\newcommand{\coker}{\operatorname{coker}}
\newtheorem{theorem}{Theorem}[section]
\newtheorem{lemma}[theorem]{Lemma}
\newtheorem{proposition}[theorem]{Proposition}
\newtheorem{claim}{Claim}[theorem]
\theoremstyle{definition}
\newtheorem{definition}[theorem]{Definition}
\theoremstyle{remark}
\newtheorem{remark}[theorem]{Remark}
\newcommand{\R}{\mathbb{R}}
\newcommand{\N}{\mathbb{N}}
\newcommand{\nl}{\newline}
\newcommand{\st}{\text{ s.t. }}
\newcommand{\eps}{\epsilon}
\newcommand{\rmk}{\text{Remark: }}
\newcommand{\BE}{\textnormal{BE}}
\newcommand{\Pf}{\textbf{Proof: } }
\newcommand{\Ind}{\textnormal{Ind}}
\newcommand{\Ric}{\text{Ric}}
\newcommand{\C}{\mathbb{C}}
\newcommand{\Leu}{L_{\eps, u}}
\newcommand{\QAC}{Q_{AC_{\eps}} }
\newcommand{\QBE}{Q_{\text{BE}_{\eps}}}
\newcommand{\sn}{\text{sn}}
\newcommand{\cn}{\text{cn}}
\newcommand{\dn}{\text{dn}}
\newcommand{\bg}{\overline{g}}
\newcommand{\bw}{\overline{w}}
\newcommand{\btau}{\overline{\tau}}
\newcommand{\brho}{\overline{\rho}}
\newcommand{\bkappa}{\overline{\rho}}
\newtheorem{Remark}{Remark}
\newtheorem{corollary}[theorem]{Corollary}
\renewcommand{\eps}{\varepsilon}
\numberwithin{equation}{section}
\title{Geometric Variations of an Allen--Cahn Energy on Hypersurfaces}
\author{Jared Marx-Kuo \and \'{E}rico Melo Silva}
\date{April 2023}
\begin{document}

\maketitle

\begin{abstract}
\noindent We introduce an Allen--Cahn type functional, $\text{BE}_{\eps}$, that defines an energy on separating hypersurfaces, $Y$, of closed Riemannian Manifolds. We establish $\Gamma$-convergence of $\text{BE}_{\eps}$ to the area functional, and compute first and second variations of this functional under hypersurface pertrubations. We then compute an explicit expansion for the variational formula as $\epsilon \to 0$. A key component of this proof is the invertibility of the linearized Allen--Cahn equation about a solution, on the space of functions vanishing on $Y$. We also relate the index and nullity of $\text{BE}_{\eps}$ to the Allen--Cahn index and nullity of a corresponding solution vanishing on $Y$. We apply the second variation formula and index theorems to show that the family of $2p$-dihedrally symmetric solutions to Allen--Cahn on $S^1$ have index $2p - 1$ and nullity $1$.\end{abstract}
\tableofcontents
%
\section{Introduction.}
\noindent Let $(M^n, g)$ be a Riemannian manifold and $u \in W^{1,2}(M)$. The Allen--Cahn energy is given by 
\begin{equation} \label{ACEnergy} 
E_{\eps}(u) = \int_M \frac{\eps}{2} |\nabla u|^2 + \frac{1}{\eps} W(u)
\end{equation}
where $W(u) = \frac{(1 - u^2)^2}{4}$ is a double-well potential. Critical points of this functional satisfy the Allen--Cahn equation
\begin{equation} \label{ACEquation}
\eps^2 \Delta_g u = W'(u)
\end{equation}
There is a well-known correspondence between zero sets of solutions to \eqref{ACEquation} and minimal surfaces. Modica and Mortola (\cite{modica1985gradient} \cite{mortola1977esempio}) showed that the Allen--Cahn energy functional $\Gamma$-converges to perimeter. In particular a large body of literature has emerged in recent years using min-max properties of the Allen--Cahn energy to construct min-max minimal hypersurfaces due to Guaraco, Gaspar, Chodosh, Mantoulidis, and many others (\cite{guaraco2018min} \cite{gaspar2018allen} \cite{chodosh2020minimal}). Under certain geometric constraints, Wang and Wei ([\cite{wang2019second}, Thm 1.1]) showed that the level sets of a sequence of stable solutions to \eqref{ACEquation}, $\{u_{\eps_i}\}$, converge to a minimal surface with good regularity.\nl \nl
\noindent One drawback of the Allen--Cahn energy \eqref{ACEnergy} is that a priori, it is difficult to control the topology of the zero set of minimizers, and thus the topology of the limiting minimal surfaces. This motivates us to define an Allen--Cahn energy functional \textit{on hypersurfaces themselves}. For $Y^{n-1} \subseteq M^n$, separating with $M = M^+ \sqcup_Y M^-$, let $u_{\eps}^{\pm}$ be the non-negative (positive) minimizers of equation \eqref{ACEnergy} on $M^{\pm}$ over all functions vanishing on $Y = \partial M^{\pm}$. For $Y$ satisfying some geometric constraints, $u_{\eps}^{\pm}$ are non-zero. We then define a \textbf{Balanced Energy} functional on hypersurfaces as 
\begin{align*}
\BE_{\eps}(Y) &:= E_{\eps}(u_{\eps}^+, M^+) + E_{\eps}(u_{\eps}^-, M^+)
\end{align*}
and investigate the variational properties of $\BE_{\eps}$ on the space of separating hypersurfaces. This includes explicit formula for the first and second variation, as well as connections between the Morse index and nullity of this function to the normal Allen--Cahn index and nullity on solutions. \nl \nl
In section \S \ref{FirstVariationSection}, we show that if $Y$ is critical for $\BE_{\eps}$, then $u_{\eps}^{\pm}$ can be glued together to form a solution to \eqref{ACEquation} on $M$. In particular, we have the existence of critical points for $\BE_{\eps}$ from the following theorem of Pacard and Ritore (See figure \ref{fig:aclevelsetconvergence} for a visualization):
\begin{restatable}[Pacard--Ritor\'e, Theorem 1.1]{thmm}{PRTheorem} \label{PRTheorem}
Assume that $(M, g)$ is an $n$-dimensional closed Riemannian manifold and $Y^{n-1} \subseteq M^n$ is a two-sided, nondegenerate minimal hypersurface. Then there exists $\eps_0 > 0$ such that $\forall \eps < \eps_0$, there exist solutions, $u_{\eps}$, to equation \eqref{ACEquation} such that $u_{\eps}$ converges to $+1$ (resp. -1) on compact subsets of $(\Omega^+)^o$ (resp. $(\Omega^c)^o$) and 
\[
E_{\eps}(u_{\eps}) \xrightarrow{\eps \to 0} \frac{1}{\sqrt{2}} A(Y)
\]
for $A(Y)$ the $n-1$-dimensional area of $Y$
\end{restatable}
\begin{figure}[h!]
\centering
\includegraphics[scale=0.3]{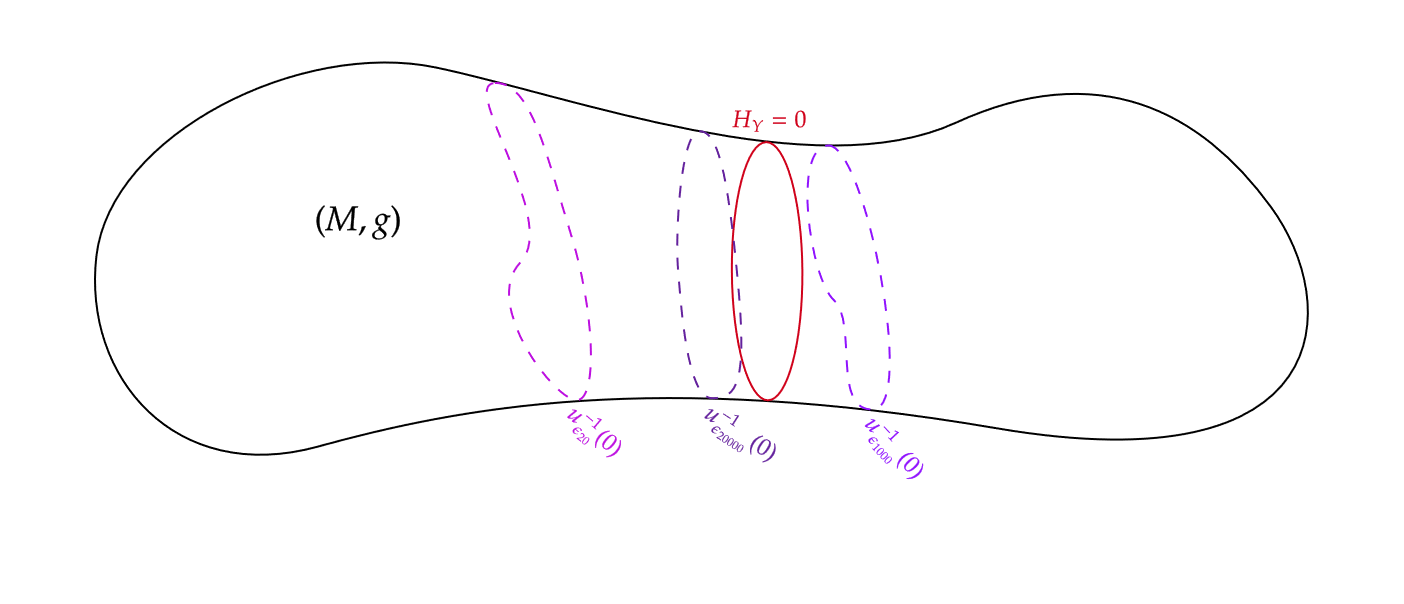}
\caption{Illustration of level set convergence to a minimal hypersurface}
\label{fig:aclevelsetconvergence}
\end{figure}
%
%
\noindent We are inspired by \cite{guaraco2019min}, Ex. 19, in which the author uses energy minimization and symmetry to glue solutions on $3$ subdomains of $S^n$ together. Energy minimization is also considered in the research of ``ground states" of the Allen--Cahn energy in \cite{caju2020ground}. We also draw inspiration from an analogous approach in the theory of extremal eigenvalue problems for the Laplacian, as presented in Henrot \cite{henrot2006extremum} on variations of the first eigenvalue. \nl \nl
We also mention a similar theory of variations of the Allen--Cahn energy using ``inner variations" of Allen--Cahn solutions as developed by Le \cite{le2011second} and Gaspar \cite{gaspar2020second}, among others. In this setting, variations of the form $u_{\eps} \circ \Phi_t$ are considered, for $\Phi_t$ a one parameter family of diffeomorphisms. Such variations are geometrically similar to variations of $\BE_{\eps}$, since they will move the nodal set of $u_{\eps}$. However, variations of $\BE_{\eps}$ move the nodal set \textit{and} simultaneously re-minimizes the Allen--Cahn energy on both sides, so the settings are different in both the setup and formulae. In particular, the first and second variations of $\BE_{\eps}$ can be framed as Dirichlet-to-Neumann problems and expanded asymptotically in $\eps$ to acquire precise estimates on subleading terms.
\subsection{Acknowledgements}
The authors would like to thank Otis Chodosh for suggesting this problem, as well as many insightful conversations over the course of this work. The first named author would also like to thank Rafe Mazzeo for his advice and interest in the problem.
\section{Results and Overview}
\noindent Given $Y^{n-1} \subseteq M$, sufficiently regular and separating so that 
\[
M = \Omega^+ \sqcup_Y \Omega^-
\]
we define $u^{\pm}_{Y, \eps}$ to be the positively (negatively) signed minimizers of \eqref{ACEnergy} on $\Omega^{\pm}$ with $0$ dirichlet condition on $Y$, and $U_{\eps, Y}$ to be the combined, pasted function defined on all of $M$. We then informally define the ``\textit{Balanced Energy}" on $Y$ as
\[
\BE_{\eps}(Y):= E_{\eps}(u^+_{Y,\eps}, \Omega^+) + E_{\eps}(u^-_{Y,\eps}, \Omega^-)
\]
(see \ref{BEDefinition} for full details). This is one way to associate an energy to a large class of hypersurfaces in $M$. We will often suppress the $\{Y, \eps \}$ subscripts and just write $u^{\pm}$. \nl \nl 
\noindent We note that $\BE_{\eps} \xrightarrow{\Gamma} \mathcal{P}$,the perimeter functional for sets of finite perimeter, which agrees with the area functional on smooth boundaries. This uses the standard $\Gamma$-convergence result for the Allen--Cahn energy due to Modica-Mortola \cite{mortola1977esempio}.
\begin{restatable}{thmm}
{GammaConSimple}\label{GammaConSimple}
    Let $\Omega$ be a set of finite perimeter. Then, we have
    \begin{enumerate}
        \item If $\Omega_{n} \xrightarrow{L^{1}} \Omega$ and $\partial \Omega_{n}$ is a smooth hypersurface, then $\textnormal{BE}_{\varepsilon}(\partial \Omega_{n}) \to \frac{1}{\sqrt{2}} \mathcal{P}( \Omega)$.
        \item There is a sequence of smooth hypersurfaces $\Omega_{n}$ such that $\Omega_{n} \xrightarrow{L^{1}} \Omega$ with 

        \[
        \limsup_{\varepsilon \to 0^{+}} \textnormal{BE}_{\varepsilon}(\partial \Omega_{n}) = \frac{1}{\sqrt{2}} \mathcal{P}( \Omega).
        \]
    \end{enumerate}
\end{restatable}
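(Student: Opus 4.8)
The plan is to deduce \Cref{GammaConSimple} from the classical Modica–Mortola $\Gamma$-convergence of $E_\eps$ to $\sigma_0 \mathcal P$ (with $\sigma_0 = \tfrac{1}{\sqrt 2}$, since $\int_{-1}^1 \sqrt{2W(t)}\,dt = \tfrac{2}{3}\cdot$ — I will keep the constant abstract as $\sigma_0$), combined with the variational characterization of $\BE_\eps$. The essential point is a two-sided sandwich: for a fixed smooth separating hypersurface $Y = \partial\Omega$, the combined function $U_{\eps,Y}$ is an admissible competitor for $E_\eps$ on all of $M$, so $E_\eps(U_{\eps,Y}) \ge$ (the min-max / $\Gamma$-lower-bound quantity), while on the other hand $\BE_\eps(Y) = E_\eps(U_{\eps,Y})$ is bounded above by the energy of any convenient competitor that vanishes on $Y$ — namely the one-dimensional optimal profile $\mathbb H(d_Y(x)/\eps)$ truncated appropriately, where $d_Y$ is signed distance and $\mathbb H$ is the heteroclinic solution of $\mathbb H'' = W'(\mathbb H)$. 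This recovers, per fixed smooth $Y$, that $\lim_{\eps\to 0}\BE_\eps(Y) = \sigma_0 \mathcal A(Y) = \sigma_0 \mathcal P(\partial\Omega)$.

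For part (1), I would argue as follows. First I would note $\BE_\eps(\partial\Omega_n) = E_\eps(U_{\eps, \partial\Omega_n})$ where $U_{\eps,\partial\Omega_n}$ vanishes on $\partial\Omega_n$ and is the glued pair of one-sided minimizers. For the upper bound $\limsup_\eps \BE_\eps(\partial\Omega_n) \le \sigma_0\mathcal P(\partial\Omega_n)$: since each $u^{\pm}_{\partial\Omega_n,\eps}$ minimizes $E_\eps$ on $\Omega_n^{\pm}$ among functions vanishing on the boundary, I can plug in the truncated signed-distance profile $\phi_\eps^{\pm}(x) = \pm\mathbb H\!\big(\mathrm{dist}(x,\partial\Omega_n)/\eps\big)$ (cut off smoothly far from $\partial\Omega_n$ so it stays in the right sign class and matches $\pm 1$ on compact interior pieces), and the standard coarea computation gives $E_\eps(\phi_\eps^\pm, \Omega_n^\pm)\to \tfrac{1}{2}\sigma_0\mathcal P(\partial\Omega_n)$ as $\eps\to 0$, using smoothness of $\partial\Omega_n$ so the distance function is smooth near it and the tubular-neighborhood Jacobian $\to 1$. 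Summing the two sides yields the $\limsup$ bound. For the matching lower bound, $U_{\eps,\partial\Omega_n}$ is an admissible function on $M$ with $\int_M W(U_{\eps,\partial\Omega_n})/\eps$ finite, and it transitions from near $+1$ to near $-1$ across $\partial\Omega_n$; applying the Modica–Mortola liminf inequality (or just the elementary pointwise $\tfrac{\eps}{2}|\nabla u|^2 + \tfrac1\eps W(u) \ge |\nabla(\Psi\circ u)|$ with $\Psi' = \sqrt{2W}$, integrated and using that $\Psi\circ U_{\eps}$ has bounded variation bounded below by oscillation times measure near the interface) gives $\liminf_\eps E_\eps(U_{\eps,\partial\Omega_n}) \ge \sigma_0\mathcal P(\partial\Omega_n)$. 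Actually, to be careful about the direction: I want $\liminf_\eps \BE_\eps(\partial\Omega_n)\ge \sigma_0\mathcal P(\partial\Omega_n)$ for a \emph{fixed} $n$, which does \emph{not} directly follow from $\Gamma$-convergence's liminf clause (that compares along varying sequences); instead I would use the quantitative interface lower bound for the specific smooth hypersurface, e.g. slicing $M$ by geodesics normal to $\partial\Omega_n$ and applying the one-dimensional estimate $\int_{\mathbb R}\tfrac{\eps}{2}|v'|^2 + \tfrac1\eps W(v)\,dt \ge \sigma_0 - o(1)$ for any $v$ running from near $+1$ to near $-1$, then integrating over $\partial\Omega_n$ via Fubini/coarea. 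Combined with the upper bound this pins down $\lim_\eps\BE_\eps(\partial\Omega_n) = \sigma_0\mathcal P(\partial\Omega_n)$, and then $\mathcal P(\partial\Omega_n)\to\mathcal P(\partial\Omega)$ is \emph{not} automatic from $\Omega_n\xrightarrow{L^1}\Omega$ — wait, this is the subtlety.

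So here I would restate the reading of (1): the hypothesis "$\Omega_n \xrightarrow{L^1}\Omega$ and $\partial\Omega_n$ smooth" together with the conclusion "$\BE_\eps(\partial\Omega_n)\to\sigma_0\mathcal P(\partial\Omega)$" must mean a diagonal statement — $\BE_{\eps_n}(\partial\Omega_n)\to\sigma_0\mathcal P(\partial\Omega)$ along suitable $\eps_n\to 0$ — which is exactly the $\Gamma$-limsup (recovery sequence) half of $\Gamma$-convergence for $\BE_\eps$ together with lower semicontinuity of perimeter; and (2) is the full recovery-sequence statement: there exist smooth $\Omega_n\to\Omega$ in $L^1$ realizing $\limsup_\eps\BE_\eps(\partial\Omega_n) = \sigma_0\mathcal P(\partial\Omega)$. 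For (2) I would invoke density of smooth sets in finite-perimeter sets (mollification / Sard on smooth approximations of $\chi_\Omega$) to get smooth $\Omega_n$ with $\mathcal P(\partial\Omega_n)\to\mathcal P(\partial\Omega)$ and $\Omega_n\to\Omega$ in $L^1$; then for each fixed $n$ the per-hypersurface computation above gives $\lim_\eps\BE_\eps(\partial\Omega_n)=\sigma_0\mathcal P(\partial\Omega_n)$, hence a diagonal choice of $n=n(\eps)$ produces $\limsup_\eps\BE_\eps(\partial\Omega_{n(\eps)})=\sigma_0\mathcal P(\partial\Omega)$, after relabeling. The main obstacle I anticipate is the \textbf{lower bound for a fixed smooth hypersurface} — i.e., showing $\liminf_{\eps\to 0}\BE_\eps(Y)\ge \sigma_0\mathcal A(Y)$ for smooth $Y$ — because it requires controlling the glued minimizer $U_{\eps,Y}$ near $Y$ (it vanishes on $Y$ but a priori could have a very sharp or very diffuse transition elsewhere); I would handle it via the coarea/slicing argument above, which only needs that $U_{\eps,Y}$ is admissible and changes sign across $Y$, both of which hold by construction, plus uniform energy bounds from the upper bound to prevent mass escaping — and separately I should double-check the exact normalizing constant $\sigma_0$ matches the paper's convention, writing $\sigma_0 = \int_{-1}^{1}\sqrt{2W(s)}\,ds$ so that all identities are self-consistent.
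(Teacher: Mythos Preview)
Your overall strategy matches the paper's: the upper bound comes from plugging the truncated heteroclinic-of-signed-distance competitor into each side and using that $u^\pm_{\eps,Y}$ are energy minimizers among functions vanishing on $Y$; the lower bound comes from Modica--Mortola; and the passage to general finite-perimeter $\Omega$ goes through smooth approximation with $\mathcal P(\Omega_n)\to\mathcal P(\Omega)$ followed by a diagonal. You also correctly flagged that statement~(1) as written is imprecise (perimeter is only lower-semicontinuous under $L^1$ convergence) --- the paper acknowledges this explicitly and replaces it by the standard $\Gamma$-convergence statement (precompactness, liminf, recovery sequence) in \S\ref{GammaConvergenceSection}, which is what is actually proved.

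The one place you detour unnecessarily is the lower bound for fixed smooth $Y$. You worry that the Modica--Mortola liminf clause only compares along \emph{varying} sequences and therefore propose a direct slicing/coarea argument. In fact Modica--Mortola applies cleanly: take the constant sequence $\Omega_k\equiv\Omega$ and the broken phase transitions $u_k=U_{\eps_k,Y}$. The only thing to check is that $u_k\to \chi_\Omega-\chi_{\Omega^c}$ in $L^1$, and the paper obtains this from BV-compactness together with the sign structure (since $u_k>0$ exactly on $\Omega$ and $u_k<0$ exactly on $\Omega^c$, any $L^1$ subsequential limit $u\in\mathrm{BV}(M;\{\pm1\})$ must satisfy $\{u>0\}=\Omega$ up to null sets --- this is the content of Lemma~\ref{PosPartConvergence} and the precompactness step). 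Once that is in hand, $\liminf_k E_{\eps_k}(u_k)\ge\sigma_0\mathcal P(\Omega)$ is immediate, and your slicing argument, while correct, is not needed.
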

\noindent  For a more precise statement see \S \ref{GammaConvergenceSection}. We then compute the first variation of $\BE_{\eps}$ as a function of the hypersurface $Y$.
\begin{restatable}{thmm}{FirstVariationTheorem}\label{FirstVariationTheorem}
Let $\{Y_t\}$ a one-parameter family of hypersurfaces in $(M^n, g)$ with 
\[
Y_t = \exp_p (f(p) \nu(p))
\]
and $Y_0 = Y$. Then 
\begin{equation} \label{FirstVariationFormula}
\frac{d}{dt} \BE_{\eps}(Y_t) \Big|_{t = 0} = \BE_{\eps}'(f \nu) \Big|_Y = \frac{\eps}{2} \int_Y f [ (u_{\nu}^+)^2 - (u_{\nu}^-)^2]
\end{equation}
\end{restatable}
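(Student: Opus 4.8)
The plan is to differentiate the identity $\BE_\eps(Y_t) = E_\eps(u^+_{Y_t,\eps}, \Omega^+_t) + E_\eps(u^-_{Y_t,\eps}, \Omega^-_t)$ in $t$ at $t=0$, and to exploit the fact that $u^\pm$ are \emph{minimizers} of $E_\eps$ on each side with $0$ Dirichlet data on the moving boundary. First I would set up a one-parameter family of diffeomorphisms $\Phi_t: M \to M$ with $\Phi_0 = \mathrm{id}$ and normal velocity $\tfrac{d}{dt}\Phi_t|_{t=0} = f\nu$ near $Y$, so that $\Phi_t(Y) = Y_t$ and $\Phi_t(\Omega^\pm) = \Omega^\pm_t$. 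Pulling back by $\Phi_t$, one can write $E_\eps(u^\pm_{Y_t,\eps}, \Omega^\pm_t) = E_{\eps,t}(\tilde u^\pm_t, \Omega^\pm)$, where $\tilde u^\pm_t = u^\pm_{Y_t,\eps}\circ\Phi_t$ now all vanish on the \emph{fixed} boundary $Y$ and $E_{\eps,t}$ is a $t$-dependent energy (with the pulled-back metric). This separates the variation into two pieces: the ``inner'' variation coming from $t\mapsto \tilde u^\pm_t$ at fixed domain, and the ``geometric'' variation coming from $t\mapsto E_{\eps,t}$ at fixed function.

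The key simplification is that the inner-variation piece vanishes. Since $\tilde u^\pm_t$ are minimizers of $E_{\eps,t}$ over $W^{1,2}_0(\Omega^\pm)$ — and in particular $\tilde u^\pm_0 = u^\pm$ minimizes $E_\eps = E_{\eps,0}$ — the first variation of $E_{\eps,0}$ in the direction $\dot{\tilde u}^\pm := \tfrac{d}{dt}\tilde u^\pm_t|_{t=0}$ is zero; here one needs that $\dot{\tilde u}^\pm \in W^{1,2}_0(\Omega^\pm)$, which holds because all the $\tilde u^\pm_t$ share the boundary condition. (This requires knowing $t\mapsto \tilde u^\pm_t$ is differentiable into $W^{1,2}_0$, which follows from smooth dependence of the minimizer on the domain/metric — presumably established earlier via the implicit function theorem and nondegeneracy, or can be taken as a standing regularity hypothesis.) Therefore only the geometric term survives:
\[
\frac{d}{dt}\BE_\eps(Y_t)\Big|_{t=0} = \frac{\partial}{\partial t}\Big|_{t=0} E_{\eps,t}(u^+,\Omega^+) + \frac{\partial}{\partial t}\Big|_{t=0} E_{\eps,t}(u^-,\Omega^-).
\]

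To evaluate $\tfrac{\partial}{\partial t}|_{t=0} E_{\eps,t}(u^\pm,\Omega^\pm)$, instead of differentiating the pulled-back metric it is cleaner to go back to the unpulled picture: $E_{\eps,t}(u^\pm,\Omega^\pm) = \int_{\Omega^\pm_t}\big(\tfrac{\eps}{2}|\nabla v^\pm_t|^2 + \tfrac1\eps W(v^\pm_t)\big)$ where $v^\pm_t := u^\pm\circ\Phi_t^{-1}$ is a \emph{fixed} profile transported along $\Phi_t$, i.e. $\tfrac{d}{dt}v^\pm_t|_{t=0} = -\langle\nabla u^\pm, f\nu\rangle = -u^\pm_\nu f$ on $Y$ but we only need the domain-boundary term. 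Differentiating an integral over a moving domain gives an interior term (which, after integrating by parts and using the Allen--Cahn equation $\eps^2\Delta u^\pm = W'(u^\pm)$, cancels) plus a boundary term $\int_{Y}\big(\tfrac{\eps}{2}|\nabla u^\pm|^2 + \tfrac1\eps W(u^\pm)\big)\,\langle f\nu,\nu^\pm\rangle\,dA$, where $\nu^\pm$ is the outward normal of $\Omega^\pm$ along $Y$, so $\nu^+ = \nu$ and $\nu^- = -\nu$. On $Y$ we have $u^\pm = 0$, hence $W(u^\pm) = W(0)$; and $|\nabla u^\pm|^2 = (u^\pm_\nu)^2$ since the tangential derivatives vanish. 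The constant $W(0) = 1/4$ term contributes $\tfrac{1}{4\eps}\int_Y f$ from the $+$ side and $-\tfrac{1}{4\eps}\int_Y f$ from the $-$ side, cancelling. What remains is $\tfrac{\eps}{2}\int_Y f (u^+_\nu)^2 - \tfrac{\eps}{2}\int_Y f(u^-_\nu)^2$, which is exactly \eqref{FirstVariationFormula}. The main obstacle is the rigorous justification that the inner-variation term genuinely vanishes — i.e. differentiability of the family of constrained minimizers in $W^{1,2}_0$ and the legitimacy of interchanging $\tfrac{d}{dt}$ with the energy — plus careful bookkeeping of boundary orientations and the cancellation of the interior and $W(0)$ terms; the geometric boundary-term computation itself is the standard first-variation-of-a-domain-functional calculation.
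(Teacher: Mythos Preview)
Your overall strategy is sound and is essentially the paper's direct computation reorganized: split off the contribution from $\dot{\tilde u}^\pm \in W^{1,2}_0(\Omega^\pm)$, which vanishes by minimality, and reduce to the shape derivative of $E_\eps$ along the transported fixed profile $v^\pm_t = u^\pm\circ\Phi_t^{-1}$. The error is in evaluating that shape derivative. When you differentiate $\int_{\Omega^\pm_t} e_\eps(v^\pm_t)$, the interior piece
\[
\int_{\Omega^\pm}\Big(\eps\,\langle\nabla u^\pm,\nabla\dot v^\pm\rangle + \tfrac{1}{\eps}W'(u^\pm)\,\dot v^\pm\Big)
\]
does \emph{not} cancel. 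After integrating by parts the bulk indeed vanishes by the Allen--Cahn equation, but the boundary term $\int_Y \eps\,(\partial_{\nu^\pm} u^\pm)\,\dot v^\pm$ survives, since $\dot v^\pm|_Y = -f\,u^\pm_\nu \neq 0$; equivalently $\dot v^\pm \notin W^{1,2}_0(\Omega^\pm)$, so criticality cannot be invoked a second time here.

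This surviving contribution is exactly what the paper computes directly as its term $B$. There one differentiates $E_\eps(u_t,\Omega_t)$ without any pull-back, obtaining the moving-domain term $A = \int_{\partial\Omega}(-f)\big(\tfrac{\eps}{2}u_\nu^2+\tfrac{1}{4\eps}\big)$ plus the function-variation term $B = \int_{\partial\Omega}\eps\,u_{\nu^c}\dot u = \eps\int_{\partial\Omega} f\,u_\nu^2$ (using $\dot u|_{\partial\Omega}=-fu_\nu$), and it is $B$ that flips the sign of the gradient part to give the correct one-sided density $\tfrac{\eps}{2}u_\nu^2-\tfrac{1}{4\eps}$ before the two sides are summed and the $\tfrac{1}{4\eps}$ pieces cancel. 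Your final expression matches the statement only because the orientation choice $\nu^+=\nu$ outward introduces a compensating sign; with your stated convention the correctly computed first variation carries the opposite overall sign.
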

\noindent \rmk \; Equation \eqref{FirstVariationFormula} shows that critical points of \eqref{FirstVariationFormula} correspond to solutions of \eqref{ACEquation} on all of $M$, since $u_{\eps, Y}$ is now $C^1$ and hence smooth across its nodal set $u_{\eps, Y}^{-1}(0) = Y$. \nl \nl 
If we use basic results from \cite{marx2023dirichlet} we can show 
\begin{restatable}{corr}{FirstVarAnalogy}\label{FirstVarAnalogy}
For $Y$ a fixed $C^{3,\alpha}$ surface, there exists an $\eps_0$ such that for all $\eps < \eps_0$
\[
\BE_{\eps}'(f \nu) \Big|_Y = \frac{1}{2 \sqrt{2}} \langle H_Y + E, f \rangle 
\]
where $|E| \leq K \eps $ for $K$ dependent on $Y$.
\end{restatable}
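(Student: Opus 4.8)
The plan is to start from the exact first-variation formula \eqref{FirstVariationFormula} and replace $(u_\nu^\pm)^2$ by their leading-order asymptotics as $\eps \to 0$. Recall the one-dimensional heteroclinic profile $\mathbb{H}(t) = \tanh(t/\sqrt 2)$ solving $\mathbb{H}'' = W'(\mathbb{H})$ on $\R$ with $\mathbb{H}(\pm\infty) = \pm 1$; the model half-line solution vanishing at the origin has normal derivative $\mathbb{H}'(0) = \tfrac{1}{\sqrt 2}$ at the boundary. The expectation is that near $Y$, in Fermi coordinates $(y, s)$ with $s$ the signed distance, one has $u^+_{\eps,Y}(y,s) \approx \mathbb{H}(s/\eps)$ for $s > 0$ and $u^-_{\eps,Y}(y,s)\approx \mathbb{H}(s/\eps)$ for $s < 0$ (with the sign convention making $u^-$ negative), so that $\eps \, u^\pm_\nu(y) = \eps\,\partial_s u^\pm(y,0) \to \mathbb{H}'(0) = \tfrac{1}{\sqrt 2}$, hence $\eps^2 (u_\nu^\pm)^2 \to \tfrac12$. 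This already suggests $\BE_\eps'(f\nu) \approx \tfrac{\eps}{2}\int_Y f\cdot \eps^{-1}\big[\tfrac12 - \tfrac12\big]$, which vanishes at leading order — so the real content is the \emph{subleading} term, which is where the curvature of $Y$ enters and produces the mean curvature $H_Y$.

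Concretely, I would invoke the estimates from \cite{marx2023dirichlet} on the Dirichlet minimizer $u^{\pm}_{\eps,Y}$ on a domain with boundary $Y$ of bounded geometry: these should give an expansion of $u^\pm$ in powers of $\eps$ whose first correction to the flat profile is governed by a linear ODE with inhomogeneity proportional to the mean curvature $H_Y$ of $Y$ (the standard Fermi-coordinate computation: $\Delta_g = \partial_s^2 + H_Y(y)\partial_s + O(s)\partial_s + \Delta_Y + \cdots$, so the $O(\eps)$ correction $v_1$ to $\mathbb{H}(s/\eps)$ satisfies $L v_1 = -H_Y \mathbb{H}'$ where $L = \eps^2\partial_s^2 - W''(\mathbb{H})$). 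Evaluating $\partial_s$ of this expansion at $s = 0$ yields $\eps\, u^\pm_\nu(y) = \tfrac{1}{\sqrt 2} \pm c\,\eps H_Y(y) + O(\eps^2)$ for a universal constant $c$, with the sign flip between the two sides coming from $s \to -s$. Squaring and subtracting, the $O(1)$ terms cancel and one is left with $\eps^2\big[(u_\nu^+)^2 - (u_\nu^-)^2\big] = 2\cdot\tfrac{1}{\sqrt 2}\cdot(2c)\,\eps H_Y + O(\eps^2) = \tfrac{\eps}{\sqrt 2}\cdot(\text{const})\, H_Y + O(\eps^2)$. Plugging into \eqref{FirstVariationFormula}, $\BE_\eps'(f\nu) = \tfrac{\eps}{2}\int_Y f\cdot \eps^{-1}\cdot\big(\tfrac{1}{2\sqrt2}\cdot 2 H_Y + O(\eps)\big) = \tfrac{1}{2\sqrt2}\int_Y f(H_Y + E)$ with $|E| \le K\eps$, after checking the universal constant works out so that the normalization matches the $\Gamma$-limit $\sigma_0 = \tfrac{1}{\sqrt 2}\cdot(\text{relevant factor})$ — indeed consistency with \Cref{GammaConSimple} and the expected first variation of area $\tfrac{1}{2\sqrt2}\langle H_Y, f\rangle$ forces the constant.

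The main obstacle is making the asymptotic expansion of $u^\pm_{\eps, Y}$ rigorous \emph{uniformly up to the boundary} $Y$ and, crucially, controlling the $\eps$-dependence of the remainder in $C^1$ near $Y$, since we need to differentiate in the normal direction and evaluate on $Y$ itself — a boundary layer estimate, not an interior one. This is precisely what I would extract from \cite{marx2023dirichlet}: I expect that paper provides (i) existence and non-negativity of the minimizer $u^+_{\eps,Y}$, (ii) a barrier or energy-comparison argument pinning $u^+$ close to $\mathbb{H}(\text{dist}(\cdot,Y)/\eps)$, and (iii) elliptic estimates upgrading $L^2$ or energy control to $C^{1,\alpha}$ control of the difference, with the constant $K$ depending on bounds for the second fundamental form and injectivity radius entering through the bounded-geometry hypothesis \eqref{boundedGeometry}. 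The remaining work is bookkeeping: track how the Fermi-coordinate metric expansion feeds the constant, verify the sign conventions for $u^-$ (negative minimizer) so the two normal derivatives genuinely differ at order $\eps$ rather than coincide, and confirm the numerical constant $\tfrac{1}{2\sqrt 2}$ by cross-checking against the area first variation, which $\BE_\eps'$ must limit to by \Cref{GammaConSimple} and \eqref{FirstVariationFormula}.
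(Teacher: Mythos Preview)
Your proposal is correct and follows essentially the same route as the paper: both take the first-variation formula \eqref{FirstVariationFormula}, insert the asymptotic expansion of $u^\pm$ from \cite{marx2023dirichlet} (recorded in the paper as \eqref{uEpsExpansion4}--\eqref{uEpsExpansion5}), differentiate at $t=0$ to get $u_\nu^\pm = \tfrac{1}{\eps\sqrt{2}} \pm \dot w(0)H_Y + O(\eps)$, and observe that the leading $\eps^{-1}$ terms cancel in $(u_\nu^+)^2 - (u_\nu^-)^2$ while the cross term produces $H_Y$. The paper's argument is in fact terser than yours---it simply writes the expansion \eqref{MatchingNormal} and says ``computing $\dot w(0)$ gives'' the result---so your identification of the boundary-layer $C^{1,\alpha}$ control from \cite{marx2023dirichlet} as the key analytic input, and of the sign flip in the $H_Y$ term under $s\mapsto -s$, is exactly what is being used.
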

\begin{Remark}
\cref{FirstVarAnalogy} can be thought of as an extension of $\Gamma$-convergence of the Allen--Cahn energy to the area functional, with the first variation converging to the mean curvature.
\end{Remark}
\noindent Having derived a notion of first variation for $\BE_{\eps}$, we can formulate the second variation. We first prove the following theorems, relating the AC-index and AC-nullity to the $\BE_{\eps}$-index and $\BE_{\eps}$-nullity.
\begin{restatable}{thmm}{EqualityOfIndicesTheorem}\label{EqualityOfIndicesTheorem}
Let $Y$ be any surface with bounded geometry \eqref{boundedGeometry} with $||A_Y||_{C^{k-2}} \leq C$ and $r_{\text{inj}}(Y) > \rho$. There exists an $\eps_0 = \eps_0(C, \rho)$ so that for all $\eps < \eps_0$, if $Y$ is a critical point for $\BE_{\eps}$, then
\begin{equation} \label{IndexEquation}
\textnormal{Ind}_{AC_{\eps}}(u_{Y, \eps}) = \textnormal{Ind}_{\BE_{\eps}}(Y)
\end{equation}
\end{restatable}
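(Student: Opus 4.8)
The plan is to produce a linear isomorphism between the two Hessian form domains that carries one quadratic form to (a positive multiple of) the other, so that the numbers of negative and zero eigenvalues agree. Concretely, let $U = U_{Y,\eps}$ be the glued Allen--Cahn solution on $M$ guaranteed by \Cref{FirstVariationTheorem} (at a critical $Y$, the one-sided minimizers paste to a $C^1$, hence smooth, solution of \eqref{ACEquation} with nodal set $Y$). The Allen--Cahn second variation at $U$ is the quadratic form $Q_{AC_\eps}(\phi) = \int_M \eps |\nabla \phi|^2 + \tfrac1\eps W''(U)\phi^2$ on $W^{1,2}(M)$, while $\mathrm{Ind}_{\BE_\eps}(Y)$ is the index of $\BE_\eps''$ acting on normal variations $f\nu$ along $Y$. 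The natural map between these is $f \mapsto \dot U_f$, the linearization of $U_{Y_t,\eps}$ in the direction $f\nu$; one must first establish that this assignment is well defined, linear, and — crucially — that $\dot U_f$ solves the linearized Allen--Cahn equation $L_{\eps,U}\dot U_f := \eps^2\Delta \dot U_f - W''(U)\dot U_f = 0$ away from $Y$ with a prescribed jump in its normal derivative across $Y$ controlled by $f$, with $\dot U_f \equiv 0$ forcing $f\equiv 0$. This last point is exactly where the paper's stated invertibility of the linearized Allen--Cahn operator on functions vanishing on $Y$ does the work: on each side $\Omega^\pm$, $\dot U_f^\pm$ is the unique solution of $L_{\eps,U}\dot U_f^\pm = 0$ with Dirichlet data determined by the motion of $Y$, so $\dot U_f$ is a genuine Jacobi-type field for the ambient problem.

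Next I would compute $Q_{AC_\eps}(\dot U_f)$ and identify it with $\BE_\eps''(f\nu)$ up to a fixed positive constant. The mechanism is integration by parts: splitting $\int_M = \int_{\Omega^+} + \int_{\Omega^-}$, on each side $\int \eps|\nabla \dot U_f|^2 + \tfrac1\eps W''(U)(\dot U_f)^2 = \eps\int_{\partial\Omega^\pm} \dot U_f \,\partial_\nu \dot U_f$ after using $L_{\eps,U}\dot U_f^\pm = 0$; summing the two boundary contributions and using the Dirichlet-to-Neumann description of $\BE_\eps''$ from \S\ref{FirstVariationSection} (the second variation of $\BE_\eps$ is precisely the boundary pairing of $f$ against the jump of normal derivatives of the linearized solution) yields $Q_{AC_\eps}(\dot U_f) = c\,\BE_\eps''(f\nu)$ for an explicit $c>0$. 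I would then argue that the map $f\mapsto \dot U_f$ is onto a subspace of $W^{1,2}(M)$ on which $Q_{AC_\eps}$ is minimized in the complementary directions: any test function $\psi\in W^{1,2}(M)$ decomposes, via solving the Dirichlet problem for $L_{\eps,U}$ on each side with $\psi|_Y$ as data, as $\psi = \dot U_{\psi|_Y} + \psi_0$ with $\psi_0$ vanishing on $Y$; the invertibility result (in fact positivity of $L_{\eps,U}$ on the zero-trace space, which should follow since $U^\pm$ are \emph{minimizers}) makes this decomposition $Q_{AC_\eps}$-orthogonal up to controlled error, so the index and nullity of $Q_{AC_\eps}$ on all of $W^{1,2}(M)$ equal those of $\BE_\eps''$ on normal variations.

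The main obstacle is the last structural point: showing that splitting off the zero-trace part is $Q_{AC_\eps}$-orthogonal to the range of $f\mapsto \dot U_f$, and that $Q_{AC_\eps}$ restricted to zero-trace functions is positive (so it contributes nothing to index or nullity). Positivity on the zero-trace space is plausible because $u^\pm$ are the \emph{minimizers} of $E_\eps$ over zero-boundary functions on $\Omega^\pm$, so the second variation there is $\geq 0$; but one must rule out a zero eigenvalue, which is where the hypothesis "$\eps < \eps_0$" and "bounded geometry" enter, presumably via a spectral gap estimate for $L_{\eps,U^\pm}$ with Dirichlet conditions, exactly the invertibility statement advertised in the abstract. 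The orthogonality is a Green's-formula computation: $Q_{AC_\eps}(\dot U_f,\psi_0) = \eps\int_Y \psi_0\,\partial_\nu\dot U_f = 0$ since $\psi_0|_Y = 0$. So the real content is assembling: (i) well-posedness and injectivity of $f\mapsto\dot U_f$, (ii) the quadratic-form identity via boundary integration by parts, and (iii) the spectral gap on the zero-trace space; modulo the cited invertibility theorem, (ii) and (iii) are the steps I would write out carefully, and (iii) is the crux.
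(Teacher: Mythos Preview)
Your proposal is correct and matches the paper's proof: decompose an arbitrary Allen--Cahn variation as $\dot u = \dot w + \dot\psi$ with $\dot w = \dot U_f$ solving the linearized equation on each side and $\dot\psi|_Y = 0$, verify $Q_{AC_\eps}$-orthogonality of the two pieces by integrating by parts, and use $Q_{AC_\eps}(\dot\psi,\dot\psi)\geq 0$ from minimality of $u^\pm$ to conclude that every negative $AC_\eps$-direction yields a negative $\BE_\eps$-direction (the constant in your identity is $c=1$). One small correction: for the \emph{index} equality you only need non-negativity on the zero-trace space, which is immediate from $u^\pm$ being energy minimizers among functions vanishing on $Y$ and requires no $\eps_0$; the invertibility result (Proposition~\ref{NoLinearizedKernel}) enters instead to make the Dirichlet problem defining $\dot U_f$ well posed and linear in $f$, and its role as a strict positivity statement is only essential in the companion nullity theorem.
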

%
%
\begin{restatable}{thmm}{NullityBound}\label{NullityBound} 
Let $Y$ be any surface with bounded geometry \eqref{boundedGeometry} constants $C, \rho > 0$. There exists an $\eps_0 = \eps_0(C, \rho)$ so that for all $\eps < \eps_0$, if $Y$ is a critical point for $\BE_{\eps}$, then
\begin{equation} \label{NullityEquation}
\textnormal{Null}_{AC_{\eps}}(u_{Y,\eps}) = \textnormal{Null}_{\BE_{\eps}}(Y)
\end{equation}
\end{restatable}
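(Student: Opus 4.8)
The plan is to run the same argument that proves \ref{EqualityOfIndicesTheorem}, but to track kernels rather than negative subspaces. Since $Y$ is critical, the remark following \ref{FirstVariationTheorem} tells us the pasted function $u := U_{\eps,Y}$ is a genuine smooth solution of \eqref{ACEquation} on $M$; write $L := \eps^2\Delta_g - W''(u)$ for the linearized operator and $Q_{AC_\eps}(\phi) := \int_M \eps|\nabla\phi|^2 + \tfrac1\eps W''(u)\phi^2$ for the associated quadratic form on $W^{1,2}(M)$, so that $\textnormal{Null}_{AC_\eps}(u) = \dim\ker L$ is the dimension of the radical of $Q_{AC_\eps}$. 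The goal is a linear isomorphism between this radical and the radical of the second variation form $\QBE$ of $\BE_\eps$ at $Y$, whose dimension is $\textnormal{Null}_{\BE_\eps}(Y)$.

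First I would split $W^{1,2}(M) = V_0 \oplus V_h$, where $V_0 = \{\phi : \phi|_Y = 0\}$ and $V_h = \{\phi : L\phi = 0 \text{ on } \Omega^+ \text{ and on } \Omega^-\}$ is the space of functions that are $L$-harmonic on each side, determined by their common trace on $Y$. This decomposition is legitimate precisely because of the key invertibility result already established in the paper: $L$ is invertible on $V_0$, equivalently the Dirichlet problem for $L$ on each $\Omega^\pm$ is uniquely solvable, so every $\phi$ has a unique $L$-harmonic part $\phi_h$ with $\phi - \phi_h \in V_0$. Integrating by parts on $\Omega^+$ and $\Omega^-$ separately, the cross term $Q_{AC_\eps}(\phi_0,\psi_h)$ vanishes for $\phi_0\in V_0$, $\psi_h\in V_h$ (the boundary integrals over $Y$ carry the factor $\phi_0|_Y=0$, and $L\psi_h=0$ inside each side), so $Q_{AC_\eps} = Q_{AC_\eps}|_{V_0}\oplus Q_{AC_\eps}|_{V_h}$ and hence the radical splits as $\mathrm{rad}(Q_{AC_\eps}) = \mathrm{rad}(Q_{AC_\eps}|_{V_0})\oplus\mathrm{rad}(Q_{AC_\eps}|_{V_h})$. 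Invertibility of $L$ on $V_0$ gives $\mathrm{rad}(Q_{AC_\eps}|_{V_0}) = 0$ — this is the step where the index proof instead uses that this block is \emph{positive} definite for $\eps<\eps_0$ — so $\textnormal{Null}_{AC_\eps}(u) = \dim\mathrm{rad}(Q_{AC_\eps}|_{V_h})$.

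Next, a second integration by parts identifies $Q_{AC_\eps}|_{V_h}$, regarded as a form in the trace $g := \phi_h|_Y$, with (minus) the sum of the Dirichlet-to-Neumann maps of $L$ on $\Omega^\pm$: $Q_{AC_\eps}|_{V_h}(g,g') = -\eps\int_Y(\partial_\nu\phi_h^+ - \partial_\nu\phi_h^-)\,g'$. Its radical therefore consists of traces whose $L$-harmonic extension has no jump in normal derivative across $Y$, i.e. extends to a weak and hence (elliptic regularity) smooth solution of $L\phi = 0$ on all of $M$; together with injectivity of $\phi\mapsto\phi|_Y$ on $\ker L$ (which again uses invertibility on $V_0$) this recovers $\dim\mathrm{rad}(Q_{AC_\eps}|_{V_h}) = \dim\ker L$, consistent with the previous paragraph. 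Finally I would invoke the second variation formula of $\BE_\eps$: for a normal variation $f\nu$ the shape derivative $\dot u^\pm$ of $u^\pm_{Y,\eps}$ solves $L\dot u^\pm = 0$ on $\Omega^\pm$ with trace $\dot u^\pm|_Y = -u_\nu f$ (where $u_\nu := \partial_\nu u|_Y$), and $\QBE$ is exactly the combined Dirichlet-to-Neumann form above pulled back along $f\mapsto g = -u_\nu f$. Since $Y$ has bounded geometry and $\eps<\eps_0$, the Hopf lemma gives $u_\nu>0$ on $Y$, so $f\mapsto -u_\nu f$ is an isomorphism of function spaces on $Y$; hence $\mathrm{rad}(\QBE)\cong\mathrm{rad}(Q_{AC_\eps}|_{V_h})$ and $\textnormal{Null}_{\BE_\eps}(Y) = \textnormal{Null}_{AC_\eps}(u)$.

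The main obstacle is the input from the second variation section: verifying that $\QBE$ really is the pulled-back Dirichlet-to-Neumann form with no leftover terms. Differentiating \eqref{FirstVariationFormula} produces, besides the normal derivative of $\dot u^\pm$, geometric contributions from the second fundamental form of $Y$, the normal Hessian of $u$, and the acceleration of the family $Y_t$; the point is that these conspire — using crucially that $u^\pm_{Y,\eps}$ solves \eqref{ACEquation} on each side, an envelope/Hadamard-type cancellation — to reassemble precisely $-\eps\,u_\nu\,\partial_\nu(\dot u^+ - \dot u^-)$. Granting that identity, together with the standard Schauder mapping properties on $C^{3,\alpha}$ domains that legitimize the integrations by parts and the Neumann traces of $\dot u^\pm$, the nullity statement reduces to the short linear-algebra argument above.
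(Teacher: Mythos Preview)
Your proposal is correct and follows essentially the same route as the paper's proof. Both arguments decompose variations of $u_{Y,\eps}$ into a piece $\dot w(f)\in V_h$ solving \eqref{LinearizedSystem} and a piece $\dot\psi\in V_0$ vanishing on $Y$, observe that $\QAC$ is block-diagonal with respect to $V_0\oplus V_h$ (the cross terms vanish by integration by parts), kill the $V_0$ block using Proposition~\ref{NoLinearizedKernel}, and identify the $V_h$ block with $\QBE$ via the second variation formula \eqref{SecondVariationFormula}; the only cosmetic differences are that you phrase things in terms of radicals of quadratic forms and the Dirichlet-to-Neumann map, and that you cite the Hopf lemma for $u_\nu\neq 0$ where the paper instead reads this off the expansion \eqref{uEpsExpansion4}.
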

\noindent Thus, for solutions which are energy minimizers, the index and nullity can be computed over the space of \textit{geometric} variations, i.e. variations of the null set. \nl \nl 
\noindent We can also explicitly state the second variation formula at a critical point. We consider solutions $\dot{u}_{\eps,Y}^{\pm} : \Omega^{\pm} \to \R $ to the linearized Allen--Cahn equation at $u$: 
\begin{align} \label{LinearizedSystem}
\begin{cases}
\eps^2 \Delta_g \dot{u}_{\eps, Y}^{\pm} &= W''(u_{\eps, Y}) \dot{u}_{\eps, Y}^{\pm} \text{ in } \Omega^{\pm},\\
\dot{u}_{\eps, Y}^{\pm} \Big|_Y &= -f u_{\nu}.
\end{cases}
\end{align}
We will often suppress the $\{\eps, Y\}$ subscripts out of convenience. The existence and uniqueness of a solution to the system \eqref{LinearizedSystem} will be a consequence of proposition \ref{NoLinearizedKernel}. With this, we can state the formula for the second derivative of $\BE_{\eps}$ at a critical point.
\begin{restatable}{thmm}{SecondVariationTheorem} \label{SecondVariationTheorem}
Let $\{Y_t\}$ a one-parameter family of hypersurfaces in $(M^n, g)$ with $Y_0 = Y$ as in theorem \ref{FirstVariationTheorem}. Further assume that $Y$ is a critical point. Then 
\begin{equation} \label{SecondVariationFormula}
\frac{d^2}{dt^2} \BE_{\eps}(Y_t) \Big|_{t = 0} = \eps \int_Y f u_{\nu} [ \dot{u}^{+}_{\nu} - \dot{u}^-_{\nu}]
\end{equation}
for $\dot{u}^{\pm}$ as in \eqref{LinearizedSystem}.
\end{restatable}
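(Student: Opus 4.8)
The plan is to obtain \eqref{SecondVariationFormula} by differentiating the first variation formula \eqref{FirstVariationFormula} once more along $\{Y_t\}$. Theorem \ref{FirstVariationTheorem} applies to every hypersurface of bounded geometry, not only to critical ones, so for all small $t$
\[
\frac{d}{ds}\BE_\eps(Y_s)\Big|_{s=t} = \frac{\eps}{2}\int_{Y_t} f_t\,\big[(u^+_{t,\nu_t})^2 - (u^-_{t,\nu_t})^2\big]\, dA_t ,
\]
where $f_t$ is the normal speed of the family at time $t$ (so $f_0 = f$), $\nu_t$ is the unit normal of $Y_t$, $u^\pm_t$ are the signed minimizers of \eqref{ACEnergy} on the components $\Omega^\pm_t$ of $M\setminus Y_t$, and $u^\pm_{t,\nu_t}=\partial_{\nu_t}u^\pm_t$ on $Y_t$. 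Parametrizing $Y_t$ by the flow $\Phi_t\colon Y\to Y_t$ of the variation (so $\Phi_0=\mathrm{id}$ and $\partial_t\Phi_t|_{t=0}$ has normal part $f\nu$) and pulling the integral back to $Y$, the contributions in which the $t$-derivative lands on $f_t$ or on the area element are multiplied by the integrand $(u^+_\nu)^2-(u^-_\nu)^2$, which vanishes identically on $Y$ because $Y$ is a critical point of $\BE_\eps$. Hence only the derivative of the bracket survives, and writing $\tfrac{D}{Dt}$ for the material derivative along $t\mapsto\Phi_t(p)$ and $u_\nu$ for the common value of $u^\pm_\nu$ on $Y$ (common because the glued function $U_{\eps,Y}$ is smooth across $Y$), one gets
\[
\frac{d^2}{dt^2}\BE_\eps(Y_t)\Big|_{t=0} = \eps\int_Y f\, u_\nu\Big[\tfrac{D}{Dt}\big(u^+_{t,\nu_t}\big)\big|_{t=0} - \tfrac{D}{Dt}\big(u^-_{t,\nu_t}\big)\big|_{t=0}\Big].
\]

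Next I would identify the shape derivative $\dot u^\pm:=\partial_t u^\pm_t|_{t=0}$, taken at a fixed interior point, with the solution of \eqref{LinearizedSystem}. Differentiating the interior equation $\eps^2\Delta_g u^\pm_t = W'(u^\pm_t)$ in $t$ gives $\eps^2\Delta_g\dot u^\pm = W''(u^\pm)\dot u^\pm$ on $\Omega^\pm$, and differentiating $u^\pm_t(\Phi_t(p))\equiv 0$ together with $\nabla u^\pm|_Y=(\partial_\nu u^\pm)\nu$ gives $\dot u^\pm|_Y = -f\,\partial_\nu u^\pm = -f u_\nu$. The existence, uniqueness and $C^1$-in-$t$ dependence of $t\mapsto u^\pm_t$ up to the boundary follow from the invertibility of $\eps^2\Delta_g - W''(u^\pm)$ on functions vanishing on $Y$, i.e.\ Proposition \ref{NoLinearizedKernel}, combined with the implicit function theorem applied to the minimization on $\Omega^\pm_t$ after pulling back by $\Phi_t$.

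The crux is then the Hadamard-type identity
\[
\tfrac{D}{Dt}\big(u^\pm_{t,\nu_t}\big)\big|_{t=0} = \partial_\nu\dot u^\pm + \operatorname{Hess}u^\pm(X,\nu) + g\big(\nabla u^\pm,\dot\nu\big),
\]
obtained by differentiating $u^\pm_{t,\nu_t}(\Phi_t(p)) = g\big(\nabla u^\pm_t,\nu_t\big)\circ\Phi_t(p)$ and using that the covariant $t$-derivative of $\nabla u^\pm_t$ at an interior point is $\nabla\dot u^\pm + \nabla_X\nabla u^\pm$, with $X=\partial_t\Phi_t|_{t=0}$ and $\dot\nu$ the variation of the normal. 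The point is that the last two terms depend only on the common $2$-jet of $U_{\eps,Y}$ along $Y$ and on the family $\{Y_t\}$, not on the side of $Y$: since $Y$ is critical, $u^+$ and $u^-$ paste into the smooth solution $U_{\eps,Y}$ of \eqref{ACEquation}, so $\nabla u^+=\nabla u^-=\nabla U_{\eps,Y}$ and $\operatorname{Hess}u^+=\operatorname{Hess}u^-=\operatorname{Hess}U_{\eps,Y}$ on $Y$, while $\dot\nu$ is intrinsic to the variation. Thus $\operatorname{Hess}u^\pm(X,\nu)+g(\nabla u^\pm,\dot\nu)$ cancels in the difference, leaving $\tfrac{D}{Dt}(u^+_{t,\nu_t})|_{0} - \tfrac{D}{Dt}(u^-_{t,\nu_t})|_{0} = \partial_\nu\dot u^+ - \partial_\nu\dot u^- = \dot u^+_\nu-\dot u^-_\nu$, and substituting into the previous display yields exactly \eqref{SecondVariationFormula}.

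I expect the main obstacle to be analytic rather than geometric: one must establish that $t\mapsto u^\pm_t$ is twice differentiable \emph{up to the moving boundary} $Y_t$, with control uniform in small $\eps$, so that differentiation under the integral sign and evaluation of normal derivatives on $Y_t$ are legitimate, and one must check that $Y_t$ retains bounded geometry for small $t$ so that Theorem \ref{FirstVariationTheorem} is available along the whole family; this is exactly where Proposition \ref{NoLinearizedKernel} and the bounded-geometry hypothesis enter. Once $\dot u^\pm$ is known to exist and solve \eqref{LinearizedSystem}, the two remaining steps---the vanishing of the ``speed'' and ``area'' terms by criticality, and the cancellation of the Hessian and $\dot\nu$ contributions via the smoothness of $U_{\eps,Y}$---are short. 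One should also fix the sign convention for $\nu$ consistently among \eqref{FirstVariationFormula}, \eqref{LinearizedSystem} and the gluing of $u^\pm$; this does not affect the final formula since $u^\pm_\nu$ enters \eqref{FirstVariationFormula} only through its square.
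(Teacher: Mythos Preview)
Your proposal is correct and follows essentially the same route as the paper: differentiate the first variation formula at time $s$, pull back to $Y$, use criticality to kill the terms where the $t$-derivative hits $f_t$ or $dA_t$, and then decompose the material derivative of $u^\pm_{t,\nu_t}$ into the three pieces $\dot u^\pm_\nu$, a Hessian term, and a $\dot\nu$ term, the latter two cancelling in the $\pm$-difference because $U_{\eps,Y}$ is smooth across $Y$. The paper carries out exactly this computation in Fermi coordinates (writing the three pieces as $\dot u_\nu$, $f u_{\nu\nu}$, and $\dot\nu(u)$) and defers the smooth dependence $t\mapsto u^\pm_t$ to an appendix using the implicit function theorem and the invertibility of $\eps^2\Delta_g - W''(u)$, just as you anticipated.
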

\noindent We then asymptotically expand $\dot{u}^{\pm}_{\nu}$ as a function of $\eps$ and prove that the equation \eqref{SecondVariationFormula} actually reflects a Jacobi operator.
\begin{restatable}{thmm}{SecondVarFormulaSmallEpsSobolevThm} \label{SecondVarFormulaSmallEpsSobolevThm}
Let $\{Y_t\}$ a one-parameter family of hypersurfaces in $(M^n, g)$ with $Y_0 = Y$ as in theorem \ref{FirstVariationTheorem}. Assume that $Y$ is a a $C^{2,\alpha}$ critical point with bounded geometry (\eqref{boundedGeometry}). Then 
\begin{align} \label{SecondVariationFormulaSobolevSmallEpsEqn}
\frac{d^2}{dt^2} \BE_{\eps}(Y_t) \Big|_{t = 0} &= \frac{\sqrt{2}}{3} \int_Y f u_{\nu}[-2 (\Delta_Y + \dot{H}_0) ](f) + E(f) \\ \nonumber
&= \frac{2 \sqrt{2}}{3}\int_Y  |\nabla f|^2 - \dot{H}_Y f^2 + E \\ \nonumber 
&= \frac{2 \sqrt{2}}{3} [D^2A(f) + E(f)]
\end{align}
where
\[
|E(f)| \leq K \eps^{1/2} ||f||_{H^1}^2
\]
for $K = K(C, \rho)$ as in \eqref{boundedGeometry}.
\end{restatable}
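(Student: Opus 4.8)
The plan is to start from the exact formula of Theorem~\ref{SecondVariationTheorem},
\[
\frac{d^2}{dt^2}\BE_\eps(Y_t)\big|_{t=0}=\eps\int_Y f\,u_\nu\,\big[\dot u^+_\nu-\dot u^-_\nu\big],
\]
and reduce everything to an $\eps\to 0$ expansion of the Dirichlet--to--Neumann data $\dot u^\pm_\nu=\partial_\nu\dot u^\pm|_Y$ of the linearized system \eqref{LinearizedSystem}. This is all concentrated in an $O(\eps)$ collar of $Y$, so I would work in Fermi coordinates $(y,r)$ about $Y$ ($r$ the signed distance, positive into $\Omega^+$), rescale $t=r/\eps$, and use that $\eps^2\Delta_g=\partial_t^2+\eps H(y,\eps t)\,\partial_t+\eps^2\Delta_{Y_{\eps t}}$, where $H(y,0)=0$ (the critical point $Y$ is minimal up to the $O(\eps)$ error of Corollary~\ref{FirstVarAnalogy}) and $\partial_r H(y,0)=-(|A_Y|^2+\Ric(\nu,\nu))=:-\dot H_0(y)$ by the Riccati equation. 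The input from \cite{marx2023dirichlet} is the layer expansion $u^\pm=\mathbb H(r/\eps)+O(\eps^2)$ with $\mathbb H(t)=\tanh(t/\sqrt2)$, so in particular $\eps u_\nu=\tfrac1{\sqrt2}+O(\eps^2)$.

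Next I would build a matched approximate solution $\dot u^\pm_{\mathrm{app}}=\tfrac1\eps D_{-1}(y,t)+\eps D_1^\pm(y,t)+O(\eps^2)$ of \eqref{LinearizedSystem}. Boundedness as $t\to\infty$ and the Dirichlet datum $-f u_\nu$ force the leading profile $D_{-1}(y,t)=-f(y)\,\mathbb H'(t)$, i.e. $\dot u^\pm\approx-\tfrac f\eps\,\mathbb H'(r/\eps)$, and the $\eps^0$-profile then vanishes because the homogeneous problem $\mathcal L_0 D_0=0$, $D_0(y,0)=0$, $D_0$ bounded (here $\mathcal L_0:=\partial_t^2-W''(\mathbb H)$) has only the zero solution. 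The crucial algebraic fact is the cancellation $\partial_t D_{-1}(y,0)=-f\,\mathbb H''(0)=-f\,W'(0)=0$, which kills the a priori $O(\eps^{-2})$ and $O(\eps^{-1})$ contributions to $\partial_\nu\dot u^\pm$, so that $\dot u^\pm_\nu=\partial_t D_1^\pm(y,0)+O(\eps)$ is $O(1)$ and the whole answer is governed by the order-$\eps$ layer profile. Collecting the $O(\eps)$ terms of the rescaled linearized equation, $D_1^\pm$ solves a one-dimensional inhomogeneous ODE of the form
\[
\mathcal L_0 D_1^\pm=(\Delta_Y f)\,\mathbb H'(t)-f\,\dot H_0\big(t\,\mathbb H''(t)+W'''(\mathbb H)\,V(t)\,\mathbb H'(t)\big),\qquad D_1^\pm(y,0)=O(1),\quad D_1^\pm\ \text{bounded},
\]
where $V$ is the bounded solution of $\mathcal L_0 V=t\,\mathbb H'$, $V(0)=0$ (the curvature-induced $O(\eps^2)$ correction of the layer). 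Each of these ODEs is solved explicitly by reduction of order for $\mathcal L_0$ from its kernel element $\mathbb H'$; the homogeneous term enforcing the boundary value contributes nothing to $\partial_t D_1^\pm(y,0)$ because $\mathbb H''(0)=0$ again, and one reads off $\partial_t D_1^\pm(y,0)=B'(0)\,\Delta_Y f(y)+A'(0)\,\dot H_0(y)\,f(y)$ with universal constants $B'(0)=A'(0)=-\tfrac23$, the value $-\tfrac23$ emerging from $\int_0^\infty(\mathbb H')^2\,dt=\tfrac{\sqrt2}{3}$ and the analogous $\mathbb H$-moments.

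Assembling: since $\partial_\nu$ uses one global unit normal, which is outward for $\Omega^-$ and inward for $\Omega^+$, the two sides add rather than cancel, $\dot u^+_\nu-\dot u^-_\nu=2\,\partial_t D_1(y,0)+O(\eps)=-\tfrac43(\Delta_Y+\dot H_0)f+O(\eps)$; multiplying by $\eps u_\nu=\tfrac1{\sqrt2}+O(\eps^2)$ and integrating against $f$ gives $\tfrac{d^2}{dt^2}\BE_\eps=-\tfrac{4}{3\sqrt2}\int_Y f(\Delta_Y+\dot H_0)f+E(f)=\tfrac{2\sqrt2}{3}\int_Y\big(|\nabla f|^2-\dot H_Y f^2\big)+E(f)$, which is $\tfrac{2\sqrt2}{3}(D^2A(f)+E(f))$ once one identifies $\dot H_Y=\dot H_0=|A_Y|^2+\Ric(\nu,\nu)$ and uses the second variation of area at a minimal hypersurface; note $\tfrac{2\sqrt2}{3}=\sigma_0$ is exactly the $\Gamma$-limit constant of Theorem~\ref{GammaConSimple}, so this says $D^2\BE_\eps\to\sigma_0 D^2A$. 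A conceptual cross-check, and an alternative route to the error estimate: integrating \eqref{LinearizedSystem} by parts turns Theorem~\ref{SecondVariationTheorem} into the energy identity $\tfrac{d^2}{dt^2}\BE_\eps=\sum_{\pm}\int_{\Omega^\pm}\eps|\nabla\dot u^\pm|^2+\tfrac1\eps W''(u)(\dot u^\pm)^2$, i.e. the sum of the two Allen--Cahn index forms at the energy-minimizing extensions of $-fu_\nu$; feeding $\dot u^\pm_{\mathrm{app}}$ into this produces the $|\nabla f|^2$ term immediately from $\int_0^\infty(\mathbb H')^2=\tfrac{\sqrt2}{3}$ on each side (after the $O(\eps^{-2})$ part cancels because $(\mathbb H'')^2+W''(\mathbb H)(\mathbb H')^2$ integrates to zero), and the $\dot H_Y f^2$ term from the Fermi-metric and $V$-corrections — the mechanism behind $\Gamma$-convergence of the second variation.

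Finally, for the remainder I would write $\dot u^\pm=\dot u^\pm_{\mathrm{app}}+\rho^\pm$ with $\rho^\pm$ vanishing on $Y$ and solving $\eps^2\Delta\rho^\pm-W''(u)\rho^\pm=-R^\pm$, $R^\pm$ the (small, explicitly controlled) residual of the approximate solution, estimate $\rho^\pm$ via the quantitative invertibility of the linearized Allen--Cahn operator on the space of functions vanishing on $Y$ (Proposition~\ref{NoLinearizedKernel}), whose inverse has operator norm degenerating by a fixed power of $\eps$, and then pass from the interior bound on $\rho^\pm$ to a bound on the boundary flux $\partial_\nu\rho^\pm$ through a rescaled trace inequality — it is this last step that turns the naively expected $O(\eps)$ into $O(\eps^{1/2})$. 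The bounded-geometry hypothesis~\ref{boundedGeometry} and the $C^{2,\alpha}$ regularity of $Y$ are used precisely to keep all constants in the Fermi expansion, the elliptic estimates, and the trace inequality uniform in $\eps$ and in $f$. I expect this uniform propagation of the error through the degenerate linearized theory and the trace operator, landing exactly at $|E(f)|\le K\eps^{1/2}\|f\|_{H^1}^2$, to be the main technical obstacle; the matched expansion and the explicit one-dimensional integrals, though lengthy, are routine.
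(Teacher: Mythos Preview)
Your proposal is correct in spirit and would work, but it takes a different route from the paper's Sobolev proof. Both start from Theorem~\ref{SecondVariationTheorem} and both rely on the invertibility of $\Leu$ on $H^1_{\eps,0}(M^\pm)$ (Proposition~\ref{NoLinearizedKernel}). The essential difference is in how the Neumann data of $\dot u^\pm$ is extracted. You build a full matched asymptotic expansion $\dot u^\pm_{\mathrm{app}}=\tfrac1\eps D_{-1}+\eps D_1^\pm+\cdots$, compute $\partial_t D_1^\pm(y,0)$ explicitly from one-dimensional ODEs, and then propose to control the boundary flux of the remainder $\rho^\pm$ through a rescaled trace inequality --- which you correctly flag as the technically delicate step responsible for the $\eps^{1/2}$ loss. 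The paper instead uses the \emph{minimal} ansatz $\dot u = h\,\dot{\bg}_\eps + \phi$ with $h=-fu_\nu$, notes that $\dot u_\nu|_{t=0}=\phi_t(s,0)$ since $\ddot g(0)=0$, and then \emph{avoids the trace step entirely}: multiplying by $\dot{\bg}_\eps$ and integrating by parts in $t$ converts $\int_Y f\,\phi_t$ into a bulk integral over the collar (equation~\eqref{NormalIntegration}), which can be bounded term by term using only the $H^1_\eps$ estimate $\|\phi\|_{H^1_\eps}\le K\eps\|f\|_{H^1(Y)}$ of Lemma~\ref{phiBoundByfLemma}. The $\eps^{1/2}$ then arises transparently from Cauchy--Schwarz between $\nabla\phi$ and $\dot{\bg}_\eps\nabla f$ (the term $I_4$ in the appendix), rather than from a trace. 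Your approach is closer to the H\"older version the paper sketches in Remark~\ref{HolderRemark}; it buys a pointwise expansion of $\dot u^+_\nu-\dot u^-_\nu$, at the cost of having to justify the boundary-flux estimate for $\rho^\pm$ in weighted Sobolev spaces, which the paper's integration-by-parts trick sidesteps.
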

\begin{Remark}
Equation \eqref{SecondVariationFormulaSobolevSmallEpsEqn} is a further extension of $\Gamma$-convergence of the Allen--Cahn energy to the area functional, but now with the second variation converging to the second variation of area. This contrasts with the results of Gaspar \cite{gaspar2020second} and Le \cite{le2011second}, where there is an extra term that obstructs the convergence of the second inner variation of the Allen--Cahn energy to the second variation of the area functional. Our result corresponds to the situation where the extra term in the second inner variation vanishes, and theorem \ref{SecondVarFormulaSmallEpsSobolevThm} additionally shows a precise control of the size of the error in $\eps$.
\end{Remark}
\noindent To prove \cref{SecondVarFormulaSmallEpsSobolevThm} we establish the following theorem on the linearized operator $\eps^2 \Delta_g - W''(u)$ (the content of which is contained in proposition \ref{NoLinearizedKernel}):
\begin{restatable}{thmm}{LinearizedOperatorInverseThm}\label{LinearizedOperatorInverseThm}
The operator $\Leu = \eps^2 \Delta_g - W''(u): H^1_{\eps, 0}(M^{\pm}) \to H^{-1}_{\eps}(M^{\pm})$ has a bounded inverse.
\end{restatable}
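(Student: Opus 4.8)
The plan is to represent $\Leu$ by its symmetric Dirichlet form and prove that form is coercive, so that the Lax--Milgram theorem applies. Set
\[
Q_{\eps}(\phi,\psi) \;=\; \int_{M^{\pm}} \eps^{2}\,\nabla\phi\cdot\nabla\psi \;+\; W''(u)\,\phi\,\psi , \qquad \phi,\psi\in H^{1}_{\eps,0}(M^{\pm}),
\]
where $u=u^{\pm}_{\eps,Y}$, so that $-\Leu$ is precisely the bounded operator $H^{1}_{\eps,0}(M^{\pm})\to H^{-1}_{\eps}(M^{\pm})$ induced by $Q_{\eps}$. Since $W''(u)=3u^{2}-1$ is bounded, $Q_{\eps}$ is continuous, and the entire content of the theorem is the coercivity estimate $Q_{\eps}(\phi,\phi)\ge c\,\lVert\phi\rVert_{H^{1}_{\eps}}^{2}$. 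Using the symmetry $W(s)=W(-s)$, it is enough to argue on $M^{+}$ with $u=u^{+}_{\eps,Y}$; recall $0\le u\le 1$, and note that since $u\ge 0$ is nontrivial and $\eps^{2}\Delta_{g}u=W'(u)\le 0$ wherever $u\le 1$, the strong maximum principle forces $u>0$ in the interior of $M^{+}$ (interior smoothness, and boundary regularity to the extent $Y$ permits, are standard elliptic theory). In particular $u$ vanishes on $Y$, so $u\in H^{1}_{\eps,0}(M^{+})$.

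The crux is that the lowest Dirichlet eigenvalue $\lambda_{1}$ of the Schr\"odinger operator $-\eps^{2}\Delta_{g}+W''(u)$ on $M^{+}$ is \emph{strictly} positive. Since $M^{+}$ is compact with boundary and $W''(u)\in L^{\infty}$, this operator has discrete spectrum and a compact resolvent; let $\phi_{1}\in H^{1}_{\eps,0}(M^{+})$ be a first eigenfunction, so that $Q_{\eps}(\phi_{1},\psi)=\lambda_{1}\int_{M^{+}}\phi_{1}\psi$ for all $\psi\in H^{1}_{\eps,0}(M^{+})$, and (on each connected component of $M^{+}$) we may take $\phi_{1}>0$ in the interior. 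On the other hand the Allen--Cahn equation gives, via the elementary identity $W''(u)u-W'(u)=2u^{3}$,
\[
-\Leu u \;=\; -\eps^{2}\Delta_{g}u+W''(u)u \;=\; W''(u)u-W'(u) \;=\; 2u^{3},
\]
i.e.\ $Q_{\eps}(u,\psi)=\int_{M^{+}} 2u^{3}\psi$ for all $\psi\in H^{1}_{\eps,0}(M^{+})$ (check it first on $\psi\in C^{\infty}_{c}((M^{+})^{o})$ and extend by density; no boundary traces of $u$ are used). Testing the $\phi_{1}$-relation with $\psi=u$ and the $u$-relation with $\psi=\phi_{1}$, and using the symmetry of $Q_{\eps}$, yields $\lambda_{1}\int_{M^{+}}u\phi_{1}=2\int_{M^{+}}u^{3}\phi_{1}>0$; since $u,\phi_{1}>0$ in the interior the factor $\int_{M^{+}}u\phi_{1}$ is strictly positive, hence $\lambda_{1}>0$. (Heuristically: a positive solution of a \emph{linear} Dirichlet problem would be a ground state, forcing $\lambda_{1}=0$; but Allen--Cahn is semilinear, and the defect $W'(u)\ne W''(u)u$, quantitatively $2u^{3}>0$ in the interior, is exactly what opens the spectral gap. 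Minimality of $u$ alone gives only $\lambda_{1}\ge 0$ a priori.)

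With $\lambda_{1}>0$, the variational characterization of $\lambda_{1}$ gives $Q_{\eps}(\phi,\phi)\ge\lambda_{1}\lVert\phi\rVert_{L^{2}}^{2}$ for all $\phi\in H^{1}_{\eps,0}(M^{+})$, and combining with $W''(u)\ge-1$ we get, for any $\theta\in(0,1)$,
\[
Q_{\eps}(\phi,\phi)\;\ge\;\theta\,\eps^{2}\lVert\nabla\phi\rVert_{L^{2}}^{2}\;+\;\bigl((1-\theta)\lambda_{1}-\theta\bigr)\lVert\phi\rVert_{L^{2}}^{2}.
\]
Choosing $\theta$ small makes both coefficients positive, so $Q_{\eps}(\phi,\phi)\ge c_{\eps}\lVert\phi\rVert_{H^{1}_{\eps}}^{2}$. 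Lax--Milgram (or, since $Q_{\eps}$ is symmetric and coercive, the Riesz representation theorem) then shows that $-\Leu$, hence $\Leu$, is an isomorphism $H^{1}_{\eps,0}(M^{+})\to H^{-1}_{\eps}(M^{+})$ with bounded inverse; the $M^{-}$ case is identical after $u\mapsto -u$. As a consequence \eqref{LinearizedSystem} is uniquely solvable for every boundary datum: subtract an $H^{1}$-extension of $-f u_{\nu}$ and invert $\Leu$ on the interior.

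The main obstacle is the second step --- excluding a zero mode of $\Leu$ on $H^{1}_{\eps,0}$, i.e.\ upgrading $\lambda_{1}\ge 0$ to $\lambda_{1}>0$; everything downstream is routine Hilbert-space functional analysis. Two points to handle with care in the write-up: (i) the pairing of $u$ against $\phi_{1}$ should be run entirely through the weak form $Q_{\eps}$, as above, so that no regularity of $Y$ beyond $u,\phi_{1}\in H^{1}_{\eps,0}$ is invoked (in particular no normal-derivative traces appear); and (ii) the coercivity constant $c_{\eps}$ produced here may degenerate as $\eps\to 0$, so the uniform-in-$\eps$ estimates needed for the error term in \cref{SecondVarFormulaSmallEpsSobolevThm} require the separate, finer analysis and are not asserted by this theorem.
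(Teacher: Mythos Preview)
Your argument is correct and is genuinely different from the paper's. You exploit the algebraic identity $W''(u)u-W'(u)=2u^{3}$ to test the first Dirichlet eigenfunction against $u$ itself and read off $\lambda_{1}=2\int u^{3}\phi_{1}\big/\int u\phi_{1}>0$; this is clean, works for \emph{every} $\eps$ for which the Brezis--Oswald minimizer exists, and uses nothing about the shape of $u$ beyond positivity in the interior. The paper instead proves the quantitative bound $\lVert\phi\rVert_{H^{1}_{\eps}}\le K\lVert\Leu\phi\rVert_{H^{-1}_{\eps}}$ with $K$ \emph{independent of $\eps$} (Proposition~\ref{NoLinearizedKernel}), via a hands-on splitting of $M^{+}$ into the thin collar $\{0\le t<\kappa\eps\}$ and its complement: on the collar a scaled Poincar\'e inequality (Lemma~\ref{LocalPoincare}) is combined with the numerical coincidence that the positive root $\kappa_{0}$ of $W''(g(t))=0$ satisfies $\kappa_{0}<1$, while on the complement $W''(u)\ge\delta/2>0$ gives direct control.

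What each approach buys: yours is shorter, conceptually transparent, and valid for all $\eps$, but --- as you correctly flag in (ii) --- the resulting coercivity constant $c_{\eps}$ is not shown to be bounded below as $\eps\to 0$. The paper's argument is engineered precisely to obtain that uniform bound, which is the input actually consumed downstream (e.g.\ in Lemma~\ref{phiBoundByfLemma} the estimate $\lVert\phi\rVert_{H^{1}_{\eps}}\le K\eps\lVert f\rVert_{H^{1}(Y)}$ with $K$ independent of $\eps$ is what drives the $O(\eps^{1/2})$ error in \cref{SecondVarFormulaSmallEpsSobolevThm}). So your proof establishes the theorem as stated, while the paper's proof establishes the stronger proposition that the theorem is meant to advertise.
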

\noindent In particular, \cref{SecondVarFormulaSmallEpsSobolevThm} tells us that the second variation of $\BE_{\eps}$ for a critical point is $O(\eps)$ away from the second variation of area for a minimal surface. This leads to the following applications using the geometry of $Y$
\begin{restatable}{corr}{RicciStability}
\label{RicciStability}
Fix $C, \rho, c_0 > 0$ and $0 \leq \gamma < 1/2$. Suppose $Y$ is a $C^{2,\alpha}$ hypersurface with bounded geometry coefficients \eqref{boundedGeometry}, $C, \rho$. There exists an $\eps_0 = \eps_0(C, \rho, c_0, \gamma)$ such that if $Y$ is a $\BE_{\eps}$ critical point for $\eps < \eps_0$ and 
\begin{equation} \label{StabilityIntegralCondition}
\frac{1}{\text{Vol}(Y) \eps^{1/2 - \gamma}}\int_Y \Ric_{g}(\nu, \nu) + ||A_Y||^2 \geq c_0
\end{equation}
then $Y$ is $\BE_{\eps}$-unstable.
\end{restatable}
\begin{remark}
Corollary \ref{RicciStability} should be compared to the classic Allen--Cahn stability result which says that when $M$ is ricci positive, no stable solutions. When $Y$ has bounded geometry and $\eps$ is sufficiently small, the integral condition \eqref{StabilityIntegralCondition} is weaker than $M$ being ricci positive.
\end{remark}
\noindent Similarly, we produce a parallel result to that of minimal surfaces, proved by Fischer-Colbrie--Schoen:
\begin{restatable}{thmm}{FischerColbrieMimic}
\label{FischerColbrieMimic}
Let $M^3$ be a complete oriented $3$-manifold of non-negative scalar curvature. Let $Y^2$ be a $C^{2,\alpha}$, oriented complete, \textbf{compact}, stable critical point for $\BE_{\eps}$ in $M$ . Let $S$ denote the scalar curvature of $M$ at any point. Then $Y$ is conformally equivalent to a $(S^2, g_{\text{round}})$ or $Y$ is ``almost" a totally geodesic flat torus $T^2$, i.e.
\[
||A_Y||_{L^2(Y)}^2 \leq K \eps^{1/2}
\]
for some $K > 0$ independent of $\eps$ for $K$ sufficiently small. If $S > 0$ on $M$, then $Y$ is conformally equivalent to $(S^2, g_{\text{round}})$. If both:
\begin{align*} 
S &\geq \kappa_0 > 0 \\
\text{Vol}(Y) & \geq v_0 > 0
\end{align*}
both independent of $\eps$, then $Y$ must be conformally equivalent to $S^2$.
\end{restatable}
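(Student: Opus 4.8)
\noindent The plan is to mimic the classical argument of Fischer-Colbrie and Schoen for stable minimal surfaces in $3$-manifolds of nonnegative scalar curvature, with \cref{SecondVarFormulaSmallEpsSobolevThm} doing the essential work: it converts stability of $Y$ for $\BE_{\eps}$ into an \emph{approximate} stability inequality for the area functional, which is then fed into the Gauss equation and Gauss--Bonnet. Since $Y$ is compact we only need the constant test function, so the logarithmic cutoffs required in the complete non-compact version of the theorem are unnecessary here (and we may work on each connected component of $Y$ separately, since the second variation decouples across components and $f$ supported on a single component is an admissible variation). \textbf{Step 1 (approximate area-stability).} As $Y$ is a stable critical point of $\BE_{\eps}$, we have $\frac{d^{2}}{dt^{2}}\BE_{\eps}(Y_{t})\big|_{t=0}\ge 0$ for every normal variation $f\nu$. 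Apply \cref{SecondVarFormulaSmallEpsSobolevThm} with $f\equiv 1$: then $\nabla f = 0$ and $\|f\|_{H^{1}}^{2}=\text{Vol}(Y)$, and identifying the Jacobi-operator coefficient $\dot H_{Y}$ with $\Ric_{g}(\nu,\nu)+|A_{Y}|^{2}$ (the remaining terms of $\dot H_{Y}$ involve $H_{Y}=O(\eps)$ by \cref{FirstVarAnalogy}, and are absorbed in the error), we get
\[
\int_{Y}\big(\Ric_{g}(\nu,\nu)+|A_{Y}|^{2}\big)\ \le\ K\eps^{1/2}\,\text{Vol}(Y).
\]

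\noindent \textbf{Step 2 (Gauss equation and Gauss--Bonnet).} On $Y^{2}\subset M^{3}$ the Gauss equation reads $K_{Y}=\tfrac12 S-\Ric_{g}(\nu,\nu)+\tfrac12\big(H_{Y}^{2}-|A_{Y}|^{2}\big)$, so $\Ric_{g}(\nu,\nu)+|A_{Y}|^{2}=\tfrac12 S+\tfrac12 H_{Y}^{2}+\tfrac12|A_{Y}|^{2}-K_{Y}$. Integrating over the closed surface $Y$ and using $\int_{Y}K_{Y}=2\pi\chi(Y)$, Step 1 yields
\[
\tfrac12\int_{Y}\big(S+H_{Y}^{2}+|A_{Y}|^{2}\big)\ \le\ 2\pi\chi(Y)+K\eps^{1/2}\,\text{Vol}(Y).
\]
Since $S\ge 0$ and $H_{Y}^{2},|A_{Y}|^{2}\ge 0$, the left side is nonnegative, hence $2\pi\chi(Y)\ge -K\eps^{1/2}\,\text{Vol}(Y)$. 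Using the control on $\text{Vol}(Y)$ from the bounded-geometry hypothesis (and the volume bound when it is assumed), and taking $\eps$ small enough that $K\eps^{1/2}\,\text{Vol}(Y)<2\pi$, the integrality $\chi(Y)=2-2g\in 2\mathbb{Z}$ forces $\chi(Y)\ge 0$, i.e.\ $g(Y)\in\{0,1\}$. By uniformization this says exactly that $Y$ is conformally $(S^{2},g_{\text{round}})$ when $g=0$, or conformally a flat torus when $g=1$.

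\noindent \textbf{Step 3 (torus alternative and positivity refinements).} If $\chi(Y)=2$ we are in the first case. If $\chi(Y)=0$ the displayed inequality collapses to $\tfrac12\int_{Y}\big(S+H_{Y}^{2}+|A_{Y}|^{2}\big)\le K\eps^{1/2}\,\text{Vol}(Y)$; in particular $\|A_{Y}\|_{L^{2}(Y)}^{2}\le 2K\eps^{1/2}\,\text{Vol}(Y)$ and $\int_{Y}S\le 2K\eps^{1/2}\,\text{Vol}(Y)$, which is the claimed ``almost totally geodesic flat torus'' statement. If moreover $S\ge\kappa_{0}>0$ along $Y$ --- automatic when $M$ is compact with $S>0$, and what the final hypothesis guarantees in general --- then a torus would give $\kappa_{0}\,\text{Vol}(Y)\le\int_{Y}S\le 2K\eps^{1/2}\,\text{Vol}(Y)$, hence $\kappa_{0}\le 2K\eps^{1/2}$, impossible for $\eps$ small; so $\chi(Y)\ne 0$ and $Y$ is conformally $S^{2}$. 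The role of $\text{Vol}(Y)\ge v_{0}$, together with bounded geometry, is to keep the error terms from \cref{SecondVarFormulaSmallEpsSobolevThm} uniformly small relative to the geometric quantities above.

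\noindent \textbf{Main obstacle.} The algebra is routine; the delicate point is uniformity of the error in $\eps$. \Cref{SecondVarFormulaSmallEpsSobolevThm} only provides $|E(f)|\le K\eps^{1/2}\|f\|_{H^{1}}^{2}$ with $K=K(Y)$, so for a family of $\BE_{\eps}$-critical points one must verify that $\text{Vol}(Y)$ and the bounded-geometry constants do not degenerate as $\eps\to 0$, so that indeed $K\eps^{1/2}\,\text{Vol}(Y)<2\pi$ and the integrality of $\chi(Y)$ can be exploited --- this is precisely where the volume hypotheses enter. A secondary technical point is justifying $\dot H_{Y}=\Ric_{g}(\nu,\nu)+|A_{Y}|^{2}+O(\eps)$ for a surface that is only approximately minimal, which follows from \cref{FirstVarAnalogy} and the explicit form of the linearized operator.
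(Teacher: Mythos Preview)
Your proof is correct and follows essentially the same route as the paper: take $f\equiv 1$ in \cref{SecondVarFormulaSmallEpsSobolevThm}, rewrite $\dot H_Y=|A_Y|^2+\Ric_g(\nu,\nu)$ via the Gauss equation in terms of the ambient scalar curvature $S$ and the intrinsic Gauss curvature $K_Y$, then apply Gauss--Bonnet to force $\chi(Y)\ge 0$ and read off the torus estimate when $\chi(Y)=0$. One small clarification: in this paper $\dot H_Y$ is \emph{defined} as $|A_Y|^2+\Ric_g(\nu,\nu)$ (see the preliminaries), so no $O(\eps)$ correction is needed to identify it --- the $H_Y^2$ term you track enters only through the Gauss equation and is indeed $O(\eps^2)$ by \cref{FirstVarMeanCurvature}, hence harmlessly absorbed into the error.
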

\noindent We then produce results on the Morse index of Allen--Cahn level sets. In the highly symmetric case, we have:
\begin{restatable}{corr}{MinimalIndexEqualsACIndex} \label{MinimalIndexEqualsACIndex}
Suppose $\{u_{\eps}\}$ is a sequence of Allen--Cahn solutions with $Y = u_{\eps}^{-1}(0)$ for all $\eps$. If $Y$ is minimal and $u_{\eps}$ are each critical points of \eqref{BEEnergies}, then there exists an $\eps_0 > 0$ sufficiently small, such that for all $\eps < \eps_0$
\begin{equation}
\Ind_{AC_{\eps}}(u_{\eps})  = \Ind_{\BE_{\eps}}(u_{\eps})  \geq \Ind_{\text{Min}}(Y)
\end{equation}
%
\end{restatable}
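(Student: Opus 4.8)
The plan is to glue together three ingredients already in hand: \cref{EqualityOfIndicesTheorem} (equality of the AC- and $\BE_\eps$-indices at a critical point), \cref{SecondVarFormulaSmallEpsSobolevThm} (the $O(\eps^{1/2})$-expansion of the second variation of $\BE_\eps$ around $D^2A$), and the variational characterization of Morse index as the maximal dimension of a subspace on which the second variation is negative definite.

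\emph{The equality $\Ind_{AC_\eps}(u_\eps) = \Ind_{\BE_\eps}(u_\eps)$.} Since each $u_\eps$ is a solution of \eqref{ACEquation} on $M$ with nodal set $Y$, the pasted function $U_{\eps,Y}$ coincides with $u_\eps$ and is smooth across $Y$; by the remark following \cref{FirstVariationTheorem} (equivalently, $(u_\nu^+)^2 = (u_\nu^-)^2$ along the level set $Y$), $Y$ is a critical point of $\BE_\eps$. As a smooth compact minimal hypersurface, $Y$ has bounded geometry in the sense of (\ref{boundedGeometry}), so \cref{EqualityOfIndicesTheorem} applies and gives $\Ind_{AC_\eps}(u_\eps) = \Ind_{\BE_\eps}(Y)$ for all $\eps$ below some threshold $\eps_1 > 0$; in the notation of the statement this reads $\Ind_{AC_\eps}(u_\eps) = \Ind_{\BE_\eps}(u_\eps)$. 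It therefore suffices to bound $\Ind_{\BE_\eps}(Y)$ below by $\Ind_{\text{Min}}(Y)$.

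\emph{The inequality $\Ind_{\BE_\eps}(Y) \ge \Ind_{\text{Min}}(Y)$.} Put $k := \Ind_{\text{Min}}(Y)$ and pick a $k$-dimensional subspace $V \subseteq C^\infty(Y)$ on which the second variation of area $D^2A$ is negative definite (e.g. the span of the Jacobi eigenfunctions of $Y$ with negative eigenvalue). Because $V$ is finite dimensional and consists of smooth functions on the compact $Y$, all Sobolev norms are equivalent on $V$, and negative-definiteness of $D^2A$ on the compact unit sphere of $(V, ||\cdot||_{H^1(Y)})$ produces a constant $c > 0$ with $D^2A(f) \le -c\,||f||_{H^1(Y)}^2$ for all $f \in V$. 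Applying \cref{SecondVarFormulaSmallEpsSobolevThm} to the variation $Y_t$ with normal speed $f\nu$, for every $f \in V$ we get
\[
\frac{d^2}{dt^2}\BE_\eps(Y_t)\Big|_{t=0} = \frac{2\sqrt2}{3}\big(D^2A(f) + E(f)\big) \le \frac{2\sqrt2}{3}\big(K\eps^{1/2} - c\big)\,||f||_{H^1(Y)}^2,
\]
where $K$ depends only on $Y$ and $M$. Hence, with $\eps_0 := \min\{\eps_1, (c/2K)^2\}$, the second variation quadratic form of $\BE_\eps$ at $Y$ is negative definite on the $k$-dimensional space $V$ for all $\eps < \eps_0$, so $\Ind_{\BE_\eps}(Y) \ge k$. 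Chaining this with the previous paragraph yields the corollary.

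\emph{The main obstacle.} The only real subtlety is obtaining a \emph{uniform} $\eps_0$ valid for the whole sequence, which is exactly why the hypothesis pins down $Y = u_\eps^{-1}(0)$ independently of $\eps$: then $c$ comes from a single fixed Jacobi operator and $K$ from \cref{SecondVarFormulaSmallEpsSobolevThm} is $\eps$-independent, so $\eps_0$ depends only on $Y$ and $M$. One should also confirm that the class of variations defining $\Ind_{\BE_\eps}$ contains $V$ — which it does, since $V \subseteq C^\infty(Y)$ and $Y$ is compact — and that the hypothesis "$u_\eps$ critical for \eqref{BEEnergies}" genuinely identifies $u_\eps$ with the signed energy minimizer $U_{\eps,Y}$ underlying $\BE_\eps$, so that \cref{EqualityOfIndicesTheorem} is applicable.
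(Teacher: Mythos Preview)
Your proof is correct and follows essentially the same approach as the paper: invoke \cref{EqualityOfIndicesTheorem} for the equality, then use the second-variation expansion to show the error term is dominated on the finite-dimensional negative space $V$ of $D^2A$ for $\eps$ small. The only cosmetic difference is that the paper diagonalizes $V$ via Jacobi eigenfunctions, normalizes in $C^2$, and bounds the cross terms in the bilinear form using the H\"older-type $O(\eps)$ error (cf.\ Remark~\ref{HolderRemark}), whereas you work directly with the quadratic form via the Sobolev $O(\eps^{1/2})$ bound and a compactness argument on the $H^1$-unit sphere of $V$; both routes extract the same uniform negativity constant and are equivalent.
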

%
\noindent Separately, we use the $\BE$ frame work and variation formula to compute the Morse index of solutions to \eqref{ACEquation} on $S^1$. We first define $u_{\eps, 2p}$ to be the unique solution vanishing at $Y_{2p} := \{i/(2p)\}_{i = 0}^{2p-1}$ on $S^1 \cong [0,1]$ (see \S \ref{S1MorseIndex})
\begin{restatable}{prop}{CircleSecondVarProp} \label{CircleSecondVarProp}
Let $p > 0$, and $u_{\eps, 2p}$ the solution vanishing on $Y_{2p}$. Consider a variation of the form
\[
a_i(t):= \frac{i}{2p} + f(i/(2p)) t
\]
for $f: \{0, 1, \dots, 2p -1\} \to \R$. Then
\begin{equation} \label{S1SecondVarRevamped}
BE''(f) \Big|_{Y_{2p}} = \eps  c^2 v(\eps)   \sum_{i = 0}^{2p-1} [f(i/(2p)) - f((i+1)/2p)]^2
\end{equation}
for $v(\eps) < 0$ for all $\eps > 0$
\end{restatable}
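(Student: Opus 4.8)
On $S^{1}$ the Balanced Energy decouples into a sum of one–dimensional Allen--Cahn energies, one per arc, each depending only on that arc's length, so the whole computation reduces to a single ODE identity. Concretely, write $S^{1}\cong[0,1]$ and $L:=1/(2p)$; the $2p$ points $a_{i}(t)=i/(2p)+f(i/(2p))\,t$ cut $S^{1}$ into arcs, the $i$-th of length $L_{i}(t)=a_{i+1}(t)-a_{i}(t)=L+\big(f((i+1)/(2p))-f(i/(2p))\big)t$ (indices mod $2p$), and $\Omega^{\pm}$ are the unions of the alternate arcs on which the glued solution is positive, resp.\ negative. Because $E_{\eps}$ is additive over connected components and the zero Dirichlet constraint is purely local, the minimal energies add up arc by arc:
\begin{equation*}
\BE_{\eps}(Y_{t})=\sum_{i=0}^{2p-1}\mathcal{E}\big(L_{i}(t)\big),
\end{equation*}
where $\mathcal{E}(L)$ is the minimum of $\int_{0}^{L}\big(\tfrac{\eps}{2}(u')^{2}+\tfrac{1}{\eps}W(u)\big)\,dx$ over $u\in W^{1,2}(0,L)$ vanishing at both endpoints. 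For $\eps$ small the minimizer is the unique nonnegative solution $u_{L}$ of $\eps^{2}u''=W'(u)$ with $u(0)=u(L)=0$, symmetric about the midpoint, with interior maximum $\beta_{L}:=\max u_{L}\in(0,1)$ and endpoint slope of common magnitude $c:=u_{\nu}|_{Y_{2p}}=|u_{L}'(0)|$ (the same at every point by symmetry).

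\textbf{Reducing to $\mathcal{E}''$.} Each $L_{i}(t)$ is affine in $t$, so $\ddot L_{i}\equiv0$, and at $t=0$ every $L_{i}=L$; hence
\begin{equation*}
\BE''(f)\big|_{Y_{2p}}=\sum_{i=0}^{2p-1}\mathcal{E}''(L)\,\dot L_{i}^{2}=\mathcal{E}''(L)\sum_{i=0}^{2p-1}\big[f(i/(2p))-f((i+1)/(2p))\big]^{2},
\end{equation*}
while $\BE'(f)|_{Y_{2p}}=\mathcal{E}'(L)\sum_{i}\dot L_{i}=0$ since the increments telescope around the circle (this recovers criticality of $Y_{2p}$, consistent with the remark after Theorem~\ref{FirstVariationTheorem}). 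So the proposition amounts to showing $\mathcal{E}''(L)=\eps c^{2}v(\eps)$ for some $v(\eps)<0$.

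\textbf{Computing $\mathcal{E}''$.} Smoothness of $L\mapsto u_{L}$, hence of $\mathcal{E}$, follows from the implicit function theorem applied (after rescaling the interval to $[0,1]$) using Theorem~\ref{LinearizedOperatorInverseThm}: the linearized operator $\eps^{2}\partial_{x}^{2}-W''(u_{L})$ with Dirichlet data has no kernel. Differentiating $\mathcal{E}(L)=\int_{0}^{L}\big(\tfrac{\eps}{2}(u_{L}')^{2}+\tfrac{1}{\eps}W(u_{L})\big)dx$ in $L$, integrating the $\partial_{L}u_{L}'$ term by parts and using the Euler--Lagrange equation kills the interior integral; the boundary terms, evaluated with $u_{L}(0)\equiv0$ (so $(\partial_{L}u_{L})(0)=0$) and $u_{L}(L)\equiv0$ (so $(\partial_{L}u_{L})(L)=-u_{L}'(L)$), together with the first integral $\tfrac{\eps^{2}}{2}(u_{L}')^{2}-W(u_{L})\equiv-W(\beta_{L})$, yield the clean identity $\mathcal{E}'(L)=\tfrac{1}{4\eps}-\tfrac{\eps}{2}u_{L}'(L)^{2}=\tfrac{1}{\eps}W(\beta_{L})$. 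Hence $\mathcal{E}''(L)=\tfrac{1}{\eps}W'(\beta_{L})\,\beta_{L}'$. Now $W'(s)=-s(1-s^{2})<0$ on $(0,1)$, and $\beta_{L}$ is strictly increasing in $L$ (see the next paragraph), so $\mathcal{E}''(L)<0$; and since the same first integral gives $\eps c^{2}=\tfrac{2}{\eps}\big(\tfrac14-W(\beta_{L})\big)$, we may take $v(\eps):=W'(\beta_{L})\beta_{L}'/\big(2(\tfrac14-W(\beta_{L}))\big)<0$, which proves \eqref{S1SecondVarRevamped}. (Alternatively one can feed the one–dimensional linearized solutions into Theorem~\ref{SecondVariationTheorem}; this gives the same answer after solving the linearized ODE, and is a useful cross-check.)

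\textbf{Main obstacle.} The only step that is not routine bookkeeping is the monotonicity $\beta_{L}'>0$ — strict monotonicity of the time map for the double well $W$. For this particular $W$ it is visible directly: from $\tfrac{\eps^{2}}{2}(u_{L}')^{2}=W(u_{L})-W(\beta_{L})$ and the substitution $u_{L}=\beta_{L}\sigma$ one obtains
\begin{equation*}
L=2\sqrt{2}\,\eps\int_{0}^{1}\frac{d\sigma}{\sqrt{(1-\sigma^{2})\big(2-\beta_{L}^{2}(1+\sigma^{2})\big)}},
\end{equation*}
whose right-hand side is manifestly strictly increasing in $\beta_{L}\in(0,1)$ (the bracket stays positive and is decreasing in $\beta_{L}$), so inverting gives $\beta_{L}'>0$. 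I would include this short computation rather than invoke a general period-function theorem; an equivalent route is the explicit Jacobi-elliptic representation of $u_{L}$ together with monotonicity of the complete elliptic integral $K$.
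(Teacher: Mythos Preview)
Your proof is correct and takes a genuinely different, more elementary route than the paper. Both arguments start the same way: decompose $\BE_{\eps}(Y_t)$ as $\sum_i \mathcal{E}(L_i(t))$ with $L_i$ affine in $t$, so that $\BE''=\mathcal{E}''(L)\sum_i\dot L_i^{2}$ (the paper phrases this as passing to a ``center of momentum frame''). The divergence is in how the sign of $\mathcal{E}''(L)$ is obtained. The paper feeds each interval into the general second variation formula (Theorem~\ref{SecondVariationTheorem}), which expresses $\BE''$ in terms of the Neumann data of the linearized solution $\dot u$; it then computes $\dot u_x(0)$ in the appendix by writing $u_\lambda=g+\lambda\kappa+\cdots$ and $\dot u=\dot g+\lambda\omega+\cdots$ as perturbations of the heteroclinic in the auxiliary parameter $\lambda$, solving several auxiliary ODEs, and reading off the sign from the leading coefficient. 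That argument only yields $v(\eps)<0$ for $\eps$ sufficiently small, despite the statement's ``for all $\eps>0$''. Your approach bypasses the linearized system entirely: the identity $\mathcal{E}'(L)=\tfrac{1}{\eps}W(\beta_L)$ (obtained from the first integral and boundary terms) plus the time-map monotonicity $\beta_L'>0$ give $\mathcal{E}''(L)=\tfrac{1}{\eps}W'(\beta_L)\beta_L'<0$ directly, valid for every $\eps$ at which the positive Dirichlet minimizer exists. So you prove strictly more, with less machinery; the paper's route has the virtue of illustrating their general second-variation/Dirichlet-to-Neumann framework in the simplest setting, which is presumably why they chose it.
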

\noindent Equation \eqref{S1SecondVarRevamped} allows us to classify the Morse index of all solutions to \eqref{ACEquation} on $S^1$ as $\eps \to 0$. 
\begin{restatable}{thmm}{CircleMorseIndexTheorem}
\label{S1MorseIndexTheorem}
Fix $p > 0$. There exists an $\eps_p$ such that $\forall \eps < \eps_p$, the function, $u_{\eps, 2p}$, has Allen--Cahn Morse index $2p-1$ and nullity $1$. The nullity is given by rotations of $S^1$ and every other non-trivial variation produces a strictly negative variation.
\end{restatable}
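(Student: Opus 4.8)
The plan is to combine Proposition \ref{CircleSecondVarProp} with the index/nullity identities of Theorems \ref{EqualityOfIndicesTheorem} and \ref{NullityBound}. First I would observe that the configuration space of variations is finite-dimensional: perturbing the $2p$ nodal points of $u_{\eps,2p}$ on $S^1 \cong [0,1]$ along the normal amounts to choosing a vector $f = (f(0),\dots,f((2p-1)/2p)) \in \R^{2p}$, so $\BE_{\eps}''$ restricted to $Y_{2p}$ is the quadratic form
\[
Q_{\eps}(f) = \eps\, c^2 v(\eps) \sum_{i=0}^{2p-1} \bigl[f(i/(2p)) - f((i+1)/(2p))\bigr]^2
\]
on $\R^{2p}$, indices read cyclically. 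Since $v(\eps)<0$ for all $\eps>0$, the sign of $Q_{\eps}$ is $-1$ times the sign of the cyclic difference form $L(f) := \sum_i [f_i - f_{i+1}]^2$, which is the quadratic form of the graph Laplacian of the cycle graph $C_{2p}$.

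Next I would carry out the elementary linear-algebra computation for $C_{2p}$: the cycle-graph Laplacian is positive semidefinite with one-dimensional kernel spanned by the all-ones vector $(1,1,\dots,1)$ (corresponding to simultaneously translating all nodal points, i.e. an ambient rotation of $S^1$), and it is strictly positive on the orthogonal complement. Hence $L$ has rank $2p-1$ and nullity $1$, so $Q_{\eps} = -\eps c^2 v(\eps) L$ — being a negative multiple of $L$ — is negative definite on a $(2p-1)$-dimensional subspace and vanishes only on the rotation direction. Therefore $\Ind_{\BE_{\eps}}(Y_{2p}) = 2p-1$ and $\textnormal{Null}_{\BE_{\eps}}(Y_{2p}) = 1$, with the null direction being exactly the rotational variation; every other variation gives strictly negative second variation. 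Finally, I would invoke Theorem \ref{EqualityOfIndicesTheorem} and Theorem \ref{NullityBound}: since $u_{\eps,2p}$ is by construction a critical point of $\BE_{\eps}$ (its two-sided minimizers glue to a genuine Allen--Cahn solution on $S^1$ by the discussion after Theorem \ref{FirstVariationTheorem}), and $Y_{2p}$ — being a finite set of points in $S^1$ — trivially has bounded geometry, for $\eps < \eps_p$ these theorems give $\Ind_{AC_{\eps}}(u_{\eps,2p}) = \Ind_{\BE_{\eps}}(Y_{2p}) = 2p-1$ and $\textnormal{Null}_{AC_{\eps}}(u_{\eps,2p}) = \textnormal{Null}_{\BE_{\eps}}(Y_{2p}) = 1$.

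One technical point to check is the threshold $\eps_p$: Theorems \ref{EqualityOfIndicesTheorem} and \ref{NullityBound} each supply an $\eps_0$ (possibly depending on $Y_{2p}$, hence on $p$), and Proposition \ref{CircleSecondVarProp} is stated for all $\eps>0$, so I would simply take $\eps_p$ to be the minimum of the two thresholds. I should also make sure that the nullity statement matches: the kernel of $\BE_{\eps}''$ is one-dimensional and spanned by the rotation of $S^1$, so under the correspondence $\textnormal{Null}_{AC_{\eps}} = \textnormal{Null}_{\BE_{\eps}}$ the Allen--Cahn Jacobi field is the one coming from ambient rotation, as claimed.

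The main obstacle here is not in this theorem itself — once Proposition \ref{CircleSecondVarProp} is in hand the argument is a short spectral computation for the cycle graph — but rather it is deferred to the proof of Proposition \ref{CircleSecondVarProp}, where one must actually compute $\dot{u}^{\pm}_{\nu}$ on each arc $[i/(2p),(i+1)/(2p)]$ by solving the linearized Allen--Cahn ODE with the prescribed boundary data from \eqref{LinearizedSystem}, assemble the Dirichlet-to-Neumann contributions at each node via \eqref{SecondVariationFormula}, and verify that the resulting quadratic form collapses exactly to a multiple of the cyclic difference form with a strictly negative coefficient $v(\eps)$. That ODE analysis — exploiting the explicit one-dimensional Allen--Cahn solution in terms of Jacobi elliptic functions and the periodicity/symmetry of the $2p$-dihedral configuration — is where the real work lies.
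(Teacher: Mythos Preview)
Your proposal is correct and follows essentially the same approach as the paper: both reduce to analyzing the quadratic form from Proposition \ref{CircleSecondVarProp}, identify the one-dimensional kernel as rotations, and then invoke the index/nullity equalities of Theorems \ref{EqualityOfIndicesTheorem} and \ref{NullityBound}. The only cosmetic difference is that you phrase the linear algebra as the spectrum of the cycle-graph Laplacian $C_{2p}$, whereas the paper mods out by rotations by fixing the node $q=0$ and observing that any nontrivial variation of the remaining $2p-1$ nodes forces some consecutive difference to be nonzero; these are the same computation.
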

\noindent We note that formulas \eqref{FirstVariationFormula}, \eqref{SecondVariationFormula} \eqref{SecondVariationFormulaSobolevSmallEpsEqn} are local to $Y$, thought of as the nodal set of a solution, $u_{\eps}$, to \eqref{ACEquation}. The first and second variation for \eqref{ACEnergy} are integrals over all of $M$, despite most of the geometry of $u_{\eps}$ being contained in a neighborhood of $Y$. The locality to $\BE_{\eps}$ can be an advantage that gives more information about  Allen--Cahn solutions, as in \cref{S1MorseIndexTheorem}. In this way, we feel that the direct connection of $\BE_{\eps}$ to the surface gives a new, geometric way to investigate a smaller class of Allen--Cahn functions and variations. \nl \nl
Finally, we note that while our visuals and notation seem to be adapted for $Y$ connected (and $M^{\pm}$ each connected), our results apply to $Y$ with finitely many components. As long as $Y$ is separating, one can make sense of the decomposition $M = M^+ \sqcup_Y M^-$ and define energy minimizers on each component of $M^{\pm}$ which are positively (resp. negatively) signed on the interior of each component. Non-zero minimizers are guaranteed with a bound on $\lambda_1$ on each component of $M^{\pm}$, as in theorem \ref{BO}. This can also be achieved by bounding the distance between components of $Y$ from below. Assuming this, we can reproduce our expansions of $u_{\eps}(s,t)$, $\dot{u}(s,t)$ locally on each component of $Y$ and prove the same theorems.
%
%
%
\subsection{Paper Organization}
The paper is organized as follows: 
\begin{itemize}
\item Section \S \ref{PrelimSection} sets up the machinery of $\BE_{\eps}$.

\item In section \S \ref{GammaConvergenceSection}, we prove that the $\BE_{\eps}$ functional $\Gamma$-converges to the area functional up to a scaling factor. This closely mimics the original proof by Modica--Mortola in \cite{mortola1977esempio}

\item In section \S \ref{NonExistenceSection} we show that \textit{absolute minimizers} of $\BE_{\eps}$ do \textit{not} exist on compact manifold $(M^n, g)$ for $\eps$ sufficiently small. In fact $\inf_Y \BE_{\eps}(Y)$ over all hypersurfaces is $0$ when $n \geq 2$. On $S^1$, $\inf_Y \BE_{\eps}(Y)$ has a non-trivial lower bound.

\item In \S \ref{FirstVariationSection}, we prove theorem \ref{FirstVariationTheorem}. In \S \ref{SecondVariationSection}, we prove theorem \ref{SecondVariationTheorem} similarly

\item In \S \ref{EqualityOfIndices}, we prove that if $Y \leftrightarrow u_{\eps, Y}$ is a critical point of $\BE_{\eps} \leftrightarrow E_{\eps}(M)$, then the $\BE_{\eps}$ Morse index and nullity of $Y$ equals the Allen--Cahn Morse index and nullity of $u_{\eps, Y}$. The equalities, along with an explicit expansion of the dirichlet-to-neumann operator for the linearized Allen--Cahn system, allows us to compute the Morse index and nullity of all solutions to \eqref{ACEquation} on $S^1$

\item In \S \ref{SecondVariationSmallEpsSection}, \S \ref{SecondVarProofSection}, we compute an asymptotic expansion of \eqref{SecondVariationFormula} in terms of $\eps$. In particular, this shows that the second variation is $O(\eps)$-close to the second variation of area for minimal surfaces
%
\item In \S \ref{ApplicationsOfSecondVariation}, we give some applications of equation \eqref{SecondVariationFormula}, in particular showing that no stable $\BE_{\eps}$-hypersurfaces exist when there is a Ricci lower bound, or when there are bounds on $||A_Y||_{L^2(Y)}$ and $\text{Vol}(Y)$. We also prove an analogous version of Fischer-Colbrie-Schoen's theorem on stable minimal surfaces in $3$-dimensional PSC manifolds, our \cref{FischerColbrieMimic}, for $\BE_{\eps}$-stable in $3$-dimensional PSC manifolds. Finally, we show that for $Y$ minimal with $\{u_{\eps}\}$ a sequence of Allen--Cahn solutions and $u_{\eps}^{-1}(0) = Y$ that the index of $Y$ as a minimal hypersurface equals the index Allen--Cahn index of $u_{\eps}$ for all $\eps$ sufficiently small.

\item In section \S \ref{S1MorseIndex}, we prove \cref{S1MorseIndexTheorem} and compute the Morse index of \textit{all} Allen--Cahn solutions on $S^1$ for all $\eps$ sufficiently small. This culminates in equation \eqref{S1SecondVarRevamped} which explicitly reflects the geometry of the variations. In particular, this shows a rigidity result, i.e. all variations lead to non-positive second variation, with null directions given by rotations.


\end{itemize}

\section{Notation and Preliminaries.} \label{PrelimSection}
\noindent Throughout we will denote by $(M, g)$ an arbitrary closed Riemannian manifold. 
We will write $\Omega \subset M$ to denote an arbitrary set of finite perimeter which has smooth boundary, $\partial \Omega$, and we will write $E \subset M$ for sets which may not be smooth, and we identify sets which differ except up to a set of measure $0$

\begin{itemize}
\item If $\Omega \subset M$ is a smooth domain, then $\lambda_{1}(\Omega)$ denotes the first eigenvalue of the Laplacian $\Delta_{g}$ on $\Omega$ with dirichlet conditions on $\partial \Omega$

\item $E_{\varepsilon}(u; E)$ denotes the $\varepsilon$-Allen--Cahn energy, \eqref{ACEnergy}, of the function $u \in W^{1,2}(M)$ restricted to the Borel set $E$.

\item $\mathcal{C}(M)$ denotes the collection of Borel sets of finite perimeter.

\item $\mathcal{C}^{\infty}(M)$ are those sets of finite perimeter with smooth boundary.

\item $\mathcal{C}_{\varepsilon}^{\infty}(M) \coloneqq \{ \Omega \in \mathcal{C}^{\infty}(M) \big\vert \min\{\lambda_{1}(\Omega)^{-\frac{1}{2}}, \lambda_{1}(\Omega^{c})^{-\frac{1}{2}}\} > \varepsilon \}$. Note that $\bigcup_{\varepsilon > 0} \mathcal{C}^{\infty}_{\varepsilon}(M) = \mathcal{C}^{\infty}(M)$.

\item We denote by $|\Omega|$ the measure of a set with the Borel measure induced by the volume form on $(M, g)$.

\item $|\Omega \Delta \Omega'|$ denotes the measure of the symmetric difference of two sets, i.e. $|\Omega \Delta \Omega'| = |(\Omega \backslash \Omega') \cup (\Omega' \backslash \Omega)|$.

\item If $\Omega$ is a smooth domain, $\nu$ will denote the outer unit normal to $\partial \Omega$.

\item Let $\{E_{j}\} \subset \mathcal{C}(M)$. We say that $E_{j} \underset{L^{1}}{\to} E$ if $|E_{j} \Delta E| \to 0$ as $j \to \infty$.

\item Let $\{\Omega_{j}\} \subset \mathcal{C}^{\infty}(M)$ we will say that $\Omega_{j} \underset{C^{\infty}}{\to} \Omega$ if $\partial \Omega_{j} \to \partial \Omega$ as graphs in $C^{\infty}$. 

\item $\BE_{\eps}(Y)$ denotes the \textbf{Balanced Energy} (see definition \ref{BEDefinition}) of a hypersurface $Y^{n-1} \subseteq M^n$ when well defined
\end{itemize}
\noindent Basic properties of sets of finite perimeter are developed in \cite{maggi2012sets}. In particular it will be important to note that for any $E \in \mathcal{C}(M)$ there is a sequence $\{\Omega_{j}\} \subset \mathcal{C}^{\infty}(M)$ so that $\Omega_{j} \underset{L^{1}}{\to} E$. \nl \nl
\noindent We will denote by $Y \subset M$ a $2$-sided, $C^{2}$ (potentially more regular when noted), closed hypersurface. $Y$ divides $M$ into two (possibly not connected) sets, which we will denote by $\Omega^{+} \sqcup_Y \Omega^{-}$ or $M^+ \sqcup M^-$. When needed, we'll assume that $Y$ has ``bounded geometry" in the following sense: there exists a $C = C(k), \rho > 0$ (both independent of $\eps$) such that if $Y$ is a $C^{k,\alpha}$ surface, then 
\begin{align} \label{boundedGeometry}
||A_Y||_{C^{k-2,\alpha}} &\leq C \\
r_{\text{inj}}(Y) & \geq \rho
\end{align}
Here, $r_{\text{inj}}(Y)$ denotes the radius of injectivity for the exponential map. The lower bound guarantees a uniform tubular neighborhood in which fermi coordinates are defined. For convenience we also define 
\[
\dot{H}_Y = |A_Y|^2 + \Ric_g(\nu,\nu)
\]
where $\Ric_g$ is the ricci tensor of the ambient metric on $M$.
\nl \nl    
\noindent Let $g(t) = \tanh(t/\sqrt{2})$ denote the heteroclinic solution to \eqref{ACEquation} on $\R$. We define $\dot{g}(t) = g'(t)$ and so on for higher derivatives. Similarly, let $W(u) = \frac{(1 - u^2)^2}{4}$. We will denote $w(t), \rho(t), \kappa(t), \tau(t)$ the following functions on $[0, \infty)$ which solve the below ODEs
\begin{align} \label{ODEEquations}
\ddot{w} - W''(g) w &= \dot{g} \\ \nonumber
\ddot{\rho} - W''(g) \rho &= \dot{w} \\ \nonumber
\ddot{\tau} - W''(g) \tau &= t \dot{g} \\ \nonumber
\ddot{\kappa} - W''(g) \kappa &= g w 
\end{align}
all subject to the condition that 
\[
f: [0, \infty) \to \R, \qquad f(0) = 0, \qquad \lim_{t \to \infty} f(t) = 0
\]
The functions, $w(t), \rho(t), \kappa(t), \tau(t)$, are then uniquely defined (see \cite{marx2023dirichlet}, Appendix 7.6). \nl \nl 
We also define the following constants
%
\begin{align} \label{constants}
\sigma_0 &:= \int_0^{\infty} \dot{g}^2 = \frac{\sqrt{2}}{3} \\
\sigma &:= \dot{g}(0) =\sqrt{2}^{-1} 
\end{align}
Let 
\begin{align} \label{rescaledNorms}
||f||_{C^{k,\alpha}_{\eps}} &:= \sum_{i = 0}^k \eps^i ||D^i f||_{\infty} + \eps^{k + \alpha} [D^k f]_{\alpha} \\
||f||_{H^k_{\eps}}^2 &:= \sum_{i = 0}^k \eps^{2i} \eps ||D^i f||_{L^2}^2 \\
||f||_{H^{-k}_{\eps}} &= \sup_{g \in H^k_{\eps,0}} \frac{1}{||g||_{H^k_{\eps}}} \int_M f g
\end{align}
be the rescaled H\"older and Sobolev norms, respectively. Here $H^k_{\eps,0}$ refers to functions in $H^k_{\eps}$ with $0$ trace on the boundary (in context, this will be $M^{\pm}$ so that $\partial M^{\pm} = Y$). In the same vein, we notate a rescaled function as 
\begin{align*}
f&: \R^+ \to \R  \\
f_{\eps}(t) &:= f(t/\eps)
\end{align*}
We also define
\[
\bg(t):= \chi_{-\omega \ln(\eps)}(t) g(t) + [1 - \chi_{-\omega\ln(\eps)}(t)]
\]
where $\chi_{-\omega \ln(\eps)}$ goes from $1$ to $0$ on $[-\omega \ln(\eps), -2 \omega \ln(\eps)]$ where $\omega > 5$. Then we note that 
\begin{equation} \label{ApproxHeteroclinic}
[\partial_t^2 - W''(\bg(t))] \bg(t) = E \qquad ||E||_{H^k} \leq C(k) \eps^{\omega}
\end{equation}
Note that we'll use the same notation of $\bw, \btau, \overline{\kappa}$ to denote cutoffs of $w, \tau, \kappa$ but to $0$ since those functions decay exponentially naturally. E.g.
\begin{align*}
\bw(t)&:= [1 - \chi_{-\omega\ln(\eps)}(t)]g(t) \\
[\partial_t^2 - W''(\bg(t))] \bw(t) &= E \qquad ||E||_{H^k} \leq C(k) \eps^{\omega}
\end{align*}
%
%
We recall the following result from \cite{marx2023dirichlet}, Theorem 1.5 (and implicitly \cite{mantoulidis2022variational}): For $Y$ a $C^{4,\alpha}$ hypersurface with bounded geometry (equation \ref{boundedGeometry}), we have that $u_{\eps}^+: M^+ \to \R$ expands as 
\begin{align} \label{uEpsExpansion4}
u_{\eps}^+(s,t) &= \bg_{\eps}(t) + \eps H_Y(s) \bw_{\eps}(t) + \phi \\ \nonumber
||\phi||_{C^{2,\alpha}_{\eps}(M)}& \leq K \eps^2
\end{align}
where $s$ denotes Fermi coordinates on $Y$ and $t$ is the signed distance to $Y$. If $Y$ is $C^{5,\alpha}$, the expansion becomes
\begin{align} \label{uEpsExpansion5}
u_{\eps}^+(s,t) &= \bg_{\eps}(t) + \eps H_Y(s) \bw_{\eps}(t) + \eps^2 \left[\dot{H}_Y(s) \btau_{\eps}(t) + H_Y^2(s) \left(\brho_{\eps}(t) + \frac{1}{2} \overline{\kappa}_{\eps}(t)\right) \right] + \phi\\ \nonumber
||\phi||_{C^{2,\alpha}_{\eps}(M)}& \leq K \eps^3
\end{align}
\noindent Finally for $u: M \to \R$, we define $\Leu := \eps^2 \Delta_g - W''(u)$. 
%
%
\subsection{$\text{BE}_{\varepsilon}$ energies} \label{BEEnergies}

In \cite{brezis1986remarks}, Brezis and Oswald showed the following result.

\begin{theorem}[Brezis-Oswald \cite{brezis1986remarks}]
\label{BO}
Suppose $\Omega \subset M$ is an open domain with smooth boundary. If $\varepsilon < \lambda_{1}(\Omega)^{-\frac{1}{2}}$, then

\begin{equation}
\begin{cases}
\Delta_{g} u = \frac{W'(u)}{\varepsilon^{2}} &\text{ in } \Omega,\\
u > 0 & \text{ in } \Omega,\\
u = 0 & \text{ on } \partial \Omega,
\end{cases}
\end{equation}
\noindent has a unique solution.
\end{theorem}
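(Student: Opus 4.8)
The plan is to recognize the stated boundary value problem as a special case of the Brezis--Oswald framework and then run the standard variational existence argument together with the Picone-type uniqueness argument, tracking carefully where the hypothesis $\eps < \lambda_1(\Omega)^{-\frac12}$ is used. Writing $f(u) := -\eps^{-2}W'(u) = \eps^{-2}(u - u^3)$, the system becomes $-\Delta_g u = f(u)$ in $\Omega$, with $u > 0$ in $\Omega$ and $u = 0$ on $\partial\Omega$. The function $s \mapsto f(s)/s = \eps^{-2}(1 - s^2)$ is strictly decreasing on $(0,\infty)$, with $\lim_{s\to 0^+} f(s)/s = \eps^{-2}$ and $\lim_{s\to\infty} f(s)/s = -\infty$; the assumption $\eps < \lambda_1(\Omega)^{-\frac12}$ is exactly the condition $\lim_{s\to 0^+} f(s)/s = \eps^{-2} > \lambda_1(\Omega)$, which is the sharp threshold for solvability. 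So structurally everything reduces to verifying the Brezis--Oswald hypotheses and reproducing their proof.

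For existence I would minimize $E_\eps(\,\cdot\,;\Omega)$ over $W^{1,2}_0(\Omega)$. The functional is coercive and weakly lower semicontinuous, so a minimizer $u$ exists; replacing $u$ by $|u|$ (which does not change $E_\eps$ since $W$ depends only on $u^2$ and $|\nabla|u||=|\nabla u|$ a.e.) and then by $\min(|u|,1)$ (which only decreases $E_\eps$), I may assume $0 \le u \le 1$. The crucial point is that $u \not\equiv 0$: for $\phi_1$ the first Dirichlet eigenfunction of $-\Delta_g$ on $\Omega$,
\[
E_\eps(t\phi_1;\Omega) - E_\eps(0;\Omega) = \frac{t^2}{2}\left(\eps\lambda_1(\Omega) - \frac{1}{\eps}\right)\int_\Omega \phi_1^2 + O(t^4),
\]
whose leading coefficient is negative precisely because $\eps^2\lambda_1(\Omega) < 1$; hence the infimum is strictly below $E_\eps(0;\Omega) = |\Omega|/(4\eps)$ and the minimizer is nontrivial. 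Elliptic regularity (the right side $f(u)$ is smooth and bounded) then gives $u$ smooth up to $\partial\Omega$, and writing $-\Delta_g u + \eps^{-2}u^3 = \eps^{-2}u \ge 0$ with $u \ge 0$, $u \not\equiv 0$, the strong maximum principle yields $u > 0$ in $\Omega$, producing the required solution.

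For uniqueness I would use the Brezis--Oswald test-function computation based on Picone's identity $|\nabla w|^2 - \nabla(w^2/v)\cdot\nabla v = |\nabla w - (w/v)\nabla v|^2 \ge 0$ for $v > 0$. Given two solutions $u_1, u_2$ (both smooth, positive in $\Omega$, vanishing on $\partial\Omega$, with Hopf lower bound $u_i \ge c\,\mathrm{dist}(\cdot,\partial\Omega)$ and bounded gradient, so that $(u_1^2 - u_2^2)/u_i \in W^{1,2}_0(\Omega)$), I test the equation for $u_1$ against $(u_1^2 - u_2^2)/u_1$ and the equation for $u_2$ against $(u_2^2 - u_1^2)/u_2$, add, and integrate by parts using Picone to obtain
\[
\int_\Omega \left|\nabla u_1 - \tfrac{u_1}{u_2}\nabla u_2\right|^2 + \left|\nabla u_2 - \tfrac{u_2}{u_1}\nabla u_1\right|^2 = \int_\Omega \left(\frac{f(u_1)}{u_1} - \frac{f(u_2)}{u_2}\right)(u_1^2 - u_2^2).
\]
The left side is $\ge 0$, while by strict monotonicity of $s \mapsto f(s)/s$ the integrand on the right is $\le 0$ pointwise and strictly negative wherever $u_1 \ne u_2$; hence both sides vanish and $u_1 \equiv u_2$.

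The step I expect to require the most care is the boundary regularity needed to legitimize $(u_1^2 - u_2^2)/u_i$ as an admissible test function in $W^{1,2}_0(\Omega)$: this rests on the Hopf lemma giving $u_i \ge c\,\mathrm{dist}(\cdot,\partial\Omega)$ together with $C^1$ bounds up to $\partial\Omega$, which is where the smoothness of $\partial\Omega$ is genuinely used. Everything else is either a direct verification of the Brezis--Oswald structural hypotheses or a routine application of the direct method, elliptic regularity, and the maximum principle.
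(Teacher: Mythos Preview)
The paper does not supply its own proof of this theorem: it is quoted directly from Brezis--Oswald \cite{brezis1986remarks} and used as a black box to define the broken phase transitions. Your proposal is a faithful and correct reconstruction of the original Brezis--Oswald argument (direct minimization plus the eigenvalue condition $\eps^{-2} > \lambda_1(\Omega)$ to rule out $u\equiv 0$, and the Picone-identity test function for uniqueness), so there is nothing to compare against in the paper itself.
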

\noindent Therefore, for any $\Omega \in \mathcal{C}^{\infty}_{\varepsilon}(M)$, one has the existence of a unique positive solution $u^{+}_{\Omega, \varepsilon}$ to the $\varepsilon$-Allen--Cahn equation and a unique negative solution $u^{-}_{\Omega^{c}, \varepsilon}$ on $\Omega^{c}$. We define
\begin{equation} \label{BrokenPhaseTransition}
u_{\Omega, \varepsilon} = \begin{cases}
u_{\Omega, \varepsilon}^{+} & \text{ on } \overline{\Omega},\\
u_{\Omega^{c}, \varepsilon}^{-} & \text{ on } \Omega^{c}.
\end{cases}
\end{equation}
\noindent We call $u_{\Omega, \varepsilon}$ a \textbf{broken }$\varepsilon$-\textbf{phase transition} (or simply a broken phase transition if reference to $\varepsilon$ is unimportant). We may also reference $u_{\Omega, \varepsilon}$ as $u_{Y, \eps}$ when $Y = \partial \Omega$ plays a more important role. It follows by construction that $u_{\Omega, \varepsilon} \in C^{0}(M) \cap W^{1, 2}(M)$. In fact, it solves the Allen--Cahn equation outside of $\partial \Omega$. It can be seen that elliptic regularity implies that $u_{\Omega, \varepsilon}$ is a solution to the $\varepsilon$-Allen--Cahn equation if and only if $\nabla_{\nu} u^{-}_{\Omega, \varepsilon} = \nabla_{\nu} u^{+}_{\Omega, \varepsilon}$.\par

\begin{definition} 
If $\Omega \in \mathcal{C}^{\infty}_{\varepsilon}(M)$ we define the balanced energy of $\Omega$ as
\begin{equation} \label{BEDefinition}
\text{BE}_{\varepsilon}(\Omega) \coloneqq E_{\varepsilon}(u_{\Omega, \varepsilon})
\end{equation}
We will often have cause to write $\text{BE}_{\varepsilon}(\partial \Omega)$ to denote the same thing. We will also write $\text{BE}_{\varepsilon}^{+}(\Omega) \coloneqq E_{\varepsilon}(u^{+}_{\Omega, \varepsilon})$ and similarly $\text{BE}_{\varepsilon}^{-}(\Omega) = E_{\varepsilon}(u^{-}_{\Omega, \varepsilon})$.
\end{definition}

\section{$\Gamma$-convergence of $\text{BE}_{\varepsilon}$ to the Perimeter Functional.} \label{GammaConvergenceSection}
\noindent The goal of this section is to show that the $\text{BE}_{\varepsilon}$ functional defined on $\mathcal{C}^{\infty}_{\varepsilon}(M)$ converges to the perimeter functional, $\mathcal{P}$, on $\mathcal{C}(M)$ as $\epsilon \to 0$. 

\begin{theorem}[$\Gamma$-convergence] \label{Gamma} 

Let $\Omega \in \mathcal{C}(M)$ be an arbitrary Caccioppoli set.

\begin{enumerate}
\item (Pre-compactness of $\textnormal{BE}_{\varepsilon}$-bounded sets): Let $\{\Omega_{\varepsilon_{k}}\}$ be a sequence of Cacciopoli sets in $\mathcal{C}^{\infty}_{\varepsilon_{k}}(M)$ such that $\textnormal{BE}_{\epsilon_{k}}(\Omega_{\epsilon_{k}})$ is bounded and $\varepsilon_{k} \to 0$ as $k \to \infty$. Then there is some $\Omega'$ so that $\Omega_{\epsilon_{k}} \xrightarrow[L^{1}]{} \Omega'$.

\item (lim-inf inequality): If $\Omega_{\varepsilon_{k}} \underset{L^{1}}{\to} \Omega$, then

\[
\liminf_{\varepsilon_{k} \to 0} \textnormal{BE}_{\varepsilon_{k}}(\Omega_{\varepsilon_{k}}) \geq \frac{1}{\sqrt{2}} \mathcal{P}(\Omega).
\]

\item (Recovery sequence): For every $\Omega \in \mathcal{C}(M)$ there is a sequence $\Omega_{\varepsilon_{k}}(M) \in \mathcal{C}^{\infty}_{\varepsilon_{k}}(M)$ so that $\Omega_{\epsilon_{k}} \underset{L^{1}}{\to} \Omega$ and

\[
\limsup_{\epsilon_{k} \to 0} \textnormal{BE}_{\epsilon_{k}}(\Omega_{\epsilon_{k}}) \leq \frac{1}{\sqrt{2}} P(\Omega).
\]
\end{enumerate}
\end{theorem}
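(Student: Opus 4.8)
The plan is to prove the three parts of Theorem 2.? ($\Gamma$-convergence of $\BE_\eps$ to $\sigma_0\mathcal{P}$) by leveraging the classical Modica--Mortola $\Gamma$-convergence of $E_\eps$ to $\sigma_0\mathcal{P}$ together with the key observation that, on smooth domains, $\BE_\eps(\Omega)=E_\eps(u_{\Omega,\eps})$ is the energy of a specific (and in fact nearly optimal) competitor. The three parts use this in complementary ways: the lim-inf inequality should be a direct consequence of Modica--Mortola applied to the sequence $u_{\Omega_{\eps_k},\eps_k}$; the recovery sequence requires building a good competitor surface and controlling $\BE_\eps$ from above; precompactness follows from a uniform mass bound on $u_{\Omega_{\eps_k},\eps_k}$ obtained from the energy bound.

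Let me sketch each. \textbf{Part (2), lim-inf.} Suppose $\Omega_{\eps_k}\xrightarrow{L^1}\Omega$. Set $v_k:=u_{\Omega_{\eps_k},\eps_k}\in W^{1,2}(M)\cap C^0(M)$; then $E_{\eps_k}(v_k)=\BE_{\eps_k}(\Omega_{\eps_k})$ by definition. I claim $v_k\to 2\chi_\Omega-1$ in $L^1(M)$: indeed $v_k=u^+$ on $\Omega_{\eps_k}$ and $v_k=u^-$ on $\Omega_{\eps_k}^c$, and standard estimates for positive Allen--Cahn minimizers (e.g.\ from \cite{caju2020ground} or the expansion \eqref{uEpsExpansion4}, using the bounded-geometry hypotheses where needed, or more elementarily a barrier/energy argument) show $u^\pm\to\pm1$ in $L^1$ on the respective bulk; combined with $|\Omega_{\eps_k}\Delta\Omega|\to0$ this gives $v_k\to2\chi_\Omega-1$. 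Then the classical Modica--Mortola lim-inf inequality for $E_\eps$ applied to $\{v_k\}$ yields $\liminf_k E_{\eps_k}(v_k)\geq\sigma_0\mathcal{P}(\Omega)$, which is exactly $\liminf_k\BE_{\eps_k}(\Omega_{\eps_k})\geq\sigma_0\mathcal{P}(\Omega)$. \textbf{Part (1), precompactness.} From the $\BE$-bound, $E_{\eps_k}(v_k)\leq C$; the potential term gives $\int_M W(v_k)\leq C\eps_k\to0$, so $v_k\to\pm1$ a.e.\ along a subsequence, and the gradient term together with the co-area/Modica--Mortola bound gives a uniform bound on $\mathcal{P}(\{v_k>0\})=\mathcal{P}(\Omega_{\eps_k})$ (here one uses $\eps|\nabla v|^2/2+W(v)/\eps\geq|\nabla(\Phi\circ v)|$ with $\Phi'=\sqrt{2W}$, as in the standard proof). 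By BV-compactness a subsequence of $\chi_{\Omega_{\eps_k}}$ converges in $L^1$ to some $\chi_{\Omega'}$ with $\Omega'$ of finite perimeter.

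\textbf{Part (3), recovery sequence.} This is where I expect the main work. Given $\Omega\in\mathcal{C}(M)$, first approximate: choose smooth domains $\Sigma_j\xrightarrow{L^1}\Omega$ with $\mathcal{P}(\partial\Sigma_j)\to\mathcal{P}(\Omega)$ (standard for Caccioppoli sets). It then suffices, by a diagonal argument, to handle $\Omega$ with smooth boundary and show $\limsup_{\eps\to0}\BE_\eps(\partial\Omega)\leq\sigma_0\mathcal{P}(\partial\Omega)$ — note each fixed smooth $\Omega$ lies in $\mathcal{C}^\infty_\eps(M)$ for $\eps$ small since $\lambda_1(\Omega),\lambda_1(\Omega^c)$ are fixed positive numbers. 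For such $\Omega$, the point is an \emph{upper} bound on $E_\eps(u_{\Omega,\eps})$. Since $u_{\Omega,\eps}^\pm$ minimizes $E_\eps(\cdot,\Omega^\pm)$ among functions vanishing on $\partial\Omega$, it suffices to exhibit \emph{any} admissible competitor $\tilde u^\pm$ on each side with $E_\eps(\tilde u^+,\Omega)+E_\eps(\tilde u^-,\Omega^c)\leq\sigma_0\mathcal{P}(\partial\Omega)+o(1)$. The natural choice is the usual one-dimensional profile transplanted along the signed distance to $\partial\Omega$: let $d(x)=\mathrm{dist}(x,\partial\Omega)$ and set $\tilde u(x)=\bg_\eps(d(x))$ on $\Omega$ (suitably cut off to be $\equiv1$ away from a tubular neighborhood where $d$ is smooth, and symmetrically $-\bg_\eps(d)$ on $\Omega^c$), which vanishes on $\partial\Omega$ by $\bg(0)=0$. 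A direct computation in Fermi coordinates, using the coarea formula and that the area of $\{d=t\}$ tends to $\mathcal{P}(\partial\Omega)$ as $t\to0$, gives $E_\eps(\tilde u)\to\sigma_0\mathcal{P}(\partial\Omega)$ exactly as in Modica--Mortola; the cutoff contributes $O(\eps^\omega)$ by \eqref{ApproxHeteroclinic}. Hence $\BE_\eps(\partial\Omega)=E_\eps(u_{\Omega,\eps})\leq E_\eps(\tilde u)\to\sigma_0\mathcal{P}(\partial\Omega)$, giving the $\limsup$ bound; diagonalizing over $j$ and $\eps$ produces the required sequence $\Omega_{\eps_k}$.

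\textbf{Main obstacle.} The genuinely delicate point is not the abstract structure but confirming that positive Dirichlet Allen--Cahn minimizers $u^\pm_{\Omega,\eps}$ actually converge to $\pm1$ in $L^1$ on the bulk — i.e.\ that the Dirichlet boundary condition does not prevent phase separation. For smooth fixed $\Omega$ this follows from the upper energy bound just established (bounded energy forces $\int W(u^\pm)/\eps$ bounded, hence $u^\pm\to\pm1$ a.e.) plus uniform $L^\infty$ bounds $|u^\pm|\leq1$ (maximum principle), so dominated convergence closes it; but in Part (2) we are handed an \emph{arbitrary} sequence $\Omega_{\eps_k}$ with only a $\BE$-bound, so one must run precompactness (Part 1) first and then argue the $L^1$-limit of $v_k$ is $\pm1$ on $\Omega'$ and on $(\Omega')^c$, which is again forced by the energy bound via the same a.e.\ convergence $+$ domination argument — so the logical order is Part (1), then Part (2). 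The recovery sequence (Part 3) is essentially Modica--Mortola verbatim once one notes $\BE_\eps\leq E_\eps(\text{competitor})$ by the minimizing property.
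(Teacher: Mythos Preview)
Your overall strategy matches the paper's exactly: identify $\BE_{\eps_k}(\Omega_k)=E_{\eps_k}(u_k)$ for the broken phase transitions $u_k$, then feed everything into classical Modica--Mortola. The recovery sequence argument (transplant the heteroclinic along signed distance, compare via minimality, then diagonalize over smooth approximants) is identical to the paper's.

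There is one concrete gap in your precompactness argument. You assert that the energy bound yields ``a uniform bound on $\mathcal{P}(\{v_k>0\})=\mathcal{P}(\Omega_{\eps_k})$'' via the Modica trick and co-area. But the inequality $E_\eps(v)\geq\int|\nabla(\Phi\circ v)|=\int_{\mathbb R}\mathcal{P}(\{v>t\})\sqrt{2W(t)}\,dt$ only bounds an \emph{average} of perimeters of level sets, not the perimeter of the single level $\{v_k>0\}$; there is no reason the latter should be uniformly bounded for an arbitrary sequence of smooth $\Omega_k$. The paper sidesteps this: it applies BV-compactness (Proposition \ref{BV-compactness}) directly to the \emph{functions} $u_k$, obtaining $u_k\to u\in BV(M;\{-1,1\})$ in $L^1$, sets $\Omega':=\{u>0\}$, and then proves $|\Omega_k\Delta\Omega'|\to0$ via an elementary lemma that $L^1$-convergence of functions implies $L^1$-convergence of their positive parts (so $u_k^+\to\chi_{\Omega'}$, and restricting to $\Omega_k^c$ gives $|\Omega'\setminus\Omega_k|\to0$, symmetrically for the other half). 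This is exactly the mechanism you describe informally in your ``main obstacle'' paragraph; you should run it explicitly in Part (1) rather than appealing to a perimeter bound that does not follow. Once Part (1) is done this way, your Part (2) goes through as you say: along a subsequence with bounded energy, the same argument identifies the $L^1$ limit of $u_k$ with $2\chi_\Omega-1$, and Modica--Mortola liminf finishes it.
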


\noindent Strictly speaking what we show is the following: for any set of finite perimeter $\Omega \in \mathcal{C}(M)$, set $\overline{\BE_{\eps}}(\Omega) = + \infty$ if $\Omega \not\in \mathcal{C}^{\infty}_{\eps}(M)$ and $\overline{\BE_{\eps}}(\Omega) = \BE_{\eps}(\Omega)$ otherwise. Then the functional $\overline{\BE_{\eps}}$, which is now defined on all of $\mathcal{C}(M)$, $\Gamma$-converges to the perimeter functional under the usual topology induced by convergence of sets of finite perimeter (identified up to a set of measure 0). This does not affect the argument otherwise, so we will proceed without distinguishing $\overline{\BE_{\eps}}$ and $\BE_{\eps}$. To prove this, we rely fundamentally on the $\Gamma$-convergence theory of Modica-Mortola \cite{mortola1977esempio}. We recall these below:

\begin{proposition}[BV-compactness] \label{BV-compactness}
Let $u_{k} \in W^{1,2}(M)$ be a sequence of functions so that  $E_{\epsilon_{k}}(u_{k})$ is uniformly bounded in $k$. Then up to subsequence 

\[
u_{k} \underset{L^{1}}{\to} u \in \textnormal{BV}(M; \{-1, 1\})
\]
\end{proposition}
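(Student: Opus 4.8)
The plan is to use the classical Modica trick. I would define the primitive $\Phi(u) = \int_0^u \sqrt{2 W(s)}\,ds$; since $\sqrt{2W(s)} = |1-s^2|/\sqrt2$ vanishes only at the isolated points $s = \pm 1$ and grows like $s^2/\sqrt2$ as $|s|\to\infty$, the map $\Phi\colon\R\to\R$ is a strictly increasing homeomorphism with $\Phi(\pm 1) = \pm\sigma_0$. For any $u\in W^{1,2}(M)$ the chain rule and the elementary inequality $ab\le\tfrac{1}{2}(a^2+b^2)$ give, a.e.,
\[
|\nabla(\Phi\circ u)| = \sqrt{2W(u)}\,|\nabla u| \le \frac{\eps}{2}|\nabla u|^2 + \frac{1}{\eps}W(u),
\]
so $w_k := \Phi\circ u_k$ satisfies $\int_M|\nabla w_k| \le E_{\eps_k}(u_k) \le C$.

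Next I would bound $\|w_k\|_{L^1(M)}$. Since $\int_M W(u_k) \le \eps_k\, E_{\eps_k}(u_k) \to 0$ and $W$ is coercive, the superlevel sets $\{|u_k|\ge 2\}$ have measure tending to $0$; on $\{|u_k|<2\}$ one has $|w_k|\le\sup_{|s|\le 2}|\Phi(s)|$, while on $\{|u_k|\ge 2\}$ the comparison of the cubic growth of $\Phi$ with the quartic growth of $W$ yields $|\Phi(u_k)| \le C(1+W(u_k))$. Hence $\|w_k\|_{L^1(M)} \le C\bigl(1 + \int_M W(u_k)\bigr) \le C'$, so $\{w_k\}$ is bounded in $\textnormal{BV}(M)$. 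By the Rellich--Kondrachov compactness theorem for $\textnormal{BV}$, after passing to a subsequence $w_k \to w$ in $L^1(M)$ and a.e., with $w\in\textnormal{BV}(M)$ and $\|Dw\|(M)\le\liminf_k E_{\eps_k}(u_k)$.

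Then I would identify the limit and transfer convergence back to $u_k$. Set $u := \Phi^{-1}(w)$; since $\Phi^{-1}$ is continuous, $u_k = \Phi^{-1}(w_k)\to u$ a.e. By Fatou, $\int_M W(u)\le\liminf_k\int_M W(u_k) = 0$, so $u\in\{-1,1\}$ a.e. Because $\Phi(\pm1) = \pm\sigma_0$, we get $w = \sigma_0(2\chi_{\{u=1\}}-1)$, hence $\chi_{\{u=1\}} = (w+\sigma_0)/(2\sigma_0)\in\textnormal{BV}(M)$ and therefore $u = 2\chi_{\{u=1\}}-1\in\textnormal{BV}(M;\{-1,1\})$. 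Finally, the growth estimate $|u_k| = |\Phi^{-1}(w_k)| \le C(1+|w_k|)$, combined with $C(1+|w_k|)\to C(1+|w|)$ in $L^1$, lets me invoke the generalized dominated convergence theorem (Pratt's lemma) to conclude $u_k\to u$ in $L^1(M)$.

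The only genuinely non-routine point is the $L^1$ bound on $w_k$, equivalently the control of the region where $|u_k|$ is large: a priori $u_k$ lies only in $W^{1,2}(M)$ with no uniform $L^\infty$ (or even $L^3$) control, so this is exactly where the superquadratic growth of $W$ and the pointwise comparison $|\Phi|\lesssim W+1$ at infinity do the work. Everything else is the standard $\textnormal{BV}$-compactness machinery together with the change of variables $u\leftrightarrow\Phi(u)$; alternatively one could simply cite \cite{mortola1977esempio} (or \cite{modica1985gradient}).
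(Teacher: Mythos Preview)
Your argument is correct and is precisely the classical Modica--Mortola proof. The paper does not give its own proof of this proposition: it is stated there as a result \emph{recalled} from \cite{mortola1977esempio} (and \cite{modica1985gradient}), exactly as you note in your final sentence. One small remark: your step ``$\int_M W(u_k)\le \eps_k E_{\eps_k}(u_k)\to 0$'' tacitly uses $\eps_k\to 0$, which is not written in the proposition statement but is the intended hypothesis in the surrounding $\Gamma$-convergence context (and without it the conclusion $u\in\{-1,1\}$ a.e.\ would fail).
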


\begin{theorem}[$\Gamma$-convergence for the Allen--Cahn energies] \label{GammaConvergeTheorem}

Let $\Omega \in \mathcal{C}(M)$ be an arbitrary Caccioppoli set.

\begin{enumerate}
\item (liminf inequality): Suppose $u_{k} \in W^{1, 2}(M)$ and $E_{\varepsilon_{k}}(u_{k})$ are uniformly bounded. If $u_{k} \underset{L^{1}}{\to} u \in \text{BV}(M; \{-1, 1\})$ then
\[
\liminf_{\epsilon_{k} \to 0} E_{\epsilon_{k}} u_{k} \geq \sigma_{0} \mathcal{P}(\{u = 1\}).
\]
\item (limsup inequality): There is a sequence $u_{k} \in W^{1, 2}(M)$ so that
\[
u_{k} \underset{L^{1}}{\to} \chi_{\Omega} - \chi_{\Omega^{c}},
\]
satisfying,
\[
\limsup_{\epsilon_{k}}E_{\epsilon_{k}}(u_{k}) \leq \sigma \mathcal{P}(\Omega).
\]
\noindent Moreover, if we assume further that if $\Omega \in \mathcal{C}^{\infty}(M)$, then we can construct the $u_{k}$ so that they satisfy $\{u_{k} = 0\} = \partial \Omega$.
\end{enumerate}
\end{theorem}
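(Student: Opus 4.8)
This is the classical Modica--Mortola $\Gamma$-convergence theorem for the Allen--Cahn energy, and the plan is to run the standard argument. Both halves hinge on one pointwise algebraic inequality together with soft facts about $BV$ functions and sets of finite perimeter.

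For the \emph{liminf} inequality I would set $\Phi(t):=\int_{-1}^{t}\sqrt{2W(s)}\,ds$. Since $W$ is smooth and coercive, one may first truncate $u_k$ to $\max(-1,\min(1,u_k))$, which neither increases $E_{\eps_k}$ nor changes the $L^1$-limit, so without loss of generality $|u_k|\le 1$ and $\Phi$ is Lipschitz on $[-1,1]$. The AM--GM inequality then gives, a.e.\ on $M$, the pointwise bound $\frac{\eps}{2}|\nabla u_k|^2+\frac{1}{\eps}W(u_k)\ge\sqrt{2W(u_k)}\,|\nabla u_k|=|\nabla(\Phi\circ u_k)|$, using the chain rule $\nabla(\Phi\circ u_k)=\Phi'(u_k)\nabla u_k$ (valid since $\Phi$ is Lipschitz and $u_k\in W^{1,2}$). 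Integrating, $E_{\eps_k}(u_k)\ge |D(\Phi\circ u_k)|(M)$. As $\Phi$ is Lipschitz and $u_k\to u$ in $L^1$, also $\Phi\circ u_k\to\Phi\circ u$ in $L^1$, so lower semicontinuity of total variation yields $\liminf_k E_{\eps_k}(u_k)\ge |D(\Phi\circ u)|(M)$. Finally, $u$ being $\{-1,1\}$-valued makes $\Phi\circ u=\Phi(1)\chi_{\{u=1\}}$, whose total variation is $\sigma_0\,\mathcal{P}(\{u=1\})$ --- the claimed bound. (The separately stated Proposition \ref{BV-compactness} rests on the same inequality: an energy bound bounds $|D(\Phi\circ u_k)|(M)$, giving an $L^1$-convergent subsequence; $\int_M W(u_k)\to 0$ forces the limit to be $\{-1,1\}$-valued; and $\Phi$, a homeomorphism onto its image, transfers the convergence back to $u_k$.)

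For the \emph{limsup} part I would treat $\Omega\in\mathcal{C}^{\infty}(M)$ first and then pass to a general Caccioppoli set. When $\partial\Omega$ is smooth, let $d$ be the signed distance to $\partial\Omega$, smooth with $|\nabla d|\equiv 1$ on a tube $\{|d|<\delta_0\}$, and let $\gamma_{\eps_k}$ be a truncated, rescaled heteroclinic profile (as in \eqref{ApproxHeteroclinic}), equal to $\pm 1$ outside a layer of thickness $O(\eps_k|\ln\eps_k|)$. Put $u_k:=\gamma_{\eps_k}(d)$ on $\{|d|<\delta_0\}$ and $u_k:=\operatorname{sgn}(d)$ elsewhere; for $k$ large these patch smoothly, $u_k\to\chi_\Omega-\chi_{\Omega^c}$ in $L^1$ (the transition layer has volume $O(\eps_k|\ln\eps_k|)$), and $\{u_k=0\}=d^{-1}(0)=\partial\Omega$ because the profile vanishes only at the origin --- this is the ``moreover'' clause. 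Evaluating $E_{\eps_k}(u_k)$ by the coarea formula for $d$ and rescaling $r=\eps_k s$ turns the energy into $\int_{\R}\big(\tfrac12(\gamma')^2+W(\gamma)\big)\,\mathcal{H}^{n-1}(d^{-1}(\eps_k s))\,ds+o(1)$; since $\mathcal{H}^{n-1}(d^{-1}(\eps_k s))\to\mathcal{P}(\Omega)$ and the truncation error vanishes, $\limsup_k E_{\eps_k}(u_k)\le\sigma_0\,\mathcal{P}(\Omega)$ by equipartition in the profile. For a general $\Omega\in\mathcal{C}(M)$, approximate $\chi_\Omega$ in $L^1$ by smooth domains $\Omega_j$ with $\mathcal{P}(\Omega_j)\to\mathcal{P}(\Omega)$ (mollify and pick an a.e.\ level set via Sard), run the construction for each $j$, and diagonalize.

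The main estimates are robust --- AM--GM does all the work on the lower bound, the distance-function profile on the upper bound --- so I expect the only real friction to come from the two soft reductions: the truncation and $BV$ chain-rule bookkeeping making $\Phi\circ u_k$ a genuine $BV$ function with lower-semicontinuous total variation, and, more substantially, the approximation of an arbitrary set of finite perimeter by smooth domains \emph{with convergence of perimeters}, which is the one point where the structure theory of Caccioppoli sets is really used. I would flag the latter as the main obstacle, though it is entirely standard.
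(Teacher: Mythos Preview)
Your argument is the standard Modica--Mortola proof and is correct. The paper does not supply its own proof of this theorem: it is explicitly \emph{recalled} from \cite{mortola1977esempio} and used as a black box in the proof of Theorem~\ref{Gamma}. That said, the paper does carry out the ``moreover'' clause of the limsup construction (recovery sequence vanishing on $\partial\Omega$) inside the proof of Theorem~\ref{Gamma}, and its construction there---signed distance function, cutoff of the rescaled heteroclinic, coarea formula, then approximation of a general Caccioppoli set by smooth ones via \cite{maggi2012sets}---is essentially identical to yours.
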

\noindent To prove theorem \ref{Gamma} we first need the following basic lemma.

\begin{lemma} \label{PosPartConvergence}
If $f_{n} \underset{L^{1}}{\to} f$. Let $f_n^+ = \max(f_n, 0)$, then $f_{n}^{+} \underset{L^{1}}{\to} f^{+}$.
\end{lemma}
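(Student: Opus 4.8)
The statement to prove is Lemma~\ref{PosPartConvergence}: if $f_n \xrightarrow{L^1} f$, then $f_n^+ \xrightarrow{L^1} f^+$, where $f_n^+ = \max(f_n, 0)$. The plan is to exploit the elementary pointwise inequality $|a^+ - b^+| \leq |a - b|$, valid for all real numbers $a, b$. Indeed, the map $x \mapsto x^+$ is $1$-Lipschitz on $\R$, since for $a \geq b$ one checks the three cases $a, b \geq 0$ (difference is $a-b$), $a \geq 0 > b$ (difference is $a \leq a - b$), and $0 > a \geq b$ (difference is $0$); the case $a < b$ is symmetric.

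First I would record this Lipschitz inequality explicitly. Then, applying it with $a = f_n(x)$ and $b = f(x)$ and integrating over $M$, one gets
\[
\int_M |f_n^+ - f^+| \, d\mathrm{vol}_g \;\leq\; \int_M |f_n - f| \, d\mathrm{vol}_g \;=\; \|f_n - f\|_{L^1(M)}.
\]
Since the right-hand side tends to $0$ by hypothesis, the left-hand side does as well, which is exactly the assertion $f_n^+ \xrightarrow{L^1} f^+$. One small point of care: one should note that $f_n^+$ and $f^+$ are measurable (as compositions of measurable functions with the continuous map $x \mapsto x^+$) and integrable (dominated by $|f_n|, |f|$ respectively, or simply because the displayed inequality shows $f_n^+ - f^+ \in L^1$ and $f^+ \in L^1$), so the integrals make sense.

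There is essentially no obstacle here — the lemma is purely a consequence of the Lipschitz continuity of the positive-part function, and the argument is two or three lines. If anything, the only thing worth being slightly careful about is stating in which sense the sets of finite perimeter or characteristic functions enter (they do not, really; this is a statement about $L^1$ functions on the measure space $(M, \mathrm{vol}_g)$), so I would keep the statement and proof at the level of general integrable functions and not mention $\mathcal{C}(M)$ at all. The lemma will then be applied elsewhere (presumably in the recovery-sequence part of Theorem~\ref{Gamma}, where one passes from a function $u_k$ to its positive part to build Dirichlet minimizers on the two sides).
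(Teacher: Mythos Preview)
Your proof is correct and takes essentially the same approach as the paper: both establish the pointwise inequality $|f_n^+ - f^+| \leq |f_n - f|$ and then integrate. The paper arrives at this inequality via a direct case analysis on the signs of $f_n$ and $f$, while you phrase it as the $1$-Lipschitz property of $x \mapsto x^+$; these are the same argument in slightly different packaging.
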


\begin{proof}
Write $f_{n} = f_{n}^{+} - f_{n}^{-}$ and $f = f^{+} - f^{-}$. Then if $f_{n}^{+} > 0$ we have two cases.

\begin{enumerate}
\item $f^{+} > 0$, in which case

\[
|f_{n} - f| = |f_{n}^{+} - f^{+}|.
\]

\item $f^{-} > 0$, in which case

\[
|f_{n} - f| = f_{n}^{+} + f^{-} \geq f_{n}^{+} = |f_{n}^{+} - f^{+}|,
\]

since $f^{+} = 0$ whenever $f^{-} > 0$.
\end{enumerate}
\noindent In either case we find 
\[
|f_{n} - f| \geq |f_{n}^{+} - f^{+}|
\]
Note that the above also holds if $f_n^+ = 0$, so
\[
\int |f_{n}^{+} - f^{+}| \leq \int |f_{n} - f| \to 0.
\]
\end{proof}

\begin{proof}[Proof of Theorem \ref{Gamma}]

The result follows from the $\Gamma$-convergence result of Modica-Mortola.

\begin{enumerate}

\item (Pre-compactness): 
Let $\Omega_{k}$ be a bounded sequence for $\text{BE}_{\epsilon}$ in $\mathcal{C}^{\infty}(M)$. That is to say there is a constant $C > 0$
\[
\sup_{k}\text{BE}_{\epsilon_{k}}(\Omega_{k}) \leq C
\]   
\noindent We wish to construct a Caccioppoli set $\Omega \in \mathcal{C}(M)$ so that $|\Omega_{k} \Delta \Omega| \to 0$ up to possibly taking a subsequence in $k$. For each $k$, Let $u_{k} \in W^{1, 2}(M)$ denote the broken $\varepsilon_{k}$ phase-transition associated to $\Omega_{k}$. That is to say,
\[
u_{k} = \begin{cases}
u_{\Omega_{k}}^{+} \text{ in } \Omega_{k},\\
u_{\Omega_{k}}^{-} \text{ in } \Omega_{k}^{c}.
\end{cases}
\]
\noindent  Note that by taking a subsequence without relabeling, we may assume $\varepsilon_{k}$ is sufficiently small that this broken phase-transition because $\varepsilon_{k} \to 0$ as $k \to \infty$. \nl \nl 
\noindent It follows from the definition of $\Omega_{k}$ that
\[
E_{\epsilon_{k}}(u_{k}) = \text{BE}_{\epsilon_{k}}(\Omega_{k}).
\]
\noindent Boundedness of $\Omega_{k}$ with respect to the Balanced energies $\text{BE}_{\epsilon}$ implies that the Allen--Cahn energies for $u_{k}$ controlled uniformly in $k$. Therefore theorem \ref{BV-compactness} applies to the sequence $\{u_{k}\}$ and we find a function $u \in \text{BV}(M ; \{-1, 1\})$ so that up to taking a subsequence.
\[
u_{k} \underset{L^{1}}{\to} u.
\]
\noindent Set $\Omega' = \{u > 0\}$. We wish to show that $|\Omega_{k} \Delta \Omega'| \to 0$, so we will estimate the measures of $\Omega' \backslash \Omega_{k}$ and $\Omega_{k} \backslash \Omega$. Observe that $\Omega_{k} = \{u_{k} > 0\}$ by definition of the functions $u_{k}$ and that $u = \chi_{\Omega'} - \chi_{(\Omega')^{c}}$. \nl \nl 
\noindent By Lemma \ref{PosPartConvergence}, Theorem \ref{GammaConvergeTheorem}(2), and the fact that $u = \chi_{\Omega'} - \chi_{(\Omega')^{c}}$ it follows that $u_{k}^{+} \underset{L^{1}}{\to} u^{+}$, i.e.
\[
\int_{M} |u_{k}^{+} - \chi_{\Omega'}|  \to 0.
\]   
\noindent So moreover by restricting the above to $\Omega_{k}^{c}$,
\[
\int_{\Omega_{k}^{c}} \chi_{\Omega'} = |\Omega_{k}^{c} \cap \Omega'| = |\Omega' \backslash \Omega_{k}| \to 0.
\]
\noindent To show that $|\Omega_{k} \backslash \Omega'| \to 0$ as $k \to \infty$, we repeat the same with $u_k^-$ and $\chi_{(\Omega')^{c}}$:  
\[
\int_{M} |u_{k}^{-} - \chi_{(\Omega')^c}|  \to 0.
\]  
by the same lemma applied to the negative part. In particular
\[
\int_{M} |u_{k}^{-} - \chi_{(\Omega')^c}| \geq \int_{\Omega_k} |u_k^- - \chi_{(\Omega')^c}| = \int_{\Omega_k} |\chi_{(\Omega')^c}| = |\Omega_k \cap (\Omega')^c| = |\Omega_k \backslash \Omega'|
\]
\noindent It follows that $|\Omega_{k} \backslash \Omega'| \to 0$.
\item (liminf inequality): Suppose we have a sequence $\Omega_{k} \in \mathcal{C}^{\infty}_{\eps_{k}}(M)$ of Caccioppoli sets in $M$ which converge to $\Omega \in \mathcal{C}(M)$. Let $u_{k}$ be the piecewise Brezis-Oswald solutions as before. Then because
\[
E_{\epsilon_{k}}(u_{k}) = \text{BE}_{\epsilon_{k}}(\Omega_{k}) = O(1)
\]
\noindent we can apply BV-compactness to the sequence $u_{k}$ to find a function $u$ as before. By the exact same arguments as the previous it follow that
\[
|\Omega \Delta \{u > 0\}| = 0.
\]
\noindent By the Modica-Mortola liminf inequality
\[
\liminf_{k \to \infty} \text{BE}_{\epsilon_{k}}(\Omega_{k}) = \liminf_{k\to \infty} E_{\epsilon_{k}}(u_{k}) \geq \sigma_{0} \mathcal{P}(\Omega).
\]
\item (limsup inequality): Suppose that $\Omega \in \mathcal{C}^{\infty}(M)$. Let $\Omega_{k} = \Omega$ for each k. Then $u_{k}$ is a sequence of functions in $C^{0}(M) \cap W^{1,2}(M)$ which vanish precisely along $\partial \Omega$. 
For the sake of comparison we shall construct functions $g_{k}$ whose Allen--Cahn energy is strictly greater than the $u_{k}$ and which also vanish along $\partial \Omega_{k}$. Our task will be accomplished by comparing their energies using the variational characterization of Brezis-Oswald solutions. \nl\nl
\noindent Let $\rho > 0$ be a constant taken sufficiently small so that on the tubular neighborhood $B_{\rho_{0}}(\partial \Omega)$ the signed distance function $\text{d}: B_{\rho_{0}}(\partial \Omega) \to \mathbb{R}$ is smooth. Let $\chi: \mathbb{R} \to \mathbb{R}$ denote a smooth, even cutoff function satisfying $0 \leq \chi \leq 1$, with $\chi \equiv 1$ on $(-\rho_{0}/4, \rho_{0}/4)$ and $\chi \equiv 0$ outside of $(-\rho_{0}/2, \rho_{0}/2)$. Define $\tilde{g}_{k} : \mathbb{R} \to \mathbb{R}$ to be
\[
\tilde{g}_{k}(t) = g_{\varepsilon_{k}}(t) \chi(t) + (1 - \chi(t))\text{sgn}(t).
\]
\noindent And now we define $g_{k}: M \to \mathbb{R}$ by
\[
    g_{k}(x) = \tilde{g}_{k}(\text{d}(x)),
\]
\noindent extending smoothly to $1$ and $-1$ outside of $B_{\rho_{0}(M)}$. Now we show that the functions $g_{k}$ satisfy the limsup inequality. Note that by the coarea formula and the fact that $|\nabla \text{d}(x)| \equiv 1$ where $g_{k}$ is nonzero
\begin{align*}
E_{\varepsilon}(g_{k}) &= \int_{M} \frac{\varepsilon_{k}}{2}|\nabla g_{k}|^{2} + \frac{W(g_{k})}{\varepsilon_{k}}\\
&= \int_{B_{\rho_{0}(M)}} \frac{\varepsilon_{k}}{2}|\nabla g_{k}|^{2} + \frac{W(g_{k})}{\varepsilon_{k}}\\
&= \int_{|r| \leq \rho_{0}/4} \left(\frac{\varepsilon_{k}|\tilde{g}_{k}'(r)|^{2}}{2} + \frac{W(\tilde{g}_{k}(r))}{\varepsilon_{k}} \right) \mathcal{H}^{n - 1}(\{\text{d}(x) = r\}) dr\\
&\quad + \int_{M \backslash B_{\rho_{0}/4}(M)}  \frac{\varepsilon_{k}}{2}|\nabla g_{k}|^{2} + \frac{W(g_{k})}{\varepsilon_{k}}\\
&= I_{1} + I_{2}.
\end{align*}
\noindent $I_{2} = o(1)$ as $k \to \infty$ by the exponential decay of the heteroclinic to the values $-1, 1$ at infinity. Noting that, for the heteroclinic $g_{\varepsilon_{k}}$, the energy density 
\[
\left( \frac{\varepsilon_{k} |g'_{\varepsilon_{k}}|^{2}}{2} + \frac{W(g_{\varepsilon_{k}}}{\varepsilon_{k}} \right)dr \to \delta_{0},
\]
\noindent where $\delta_{0}$ denotes the Dirac mass centered at $0$, and since $\tilde{g}_{k}$ agrees with the heteroclinic in the region where $|r| \leq \rho_{0}/4$, it follows that
\begin{align*}
I_{1} &= \int_{|r| \leq \rho_{0}/4} \left( \frac{\varepsilon_{k} |\tilde{g}'_{\varepsilon_{k}}|^{2}}{2} + \frac{W(\tilde{g}_{\varepsilon_{k}}}{\varepsilon_{k}} \right) \mathcal{H}^{n - 1}(\{\text{d}(x) = r\})dr \to \mathcal{H}^{n - 1}(\partial \Omega),
\end{align*}
\noindent as $k \to \infty$. \nl \nl
\noindent To conclude, we note that
\[
E_{\epsilon_{k}}(u_{k}) = E_{\epsilon_{k}}(u_{k}; \Omega) + E_{\epsilon_{k}}(u_{k}; \Omega^{c}) \leq E_{\epsilon_{k}}(g_{k}; \Omega) + E_{\epsilon_{k}}(g_{k};\Omega^{c}) = E_{\epsilon_{k}}(g_{k}).
\]
\noindent The result now follows because
\[
\limsup_{k \to \infty} \textnormal{BE}_{\epsilon_{k}}(\Omega) = \limsup_{k\to \infty} E_{\epsilon_{k}}(u_{k}) \leq \limsup_{k \to \infty} E_{\epsilon_{k}}(g_{k}) \leq \sigma_{0} \mathcal{P}(\Omega).
\]
\noindent If we now suppose that $\Omega \in \mathcal{C}(M)$ is arbitrary, and we make no assumption on regularity, by theorem 13.8 in \cite{maggi2012sets} we can find a sequence $\Omega^{j}$ of smooth Caccioppoli sets so that $\Omega^{j} \to \Omega$ in the flat topology which moreover satisfies
\[
\mathcal{P}(\Omega^{j}) = \mathcal{P}(\Omega) + o(1), \text{ as } j \to \infty.
\]
\noindent Then if we set $\Omega_{k} = \Omega^{k}$ we get
\[
\limsup_{k \to \infty} \textnormal{BE}_{\epsilon_{k}} (\Omega_{k}) \leq \sigma_{0} \mathcal{P}(\Omega^{k}) + o(1) = \sigma_{0} \mathcal{P}(\Omega) + o(1) \text{ as } k \to \infty.
 \]
\end{enumerate}

\end{proof}

\section{First Variation.} \label{FirstVariationSection}
\noindent We consider a smooth family of hypersurfaces $\{Y_t\}$ and corresponding domains, $\{\Omega_t\}$ such that $\Omega =: \Omega_0$ and $\partial \Omega = Y$. Let $\{u_t\}$ be minimizers of \eqref{ACEnergy} on $\Omega_t$ for some $\eps > 0$. Doing the analogous for $\{\Omega_t^c\}$ and $\{u_t^c = u_t\}$, we are interested in the quantity
\begin{equation} \label{BETimeDefinition}
\BE(t) = \int_{\Omega_t} \eps \frac{|\nabla u_t|^2}{2} + \frac{1}{\eps}W(u_t) + \int_{\Omega_t^c} \frac{|\nabla u_t^-|^2}{2} + \frac{1}{\eps}W(u_t^c)
\end{equation}
as a function of $t$. We will use the labels $u_t = u_t^+$ and $u_t^c = u_t^-$ interchangeably throughout. Similarly, $\nu = \nu^+$ and $\nu^c = \nu^- := -\nu$. The goal is to prove \cref{FirstVariationTheorem}
\FirstVariationTheorem*
\subsection{Set up}
We decompose $M = \Omega_t \sqcup_{Y_t} \Omega_t^c$. Let $u_t: \Omega_t \to \R$ be the minimizers of \eqref{ACEnergy}. We can define
\[
u: N_{\rho}(\Omega) \times (-\delta, \delta) \to \R
\]
where 
\[
u(p,t) = u_t(p)
\]
Note that the right hand side is not defined for all $p \in N_{\rho}(\Omega)$, but since $u_t$ satisfies a dirichlet condition on $\partial \Omega_t$, and if we assume some uniform regularity of $\partial \Omega_t$, then we can define tubular neighborhoods for each $\partial \Omega_t$ and define odd extensions for $u_t$ on $N_{\rho_t}(\Omega_t)$. By bounded geometry assumptions  \eqref{boundedGeometry}, we have that for $t < t_0$ sufficiently small there exists a $\rho_0 = \rho_0(t_0) > 0$ so that
\[
\bigcap_{0 \leq t < t_0} N_{\rho_t}(\Omega_t) \supseteq N_{\rho_0}(\Omega)
\]
so that $u(\cdot,t)$ can be defined $N_{\rho_0}(\Omega)$ for all $t$. For example, this occurs if 
\begin{align*}
F_t&: \partial \Omega \to \partial \Omega_t  \\
F_t(p) &= \exp_p(f(p) t \nu(p)) \\
||f||_{C^{2,\alpha}} & \leq K
\end{align*}
%
%
Then we have that 
\begin{align*}
\eps^2 \Delta_g u(p,t) &= W'(u(p,t)) \\
p_t \in \partial \Omega_t, & \quad  u(p_t, t) \equiv 0 \\
\eps^2 \Delta_g \dot{u}(p,0) &= W''(u(p,0)) \dot{u}(p,0) \\
\dot{u}(p,0) &= - f u_{\nu}(p,0)
\end{align*}
where $\nu$ is the normal pointing to the interior of $\Omega$. Note that $u$ is differentiable on the interior by pulling back $Y_t \to Y$ and expanding this diffeomorphism locally. See \S \ref{SmoothDependenceDomains} for details.
\subsection{First variation computation}
We define 
\[
G_{\eps}(S, f, X) := \int_S \frac{\eps}{2} |X|^2 + \frac{1}{\eps} W(f)
\]
So that 
\[
E_{\eps}(u_t, \Omega_t) = G_{\eps}(\Omega_t, u_t, \nabla u_t)
\]
which frames 
\begin{align*}
\frac{d}{dt} E_{\eps}(u_t, \Omega_t) &= \frac{d}{dt} G_{\eps}(\Omega_t, u_t, \nabla u_t) \\
&= \int_{\partial \Omega} -f \left[\frac{\eps}{2} |\nabla u|^2 + \frac{1}{\eps} W(u)\right] \\
&+ \int_{\Omega} \eps \langle \nabla u, \nabla \dot{u} \rangle + \frac{1}{\eps} W'(u) \dot{u} \\
&= A + B
\end{align*}
here the sign choice of $-f$ as opposed to $f$ is because $\nu$ points inwards to $\Omega$. We evaluate 
\[
A = \int_{\partial \Omega} -f \left[ \frac{\eps}{2} u_{\nu}^2 + \frac{1}{4 \eps} \right]
\]
And for $B$, we have 
\begin{align*}
B &= \int_{\Omega} \eps \langle \nabla u, \nabla \dot{u} \rangle + \frac{1}{\eps} W'(u) \dot{u} \\
&= \int_{\partial \Omega} \eps u_{\nu^c} \dot{u} \\
&= \int_{\partial \Omega} \eps u_{\nu}^2 f
\end{align*}
having noted that $\dot{u} \Big|_{\partial \Omega} = -f u_{\nu}$ and $u_{\nu^c} = - u_{\nu}$. Thus 
\begin{align*}
\frac{d}{dt} E_{\eps}(u_t, \Omega_t) &= A + B \\
&= \int_{\partial \Omega} \left[\frac{\eps}{2} u_{\nu}^2 - \frac{1}{4 \eps}\right] f
\end{align*}
Repeating this on $\Omega^c$, noting that $f \to -f$ with the choice of $\nu^c$ as the inward normal to $\Omega^c$, we get that 
\[
\frac{d}{dt} \BE(t) = \frac{\eps}{2} \int_{\partial \Omega}\left[ u_{\nu}^2 - (u^c_{\nu})^2 \right] f 
\]
ending the proof of theorem \ref{FirstVariationTheorem}. \qed
\begin{Remark}
While we've formulated $\BE_{\eps}$ for $(M, g)$ compact, equation \eqref{FirstVariationFormula} is defined for $Y = \partial \Omega$ compact and $M$ non-compact. Moreover, while we assume that $f \in C^{2,\alpha}(Y)$, we can formally extend the first variation formula to $f \in L^2(Y)$ by the final formula.
\end{Remark}
\noindent We note that critical points must have $u_{\nu} = u_{\nu^c}$ everywhere on the boundary. From equation \eqref{uEpsExpansion5}, we have 
\begin{align} \label{MatchingNormal}
u_{\nu} &= \frac{1}{\eps \sqrt{2}} + \dot{w}(0) H_Y + \eps[ \dot{\tau}(0) \dot{H}_Y + H_Y^2 \alpha] + O(\eps^2) \\ \nonumber
u_{\nu} - u^c_{\nu} &= 2 \dot{w}(0) H_Y + \eps H_Y^2 \alpha + O(\eps^2)
\end{align}
which assuming $H_Y = O(1)$ and computing $\dot{w}(0)$ gives \cref{FirstVarAnalogy}
\FirstVarAnalogy*
\noindent We further note:
\begin{restatable}{corr}{FirstVarMeanCurvature} \label{FirstVarMeanCurvature}
Let $Y$ be a critical point of $\BE_{\eps}$. If $Y \in C^{3,\alpha}$, then $||H_Y||_{C^0} \leq K \eps$ and $[H_Y]_{\alpha} \leq K \eps^{1 - \alpha}$. If $Y$ is $C^{4,\alpha}$, then $||H_Y||_{C^{\alpha}} = O(\eps^{2-\alpha})$
\end{restatable}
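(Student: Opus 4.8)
The plan is first to turn criticality of $Y$ into a pointwise equation for $H_Y$ on $Y$, and then to read the bound off the known asymptotics of the broken phase transition $u_{\eps,Y}$. By \cref{FirstVariationTheorem}, a critical $Y$ satisfies $\int_Y f\,[(u_\nu^+)^2-(u_\nu^-)^2]=0$ for all admissible $f$, so $(u_\nu^+)^2\equiv (u_\nu^-)^2$ on $Y$; since $u_\eps^\pm$ are the positive, resp.\ negative, Brezis--Oswald solutions of \cref{BO} vanishing on $Y$, the Hopf boundary lemma gives $u_\nu^+>0$ and $u_\nu^->0$, hence $u_\nu^+\equiv u_\nu^-$ on $Y$. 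Now substitute the Fermi-coordinate expansion of $u_\eps^\pm$ near $Y$ from \cite{marx2023dirichlet} and differentiate in the signed distance at $t=0$: the leading singular term $\bg_{\eps}$ contributes $\tfrac{1}{\eps\sqrt 2}$ to each of $u_\nu^\pm$ and cancels, the term $\eps H_Y\bw_{\eps}$ contributes $\pm H_Y\dot w(0)$ (with $w$ as in \eqref{ODEEquations}; note the mean curvature of $Y$ with respect to the inward normal of $\Omega^-$ is $-H_Y$), and the rest is a remainder. Thus $u_\nu^+-u_\nu^-=2\dot w(0)H_Y+E$, and since $\dot w(0)\neq 0$ (from \cite{marx2023dirichlet}; matching the leading term of \cref{FirstVarAnalogy} forces $\sqrt 2\,\dot w(0)=\tfrac{1}{2\sqrt 2}$), the relation $u_\nu^+\equiv u_\nu^-$ reads $H_Y=-E/(2\dot w(0))$, and the corollary is exactly an estimate on $E$ in $C^0$ and $C^\alpha$.

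For $Y\in C^{3,\alpha}$ I would use the $C^{3,\alpha}$-analogue of \eqref{uEpsExpansion4}, namely $u_\eps^+(s,t)=\bg_{\eps}(t)+\eps H_Y(s)\bw_{\eps}(t)+\phi^+$ (and likewise on $\Omega^-$) with $\phi^+$ controlled in the rescaled norms \eqref{rescaledNorms} well enough that $\|D\phi^+\|_{\infty}=O(\eps)$ and $[D\phi^+]_\alpha=O(\eps^{1-\alpha})$ --- the expected Schauder loss of one derivative relative to \eqref{uEpsExpansion4}. Restricting the normal derivative to $Y$ and using the interpolation inequality $[h]_{C^\alpha}\le C\|Dh\|_{\infty}^{\alpha}\|h\|_{\infty}^{1-\alpha}$, this yields $\|E\|_{C^0(Y)}\le K\eps$ and $[E]_{C^\alpha(Y)}\le K\eps^{1-\alpha}$, and dividing by $2\dot w(0)$ gives $\|H_Y\|_{C^0}\le K\eps$ and $[H_Y]_\alpha\le K\eps^{1-\alpha}$. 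The constants $K$ depend on $Y$ only through the bounded-geometry bound \eqref{boundedGeometry}.

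For $Y\in C^{4,\alpha}$ I would bootstrap. The previous step (applied to $Y\in C^{4,\alpha}\subset C^{3,\alpha}$) already gives $\|H_Y\|_{C^0}\le K\eps$. Now use the sharper normal-derivative identity \eqref{MatchingNormal}, $u_\nu-u_\nu^c=2\dot w(0)H_Y+\eps H_Y^2\alpha+O(\eps^2)$: the structural point is that all of the $\eps$-order contributions to the \emph{difference} $u_\nu^+-u_\nu^-$ are assembled from quantities unchanged under $\nu\mapsto-\nu$ --- namely $H_Y^2$ and $\dot H_Y=|A_Y|^2+\Ric_g(\nu,\nu)$ --- and hence cancel, except for the displayed $\eps H_Y^2\alpha$, which is $O(\eps^3)$ once $\|H_Y\|_{C^0}\le K\eps$ is inserted. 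Therefore $0=2\dot w(0)H_Y+O(\eps^2)$; running the same computation in the rescaled Hölder norm (the $O(\eps^2)$ remainder has first derivative $O(\eps)$, hence $C^\alpha$-size $O(\eps^{2-\alpha})$ by interpolation, and $[\eps H_Y^2\alpha]_{C^\alpha}=O(\eps^{3-\alpha})$) gives $\|H_Y\|_{C^0}=O(\eps^2)$ and $\|H_Y\|_{C^\alpha}=O(\eps^{2-\alpha})$, as claimed.

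\textbf{Main obstacle.} The substance of the argument is entirely in the error bookkeeping: one must track the remainders $\phi^\pm$ of the expansions in \cite{marx2023dirichlet} in the rescaled norms \eqref{rescaledNorms}, restrict them and their normal derivatives to $Y$, and convert freely between $C^0$, $C^\alpha$ and $C^{1,\alpha}$ bounds by interpolation. The delicate point specific to the $C^{4,\alpha}$ sharpening is to show that the $\eps$-order part of $u_\nu-u_\nu^c$ genuinely is $\nu\mapsto-\nu$-even --- i.e.\ that the extra $\eps^2$-order geometric terms appearing in \eqref{uEpsExpansion5} (which a priori require $Y\in C^{5,\alpha}$) can be split off already at $C^{4,\alpha}$ regularity with a remainder small enough for the bootstrap --- since it is only this cancellation that upgrades the error from $O(\eps)$ to $O(\eps^2)$. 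Everything else --- criticality, Hopf's lemma, and the relevant values $\dot w(0)$, $\dot\tau(0)$, $\dot\rho(0)$, $\dot\kappa(0)$ --- is quoted from \cref{FirstVariationTheorem} and \cite{marx2023dirichlet}.
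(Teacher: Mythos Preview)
Your proposal is correct and follows exactly the approach the paper intends: the paper states this corollary immediately after \eqref{MatchingNormal} with only the phrase ``We further note,'' so the implicit proof is precisely to set $u_\nu-u_\nu^c=0$ in \eqref{MatchingNormal} and read off the bound on $H_Y$, bootstrapping for the $C^{4,\alpha}$ case. Your write-up is in fact more careful than the paper's, supplying the Hopf-lemma step to pass from $(u_\nu^+)^2=(u_\nu^-)^2$ to $u_\nu^+=u_\nu^-$ and flagging that \eqref{MatchingNormal} as written rests on the $C^{5,\alpha}$ expansion \eqref{uEpsExpansion5}, so that the $C^{4,\alpha}$ claim requires a lower-regularity variant from \cite{marx2023dirichlet}; the paper leaves this implicit.
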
 
\noindent Alternatively, we see that if $Y$ is a critical point of $\BE_{\eps}$, then $u_{\Omega, \varepsilon}$ is $C^1$ across $\partial \Omega$ and hence a smooth solution to \eqref{ACEquation} on all of $M$, as remarked in \S \ref{BEEnergies}. Again such solutions exist near minimal surfaces by \cref{PRTheorem}, and these solutions are exactly the energy minimizers with $0$ dirichlet condition on $Y$ by theorem \ref{BO}. This gives the following
\begin{restatable}{corr}{CritPointExistence}
Suppose $Y^{n-1} \subseteq M^n$ a nondegenerate minimal hypersurface. Then there exist critical points for $\BE_{\eps}$, $Y_{\eps}$, with $Y_{\eps} \to Y$ converging graphically as $\eps \to 0$
\end{restatable}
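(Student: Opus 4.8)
The plan is to combine Theorem~\ref{PRTheorem} of Pacard--Ritor\'e with the variational characterization of broken phase transitions from Theorem~\ref{BO} and the first variation formula, Theorem~\ref{FirstVariationTheorem}. First I would invoke \cref{PRTheorem}: since $Y$ is a two-sided nondegenerate minimal hypersurface in the closed manifold $(M,g)$, there is an $\eps_0>0$ so that for every $\eps<\eps_0$ there is a solution $u_\eps$ of \eqref{ACEquation} on all of $M$ whose nodal set $Y_\eps := u_\eps^{-1}(0)$ is a smooth hypersurface converging graphically to $Y$ as $\eps\to 0$. The graphical convergence is part of the construction in Pacard--Ritor\'e (the solution is built as a perturbation of the model $g_\eps(\mathrm{d}(\cdot))$ over a small normal graph $Y_\eps$ of $Y$), so for $\eps$ small $Y_\eps$ is $C^{2,\alpha}$-close to $Y$, in particular two-sided, separating, and with bounded geometry \eqref{boundedGeometry}; write $M=\Omega_\eps^+\sqcup_{Y_\eps}\Omega_\eps^-$.

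Next I would check that $Y_\eps$ is an admissible hypersurface for $\BE_\eps$, i.e.\ that $\Omega_\eps^\pm\in\mathcal{C}^\infty_\eps(M)$: because $Y_\eps\to Y$ smoothly and $Y$ is a fixed smooth hypersurface, $\lambda_1(\Omega_\eps^\pm)$ is bounded away from $0$ uniformly in $\eps$ (Dirichlet eigenvalues are continuous under smooth perturbation of the domain, and $\Omega^\pm$ has positive first Dirichlet eigenvalue), so $\eps<\lambda_1(\Omega_\eps^\pm)^{-1/2}$ once $\eps$ is small enough; by Theorem~\ref{BO} the positive and negative Dirichlet minimizers $u^{\pm}_{Y_\eps,\eps}$ on $\Omega_\eps^\pm$ exist and are unique. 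Now the key point: on $\Omega_\eps^+$, the restriction $u_\eps|_{\Omega_\eps^+}$ solves \eqref{ACEquation}, vanishes on $Y_\eps=\partial\Omega_\eps^+$, and has a sign there (it is the $+1$ phase, so $u_\eps>0$ on $(\Omega_\eps^+)^o$ for $\eps$ small by the convergence $u_\eps\to +1$ on compact subsets together with a boundary/maximum-principle argument near $Y_\eps$). By the uniqueness clause of Theorem~\ref{BO}, $u_\eps|_{\Omega_\eps^+}=u^+_{Y_\eps,\eps}$, and likewise $u_\eps|_{\Omega_\eps^-}=u^-_{Y_\eps,\eps}$; hence the broken phase transition $u_{Y_\eps,\eps}$ coincides with $u_\eps$ and is in particular a smooth (in fact $C^1$ suffices by elliptic regularity) solution of \eqref{ACEquation} across $Y_\eps$, so $\nabla_\nu u^+_{Y_\eps,\eps}=\nabla_\nu u^-_{Y_\eps,\eps}$ on $Y_\eps$. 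Plugging this into the first variation formula \eqref{FirstVariationFormula} gives $\BE_\eps'(f\nu)|_{Y_\eps}=\tfrac{\eps}{2}\int_{Y_\eps} f[(u^+_\nu)^2-(u^-_\nu)^2]=0$ for every $f$, i.e.\ $Y_\eps$ is a critical point of $\BE_\eps$. Setting $Y_\eps$ as claimed finishes the proof.

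The main obstacle is the identification step: one must be sure that the Pacard--Ritor\'e solution restricted to $\Omega_\eps^\pm$ is genuinely the \emph{signed} Dirichlet minimizer, which requires (i) knowing $u_\eps$ is strictly positive on $(\Omega_\eps^+)^o$ and strictly negative on $(\Omega_\eps^-)^o$ — this follows from $u_\eps\to\pm1$ locally uniformly plus the strong maximum principle, since a sign change inside $\Omega_\eps^+$ would force an interior nodal component disjoint from $Y_\eps$, contradicting $u_\eps\to+1$ on compacta for small $\eps$ — and (ii) invoking uniqueness in Theorem~\ref{BO}, which applies precisely once $\eps<\lambda_1(\Omega_\eps^\pm)^{-1/2}$. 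The remaining points (graphical convergence, bounded geometry, admissibility) are immediate from the statement of \cref{PRTheorem} and continuity of Dirichlet eigenvalues, so no substantial computation is needed.
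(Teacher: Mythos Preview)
Your proposal is correct and follows essentially the same approach as the paper. The paper's argument (given in the sentence immediately preceding the corollary) is terse: Pacard--Ritor\'e produces global Allen--Cahn solutions near $Y$, Brezis--Oswald uniqueness identifies their restrictions with the signed Dirichlet minimizers on each side, and the first variation formula then shows the nodal set is $\BE_\eps$-critical; you have simply spelled out the identification step (positivity, admissibility, uniqueness) in more detail than the paper does.
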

%
%
\noindent In general, the Pacard--Ritore solutions are the prime examples of critical points of $\BE_{\eps}$ on $(M, g)$ generic, though the set of critical points of $\BE_{\eps}$ may be larger. 
\section{Second Variation.} \label{SecondVariationSection}
\noindent Consider the linearized system, $\dot{u}$, as in \eqref{LinearizedSystem} to a family of $\{u_t\}$. These linearizations are defined on both $\Omega$ and $\Omega^c$ and are labelled $\dot{u}, \dot{u}^c$ respectively. We'll use $\dot{u}$ interchangeably with $\dot{u}^+$ and $\dot{u}^c$ interchangeably with $\dot{u}^-$. \nl \nl
In this section we aim to show the following \cref{SecondVariationTheorem}
\SecondVariationTheorem*
\noindent \Pf The first variation formula actually holds for any $t$:
\[
\frac{d}{dt} \BE(t) \Big|_{t = s} = \frac{\eps}{2} \int_{\partial \Omega_s} \left[u_{\nu_s}(p,s)^2 - u^c_{\nu_s}(p,s)^2 \right] f(p,s) dA
\]
where $\nu_t$ is an inward normal to $\partial \Omega_t$ and $f(p,t)$ is the map such that 
\begin{align*}
F_{s,r}&: \partial \Omega_s \xrightarrow{\cong} \partial \Omega_r  \\
p \in \partial \Omega_s, \quad F_{s,r}(p) &= \exp_{p, \partial \Omega_s }(f(p,s) r \nu_s(p))
\end{align*}
and each $f(p,s)$ is defined on $\partial \Omega_s$, or equivalently $f: \partial \Omega \times (-\delta, \delta)$ by pulling back each $\partial \Omega_s$ to $\partial \Omega$ by a diffeomorphism. We take the first variation formula and pull it back by $F_s$: 
\begin{equation} \label{FirstVarSParameterFamily}
\frac{d}{dt} \BE(t) \Big|_{t = s} = \frac{\eps}{2} \int_{\partial \Omega} F_s^*\left(\left[u_{\nu_s}(\cdot,s)^2 - u^c_{\nu_s}(\cdot,s)^2 \right]\right)(p) F_s^*(f(\cdot,s))(p) F_s^*(dA)
\end{equation}
for $p \in \partial \Omega$ a parameter. With $t = 0$ a critical point, the derivative of equation \eqref{FirstVarSParameterFamily} is given by 
\begin{align*}
\frac{d^2}{dt^2} \BE(t) \Big|_{t = 0} &= \frac{\eps}{2} \int_{\partial \Omega} \frac{d}{ds} F_s^*\left(\left[u_{\nu_s}(\cdot,s)^2 - u^c_{\nu_s}(\cdot,s)^2 \right]\right)(p) \Big|_{s = 0} \; f(p) dA
\end{align*}
We can write the differentiated part as
\begin{align*}
F_s^*\left(\left[u_{\nu_s}(\cdot,s)^2 - u^c_{\nu_s}(\cdot,s)^2 \right]\right)(p) &= u_{\nu_s}(F_s(p),s)^2 - u^c_{\nu_s}(F_s(p),s)^2 \\
& =\langle (\nabla u)(F_p(s), s), \nu_s \rangle^2 - \langle (\nabla u^c)(F_p(s), s), \nu_s \rangle^2  
\end{align*}
differentiating the individual terms, we get
\begin{align*}
\frac{d}{ds} \langle (\nabla u)(F_p(s), s), \nu_s \rangle^2 \Big|_{s = 0} &= 2 u_{\nu} \left[ \dot{\nu}(u) + \dot{u}_{\nu} + f u_{\nu \nu}\right] 
\end{align*}
And we conclude 
\[
\frac{d^2}{dt^2} \BE(t) \Big|_{t = 0} = \eps \int_{\partial \Omega} f u_{\nu} [\dot{u}_{\nu} - \dot{u}^c_{\nu}]
\]
where $u_{\nu} = u_{\nu}(p,0)$ for $p \in \partial \Omega$. In the first line, we've noted that 
\begin{align*}
\frac{d}{ds}\langle (\nabla u)(F_p(s), s), \nu_s \rangle &= \frac{d}{ds} \nu_s^i(\partial_i u)(F_p(s), s) \\
&= \dot{\nu}^i (\partial_i u)(p,0) + \nu^i [\nabla (\partial_i u) \cdot \dot{F}_p] + \nu^i (\partial_i \dot{u})(p,0) \\
&= \dot{\nu}(u) + \nu^i [\nabla (\partial_i u) \cdot (f \nu)] + \nu(\dot{u}) 
\end{align*}
if we choose fermi coordinates off of $\partial \Omega$ so that $\nu = \partial_t$, the middle expression becomes 
\[
f u_{tt} = f u_{\nu \nu}
\]
But note that $u_{\nu \nu} = u^c_{\nu \nu}$, since at a critical point, we have a solution to Allen--Cahn which is necessarily smooth. Thus $f u_{\nu \nu} - f u^c_{\nu \nu} = 0$. Similarly
\[
\dot{\nu}(u) - \dot{\nu}(u^c)\Big|_{\partial \Omega} = 0
\]
Since $u$ is smooth across the boundary. This gives our final expression
\begin{align*}
\frac{d^2}{dt^2} \BE(t) \Big|_{t = 0} &= \eps \int_{\partial \Omega} f u_{\nu} [\dot{u}_{\nu} - \dot{u}^c_{\nu}]
\end{align*}
this concludes the proof of theorem \ref{SecondVariationTheorem}. \qed \nl \nl      
\noindent \rmk \; Again, we note that the second variation formula can also be formally defined when $M$ is non-compact or when $\partial \Omega $ non-compact, assuming that $f \in C_c^{\infty}(\partial \Omega)$.
\section{$\BE_{\eps}$ Indices and Nullity} \label{EqualityOfIndices}
\noindent Recall that $Y$ is a critical point for $\BE_{\eps}$, then $u_{Y, \eps}$ is a solution to Allen--Cahn and an energy minimizer on each component of $M \backslash Y$. In this section we prove the following two theorems
\EqualityOfIndicesTheorem*
\NullityBound*
\noindent To prove the theorems, we first establish two propositions
\begin{proposition} \label{GraphicalityProp}
Let $Y$ a critical point for $\BE_{\eps}$ with bounded geometry \eqref{boundedGeometry} constants $C, \rho$. If $u_r = u_{\eps, Y} + r \phi$ for $ \phi \in C^1(M)$, then there exists an $r_0 = r_0(C, \rho) > 0$ such that for all $r < r_0$, $Y_r := u_r^{-1}(0)$ is graphical over $Y$.
\end{proposition}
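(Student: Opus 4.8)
The plan is to apply the implicit function theorem to $u_{\eps,Y}$, using that at a critical point $u_{\eps,Y}$ is a genuine (hence smooth) solution of \eqref{ACEquation} whose gradient is of order $\eps^{-1}$ and of definite sign in a thin collar of $Y$. Fix $\eps$ small enough that the expansion \eqref{uEpsExpansion4} holds and work in Fermi coordinates $(s,t)$ on a fixed tubular neighbourhood $N_{\rho_0}(Y)$, with $\rho_0$ depending only on the bounded geometry \eqref{boundedGeometry} of $Y$, with $t$ the signed distance to $Y$ and $\nu=\partial_t$; since $M$ is compact, $\|\phi\|_{C^1(M)}<\infty$, and all constants below are allowed to depend on $\eps$ and $\phi$, which is harmless as $\eps$ is fixed. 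The first step is confinement of $Y_r$: outside $N_{\rho_0}(Y)$ the expansion \eqref{uEpsExpansion4} gives $u_{\eps,Y}=\pm 1+O(\eps)$, so $|u_{\eps,Y}|\ge 1/2$ once $\eps$ is small, and hence for $r<r_0$ with $r_0\le \tfrac14\|\phi\|_{C^0(M)}^{-1}$ one has $|u_r|\ge 1/4$ there, forcing $Y_r=u_r^{-1}(0)\subseteq N_{\rho_0}(Y)$.

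The second step is to show $Y_r$ is a single normal graph inside the collar. From \eqref{uEpsExpansion4} together with $\|H_Y\|_{C^0}=O(\eps)$ at a critical point (\cref{FirstVarMeanCurvature}) one gets, uniformly in $s$, $\partial_t u_{\eps,Y}(s,t)=\eps^{-1}\dot g(t/\eps)+O(1)$. Since $\dot g(0)=1/\sqrt 2$ and $\dot g$ is bounded below on compact intervals, there are $c_1,c_2>0$ with $\partial_t u_{\eps,Y}\ge c_1/\eps$ on $\{|t|\le c_2\eps\}$; shrinking $r_0$ so that $r_0\|\partial_t\phi\|_{C^0}\le \tfrac12 c_1/\eps$ makes $t\mapsto u_r(s,t)$ strictly increasing on this set. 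On the outer collar $\{c_2\eps\le |t|\le \rho_0\}$, \eqref{uEpsExpansion4} gives $u_{\eps,Y}(s,t)=g(t/\eps)+O(\eps)$ (the cutoff in $\bg$ only pushes the profile closer to $\pm1$), so $\operatorname{sgn}(u_{\eps,Y})=\operatorname{sgn}(t)$ with $|u_{\eps,Y}|$ bounded below uniformly; shrinking $r_0$ once more, $\operatorname{sgn}(u_r)=\operatorname{sgn}(t)$ there too. Hence for each $s$ the function $t\mapsto u_r(s,t)$ is negative at $t=-\rho_0$, positive at $t=\rho_0$, and vanishes only inside $\{|t|\le c_2\eps\}$, where it is strictly increasing, so it has a unique zero $h(s)\in(-c_2\eps,c_2\eps)$. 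Since $\partial_t u_r\neq 0$ there, the implicit function theorem — with the uniform $C^2$ bounds from \eqref{boundedGeometry} — gives $h\in C^1(Y)$, so $Y_r=\{\exp_p(h(p)\nu(p)):p\in Y\}$ is a normal graph over $Y$, with $\|h\|_{C^0}\to 0$ as $r\to 0$.

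The delicate point, and the place where some care is needed, is that the lower bound on $|\nabla u_{\eps,Y}|$ is \emph{not} uniform over a fixed-width tube: it is of size $\eps^{-1}$ only on an $O(\eps)$-thick core of $Y$ and decays exponentially in $|t|/\eps$ outside it, so one cannot run the implicit function theorem directly on $N_{\rho_0}(Y)$. The remedy is exactly the two-scale split above — strict monotonicity on the $O(\eps)$-core, where $\partial_t\phi$ is negligible against $\eps^{-1}\dot g$, combined with a definite-sign estimate on the outer collar coming from $u_{\eps,Y}$ being already close to $\pm1$ there — which together guarantee that $Y_r$ meets each Fermi fibre exactly once. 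Once this is arranged, keeping the constants controlled through \eqref{boundedGeometry} makes the remaining estimates routine.
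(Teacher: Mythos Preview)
Your proof is correct and follows essentially the same route as the paper: both arguments apply the implicit function theorem in Fermi coordinates using that $\partial_t u_{\eps,Y}|_{t=0}=\tfrac{1}{\eps\sqrt{2}}+O(1)\neq 0$ from \eqref{uEpsExpansion4}, and both handle global uniqueness of the zero along each fibre by invoking the exponential convergence of $u_{\eps,Y}$ to $\pm 1$ away from $Y$. The only cosmetic difference is that the paper applies the implicit function theorem jointly in $(s,r,t)$ at $(s_0,0,0)$ and patches via a finite cover of $Y$, whereas you fix $r$ and argue more explicitly via a two-scale split of the collar; your treatment is in fact slightly more careful on this point than the paper's one-line appeal to exponential decay.
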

\noindent \Pf We argue as follows: we want to solve 
\[
0 = u_{\eps, Y} + r \phi
\]
Without loss of generality, assume $||\phi||_{C^0} = 1$. Let $s$ be a coordinate for $Y$ and $t$ denote the signed distance from $Y$. Parameterize each $u_r$ as 
\[
u_r = u(r,s,t) = u_{\eps,Y}(s,t) + r \phi(s,t)
\] 
We want to solve for 
\[
F(s,r) \;\; \st \;\; u(r,s, t = F(s,r)) = 0
\]
with $F(s,0) = 0$. First note that 
\[
u(r = 0, s = s_0, t = 0) = 0
\]
for any $s_0$. Further note that 
\[
\partial_t u \Big|_{r = 0, s = s_0, t = 0} = \partial_t u_{\eps, Y}(s, t) \Big|_{s = s_0, t = 0} = \frac{1}{\eps \sqrt{2}} + O(1) \neq 0
\]
by \eqref{uEpsExpansion4}. By the implicit function theorem, we get $\delta > 0$ and $F(s,r)$ which is differentiable in $s$ and $r$ such that 
\[
\text{dist}_Y(s,s_0) + |r| < \delta \implies u(r, s, t = F(s,r)) = 0
\]
and this is the unique zero in this neighborhood. Note that $F$ is defined locally, but we can repeat this argument for every $s_0 \in Y$. Taking a finite cover of $Y$, we find $\delta_0 > 0$ and $F$, a function, such that
\[
\exists F: Y \times (-\delta_0, \delta_0) \; \st\; u(r,s,F(s,r)) = 0
\]
and $F$ is continuous in both $s$ and $r$. This finishes the proof of graphicality. Note that for large $r$, the level set of $u_r$ may have more than one component. However, using that $||\phi||_{C^0(M)} = 1$ and taking $r < \delta_0^*$ for a potentially smaller $\delta_0^* \leq \delta_0$, we can guarantee that $u_r$ only has one component and it is given by $\{(s, t = F(s,r)\}$. This follows because $u_{\eps}(s,t)$ is exponentially converging to $ \pm 1$ away from $t = 0$. \qed \nl \nl
\noindent Let $f \in C^2(Y)$. For $Y_r = \exp_Y (f r \nu)$, let $w_{r,\eps}(f)$ be the corresponding $\BE_{\eps}$-phase transition for this family of surfaces. We can define 
\[
\dot{w}_{\eps} = \partial_r w_{r,\eps}(f) \Big|_{r = 0}
\]
If $Y$ has bounded geometry \eqref{boundedGeometry}, then for $\eps < \eps_0(C, \rho)$, $\dot{w}_{\eps}$ solves \eqref{LinearizedSystem} with the corresponding boundary condition.
\begin{lemma}
Let $Y$ have bounded geometry \eqref{boundedGeometry} with constants $C, \rho > 0$. Define $Y_{1,r} = \exp_Y (f_1 r \nu)$, $Y_{2,r} = \exp_Y (f_1 r \nu)$, and $w_{r,\eps}(f_1)$, $w_{r,\eps}(f_2)$ the corresponding $\BE$-phase transitions. Then there exists an $\eps_0 = \eps_0(C, \rho)$ such that for all $\eps < \eps_0$
\[
\alpha \dot{w}_{\eps}(f_1) + \beta\dot{w}_{\eps}(f_2) = \dot{w}_{\eps}(\alpha f_1 + \beta f_2)
\]
\end{lemma}
\noindent \Pf This follows by the form of \eqref{LinearizedSystem} as $\alpha \dot{w}_{\eps}(f_1) + \beta \dot{w}_{\eps}(f_2)$  and $\dot{w}_{\eps}(\alpha f_1 + \beta f_2)$ both satisfy the linearized PDE and have the same boundary conditions for $\eps$ sufficiently small. The solution is unique by proposition \ref{NoLinearizedKernel} (or \cite{marx2023dirichlet}, Thm 1.4).\qed 
\subsection{Proof of \cref{EqualityOfIndicesTheorem}}
\noindent Note that any variation of surfaces is a variation in $H^1(M)$, and so 
\[
\frac{d^2}{dr^2} \BE_{\eps}(Y_r) \Big|_{r = 0} = \frac{d^2}{dr^2} E_{\eps}(w_{r,\eps}) \Big|_{r = 0}
\]
where $w_{r,\eps}$ the $\BE_{\eps}$-phase transition corresponding to $Y_r$. This
means that any $\BE_{\eps}$ variation with negative second derivative is an AC variation with negative second derivative. To see that 
\begin{equation} \label{IndexLowerBound}
\text{Ind}_{\BE_{\eps}}(Y) \leq \text{Ind}_{AC_{\eps}}(u_{\eps, Y})
\end{equation}
we need to show that the map from hypersurface variations of $Y$ (to first order)is injective into the space of $H^1(M)$ functions which are first order variations of the form $u_{r, \eps} = u_{\eps, Y} + r \phi$ for $\phi: M \to \R$. However, given $f_1, f_2: Y \to \R$ two variations of $Y$, the corresponding variations of $Y$ are $Y_{r,i} = \exp_Y(f_i \cdot r)$, and the corresponding variations of $u_{\eps, Y}$ are 
\begin{align*}
\dot{w}_{\eps}(f_i) &:= \frac{d}{dr} u_{\eps, Y_{r,i}} \Big|_{r = 0}
\end{align*}
for $i=1,2$. Since we assume $f_1 \neq f_2$ and we know that $\dot{w}(f_i) \Big|_{Y} = - f_i u_{\nu}$ by the dirichlet condition in equation \eqref{LinearizedSystem}, we see that $\dot{w}_{\eps}(f_1) \neq \dot{w}_{\eps}(f_2)$. This establishes injectivity and hence the lower bound on index, equation \eqref{IndexLowerBound}.\nl \nl
To see equality of indices, let $u_{r,\eps} = u_{\eps, Y} + r \phi$ a $C^1(M)$ family of functions. From proposition \ref{GraphicalityProp}, we can write
\[
u_r = w_{r,\eps} + \psi_{r,\eps}
\]
where $w_{r,\eps}$ is the $\BE_{\eps}$-phase transition on $Y_r = u_{r,\eps}^{-1}(0)$, and $\psi_{r,\eps}\Big|_{Y_r} = 0 $ for all $r$. Since $w_{r,\eps}$ is $C^1$ with respect to $r$ by \S \ref{SmoothDependenceDomains}, we readily see that $\psi_{r,\eps}$ is $C^1$ in $r$. Thus we can define 
\begin{align*}
\dot{w}_{\eps} &:= \partial_r w_{r,\eps}\Big|_{r = 0}: M \to \R \\
\dot{\psi}_{\eps} &:= \partial_r \psi_{r, \eps}\Big|_{r = 0}: M \to \R
\end{align*}
For the purposes of computing the second variation at a critical point of $\BE_{\eps}$, it suffices to consider the family of functions given by:
\[
u_{r, \eps} = u_{\eps, Y} + r [\dot{w}_{\eps} + \dot{\psi}_{\eps}] 
\]
We then compute
\begin{align*}
\frac{d^2}{dr^2} E_{\eps}(u_{r,\eps}) \Big|_{r = 0} &:= Q(\dot{w}_{\eps} + \dot{\psi}_{\eps}, \dot{w}_{\eps} + \dot{\psi}_{\eps}) \\
&= \int_M \eps |\nabla (\dot{w}_{\eps} + \dot{\psi}_{\eps})|^2 + \frac{W''(u_{\eps, Y})}{\eps} (\dot{w}_{\eps} + \dot{\psi}_{\eps})^2 \\
&= Q(\dot{w}_{\eps}, \dot{w}_{\eps}) + Q(\dot{\psi}_{\eps}, \dot{\psi}_{\eps}) + 2 \int_M \eps \langle \nabla \dot{w}_{\eps}, \nabla \dot{\psi}_{\eps} \rangle + \frac{W''(u_{\eps, Y})}{\eps} \dot{w}_{\eps} \dot{\psi}_{\eps}
\end{align*}
We note that 
\[
p \in Y, \;\; p(r) \in Y_r \rightarrow \psi_{r,\eps}(p(r)) = 0 \implies \dot{\psi}_{\eps}(p) = - \nabla \psi_{0,\eps} \cdot \dot{p} = 0
\]
for any path $p(r) \in Y_r$ with $p(0) = p$. The last line follows since $\psi_{0,\eps} |_Y \equiv 0$. Then because $\dot{w}_{\eps}$ satisfies \eqref{LinearizedSystem}, we have
\begin{align*}
\int_M \eps \langle \nabla \dot{w}_{\eps}, \nabla \dot{\psi}_{\eps} \rangle + \frac{W''(u_{\eps, Y})}{\eps} \dot{w}_{\eps} \dot{\psi}_{\eps} &= \int_Y \eps \dot{\psi}_{\eps} [\dot{w}_{\eps, \nu^+} - \dot{w}_{\eps, \nu^-}] - \int_M  \eps^{-1} [ \eps^2 \Delta_g \dot{w}_{\eps} - W''(u_{\eps, Y}) \dot{w}_{\eps} ] \dot{\psi}_{\eps} \\
&= 0
\end{align*}
%
\noindent Thus 
\begin{align*}
\frac{d^2}{dr^2} E_{\eps}(u_{\eps, r}) \Big|_{r = 0} &= Q(\dot{w}_{\eps}, \dot{w}_{\eps}) + Q(\dot{\psi}_{\eps}, \dot{\psi}_{\eps}) \\
&= \frac{d^2}{dr^2} \BE_{\eps}(Y_r) \Big|_{r = 0} + Q(\dot{\psi}_{\eps}, \dot{\psi}_{\eps})
\end{align*}
We now note that 
\begin{align} \label{EnergyIncrease}
E_{\eps}(u_{\eps, Y} + r \dot{\psi}) & = Q(\dot{\psi}_{\eps}, \dot{\psi}_{\eps}) \\
&\geq E_{\eps}(u_{\eps, Y}) \\ \nonumber
\implies Q(\dot{\psi}_{\eps}, \dot{\psi}_{\eps}) & \geq 0
\end{align}
precisely because $u_{\eps, Y}$ consists of piecewise energy minimizers vanishing on $Y$ and $u_{\eps, Y} + r \dot{\psi}_{\eps}$ vanishes on $Y$. This means that 
\begin{align*}
\frac{d^2}{dr^2} E_{\eps}(u_{\eps, r}) \Big|_{r = 0} -  \frac{d^2}{dr^2} \BE_{\eps}(Y_r) \Big|_{r = 0} &= Q(\dot{\psi}_{\eps}, \dot{\psi}_{\eps}) \\
& \geq 0
\end{align*}
In particular
\[
\frac{d^2}{dr^2} E_{\eps}(u_r) \Big|_{r = 0} < 0 \implies \frac{d^2}{dr^2} \BE_{\eps}(Y_r) \Big|_{r = 0} < 0
\]
Which shows that any variation of $u_r$ contributing AC-index gives rise to a variation of $Y$ that contributes BE-index. In particular, if $\dot{u}_1, \dots, \dot{u}_k \in C^1(M)$ span a vector space of dimension $k$, equal to the AC-Morse index of $u_{\eps, Y}$, i.e. 
\[
\forall \{c^i\}_{i = 1}^k, \qquad \frac{d^2}{dr^2} E_{\eps}(u_{\eps, Y} + r c^i \dot{u}_i) = Q(c^i \dot{u}_i, c^j \dot{u}_j) \leq 0
\]
with equality only when $c^i = 0$ for all $i$. Then we have 
\begin{align*}
\forall i, \quad \dot{u}_i &= \dot{w}_i + \dot{\psi}_i \\
\frac{d^2}{dr^2} E_{\eps}(u_{\eps, Y} + r c^i \dot{u}_i) &= Q(c^i \dot{w}_i + c^i \dot{\psi}_i, c^i \dot{q}_i + c^i \dot{\psi}_i) \\
&= Q( c^i \dot{w}_i, c^j \dot{w}_j) + Q(c^i \dot{\psi}_i, c^j \dot{\psi}_j) \\
&= \frac{d^2}{dr^2} \BE_{\eps}(Y_r) \Big|_{r = 0} + Q(c^i \dot{\psi}_i, c^j \dot{\psi}_j) 
\end{align*} 
Here, $\dot{w}_i$ are again the first order variations of the $\BE_{\eps}$ phase transitions on $Y^i_r = (u_{\eps, Y} + r \dot{u}_i)^{-1}(0)$ and $\dot{\psi}_i$ are the remainders vanishing on $Y$. Note that we've adapted Einstein notation and noted that $c^i \dot{u}_i$ solves the linearized Allen--Cahn equation. Moreover $Y_r$ is the family of surfaces coming from the combined perturbation
\[
f = c^i \dot{u}_i |_Y \in C^1(Y)
\] 
Since $c^i \dot{\psi}_i|_Y = 0$, we again see that 
\begin{align} \label{RemainderTermPos}
Q(c^i \dot{\psi}_i, c^j \dot{\psi}_j) &\geq 0 \\ \nonumber
\frac{d^2}{dr^2} E_{\eps}(u + r c^i \dot{u}_i) < 0 & \implies \frac{d^2}{dr^2} \BE_{\eps}(Y_{r c^i \dot{w}_i}) \Big|_{r = 0} < 0
\end{align}
Note that equation \eqref{RemainderTermPos} first tells us that the $\{w_i\}$ are linearly independent, for if not, we could arrange for $\{c^i\}$ so that 
\begin{align*}
c^i \dot{u}_i &= c^i \dot{w}_i + c^i \dot{\psi}_i \\
&= c^i \dot{\psi}_i 
\end{align*}
but 
\[
\frac{d^2}{dr^2} E_{\eps}(u_{\eps, Y} + r c^i \dot{u}_i)  = Q(c^i \dot{\psi}_i, c^j \dot{\psi}_j) \geq 0
\]
a contradiction to the assumption that this second variation is stricly negative unless $c^i \equiv 0$. This verifies that 
\[
\frac{d^2}{dr^2} \BE_{\eps}(Y_{r c^i \dot{w}_i}) \Big|_{r = 0} < 0
\]
for all $\{c^i\}$ non-zero sequences. This gives us the other bound,
\[
\text{Ind}_{\BE_{\eps}}(Y) \geq \text{Ind}_{AC_{\eps}}(u_{\eps, Y})
\]
ending the proof. \qed

\subsection{Proof of \cref{NullityBound}}
\subsubsection{Set up}
From here on, we will suppress the $\eps$ subscripts for notational convenience, since the proof applies for any $\eps > 0$ sufficiently small. To prove this theorem, we first define the corresponding symmetric, bilinear form to the second variation. For any $f \in C^2(Y)$, let $\dot{w}(f)$ denote the $C^0$ and piecewise $C^1$ function:
\begin{align*}
\dot{w}(f)(p) &:= \begin{cases}
	\dot{w}^+(f)(p) & p \in M^+ \\
	\dot{w}^-(f)(p) & p \in M^- \\
	-f(p) u_{\nu}(p) & p \in Y
	\end{cases} \\
\eps^2 \Delta_g \dot{w}^{\pm}(f) &= W''(u) \dot{w}^{\pm}(f) \quad \text{on } M^{\pm}
\end{align*}
Note that if $Y$ is a critical point of $\BE_{\eps}$, then $u_{\nu}$ is well defined on $Y$. We now define 
\begin{align*}
f, h &\in C^2(Y) \\
Q_{\BE_{\eps}}(f,h) &= -\int_Y \dot{w}(f) \eps[\dot{w}^+(h)_{\nu} - \dot{w}^+(h)_{\nu}] \\
&= \eps \int_Y f u_{\nu} [\dot{w}^+(h)_{\nu} - \dot{w}^+(h)_{\nu}]
\end{align*}
So that the second variation of $\BE$ along $f$ is $Q_{\BE_{\eps}}(f,f)$. To see that is bilinear, note that equation \eqref{LinearizedSystem} is linear in its dirichlet condition. To see symmetry, we write 
\begin{align*}
Q_{\BE_{\eps}}(f,h) &= -\int_Y \dot{w}(f) \eps[\dot{w}^+(h)_{\nu} - \dot{w}^+(h)_{\nu}] \\
&= -\int_Y \dot{w}^+(f) \eps \dot{w}^+(h)_{\nu} - \dot{w}^-(f) \eps \dot{w}^-(h)_{\nu} 	\\
&=  -\int_Y \dot{w}^+(f) \eps \dot{w}^+(h)_{\nu^+} + \dot{w}^-(f) \eps \dot{w}^-(h)_{\nu^-} \\
&= \eps \int_{M^+} \dot{w}^+(f) \Delta_g \dot{w}^+(h) - \langle \nabla \dot{w}^+(f), \dot{w}^+(h) \rangle \\
& \quad + \eps \int_{M^-} \dot{w}^-(f) \Delta_g \dot{w}^-(h) - \langle \nabla \dot{w}^-(f), \dot{w}^-(h) \rangle \\
&= \eps \int_M - \langle \nabla \dot{w}(f), \nabla \dot{w}(h) \rangle + W''(u) \dot{w}(f) \dot{w}(h)
\end{align*}
The second line follows since $\dot{w}^{\pm}$ coincide on $Y$, and in the third line, we recalled $\nu = \nu^+$ points towards $M^+$ while $\nu^- = - \nu$ points towards $M^-$. Clearly, the final expression is symmetric in $f$ and $h$. With this, the associated linearized operator to the second variation of $\BE_{\eps}$ is 
\begin{align*}
\delta \dot{w}_{\nu}(\cdot)&: C^2(Y) \to \R \\
\delta \dot{w}_{\nu}(f) &:= \dot{w}^+(f)_{\nu} - \dot{w}^-(f)_{\nu} 
\end{align*}
With this, we recall/define
\begin{align*}
\text{Null}_{AC_{\eps}}(u_{\eps, Y}) &= \dim \ker [\eps^2 \Delta_g - W''(u)] \\
\text{Null}_{\BE_{\eps}}(Y) &= \dim \ker [\delta \dot{w}_{\nu}]
\end{align*}
where both kernels are considered over $C^2$. 
\subsubsection{Proof}
\noindent As before, any variations of the form $u_r^i = u_{\eps, Y} + r \phi^i$ can be rewritten as 
\[
u_r^i = u_{\eps, Y} + r [\dot{w}^i + \dot{\psi}^i]
\]
for $i = 1,2$, with $\dot{w}^i$ a solution to \eqref{LinearizedSystem} and $\dot{\psi}^i|Y = 0$. Moreover, define $f^i$ such that $\dot{w}^i \Big|_Y = -f^i u_{\nu}$. We compute
\begin{align*}
\QAC(\dot{w}^1 + \dot{\psi}^1,\dot{w}^2 + \dot{\psi}^2) &= \QAC(\dot{w}^1, \dot{w}^2) + \QAC(\dot{\psi}^1, \dot{w}^2) + \QAC(\dot{w}^1, \dot{\psi}^2) + \QAC(\dot{\psi}^1, \dot{\psi}^2) \\
&=\QAC(\dot{w}^1, \dot{w}^2) + \QAC(\dot{\psi}^1, \dot{\psi}^2)  \\
&= \QBE(f^1, f^2) + \QAC(\dot{\psi}^1, \dot{\psi}^2)
\end{align*}
In the second line, we've noted as in the proof of \cref{EqualityOfIndicesTheorem} that
\[
\QAC(\dot{\psi}^1, \dot{w}^2) = \QAC(\dot{w}^1, \dot{\psi}^2) = 0
\]
by integration by parts and $\dot{\psi}^i \Big|_Y = 0$. \nl \nl
If $\dot{u}^1 = \dot{w}^1 + \dot{\psi}^1$ is AC-null, then setting $\dot{\psi}^2 = 0$, we see that $f^1$ is $\BE_{\eps}$-null. This means that 
\[
\text{Null}_{AC_{\eps}}(u_{\eps, Y}) - \text{Null}_{\BE_{\eps}}(Y) \geq 0
\]
Similarly, if $f^2 = 0$, then $\dot{w}(f^2) = 0$ by proposition \ref{NoLinearizedKernel} (or \cite{marx2023dirichlet}, Thm 1.4)  and so we have that
\begin{align*}
\QAC(\dot{\psi}^1, \cdot) &: C^2_0(M^+) \cup C^2_0(M^-) \to \R \\
\QAC(\dot{\psi}^1, \cdot) &= 0
\end{align*}
where $C^k_0(M^{\pm})$ denotes $C^k$ functions vanishing on $Y = \partial M^{\pm}$. In particular, this means that $\dot{\psi}^1$ satisfies the linearized allen-cahn equation on $M^{\pm}$, but again vanishing on $Y$. Again, by proposition \ref{NoLinearizedKernel}, we have that $\dot{\psi}^1 \equiv  0$. Thus, if $\dot{u}^1$ lies in the nullity of $\QAC$, we have that $\dot{u}^1 = \dot{w}(f^1)$ for some $f^1$ in the nullity of $\QBE$. This finishes the proof. \qed  
\section{Second Variation for $\eps \to 0$} \label{SecondVariationSmallEpsSection}
\noindent In this section we aim to prove \cref{SecondVarFormulaSmallEpsSobolevThm} on the asymptotic expansion of the second variation formula in terms of $\eps$. 
\SecondVarFormulaSmallEpsSobolevThm*
\subsection{Kernel of $\Leu$}
\noindent To prove the Sobolev version of the second variation formula, we first prove \cref{LinearizedOperatorInverseThm}, i.e. $\Leu: H^1_{\eps, 0}(M^+) \to H^{-1}_{\eps}(M^+)$ is invertible. This amounts to proving that there is no kernel:
\begin{proposition} \label{NoLinearizedKernel}
Let $Y = \partial M^+$ with bounded geometry constants \eqref{boundedGeometry} $C, \rho > 0$. For $\phi \in H^1_{\eps, 0}(M^+)$ we have for all $\eps < \eps_0 = \eps_0(C, \rho)$ sufficiently small that
\[
||\phi||_{H^1_{\eps}(M^+)} \leq K ||\Leu(\phi)||_{H^{-1}_{\eps}(M^+)}
\]
for some $K > 0$ independent of $\eps$ \end{proposition}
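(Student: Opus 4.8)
The plan is to establish the coercivity estimate $\|\phi\|_{H^1_\eps(M^+)} \le K\|\Leu\phi\|_{H^{-1}_\eps(M^+)}$ by the standard dichotomy: away from the nodal set $Y$, the potential $W''(u)$ is close to $W''(\pm 1) = 2 > 0$, so $-\Leu$ is uniformly positive there; near $Y$, where $W''(u)$ dips negative (down to $W''(0) = -1$), the one-dimensional model operator $\partial_t^2 - W''(g)$ on the half-line with Dirichlet condition at $0$ is strictly positive — this is precisely the content cited from \cite{marx2023dirichlet}, Appendix 7.6 / Thm 1.4, namely that the functions $w,\rho,\tau,\kappa$ are well-defined, equivalently that the Dirichlet problem for the linearized operator has no kernel. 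So the first step is to record the model-operator positivity: there is $c_0 > 0$ with $\int_0^\infty (\dot\psi)^2 + W''(g)\psi^2 \ge c_0 \int_0^\infty (\dot\psi)^2 + \psi^2$ for all $\psi \in H^1_0((0,\infty))$. The crucial point making this true (as opposed to for the operator on all of $\R$, which has the kernel element $\dot g$) is the Dirichlet condition $\psi(0) = 0$, which excludes the translation zero-mode since $\dot g(0) = \sigma \ne 0$.

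Next I would set up the Fermi-coordinate decomposition $(s,t)$ near $Y$, writing a point of $M^+$ within distance $\sim -\eps\omega\ln\eps$ of $Y$ as $(s,t)$ with $t \ge 0$ the signed distance, and choose a cutoff separating a tubular collar of $Y$ from the "bulk" region where $|u| \ge 1 - \delta$ and hence $W''(u) \ge 2 - C\delta > 1$. On the bulk, integration by parts gives directly $\langle -\Leu\phi, \phi\rangle \ge \int \eps^2|\nabla\phi|^2 + \phi^2$ up to the rescaling bookkeeping, i.e. control in $H^1_\eps$. On the collar, after the change of variables $t \mapsto t/\eps$ the operator becomes $\partial_\tau^2 - W''(g(\tau)) + (\text{lower-order curvature terms of size }O(\eps))$ acting fiberwise in $\tau$, plus the tangential Laplacian $\eps^2\Delta_Y$ which only helps (it is nonnegative). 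Applying the model positivity fiberwise in $\tau$ for each fixed $s$ — using that $\phi$ vanishes on $Y$, i.e. at $\tau = 0$ — and integrating over $s$, together with the estimate \eqref{ApproxHeteroclinic}-type control on the discrepancy between $\bg$ and the true $u$ from expansion \eqref{uEpsExpansion4} ($\|u - \bg_\eps\|_{C^0} = O(\eps)$, so $\|W''(u) - W''(\bg_\eps)\|_{C^0} = O(\eps)$), yields collar-coercivity with constant $c_0 - O(\eps) > c_0/2$ for $\eps$ small. Patching the two regions with the cutoff (absorbing the cross terms from differentiating the cutoff, which live in the overlap where the bulk estimate already dominates and carry a harmless $\eps^2|\nabla\chi|^2 = O(1)$ factor against an exponentially small $\int \phi^2$) gives $\langle -\Leu\phi,\phi\rangle \ge c\|\phi\|_{H^1_\eps}^2$, whence $\|\phi\|_{H^1_\eps} \le c^{-1}\|\Leu\phi\|_{H^{-1}_\eps}$ by Cauchy–Schwarz in the $H^1_\eps$–$H^{-1}_\eps$ pairing.

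The main obstacle I expect is making the collar estimate uniform in $\eps$ while correctly tracking the curvature error terms: in Fermi coordinates the volume form and the Laplacian pick up $s$- and $t$-dependent factors $(1 + tH_Y(s) + \dots)$, and after rescaling $t = \eps\tau$ these contribute terms of size $O(\eps\tau)$ which are only controlled on the collar because $\tau \lesssim -\omega\ln\eps$, so $\eps\tau = O(\eps|\ln\eps|) \to 0$ — this is where the bounded-geometry hypothesis \eqref{boundedGeometry} and the choice of $\omega$ enter, and the bookkeeping of which error is absorbed into $c_0/2$ must be done carefully. A secondary subtlety is that $H^{-1}_\eps$ is defined by duality against $H^1_{\eps,0}$, so one must be slightly careful that the test function used in the pairing ($\phi$ itself) indeed lies in $H^1_{\eps,0}(M^+)$ — which it does by hypothesis — and that the constant $K$ genuinely does not degenerate. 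Once coercivity is in hand, Theorem \ref{LinearizedOperatorInverseThm} (invertibility of $\Leu\colon H^1_{\eps,0}(M^\pm) \to H^{-1}_\eps(M^\pm)$) follows from Lax–Milgram, and existence/uniqueness for the Dirichlet problem \eqref{LinearizedSystem} is then immediate, as claimed.
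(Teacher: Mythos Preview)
Your approach is sound and genuinely different from the paper's. You argue via the spectral gap of the one-dimensional model operator $-\partial_t^2 + W''(g)$ on $(0,\infty)$ with Dirichlet condition at $0$ (whose bottom eigenvalue is $3/2$, coming from the odd full-line eigenfunction), together with a collar/bulk partition-of-unity in the style of Pacard--Ritor\'e. The paper instead avoids the model spectral gap entirely: it first reduces to an $L^2$ bound (Proposition~\ref{L2Bound}) via an elementary elliptic-regularity estimate (Proposition~\ref{EllipticRegularityMimic}), and then proves the $L^2$ bound by a dichotomy on whether the mass of $\phi$ concentrates in the thin strip $\{0 \le t < \kappa\eps\}$ or outside it. On the strip it uses only the crude Poincar\'e inequality (Lemma~\ref{LocalPoincare}) with constant $\kappa\eps$, together with $|W''(u)| \le 1$ there; outside it uses $W''(u) \ge \delta/2 > 0$. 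The entire argument hinges on the numerical coincidence that the positive root $\kappa_0$ of $W''(g(t)) = 0$ satisfies $\kappa_0 \approx 0.93 < 1$, so that one can choose $\kappa \in (\kappa_0,1)$ with $1 - \kappa^2(1+\beta) > 0$ --- the paper remarks explicitly that the proof is specific to $W(u) = \tfrac{1}{4}(1-u^2)^2$ for this reason. Your spectral-gap route is more robust (it works for any double-well potential whose half-line linearization is strictly positive) and gives coercivity $\langle -\Leu\phi,\phi\rangle \ge c\|\phi\|_{H^1_\eps}^2$ directly; the paper's route is more elementary but potential-specific.

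One correction: in your patching step you write that the cutoff cross terms carry ``$\eps^2|\nabla\chi|^2 = O(1)$ against an exponentially small $\int\phi^2$.'' There is no reason $\int_{\text{overlap}}\phi^2$ should be small --- $\phi$ is an arbitrary element of $H^1_{\eps,0}$. The standard fix is to place the cutoff transition at distance $\sim R\eps$ from $Y$ for $R$ large and fixed, with transition width $\sim R\eps$; then $\eps^2|\nabla\chi|^2 = O(R^{-2})$ while both the collar and bulk quadratic forms control $\int_{\text{overlap}}\phi^2$ with constant $\gtrsim 1$ (since $W''(u) \to 2$ there), so the IMS localization error is absorbed for $R$ large. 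With that adjustment your argument goes through.
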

\noindent An analogous result in the H\"older setting was proved in (\cite{marx2023dirichlet}, Theorem 1.4) and \cref{NoLinearizedKernel} immediately gives invertibility of $\Leu$ (see e.g. \cite{gilbarg1977elliptic}, Theorem 6.15), as well as solvability of the linearized system. \eqref{LinearizedSystem}, for $f \in H^1(Y)$. To show this proposition, we first establish the following mimic of elliptic regularity:
\begin{proposition} \label{EllipticRegularityMimic}
For $\phi: M^+ \to \R$ with $\phi \in H_{0,\eps}^1(M^+)$, we have that for all $\eps > 0$
\[
\eps ||\nabla \phi||_{L^2(M^+)} \leq \frac{5}{2} (||\Leu(\phi)||_{H^{-1}_{\eps}(M^+)} + ||\phi||_{L^2(M^+)})
\]
\end{proposition}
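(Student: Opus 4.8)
The plan is to test the operator $\Leu(\phi) = \eps^2 \Delta_g \phi - W''(u)\phi$ against $\phi$ itself and read off the gradient bound from the resulting energy identity. Since $\phi \in H^1_{\eps,0}(M^+)$ has zero trace on $\partial M^+ = Y$, integrating by parts produces no boundary term, so
\[
\int_{M^+} \Leu(\phi)\,\phi \;=\; -\eps^2 \int_{M^+} |\nabla \phi|^2 \;-\; \int_{M^+} W''(u)\,\phi^2 ,
\]
equivalently $\eps^2 \int_{M^+}|\nabla\phi|^2 = -\int_{M^+} W''(u)\phi^2 - \int_{M^+}\Leu(\phi)\,\phi$. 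Both terms on the right are controlled: since $W''(u) = 3u^2 - 1 \ge -1$ pointwise (nothing about $u$ is used beyond $u^2 \ge 0$), we get $-\int_{M^+} W''(u)\phi^2 \le \int_{M^+}\phi^2 = ||\phi||_{L^2(M^+)}^2$; and by the very definition of the dual norm, taking $\phi$ as the test function in the supremum defining $||\cdot||_{H^{-1}_\eps}$, $\big|\int_{M^+}\Leu(\phi)\,\phi\big| \le ||\Leu(\phi)||_{H^{-1}_\eps(M^+)}\,||\phi||_{H^1_\eps(M^+)}$.

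To close the estimate I expand the weighted norm. Writing $a := \eps\,||\nabla\phi||_{L^2(M^+)}$, $b := ||\phi||_{L^2(M^+)}$ and $D := ||\Leu(\phi)||_{H^{-1}_\eps(M^+)}$, one has $||\phi||_{H^1_\eps(M^+)}^2 = \eps\,||\phi||_{L^2}^2 + \eps^3\,||\nabla\phi||_{L^2}^2 = \eps(b^2 + a^2) \le a^2 + b^2$ for $\eps \le 1$ (all that is needed, since the relevant regime is $\eps$ small), whence $||\phi||_{H^1_\eps(M^+)} \le a + b$. Combining the displays yields the scalar inequality $a^2 \le b^2 + D(a+b)$; completing the square, $(a - D/2)^2 \le b^2 + Db + D^2/4 = (b + D/2)^2$, so $a \le b + D$. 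This is in fact stronger than the asserted bound $\eps||\nabla\phi||_{L^2(M^+)} \le 2\big(||\Leu(\phi)||_{H^{-1}_\eps(M^+)} + ||\phi||_{L^2(M^+)}\big)$, the factor $2$ comfortably absorbing the loss in replacing $\sqrt{a^2+b^2}$ by $a+b$.

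There is no serious obstacle here: this is an $\eps$-weighted Caccioppoli-type energy estimate, and nearly everything is forced once the integration by parts is set up. The two points needing care are (i) confirming the boundary term genuinely vanishes — this is exactly where the hypothesis $\phi \in H^1_{\eps,0}(M^+)$ enters, and why the statement is phrased over $M^+$ rather than $M$ — and (ii) tracking the $\eps$-weights in $||\cdot||_{H^1_\eps}$ and $||\cdot||_{H^{-1}_\eps}$ so that the pairing estimate produces precisely $\eps||\nabla\phi||_{L^2}$ on the left and no stray power of $\eps$. This bound is then the first input to \cref{NoLinearizedKernel}: there the remaining work is to promote control of $\eps||\nabla\phi||_{L^2}$ (equivalently of $||\phi||_{H^1_\eps}$) into control of $||\phi||_{L^2}$ alone, absorbing the $||\phi||_{L^2}$ term on the right-hand side — the genuinely substantive step of this part of the paper, and the one I expect to be the hardest, presumably handled by a contradiction/blow-up argument against the limiting one-dimensional profile.
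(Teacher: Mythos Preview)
Your proof is correct and follows essentially the same route as the paper: integrate $\Leu(\phi)$ against $\phi$, kill the boundary term via $\phi \in H^1_{\eps,0}$, bound the pairing by $||\Leu\phi||_{H^{-1}_\eps}||\phi||_{H^1_\eps}$, and use the pointwise lower bound on $W''(u)$. The only cosmetic differences are that the paper uses $|W''(u)|\le 2$ and Young's inequality $ab\le \tfrac12(a^2+b^2)$, while you use the sharper $W''(u)\ge -1$ and complete the square, which is why you land on $a\le b+D$ rather than the looser factor~$2$; both arguments implicitly need $\eps$ bounded (say $\eps\le 1$) to absorb the $||\phi||_{H^1_\eps}$ term, despite the ``for all $\eps>0$'' in the statement.

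One side remark: your closing expectation that \cref{NoLinearizedKernel} is handled by a contradiction/blow-up argument is not what the paper does --- there the $L^2$ bound is obtained directly, by splitting into cases according to whether most of the $L^2$ mass of $\phi$ sits in the boundary strip $\{0\le t\le \kappa\eps\}$ or outside it, and exploiting the numerical coincidence that the positive root of $W''(g(t))$ occurs at $t=\kappa_0<1$ together with a Poincar\'e inequality on the thin strip.
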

\noindent \Pf \; We have 
\begin{align} \nonumber
\eps^2 \int_{M^+} |\nabla \phi|^2 &= - \int_{M^+} (\eps^2 \Delta_g \phi) \phi \\ \label{IBPEquality}
&= - \int_{M^+} (\Leu \phi) \phi - \int_{M^+} W''(u) \phi^2 \\ \nonumber
\Big| \int_{M^+} (\Leu \phi) \phi\Big| & \leq ||\Leu \phi||_{H^{-1}_{\eps}(M^+)} ||\phi||_{H^1_{\eps}(M^+)} \\ \nonumber
& \leq \frac{1}{2} \left( ||\Leu \phi||_{H^{-1}_{\eps}(M^+)}^2 +  ||\phi||_{H^1_{\eps}(M^+)}^2 \right) \\ \nonumber
\implies \eps^2 ||\nabla \phi||_{L^2(M^+)}^2 & \leq \frac{1}{2} ||\Leu \phi||_{H^{-1}_{\eps}(M^+)}^2  + \frac{5}{2} ||\phi||_{L^2(M^+)}^2
\end{align}
having used that $|W''(u)| \leq 2$. Taking a square root finishes this. \qed
\begin{corollary} \label{H1InitialBound}
We have 
\[
||\phi||_{H^1_{\eps}} \leq K (||\Leu(\phi)||_{H^{-1}_{\eps}(M^+)} + ||\phi||_{L^2(M^+)})
\]
\end{corollary}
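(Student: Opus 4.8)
The statement to prove is Corollary \ref{H1InitialBound}, which combines Proposition \ref{EllipticRegularityMimic} with the trivial $L^2$ bound to produce a full $H^1_\eps$ estimate.

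\textbf{Approach.} The plan is to unpack the definition of the rescaled Sobolev norm and assemble the estimate directly from Proposition \ref{EllipticRegularityMimic}. Recall from \eqref{rescaledNorms} that
\[
\|\phi\|_{H^1_\eps(M^+)}^2 = \eps\|\phi\|_{L^2(M^+)}^2 + \eps^3\|\nabla\phi\|_{L^2(M^+)}^2,
\]
so controlling $\|\phi\|_{H^1_\eps}$ reduces to controlling $\eps^{1/2}\|\phi\|_{L^2}$ and $\eps^{3/2}\|\nabla\phi\|_{L^2}$ separately.

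\textbf{Key steps.} First, by Proposition \ref{EllipticRegularityMimic} we have $\eps\|\nabla\phi\|_{L^2(M^+)} \le 2(\|\Leu\phi\|_{H^{-1}_\eps(M^+)} + \|\phi\|_{L^2(M^+)})$. Multiplying through by $\eps^{1/2}$ gives
\[
\eps^{3/2}\|\nabla\phi\|_{L^2(M^+)} \le 2\eps^{1/2}(\|\Leu\phi\|_{H^{-1}_\eps(M^+)} + \|\phi\|_{L^2(M^+)}).
\]
Second, for $\eps < \eps_0$ bounded we have $\eps^{1/2} \le \eps_0^{1/2}$, so the right side is bounded by a constant multiple of $\|\Leu\phi\|_{H^{-1}_\eps(M^+)} + \|\phi\|_{L^2(M^+)}$. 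Third, adding the trivial estimate $\eps^{1/2}\|\phi\|_{L^2(M^+)} \le \eps_0^{1/2}\|\phi\|_{L^2(M^+)}$ and using $\sqrt{a+b}\le\sqrt a+\sqrt b$ (or equivalently $a^2+b^2 \le (a+b)^2$ for nonnegative $a,b$) to pass from the squared-norm definition to the sum, we obtain
\[
\|\phi\|_{H^1_\eps(M^+)} \le \eps^{1/2}\|\phi\|_{L^2(M^+)} + \eps^{3/2}\|\nabla\phi\|_{L^2(M^+)} \le K(\|\Leu\phi\|_{H^{-1}_\eps(M^+)} + \|\phi\|_{L^2(M^+)})
\]
for a suitable $K>0$ depending only on $\eps_0$ (hence independent of $\eps$).

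\textbf{Main obstacle.} There is essentially no obstacle here — this corollary is a bookkeeping consequence of the preceding proposition, and the only mild care required is in the manipulation of the $\eps$-weights in the rescaled norms and in confirming that the constant $K$ can be chosen uniformly in $\eps$ (which follows because all extra factors of $\eps$ appearing are bounded by powers of the fixed $\eps_0$). The genuinely substantive work is deferred to Proposition \ref{NoLinearizedKernel}, where one must replace the $\|\phi\|_{L^2}$ term on the right-hand side by a term controlled by $\|\Leu\phi\|_{H^{-1}_\eps}$; that absorption argument — presumably via a contradiction/compactness argument or a spectral estimate exploiting the structure of $W''(u)$ near the heteroclinic profile — is where the real analysis lies, but it is not needed for the present statement.
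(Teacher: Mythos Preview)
Your proof is correct and is exactly the intended argument: the paper states the corollary without proof immediately after Proposition~\ref{EllipticRegularityMimic}, and your derivation---unpacking the definition of $\|\cdot\|_{H^1_\eps}$ from \eqref{rescaledNorms}, applying the gradient bound $\eps\|\nabla\phi\|_{L^2}\le 2(\|\Leu\phi\|_{H^{-1}_\eps}+\|\phi\|_{L^2})$, and absorbing the residual powers of $\eps$ using $\eps<\eps_0$---is the obvious bookkeeping the authors have in mind. Your remark that the substantive analysis is deferred to Proposition~\ref{NoLinearizedKernel} is also accurate.
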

\noindent To prove \ref{NoLinearizedKernel}, we see it suffices to show 
\begin{proposition} \label{L2Bound}
For $Y = \partial M^+$ with bounded geometry \eqref{boundedGeometry} constants $C, \rho > 0$, there exists an $\eps_0 = \eps_0(C, \rho) > 0$, such that for all $\eps < \eps_0$ and $\phi \in H^1_{\eps, 0}(M^+)$
\[
||\phi||_{L^2(M^+)} \leq K ||\Leu(\phi)||_{H^{-1}_{\eps}(M^+)}
\]
for some $K > 0$ independent of $\eps$.
\end{proposition}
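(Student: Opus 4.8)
The plan is to deduce Proposition~\ref{L2Bound} from a uniform \emph{coercivity} estimate for the quadratic form of $\Leu$ on $H^1_{\eps,0}(M^+)$, and to prove that estimate by slicing $M^+$ in Fermi coordinates off $Y$ and importing the spectral gap of the one--dimensional linearized operator on a half--line with a Dirichlet endpoint. The point is that the zero--trace condition $\phi|_Y=0$ is exactly what removes the approximate kernel (the rescaled $\dot g_\eps$) that would otherwise obstruct coercivity on a closed manifold. Concretely, it suffices to produce $c_1>0$ and $\eps_0>0$ so that
\[
\int_{M^+}\Bigl(\eps^2|\nabla\phi|^2 + W''(u)\,\phi^2\Bigr) \ \geq\ c_1\int_{M^+}\phi^2,
\qquad \forall\,\phi\in H^1_{\eps,0}(M^+),\ \ \eps<\eps_0 .
\]
Granting this, the identity $-\int_{M^+}(\Leu\phi)\phi = \eps^2\int_{M^+}|\nabla\phi|^2 + \int_{M^+}W''(u)\phi^2$ from \eqref{IBPEquality} gives $c_1\|\phi\|_{L^2(M^+)}^2 \le \|\Leu\phi\|_{H^{-1}_{\eps}(M^+)}\|\phi\|_{H^1_{\eps}(M^+)}$, and Corollary~\ref{H1InitialBound} followed by Young's inequality absorbs $\|\phi\|_{L^2}$ into the left side, yielding $\|\phi\|_{L^2(M^+)}\le K\|\Leu\phi\|_{H^{-1}_{\eps}(M^+)}$ with an explicit $K$.

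\emph{The one--dimensional spectral gap.} I would first record that the operator $-\partial_\tau^2 + W''(g(\tau))$ on $L^2((0,\infty))$ with Dirichlet condition at $\tau=0$ has strictly positive bottom of the spectrum: there is $\lambda_1>0$ with $\int_0^\infty |\psi'|^2 + W''(g(\tau))\psi^2 \geq \lambda_1\int_0^\infty\psi^2$ for all $\psi\in H^1_0((0,\infty))$. This follows by comparison with the operator on all of $\R$, whose ground state is $\dot g>0$ (solving $-\psi'' + W''(g)\psi=0$, obtained by differentiating $\ddot g=W'(g)$) with eigenvalue $0$; passing to the half--line with a Dirichlet endpoint can only raise the bottom of the spectrum, and does so \emph{strictly} because $\dot g(0)=\sigma\neq 0$ prevents the whole--line ground state from being admissible. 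Since $W''(g(\tau))\to W''(1)=2$ as $\tau\to\infty$, the essential spectrum is $[2,\infty)$, so $\lambda_1$ is a genuine positive constant (this is the nondegeneracy underlying \cite{pacard2003constant,marx2023dirichlet}). On a closed manifold the analogous normal operator has $\dot g_\eps$ in its leading--order kernel, so the coercivity above is \emph{false} without $\phi|_Y=0$; that condition is precisely what lets us apply the half--line gap slice by slice.

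\emph{Localization and slicing.} Fix a collar $N_\delta(Y)$ on which Fermi coordinates $(s,t)$ with $t$ the signed distance are valid, and set $N_\delta^+=N_\delta(Y)\cap M^+$. For $\phi$ supported in $N_\delta^+$, for a.e.\ $s$ the slice $t\mapsto\phi(s,t)$ lies in $H^1_0$; rescaling $t=\eps\tau$, applying the gap to each slice, and integrating in $s$ gives, using $|\partial_t\phi|^2\le|\nabla\phi|^2$ and the estimate $\sup_{N_\delta^+}|W''(u)-W''(g(t/\eps))|=O(\eps)$ coming from \eqref{uEpsExpansion4} (with $\bg_{\eps}$ agreeing with $g(\cdot/\eps)$ up to $O(\eps^\omega)$), the bound $\int_{M^+}(\eps^2|\nabla\phi|^2+W''(u)\phi^2)\ge(\lambda_1-C\delta-C\eps)\|\phi\|_{L^2}^2$; the $O(\delta)$ distortion from the volume Jacobian $\sqrt{\det g}=1+O(t)$ is handled by splitting $W''(g)=(W''(g)+1)-1$ and treating the nonnegative part and the constant separately. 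On $M^+\setminus N_{\delta/2}(Y)$ the Brezis--Oswald solution converges uniformly to $1$, so $W''(u)\ge c_0>0$ there for $\eps$ small and $\int(\eps^2|\nabla\phi|^2+W''(u)\phi^2)\ge c_0\|\phi\|_{L^2}^2$ for $\phi$ supported off $N_{\delta/2}$. Finally, with cutoffs $\chi_1,\chi_2$ satisfying $\chi_1^2+\chi_2^2\equiv1$, $\operatorname{supp}\chi_1\subset N_\delta$, $\operatorname{supp}\chi_2\subset M^+\setminus N_{\delta/2}$, the IMS localization identity
\begin{align*}
\int_{M^+}\bigl(\eps^2|\nabla\phi|^2 + W''(u)\phi^2\bigr)
&= \sum_{j=1}^{2}\int_{M^+}\bigl(\eps^2|\nabla(\chi_j\phi)|^2 + W''(u)(\chi_j\phi)^2\bigr)\\
&\quad - \eps^2\int_{M^+}\bigl(|\nabla\chi_1|^2+|\nabla\chi_2|^2\bigr)\phi^2
\end{align*}
combines the two regional estimates (the last term being $O(\eps^2)\|\phi\|_{L^2}^2$), producing the coercivity above with $c_1=\tfrac12\min\{\lambda_1/2,c_0\}$ for $\delta,\eps$ small. (If $Y$ has several components, this runs componentwise.)

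\emph{Main obstacle.} The conceptual core is the strict positivity of the half--line Dirichlet operator, and the technical heart is the transfer step with constants uniform in $\eps$: one must simultaneously control $W''(u)-W''(g(t/\eps))$ on the collar and the metric distortion, which is where the bounded--geometry hypothesis and the expansion \eqref{uEpsExpansion4} enter. A softer alternative is a contradiction argument: if some $\phi_k$ with $\|\phi_k\|_{L^2}=1$ had $\|\Leu\phi_k\|_{H^{-1}_{\eps}}\to0$, then Corollary~\ref{H1InitialBound} bounds $\|\phi_k\|_{H^1_{\eps_k}}$, the outer estimate forces all $L^2$ mass into arbitrarily thin collars, and rescaling $t=\eps_k\tau$ extracts a nonzero $L^2$ solution of $\partial_\tau^2\psi=W''(g(\tau))\psi$ on $(0,\infty)$ vanishing at $\tau=0$, which is impossible by the gap; I prefer the direct route since it makes $K$ explicit and reuses Proposition~\ref{EllipticRegularityMimic} cleanly.
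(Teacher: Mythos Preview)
Your argument is correct and takes a genuinely different route from the paper's. You prove a uniform coercivity bound $\int_{M^+}(\eps^2|\nabla\phi|^2+W''(u)\phi^2)\ge c_1\|\phi\|_{L^2}^2$ by slicing in Fermi coordinates and invoking the strict positivity of $-\partial_\tau^2+W''(g)$ on $H^1_0((0,\infty))$ (indeed $\lambda_1=3/2$ for this potential, the odd eigenfunction $g\dot g$ surviving the Dirichlet condition), then gluing with an IMS localization. The paper instead runs an elementary two--case argument on whether the $L^2$ mass of $\phi$ sits mostly in $\{t>\kappa\eps\}$ or in $\{t<\kappa\eps\}$: in the first case it uses $W''(u)\ge\delta/2>0$ there; in the second it uses the local Poincar\'e inequality (Lemma~\ref{LocalPoincare}) on the strip $Y\times[0,\kappa\eps)$, which gives the needed sign precisely because the positive root $\kappa_0$ of $W''(g(t))$ satisfies $\kappa_0<1$, so one may take $\kappa<1$ and beat the Poincar\'e constant. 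What each buys: the paper's proof is completely self--contained and avoids any spectral theory, but (as the authors themselves remark) leans on the numerical accident $\kappa_0\approx 0.93<1$ specific to $W(u)=\tfrac14(1-u^2)^2$; your approach is more conceptual and works for any double--well potential whose heteroclinic has $\dot g(0)\neq 0$, at the modest cost of quoting the half--line spectral gap and handling the Jacobian distortion carefully (your $W''(g)=(W''(g)+1)-1$ splitting does this). Your contradiction/blow--up sketch at the end is essentially the H\"older--setting argument of \cite{marx2023dirichlet}, Theorem~1.4, transported to Sobolev norms.
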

\noindent First note the following poincare inequality near the boundary: Let $V_{\kappa \eps} \subseteq M^+$ correspond to the one-sided normal neighborhood of $Y$, given by $Y \times [0, \kappa \eps)$ in fermi coordinates. Then
\begin{lemma} \label{LocalPoincare}
Let $\phi \in H^1_{\eps}(V_{\kappa \eps})$ and $\phi(s,0) \equiv 0$ in the trace sense. Then for all $\beta > 0$, there exists an $\eps_0(\beta)$ such that for $\eps < \eps_0(\beta)$, we have
\[
||\phi||_{L^2(V_{\kappa \eps})} \leq \kappa^2 \eps^2 (1 + \beta) ||\nabla \phi||_{L^2(V_{\kappa \eps})}
\]
\end{lemma}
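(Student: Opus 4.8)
The plan is to establish the local Poincaré inequality of Lemma~\ref{LocalPoincare} by the standard one-dimensional argument along the normal direction, combined with a careful accounting of how the Fermi-coordinate volume form deviates from a product as $\eps \to 0$. Working in Fermi coordinates $(s,t)$ on the collar $Y \times [0, \kappa \eps)$, the metric takes the form $g = dt^2 + g_t(s)$ where $g_t$ is a family of metrics on $Y$ with $g_0 = g_Y$, and the volume form is $dV = J(s,t)\, dt\, dV_Y(s)$ with $J(s,0) = 1$ and $\partial_t J$ controlled by the second fundamental form and curvature. Since $t$ ranges over an interval of length $\kappa \eps$, we have $J(s,t) = 1 + O(\eps)$ uniformly, with the implied constant depending on the (bounded) geometry of $Y$; this is where the hypothesis $\eps < \eps_0(\beta)$ enters, as we need $\eps_0$ small enough that $J$ lies within $(1+\beta)^{1/2}$ of $1$, say.

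The core estimate is the one-dimensional Poincaré inequality: for fixed $s$, since $\phi(s,0) = 0$ in the trace sense, $\phi(s,t) = \int_0^t \partial_t \phi(s,\tau)\, d\tau$, so by Cauchy--Schwarz
\[
\int_0^{\kappa \eps} |\phi(s,t)|^2\, dt \leq \int_0^{\kappa \eps} t \left(\int_0^t |\partial_t \phi(s,\tau)|^2\, d\tau\right) dt \leq \frac{(\kappa \eps)^2}{2} \int_0^{\kappa \eps} |\partial_t \phi(s,\tau)|^2\, d\tau.
\]
Integrating this over $s \in Y$ against $dV_Y$, inserting the Jacobian factors $J$ on both sides (which contributes a multiplicative factor of $1 + O(\eps)$ that gets absorbed into the $(1+\beta)$), and bounding $|\partial_t \phi| \leq |\nabla \phi|$ pointwise yields
\[
\|\phi\|_{L^2(Y \times [0,\kappa\eps))}^2 \leq (\kappa \eps)^2 (1+\beta) \|\nabla \phi\|_{L^2(Y \times [0,\kappa\eps))}^2,
\]
and taking square roots gives the claimed bound with $\kappa^2 \eps^2$ replaced by $\kappa \eps$ — indeed the stated exponent in the lemma is the square-root-free version, so one should double-check whether the intended statement is $\|\phi\|_{L^2} \leq \kappa \eps (1+\beta)^{1/2} \|\nabla \phi\|_{L^2}$ (which is what this argument gives, with a cleaner constant $\kappa^2/2$ before taking roots) or the squared form; I would present the estimate in squared form first and then state the rooted consequence to match the lemma.

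The only genuine subtlety — the "hard part," such as it is — is justifying the uniform control $J(s,t) = 1 + O(\eps)$ and the legitimacy of slicing an $H^1_\eps$ function along normal geodesics (i.e., that the trace-zero condition on $Y$ passes to almost every slice and that the fundamental theorem of calculus applies in the $H^1$ sense). The first point is handled by the bounded-geometry hypothesis \eqref{boundedGeometry}: the shape operator and ambient curvature along $Y$ are bounded independently of $\eps$, so the Jacobi-field expansion of $J$ in $t$ gives $|J(s,t) - 1| \leq C(Y)\, t \leq C(Y)\kappa\eps$ on the collar, and we choose $\eps_0(\beta)$ so that $C(Y)\kappa \eps_0 < \beta$ (up to harmless constants). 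The second point is routine density: approximate $\phi$ by smooth functions vanishing near $Y$, for which the slicing argument is classical, and pass to the limit using continuity of the trace operator and of both sides of the inequality in the $H^1_\eps$ norm. Everything else is bookkeeping with the rescaled norm $\|\cdot\|_{H^1_\eps}$, and since the inequality as stated involves only $\|\phi\|_{L^2}$ and $\|\nabla \phi\|_{L^2}$ (unweighted $L^2$ on the collar), no rescaling factors actually need to be tracked in the final statement.
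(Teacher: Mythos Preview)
Your approach is essentially the same as the paper's: reduce by density to smooth $\phi$ vanishing on $Y$, apply the one-dimensional fundamental theorem of calculus along the normal direction, and absorb the Fermi-coordinate Jacobian $\sqrt{\det g(s,t)} = \sqrt{\det g|_Y(s)}(1+O(\eps))$ into the $(1+\beta)$ factor using bounded geometry. The paper uses Jensen's inequality where you use Cauchy--Schwarz (giving you the marginally sharper constant $(\kappa\eps)^2/2$ before Jacobian correction), but this is cosmetic. Your observation about the exponent is also correct: the paper's argument actually yields $\|\phi\|_{L^2}^2 \leq \kappa^2\eps^2(1+\beta)\|\nabla\phi\|_{L^2}^2$, and it is this squared form that is invoked in the subsequent proof of Proposition~\ref{L2Bound}.
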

\noindent \Pf Take $\phi \in C^1(V_{\kappa \eps})$ and $\phi(s,0) \equiv 0$ since $C^1$ is dense in $H^1_{\eps}$ for $\eps$ fixed. Then we have 
\begin{align*}
|\phi(s,t)| &= \Big| \int_0^t \partial_r \phi(s,r) dr \Big|  \\
& \leq \int_0^t |\nabla^g \phi|(s,r) dr \\
& \leq \int_0^{\kappa \eps} |\nabla^g \phi|(s,r) dr \\
\implies |\phi(s,t)|^2 &\leq  \left( \int_0^{\kappa \eps} |\nabla^g \phi|(s,r) dr \right)^2 \\
& \leq \kappa \eps \int_0^{\kappa \eps} |\nabla^g \phi|^2 (s,r) dr
\end{align*}
The last line follows by a change of variable $t \to \frac{t}{\kappa \eps}$ and then Jensen's inequality. Now note that 
\begin{align*}
\sqrt{\det g(s,t)} dt ds &= [\sqrt{\det g|_Y(s)} + O(\eps)] dt ds \\
|\sqrt{\det g(s,t)} - \sqrt{\det g|_Y(s)}|& \leq K \eps \sqrt{\det g|_Y(s)} dt ds
\end{align*}
here, $\det g|_Y(s) ds = dA_Y$ is the volume form on $Y$ and the $O(\eps)$ expansion follows from being in fermi coordinates and $0 \leq t \leq \kappa \eps$. The second line comes from compactness of $Y$ and assuming that its riemannian (i.e. $\sqrt{\det g|_Y} \geq c_0 > 0$ for some $c_0$). This gives
\begin{align*}
\int_0^{\kappa \eps} |\phi(s,t)|^2 \sqrt{\det g(s,t)} dt &\leq [(\kappa \eps)^2 + 2K \eps^3] \int_0^{\kappa \eps} |\nabla^g \phi|^2 (s,r) \sqrt{\det g(s,r)} dr \\
\implies ||\phi||_{L^2(Y \times [0, \kappa \eps))}^2 & \leq [ \kappa^2 \eps^2 + 2K \eps^3] ||\nabla \phi||_{L^2(Y \times [0, \kappa \eps))}^2
\end{align*}
Taking $\eps$ sufficiently small we have the lemma. \qed \nl \nl
We are interested in applying lemma \ref{LocalPoincare} with  $\kappa < 1$. In particular, we note that 
\[
\kappa_0 := (3 g(t)^2 - 1)^{-1} \cap \R^+ = 0.93123\dots = 1 - \gamma, \qquad \gamma > 0
\]
i.e. the positive root of $W''(g) = 3 \tanh(t/\sqrt{2})^2 - 1$ is less than $1$. In particular, let 
\begin{align} \label{KappaDefinition}
\kappa &= 1 - \gamma/2 < 1 \\ \nonumber
\delta &= 3 g(\kappa)^2 - 1 > 0, \qquad \delta < 1
\end{align}
Then we see that for $\eps$ sufficiently small, because of uniform bounds on $||A_Y||$ and \eqref{uEpsExpansion4}, we have 
\begin{equation} \label{DeltaRange}
\frac{\delta}{2} \leq W''(u(s,\kappa \eps)) \leq 2 \delta
\end{equation}
\noindent \textbf{Proof of \ref{L2Bound}} Let $\{t > \kappa \eps\} := M^+ \backslash V_{\kappa \eps}$. We first assume that most of the mass is on $t > \kappa \eps$ in the following sense: let $0 < \alpha = \alpha(\delta) < 1$ to be determined and assume 
\begin{equation} \label{InitialMassAssumption}
||\phi||_{L^2(t > \kappa \eps)}^2 \geq (1 - \alpha) ||\phi||_{L^2(M^+)}^2
\end{equation}
Then we have 
\begin{equation} \label{IntegralBoundOne}
\frac{\delta}{2} ||\phi||_{L^2(t > \kappa \eps)}^2 \leq  \int_{t > \kappa \eps} W''(u) \phi^2 \leq 2 ||\phi||_{L^2(t > \kappa \eps)}^2
\end{equation}
since $t > \kappa \eps$ implies that  $W''(u) > \frac{\delta}{2}$. Note also that by equation \eqref{DeltaRange}
\[
\Big|\int_Y \int_0^{\kappa \eps}W''(u) \phi^2\Big| \leq ||\phi||_{L^2(V_{\kappa \eps})}^2 \leq \alpha ||\phi||_{L^2(M^+)}^2
\]
and so
\begin{equation} \label{IntegralBoundAwayFromBoundary}
\int_{M^+} W''(u) \phi^2 \geq \left(1 - \frac{2\alpha}{\delta(1 - \alpha)} \right)\int_{t > \kappa \eps} W''(u) \phi^2
\end{equation}
Now combining \eqref{InitialMassAssumption} and \eqref{IntegralBoundOne} to start, and then using \eqref{IntegralBoundAwayFromBoundary}, we get:
\begin{align*}
||\phi||_{L^2(M^+)}^2 &\leq \frac{2}{\delta(1 - \alpha)} \int_{t > \kappa \eps} W''(u) \phi^2 \\
\implies ||\phi||_{L^2(M^+)}^2 &\leq \frac{2}{\delta - \alpha \delta - 2\alpha} \int_{M^+} W''(u) \phi^2 \\
& = \frac{2}{\delta - \alpha \delta - 2\alpha} \left[- \int_{M^+} \phi \Leu(\phi) - \eps^2 \int_{M^+} |\nabla \phi|^2 \right] \\
& = \frac{2}{\delta - \alpha \delta - 2\alpha} \left[- \int_{M^+} \phi \Leu(\phi)\right] \\
& \leq \frac{2}{\delta - \alpha \delta - 2\alpha} \left[\frac{\lambda}{2} ||\phi||_{H^1_{\eps}(M^+)}^2 + \frac{1}{2\lambda} ||\Leu \phi||_{H^{-1}_{\eps}(M^+)}^2 \right] \\
\implies ||\phi||_{L^2(M^+)}^2 &\leq K ||\Leu(\phi)||_{H^{-1}_{\eps}(M^+)}^2
\end{align*}
In the last line, we noted that 
\[
||\phi||_{H^1_{\eps}(M^+)}^2 \leq K [||\phi||_{L^2(M^+)}^2 + ||\Leu(\phi)||_{H^{-1}_{\eps}(M^+)}^2]
\]
from corollary \ref{H1InitialBound}, and also chose $\alpha < \delta/4$ so that 
\[
\delta - \alpha (2 + \delta) > \frac{\delta}{2}
\]
and finally chose $\lambda$ sufficiently small independent of $\eps$ so that we can move the $K \lambda ||\phi||_{L^2(M)}$ term over to the left side. This tells us that if $||\phi||_{L^2(t > \kappa \eps)}^2 \geq (1 - \alpha) ||\phi||_{L^2(M)}^2$ for $\alpha < \delta/8$, then bound holds. \nl \nl
Now suppose the contrary:
\[
||\phi||_{L^2(Y \times [0,\kappa \eps)}^2 > \frac{\delta}{8} ||\phi||_{L^2(M)}^2
\]
We bound on $||\phi||_{L^2(Y \times [0, \kappa \eps))}$ for $\kappa$ as in \eqref{KappaDefinition}, using explicitly that $\kappa < 1$ and our bound in lemma \ref{LocalPoincare} 
\begin{align*}
||\phi||_{L^2(Y \times [0, \kappa \eps)}^2 &\leq \eps^2 \kappa^2 (1 + \beta) ||\nabla \phi||_{L^2(Y \times [0, \kappa \eps))}^2 \\
& \leq \eps^2 \kappa^2 (1 + \beta) ||\nabla \phi||_{L^2(M^+)}^2 \\
& \leq \kappa^2 (1 + \beta) \left[ - \int_M \phi \Leu(\phi) - \int_M W''(u) \phi^2 \right]
\end{align*}
explicitly using \eqref{IBPEquality} from proposition \ref{EllipticRegularityMimic}. Now we note that 
\[
- \int_M W''(u) \phi^2 \leq -\int_{t = 0}^{t = \kappa \eps} W''(u) \phi^2  \leq \int_{t = 0}^{t = \kappa \eps} \phi^2 = ||\phi||_{L^2(Y \times [0, \kappa \eps))}^2
\]
By definition of $W''(u)$ and since $W''(u) > 0$ on $t \geq \kappa \eps$ and $|W''(u)| \leq 1$ on $[0, \kappa \eps]$. Now we have 
\begin{align*}
||\phi||_{L^2(Y \times [0, \kappa \eps)}^2 (1 - \kappa^2(1 + \beta)) & \leq - \int_M \phi \Leu(\phi) \\
& \leq \frac{\lambda}{2} ||\phi||_{H^{1}_{\eps}}^2 + \frac{1}{2\lambda} ||\Leu(\phi)||_{H^{-1}_{\eps}}^2 
\end{align*}
Note that the value $\beta$ from lemma \ref{LocalPoincare} can be chosen sufficiently small so that $1 - \kappa^2(1 + \beta) > 0$. If $\lambda$ is sufficiently small (independent of $\eps$) and we use corollary \ref{H1InitialBound} again to compute
\begin{align*}
||\phi||_{L^2(Y \times [0, \kappa \eps))}^2 & \leq K ||\Leu(\phi)||_{H^{-1}_{\eps}(M)}^2  \\
\implies ||\phi||_{L^2(M)}^2 & \leq \frac{8 K}{\delta} ||\Leu(\phi)||_{H^{-1}_{\eps}(M)}^2 
\end{align*}
which is the desired bound. \qed
\begin{Remark}
Our proof relies heavily on the fact that $\kappa_0 < 1$, and hence, heavily on the choice of $W(x) = \frac{(1 - x^2)^2}{4}$ as our double well potential. We expect a similar statement about $\kappa_0 < 1$ to hold for all double well potentials with $W(0) = \frac{1}{4}$ and $W(\pm 1) = 0$. The authors currently lack a proof for this.
\end{Remark}
%
%
\section{Proof of \cref{SecondVarFormulaSmallEpsSobolevThm}} \label{SecondVarProofSection}

\subsection{Set up}
Recall that we have $Y^{n-1} \subseteq M^n$ a closed separating hypersurface with $M = M^+ \sqcup_Y M^-$. Let $u_{\eps}^{\pm}$ be the Dirichlet minimizer of Allen--Cahn on $M^{\pm}$ which is positive (resp. negative) away from $Y$ the boundary. Recall the linearized system for $\dot{u}$ as in \eqref{LinearizedSystem}: 
\begin{align*}
\eps^2 \Delta_g \dot{u} &= W''(u) \dot{u} \\
\dot{u} \Big|_Y &= - f(s) u_{\nu}(s) = h(s)
\end{align*}
where $s$ is a coordinate on $Y$. We posit 
\begin{equation} \label{dotUInitialDecomposition}
\dot{u} = h(s) \dot{\bg}_{\eps}(t) + \phi
\end{equation}
so that $\phi: M^+ \to \R$ and $\phi\Big|_Y \equiv 0$. We want to produce $H^1_{\eps}$ bounds on $\phi$. In the following sections, we'll be working with one-sided solutions and often write $M$ in place of $M^{\pm}$, noting that our results can be applied to either $M^+$ or $M^-$.
\subsection{Computing the bound} \label{ComputeBound}
In this section, we apply proposition \ref{NoLinearizedKernel} and show
\begin{lemma} \label{phiBoundByfLemma}
There exists an $\eps_0 > 0$ such that for all $\eps < \eps_0$ and $\phi: M \to \R$ as in equation \eqref{dotUInitialDecomposition}, we have
\begin{equation} \label{phiH1Bound}
||\phi||_{H^1_{\eps}(M)} \leq K \eps [||\nabla f||_{L^2(Y)} + \sqrt{\eps} ||f||_{L^2(Y)}]
\end{equation}
for $K$ independent of $\eps$. In particular $||\phi||_{H^1_{\eps}(M)} \leq K \eps ||f||_{H^1(Y)}$.
\end{lemma}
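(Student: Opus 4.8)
The plan is to reduce the estimate to Proposition \ref{NoLinearizedKernel} (the invertibility of $\Leu$) and then bound the ``error of the ansatz'' $\Leu(h\dot{\bg}_\eps)$ in the dual norm $H^{-1}_\eps$. Since $\dot u$ solves the homogeneous linearized equation, $\Leu(\dot u)=0$ on $M^{\pm}$, so from \eqref{dotUInitialDecomposition} the remainder satisfies $\Leu(\phi)=-\Leu(h(s)\dot{\bg}_\eps(t))$, and $\phi$ has zero trace on $Y$ by construction, i.e.\ $\phi\in H^1_{\eps,0}(M)$. Proposition \ref{NoLinearizedKernel} then gives
\[
\|\phi\|_{H^1_\eps(M)}\ \le\ K\,\big\|\Leu(h\,\dot{\bg}_\eps)\big\|_{H^{-1}_\eps(M)},
\]
so everything reduces to showing $\|\Leu(h\dot{\bg}_\eps)\|_{H^{-1}_\eps(M)}\le K\eps\big[\|\nabla f\|_{L^2(Y)}+\sqrt\eps\,\|f\|_{L^2(Y)}\big]$.

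To compute the error I would work in Fermi coordinates $(s,t)$ about $Y$, writing $\Delta_g=\partial_t^2+H_{Y_t}\partial_t+\Delta_{g_t}$, where $H_{Y_t}$ is the mean curvature of the parallel hypersurface at signed distance $t$ and $g_t$ the induced metric. This gives
\[
\Leu(h\dot{\bg}_\eps)=h(s)\Big[\eps^2\partial_t^2\dot{\bg}_\eps-W''(u)\dot{\bg}_\eps\Big]+\eps^2 H_{Y_t}\,h(s)\,\partial_t\dot{\bg}_\eps+\eps^2\dot{\bg}_\eps\,\Delta_{g_t}h .
\]
In the first bracket, $\eps^2\partial_t^2\dot{\bg}_\eps-W''(\bg_\eps)\dot{\bg}_\eps$ is the heteroclinic residual: differentiating $\ddot g=W'(g)$ gives $(\partial_t^2-W''(g))\dot g=0$, so by the cutoff estimate \eqref{ApproxHeteroclinic} (applied to $\dot{\bg}$, as in the remark after it) this is $O(\eps^{\omega})$ in every rescaled norm and negligible; the remaining piece $[W''(\bg_\eps)-W''(u)]\dot{\bg}_\eps$ is handled by the expansion \eqref{uEpsExpansion4}, namely $u-\bg_\eps=\eps H_Y\bw_\eps+O_{C^{2,\alpha}_\eps}(\eps^2)$, so $W''(u)-W''(\bg_\eps)=\eps H_Y\,W'''(\bg_\eps)\bw_\eps+O(\eps^2)$. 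I would then use that $Y$ is a critical point of $\BE_\eps$: by \cref{FirstVarMeanCurvature} $\|H_Y\|_{C^0}\le K\eps$, while $H_{Y_t}=H_Y+t\,\dot H_Y+O(t^2)$; and by \eqref{MatchingNormal} $u_\nu=\tfrac1{\eps\sqrt2}+O(\eps)$, so $h=-f u_\nu=-\tfrac{\sigma}{\eps}f+O(\eps f)$ and $\nabla_Y h=-\tfrac{\sigma}{\eps}\nabla_Y f+O(f)+O(\eps\nabla_Y f)$.

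For the $H^{-1}_\eps$ bounds, each surviving term has the product form $v(s,t)=c(s)\,\beta(t/\eps)$ with $\beta$ exponentially decaying (built from $\dot{\bg},\ddot{\bg},\tau\dot{\bg}(\tau),W'''(\bg)\bw\dot{\bg}$, etc.), possibly carrying a tangential derivative on $c$. To estimate $\langle v,\psi\rangle$ for $\psi\in H^1_{\eps,0}(M)$ I would use that $\psi$ vanishes on $Y$: the fundamental theorem of calculus (the mechanism behind \cref{LocalPoincare}) gives $|\psi(s,t)|\le t^{1/2}\|\partial_t\psi(s,\cdot)\|_{L^2_t}$, and after $t=\eps\tau$, Cauchy--Schwarz in $\tau$ and then in $s$, together with the elementary relations $\|\psi\|_{L^2(M)}\le\eps^{-1/2}\|\psi\|_{H^1_\eps}$ and $\|\nabla\psi\|_{L^2(M)}\le\eps^{-3/2}\|\psi\|_{H^1_\eps}$, this converts the $L^2_t$-concentration of $\beta$ and the smallness of $c$ into the claimed powers of $\eps$; for the term $\eps^2\dot{\bg}_\eps\Delta_{g_t}h$ one first integrates by parts on the slices $Y_t$ to trade the Laplacian on $h$ for a tangential gradient on $\psi$, which is what produces the $\eps\|\nabla f\|_{L^2(Y)}$ contribution, while the curvature and potential terms carry an extra factor $H_Y=O(\eps)$ (or $H_{Y_t}-H_Y=O(t)$, which after the change of variables is again $O(\eps)$), producing the $\eps^{3/2}\|f\|_{L^2(Y)}$ contribution. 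Summing gives \eqref{phiH1Bound}, and the ``in particular'' statement is immediate since $\sqrt\eps\,\|f\|_{L^2(Y)}\le\|f\|_{L^2(Y)}$.

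The main obstacle is the last step: extracting the \emph{sharp} powers of $\eps$ rather than the crude ones. This requires using simultaneously (i) the vanishing of test functions on $Y$ (the local Poincaré gain), (ii) the criticality of $Y$, so that the most dangerous curvature term inherits a factor from $H_Y=O(\eps)$, and (iii) tangential integration by parts, so that only first tangential derivatives of $f$ survive. Without all three one only obtains a weaker bound in which $\phi$ is not exhibited as a genuinely lower-order correction, which would not suffice for the asymptotic expansion in \cref{SecondVarFormulaSmallEpsSobolevThm}.
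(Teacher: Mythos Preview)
Your proposal is correct and follows the same route as the paper: reduce to Proposition~\ref{NoLinearizedKernel}, expand $\Leu(h\dot{\bg}_\eps)$ in Fermi coordinates into a tangential Laplacian piece, a mean-curvature piece, and a potential-difference piece, then integrate by parts on the slices $Y_t$ for the first and bound the other two directly. Two minor points of comparison are worth making.

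First, the paper does \emph{not} use the local Poincar\'e gain (your item (i)) for the curvature and potential terms; it simply bounds them in $L^2(M)$ and uses the crude embedding into $H^{-1}_\eps$. If you carry out your fundamental-theorem-of-calculus estimate you will find it yields the same power of $\eps$ as the direct $L^2$ bound, so it is not an essential ingredient here---the vanishing of $\phi$ on $Y$ is only needed to place $\phi$ in $H^1_{\eps,0}$ and invoke Proposition~\ref{NoLinearizedKernel}. Second, the paper uses the sharper $H_Y=O(\eps^2)$ from \eqref{MCCritExpansion} (via the $C^{5,\alpha}$ expansion \eqref{uEpsExpansion5}), whereas you invoke only $H_Y=O(\eps)$ from Corollary~\ref{FirstVarMeanCurvature}; your weaker input already suffices for \eqref{phiH1Bound}, since the dangerous terms $H_t\ddot{\bg}_\eps$ and $W''(u)-W''(\bg_\eps)$ still come out $O(\eps^2)$ pointwise. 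The finer expansion is only needed later when the leading term of the second variation is computed.
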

\noindent \Pf First note
\[
\Leu(\phi) = - \Leu(h \dot{\bg}_{\eps}(t))
\]
and 
\begin{equation} \label{hExpansion}
h(s) = -f(s) \left[\frac{1}{\eps \sqrt{2}} + O(1)\right] 
\end{equation}
from the expansion of $\partial_t u |_{t = 0} = u_{\nu}$ implicit in \eqref{uEpsExpansion4} for surfaces with bounded geometry. We compute bounds on $\Leu(\phi)$
\begin{align*}
||\Leu(\phi)||_{H^{-1}_{\eps}(M)} &= ||- \eps^2 \Delta_t(h) \dot{\bg}_{\eps} +  \eps H_t h \ddot{\bg}_{\eps} - [W''(u) - W''(\bg_{\eps}) + E] h(s) \dot{\bg}_{\eps} ||_{H^{-1}_{\eps}} \\
& \leq \Big[ \eps  ||\Delta_t(f) \dot{\bg}_{\eps} ||_{H^{-1}_{\eps}(M)}  +  ||H_t f \ddot{\bg}_{\eps}||_{L^2(M)} \\
& \quad \quad + \eps^{-1} ||[W''(u) - W''(\bg_{\eps}) + E] f \dot{\bg}_{\eps}||_{L^2(M)} \Big]
\end{align*}
by the expansion of $h(s)$ in \eqref{hExpansion}. Note that
\[
H_t = H_Y + \eps \dot{H}_Y (t/\eps) + O(\eps^2)
\]
Recall the expansion of $u$ as in \eqref{uEpsExpansion5}
\begin{align*}
u(s,t) &= \bg_{\eps}(t) + \eps H_Y(s) \bw_{\eps}(t) + \eps^2 \left[ \dot{H}_Y(s) \btau_{\eps}(t) + H_Y(s)^2 \brho_{\eps}(t) + \frac{1}{2} H_Y(s)^2 \bkappa_{\eps}(t) \right] + \phi(s,t) \\
||\phi||_{C^{2,\alpha}_{\eps}} &= O(\eps^3) 
\end{align*}
For critical points of $\BE$, we know that $u_{\nu} = u_{\nu}^c$ so from \eqref{MatchingNormal}, we have 
%
\begin{equation} \label{MCCritExpansion}
H_Y = O(\eps^2)
\end{equation}
We then have the following in $C^0$ and $L^2$:
\begin{align}  \nonumber
u(s,t) &= \bg_{\eps}(t) + \eps^2 \dot{H}_Y(s) \btau_{\eps}(t) + O(\eps^3) \\ \nonumber
W''(u) - W''(\bg_{\eps}) &= 3(u - \bg_{\eps})(u + \bg_{\eps}) \\ \nonumber
&= 3\eps^2 \dot{H}_Y \btau_{\eps}(2 \bg_{\eps}) + O(\eps^3) \\ \label{SecondDerivDiffExpansion}
&= 6 \eps^2 \dot{H}_Y \btau_{\eps} \bg_{\eps} + O(\eps^3)
\end{align}
Furthermore, since $H_Y = H_0 =O(\eps^2)$ and $W''(u) - W''(\bg_{\eps}) = O(\eps^2)$, we have that 
\begin{align*}
||H_t f \ddot{\bg}_{\eps}||_{L^2(M)}^2 & = \int_{Y} \int_{0}^{-2 \omega \eps \ln(\eps)} H_t^2 f^2 \ddot{\bg}_{\eps}^2 \\
&\leq K \eps^3 ||f||_{L^2(B_2)}^2 \\
||H_t f \ddot{\bg}_{\eps}||_{L^2(M)} & \leq K \eps^{3/2} ||f||_{L^2(Y)}
\end{align*}
where the extra factor of $\eps$ comes from integrating $\ddot{\bg}_{\eps}^2$. Similarly, we have
\begin{align*}
\frac{1}{\eps\sqrt{2}} ||[W''(u) - W''(\bg_{\eps}) + E] f \dot{\bg}_{\eps}||_{L^2(M)} & \leq K \eps^{3/2} ||f||_{L^2(B_2)}
\end{align*}
So it suffices to compute $||\Delta_t(f) \dot{\bg}_{\eps}||_{H^{-1}_{\eps}}$. Note that
\begin{align*}
v &\in H^1_{0, \eps}(M) \\
\int_{M} \Delta_t(f) \dot{\bg}_{\eps} v &= \int_{Y} \int_0^{-2\omega \eps \ln(\eps)} \Delta_t(f) \dot{\bg}_{\eps} v dA \\
&= \int_0^{-2 \omega \eps \ln(\eps)} \dot{\bg}_{\eps} \int_{Y} \Delta_t(f) v \sqrt{\det g(s,t)} ds dt \\
&= \int_0^{- 2\omega \eps \ln(\eps)} \dot{\bg}_{\eps} \left( \int_{Y} g|_Y\left(\nabla^t \left( v(s,t) \sqrt{ \det g(s,t)}\right), \nabla^t(f)\right) ds \right) dt \\
\Big| \int_M \Delta_t(f) \dot{\bg}_{\eps} v \Big| & \leq K \eps ||v||_{H^1} ||\nabla f||_{L^2} \\
& \leq K ||v||_{H^1_{\eps}} ||\nabla f||_{L^2}
\end{align*}
which tells us that 
\[
||\Leu(\phi)||_{H^{-1}_{\eps}(M)} \leq K \eps ||\nabla f||_{L^2(Y)} + K \eps^{3/2} ||f||_{L^2(Y)}
\]
Applying proposition \ref{NoLinearizedKernel} we have
\begin{equation} \label{BestPhiH1Bound}
||\phi||_{H^1_{\eps}(M)} \leq K \eps [||\nabla f||_{L^2(Y)} + \sqrt{\eps} ||f||_{L^2(Y)}]
\end{equation}
completing the lemma. \qed 
\subsection{Computing second variation}
We have that for $f \in C^2$
\[
\BE''(f \nu) \Big|_Y = \int_Y f u_{\nu}[ \dot{u}_{\nu} - \dot{u}_{\nu}^c]
\]
We've shown that 
\[
\dot{u}_{\nu}(s) = \partial_t \left( h(s) \dot{\bg}_{\eps}(t) + \phi \right)(s,0) = \phi_t(s,0)
\]
and also $u = \bg_{\eps}(t) + O(\eps^2)$ in $C^{1,\alpha}_{\eps}$ and so
\begin{align*}
\BE''(f \nu) \Big|_Y &= \int_Y f u_{\nu} [ \phi_t - \phi^c_t] \\
&= \int_Y f \left[ (\eps \sqrt{2})^{-1} + O(\eps) \right] [ \phi_t - \phi^c_t]
\end{align*}
We write
\begin{align} \label{NormalIntegration}
-\eps^2 \phi_t(s,0)\sqrt{\det g(s,0)} &= \sqrt{2}\int_0^{-2\omega \eps \ln(\eps)} (\eps^2 \partial_t^2 - W''(\bg_{\eps})) \phi \dot{\bg}_{\eps} \sqrt{\det g(s,t)} dt + R \\ \nonumber
&=  \sqrt{2}\int_0^{-2\omega \eps \ln(\eps)} \Leu(\phi) \dot{\bg}_{\eps} \sqrt{\det g(s,t)} dt \\ \nonumber
&\quad \quad - \sqrt{2}\int_0^{-2\omega \eps \ln(\eps)} [\eps^2 \Delta_t(\phi) - H_t \eps^2 \phi_t] \dot{\bg}_{\eps} \sqrt{\det g(s,t)} dt  \\ \nonumber
& \quad \quad  + \sqrt{2}\int_0^{-2\omega \eps \ln(\eps)} [W''(u) - W''(\bg_{\eps}) + E] \phi \dot{\bg}_{\eps} \sqrt{\det g(s,t)} dt + R
\end{align}
using that $\dot{g}(0) = 2^{-1/2}$ and  
\begin{equation} \label{ErrorIntegralBound}
\Big|\int_Y f R \Big| \leq K \eps^{3} ||f||_{H^1(Y)}^2
\end{equation}
(see appendix \S \ref{ReducingBE}). From before, we have 
\begin{align*}
\Leu(\phi) &= -\Leu(h(s) \dot{\bg}_{\eps}) \\
&= - \eps^2 \Delta_t(h) \dot{\bg}_{\eps} +  \eps H_t h \ddot{\bg}_{\eps} + [W''(u) - W''(\bg_{\eps}) + E] h(s) \dot{\bg}_{\eps} 
\end{align*}
So that from \eqref{hExpansion} and \eqref{NormalIntegration}
\begin{align*}
\int_Y f \phi_t &=  -\eps^{-2}\sqrt{2}\int_Y f \int_0^{-2\omega \eps \ln(\eps)} \Big(-\eps^2 \Delta_t(h) \dot{\bg}_{\eps}^2 + \eps H_t h \ddot{\bg}_{\eps} \dot{\bg}_{\eps} + [W''(u) - W''(\bg_{\eps}) + E] h(s) \dot{\bg}_{\eps}^2 \\
& \qquad \qquad -\sqrt{2}[\eps^2 \Delta_t(\phi) - H_t \eps^2 \phi_t - (W''(u) - W''(\bg_{\eps}) + E) \phi] \dot{\bg}_{\eps} \Big) \sqrt{\det g(s,t)} dt ds \\
&= \sqrt{2}\int_0^{-2\omega \eps \ln(\eps)} \int_{Y_t} \left[f \Delta_t(h) \dot{\bg}_{\eps}^2 - \dot{H}_0 f^2 (t/\eps) \ddot{\bg}_{\eps} \dot{\bg}_{\eps} - 6\dot{H}_0 f h \dot{\btau}_{\eps}(t) \bg_{\eps}(t) \dot{\bg}_{\eps}^2 + f \Delta_t (\phi) \dot{\bg}_{\eps} \right] dA_{Y_t} dt \\
& \quad \quad + \tilde{R}
\end{align*}
where
\[
|\tilde{R}| \leq K \eps ||f||_{H^1}^2
\]
by grouping the higher order terms in the expansion of $H_t$ and $W''(u) - W''(\bg_{\eps})$. In the appendix (\S \ref{ComputeBound}), we integrate by parts and compute integrals of our ODE solutions and show
\begin{align} \label{halfIntegralSecondVar}
\int_Y f u_{\nu} \dot{u}_{\nu} &= \frac{\sqrt{2}}{3} \int_Y  (|\nabla f|^2 - \dot{H}_Y f^2 ) dA_Y + E \\ \nonumber
|E| &\leq K \eps^{1/2} ||f||_{H^1(Y)}^2
\end{align}
Noting that $|\nabla f|^2$ and $\dot{H}_Y$ are invariant under the choice of normal, we see that 
\begin{align*}
\BE''(f \nu) \Big|_Y &= \int_Y f u_{\nu} [\dot{u}^+_{\nu} - \dot{u}^-_{\nu}] \\
&= \int_Y f u_{\nu} [\dot{u}^+_{\nu} + \dot{u}^-_{\nu^-}] \\
& = 2\frac{\sqrt{2}}{3} \int_Y  (|\nabla f|^2 - \dot{H}_Y f^2 ) dA_Y + E \\ 
|E| &\leq K \eps^{1/2} ||f||_{H^1(Y)}^2
\end{align*}
where $\nu^- = - \nu$ is the normal to $Y$ pointing towards $M^-$.
\begin{remark}
    Using \cref{SecondVarFormulaSmallEpsSobolevThm} it is straightfoward to see that there are no stable critical points of the $\BE_{\eps}$ functional on Manifolds with positive Ricci curvature. This is identical to the situation for the usual Allen--Cahn energies, as well as for the case of minimal surfaces. To see this suppose that 
    \[
    \Ric \geq C \textnormal{Id}
    \]
    for some $C > 0$.
    %
    %
    With  \cref{SecondVarFormulaSmallEpsSobolevThm}, we see that if we take $f \equiv 1$ then
    \[
    BE''(\nu) = -\frac{2 \sqrt{2}}{3} \int_Y \dot{H}_0 + O(\eps^{1/2} \text{Vol}(Y))
    \]
    We recall that
    \[
    \dot{H}_0 = |A_Y|^2 + \Ric_g(\nu, \nu)
    \]
    So we can say prove the following result, \cref{RicciStability}
    \RicciStability*
\end{remark}
\begin{remark}
Though we initially assumed $f \in C^2$, we can define the second variation formula for any $f \in H^1(Y)$ by using the final formula.
\end{remark}
\begin{remark} \label{HolderRemark}
One can also prove a H\"older version of this theorem and show that 
\begin{align*} 
\dot{u}^+_{\nu} - \dot{u}^-_{\nu} &= 2 \frac{\sqrt{2}}{3} [ \Delta_Y + \dot{H}_0](f) + E(f)  \\
||E(f)||_{C^{\alpha}(Y)} & \leq K \eps ||f||_{C^{2,\alpha}(Y)}
\end{align*} 
To prove this, one can use \eqref{uEpsExpansion5} to expand $u$ and fermi coordinates to expand $\eps^2 \Delta_g$. This is enough to expand $\dot{u}$ and compute the neumann difference to sufficiently high order in $\eps$
\end{remark}
\subsubsection{$\BE_{\eps}$ eigenvalues}
We clearly see that for $\eps$ sufficiently small,
\[
\frac{d^2}{dt^2} \BE_{\eps}(Y_t) \Big|_{t = 0} \geq \frac{\sqrt{2}}{3}||\nabla f||_{L^2(M)}^2 - K ||f||_{L^2(M)}^2
\]
with $K$ independent of $\eps$. With this, we can minimize
\[
\frac{d^2}{dt^2} \BE_{\eps}(Y_t) \Big|_{t = 0} = D^2 \BE_Y(f)
\]
with respect to $f$ for $f \in H^1(Y)$. This allows us to use the Rayleigh quotient to define eigenvalues for $\BE''_{\eps, Y}$ at a critical point $Y$:
\begin{align*}
\lambda_{i} &:= \inf_{\substack{f \in E \\ E \subset H^{1}(Y)}} \frac{\BE''_{\eps, Y}(f)}{||f||_{L^2(Y)}^2}, \\
\lambda_{1} &\leq \cdots \leq \lambda_{m} \leq 0 = \lambda_{m + 1} = \cdots = \lambda_{m + n} \leq \cdots.
\end{align*}
Here $E$ ranges over $i$-dimensional subspaces of $H^{1}(Y)$, $n$ is the $\BE_{\eps}$ nullity, and $m$ is the $\BE_{\eps}$ Morse index of $Y$. We recall from section \S \ref{EqualityOfIndices} that $m = \Ind_{AC_{\eps}}(u_{\eps, Y})$ and $n \leq \text{Null}_{AC_{\eps}}(u_{\eps, Y})$.
\section{Applications of Second Variation Formula} \label{ApplicationsOfSecondVariation}
\subsection{Allen--Cahn Index of symmetric solutions}
For $Y^{n-1} \subseteq M^n$ minimal, let $\text{Ind}_{\text{Min}}(Y)$ denote the index of $Y$ as a minimal surface. We prove the following corollary of \cref{SecondVarFormulaSmallEpsSobolevThm}, \cref{MinimalIndexEqualsACIndex}
%
\MinimalIndexEqualsACIndex*
\noindent \Pf The first equality is shown in \cref{EqualityOfIndicesTheorem}. For the third equality, we look at the formula:
\[
\BE_{\eps}''(f \nu) \Big|_Y = -\frac{2\sqrt{2}}{3}\int_Y f [J_Y + E](f)
\]
for $J_Y = \Delta_Y + \dot{H}_Y$. Let $V \subseteq C_c^{\infty}(Y)$ denote the finite dimensional vector space such that
\[
f \in V, \; f \neq 0 \implies D^2A(f \nu) \Big|_Y = -\int_Y f J_Y(f) < 0
\]
By self-adjointness of $J_Y$, we can diagonalize $V$ and get an orthogonal basis $\{f_1, \dots, f_m\}$. Without loss of generality, we can normalize each so that $||f_i||_{C^2(Y)} = 1$. Now we compute
\begin{align*}
\BE_{\eps}''(a^i f_i) &= -\frac{2\sqrt{2}}{3}\int_Y (a^i f_i) (J_Y + E)(a^j f_j) \\
&= -\frac{2\sqrt{2}}{3} \left[ \int_Y (a^i f_i) J_Y(a^j f_j) + (a^i f_i) E (a^j f_j) \right] \\
&= \frac{2\sqrt{2}}{3} D^2 A(a^i f_i) \Big|_Y - 2 \sigma_0 \int (a^i f_i) E(a^j f_j) \\
&= \frac{2\sqrt{2}}{3} \left[(a^i)^2 D^2 A\Big|_Y (f_i)\right] - 2 \sigma_0 \int (a^i f_i) E(a^j f_j)
\end{align*}
Now note that we can bound
\begin{align*}
\Big|\int (a^i f_i) E(a^j f_j) \Big| &= \Big| a^i a^j \int f_i E (f_j) \Big| \\ 
& \leq \frac{(a^i)^2 + (a^j)^2}{2} \Big| \int f_i E(f_j) \Big| \\
& \leq \frac{(a^i)^2 + (a^j)^2}{2}  \int |f_i|  \; |E(f_j)| \\
& \leq \frac{(a^i)^2 + (a^j)^2}{2}  K \eps 
\end{align*}
for $K$ independent of $i$ and $\eps$. Summing over both $i$ and $j$, we get
\begin{align*}
\BE_{\eps}''(a^i f_i) &\leq \frac{2\sqrt{2}}{3} \sum_i (a^i)^2 D^2 A\Big|_Y (f_i) + K \eps \sum_i (a^i)^2 \\
& \leq \frac{2\sqrt{2}}{3} \sum_i (a^i)^2 [D^2 A |_Y(f_i) + K \eps] 
\end{align*}
Define
\[
K_i := -D^2 A(f_i \nu) > 0
\]
\noindent Note that $K_i$ are independent of $\eps$, so as long as
\[
\eps < \frac{1}{2K} \sup_i K_i
\]
we have that
\begin{align*}
\BE_{\eps}''(a^i f_i) &\leq \frac{\sqrt{2}}{3} \sum_i (a^i)^2 D^2 A |_Y(f_i) < 0
\end{align*}
this finishes the proof. \qed \nl \nl
%
%
This inequality can be used to reprove the index of a family of symmetric solutions to Allen--Cahn vanishing on the clifford torus in $S^3$
\begin{restatable}{corr}{cajuHiesmayrThm} \label{cajuHiesmayrThm}
Let $Y^2 \subseteq S^3$ be the clifford torus and let $u_{\eps}: S^2 \to \R$ be the sequence of solutions vanishing along $Y$ constructed in \cite{caju2020ground}, Prop 9.1. There exists an $\eps_0 > 0$ such that for all $\eps < \eps_0$
\[
\Ind_{\text{AC}_{\eps}}(u_{\eps})  = 5
\]
\end{restatable}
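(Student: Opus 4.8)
The plan is to transfer the question to the geometric side via the index identities of \S\ref{EqualityOfIndices} and then pin down the $\BE_{\eps}$-index of the Clifford torus by comparing $\BE_{\eps}''$ with its Jacobi form. Write $Y\subseteq S^{3}$ for the Clifford torus; it is a fixed smooth compact minimal hypersurface, so \eqref{boundedGeometry} holds trivially. For $\eps$ small the solutions $u_{\eps}$ of \cite{caju2020ground}, Prop.~9.1, are positive on one component of $S^{3}\setminus Y$ and negative on the other (they converge to $\pm1$ there), so by the Brezis--Oswald uniqueness in \cref{BO} their restrictions to $S^{3}\setminus Y$ are the signed Dirichlet minimizers; hence $u_{\eps}=u_{Y,\eps}$, and since $u_{\eps}$ is a genuine smooth solution its normal derivatives agree across $Y$, i.e.\ $Y$ is a critical point of $\BE_{\eps}$. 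Now \cref{EqualityOfIndicesTheorem} gives $\Ind_{AC_{\eps}}(u_{\eps})=\Ind_{\BE_{\eps}}(Y)$, and \cref{MinimalIndexEqualsACIndex} together with the classical fact $\Ind_{\mathrm{Min}}(Y)=5$ (Urbano) yields $\Ind_{AC_{\eps}}(u_{\eps})\ge5$. It then remains to show $\Ind_{\BE_{\eps}}(Y)\le5$.

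For the upper bound I would use \cref{SecondVarFormulaSmallEpsSobolevThm}: at the critical point $Y$,
\[
\BE_{\eps}''(f\nu)\big|_{Y}=\tfrac{2\sqrt2}{3}\bigl(D^{2}A(f\nu)\big|_{Y}+E(f)\bigr),\qquad |E(f)|\le K\eps^{1/2}\,\|f\|_{H^{1}(Y)}^{2}.
\]
For the Clifford torus $|A_{Y}|^{2}=\Ric_{S^{3}}(\nu,\nu)=2$, so $\dot H_{Y}=4$ and $D^{2}A$ has associated operator $\Delta_{Y}+4$; a direct Fourier computation on the flat square torus (or Urbano's theorem) shows its $L^{2}$-spectrum is $\mu_{1}\le\cdots\le\mu_{5}<0=\mu_{6}=\cdots=\mu_{9}<\mu_{10}\le\cdots$, with index $5$, nullity $4$, and gap $\mu_{10}=2$. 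Let $P\subseteq H^{1}(Y)$ be the $D^{2}A$-positive eigenspace, of $L^{2}$-codimension $9$; there $D^{2}A(f)\ge\mu_{10}\|f\|_{L^{2}}^{2}$, hence $\|f\|_{H^{1}(Y)}^{2}\le C\,D^{2}A(f)$, so for $\eps$ small
\[
\BE_{\eps}''(f\nu)\big|_{Y}\ge\tfrac{2\sqrt2}{3}\bigl(1-CK\eps^{1/2}\bigr)D^{2}A(f)\ge c\,\|f\|_{L^{2}(Y)}^{2}>0\qquad(0\neq f\in P).
\]
By the min--max characterization of the $\BE_{\eps}$-eigenvalues $\lambda_{1}\le\lambda_{2}\le\cdots$ from \S\ref{SecondVarProofSection}, positivity on a subspace of codimension $9$ forces $\lambda_{10}\ge c>0$; and testing on the fixed finite-dimensional $D^{2}A$-negative eigenspace (on which $\|\cdot\|_{H^{1}}$ and $\|\cdot\|_{L^{2}}$ are comparable with $\eps$-independent constants) gives $\lambda_{5}<0$ for $\eps$ small.

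Finally I would pin the middle of the spectrum using ambient isometries. If $X$ is a Killing field of $S^{3}$ with $\langle X,\nu\rangle|_{Y}\not\equiv0$, then its flow $\Phi_{r}$ has the property that $u_{\eps}\circ\Phi_{r}$ is again a solution of \eqref{ACEquation} with $E_{\eps}(u_{\eps}\circ\Phi_{r})=E_{\eps}(u_{\eps})$, so $\langle\nabla u_{\eps},X\rangle$ lies in $\ker\Leu$ on $M^{+}$ and on $M^{-}$; by the argument proving \cref{NullityBound}, the normal component $f_{X}:=\langle X,\nu\rangle|_{Y}$ then lies in $\ker(\delta\dot w_{\nu})=\ker Q_{\BE_{\eps}}$. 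Since the stabilizer of $Y$ in $O(4)$ is $2$-dimensional, the fields $f_{X}$ span a $4$-dimensional subspace of $\ker Q_{\BE_{\eps}}$, whence $\textnormal{Null}_{\BE_{\eps}}(Y)\ge4$: at least four of the $\lambda_{i}$ vanish. Being sandwiched between $\lambda_{5}<0$ and $\lambda_{10}>0$ they must be $\lambda_{6}=\cdots=\lambda_{9}=0$, so $\BE_{\eps}''|_{Y}$ has exactly five negative eigenvalues; hence $\Ind_{\BE_{\eps}}(Y)=5$ and, by \cref{EqualityOfIndicesTheorem}, $\Ind_{AC_{\eps}}(u_{\eps})=5$.

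The step I expect to be the main obstacle is the treatment of the nullity: the $O(\eps^{1/2})$ error $E(f)$ in \cref{SecondVarFormulaSmallEpsSobolevThm} is not controlled on the $4$-dimensional kernel of $D^{2}A$ — there $D^{2}A(f)=0$ gives no hold on $\|f\|_{H^{1}}$ — so a naive eigenvalue-perturbation argument would leave open the possibility that $E$ drags those four directions to strictly negative second variation (index $6$ through $9$). This is handled at the two ends as above: the spectral gap $\mu_{10}>0$ lets $D^{2}A$ control $\|f\|_{H^{1}}$ on the positive eigenspace, and the \emph{exact} vanishing of $\BE_{\eps}''$ along the isometry-generated directions — genuine null directions because the flow preserves $E_{\eps}$ — pins the four ambiguous eigenvalues at $0$. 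Apart from that, the proof only invokes the classical facts that the Clifford torus has minimal-surface index $5$ and nullity $4$, with its space of Jacobi fields spanned by restrictions of Killing fields of $S^{3}$.
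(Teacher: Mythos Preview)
Your argument is correct and your lower bound (via \cref{EqualityOfIndicesTheorem}, \cref{MinimalIndexEqualsACIndex}, and Urbano) is exactly the paper's. The upper bound, however, is obtained differently. The paper imports the Chodosh--Mantoulidis inequality $\Ind_{AC_{\eps}}(u_{\eps})+\textnormal{Null}_{AC_{\eps}}(u_{\eps})\le \Ind_{\mathrm{Min}}(\mathcal T)+\textnormal{Null}_{\mathrm{Min}}(\mathcal T)=9$ as a black box, then combines it with the Killing-field nullity bound $\textnormal{Null}_{AC_{\eps}}\ge 4$ to get $\Ind_{AC_{\eps}}\le 5$. You instead stay entirely within the paper's own framework: you use \cref{SecondVarFormulaSmallEpsSobolevThm} to show that $\BE_{\eps}''$ is strictly positive on the $D^2A$-positive eigenspace (codimension $9$), forcing $\lambda_{10}>0$, and then the same Killing-field nullity pins $\lambda_6=\cdots=\lambda_9=0$. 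This is a genuinely different, and arguably more satisfying, route: it avoids the external citation and demonstrates that the second-variation expansion alone is strong enough to recover the upper bound. The paper's proof is shorter precisely because it outsources this step. One minor slip: on the flat Clifford torus with $\dot H_Y=4$ the first positive Jacobi eigenvalue is $\mu_{10}=4$ (from $m^2+n^2=4$), not $2$; this does not affect your argument, which only needs $\mu_{10}>0$.
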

\begin{Remark}
This result, and a stronger rigidity result, was originally proved by Hiesmayr in theorem $1$ \cite{hiesmayr2020rigidity}. A separate proof was given in \cite{caju2019solutions}, theorem 1.1
\end{Remark}
\noindent \Pf Note that the clifford torus, $\mathcal{T}$, is minimal inside of $S^3$, and that the solutions $\{u_{\eps}\}$ are equal to the energy minimizers on either side of $\mathcal{T}$ inside of $S^3$ (in fact, such solutions are unique by uniqueness of solutions with the dirichlet condition). This tells us that 
\[
\Ind_{AC_\eps}(u_{\eps}) = \Ind_{\BE_{\eps}}(\mathcal{T})
\]
Now apply corollary \ref{MinimalIndexEqualsACIndex} and Urbano's original proof that 
\[
\Ind_{\text{Min}}(\mathcal{T}) = 5
\]
(see \cite{urbano1990minimal}, Theorem) to conclude that 
\[
\Ind_{AC_{\eps}}(u_{\eps}) \geq 5
\]
To bound from above, we apply Chodosh-Mantoulidis Theorem 5.11 \cite{chodosh2020minimal} to get that 
\begin{equation} \label{ChodoshMantoulidis}
\Ind_{AC_{\eps}}(u_{\eps}) + \text{Null}_{AC_{\eps}}(u_{\eps}) \leq \Ind_{\text{Min}}(\mathcal{T}) + \text{Null}_{\text{Min}}(\mathcal{T})
\end{equation}
and note that $\text{Null}_{\text{Min}}(\mathcal{T}) = 4$ by \cite{hsiang1971minimal}. In fact, the nullity of $\mathcal{T}$ is generated by the subspace $V \subseteq \mathfrak{s}\mathfrak{o}(4)$ of dimension $4$ which \textit{does not} fix $\mathcal{T}$. The same subspace of killing fields give rise to diffeomorphisms of $S^3$ which preserve $E_{\eps}(u_{\eps}, S^3) = \BE_{\eps}(\mathcal{T})$ and so 
$\text{Null}_{AC_{\eps}}(u_{\eps}) \geq 4$. Immediately by equation \eqref{ChodoshMantoulidis}, we conclude $\Ind_{AC_{\eps}}(u_{\eps}) = 5$. \qed 
%
%
\subsection{Application: Allen--Cahn Index of $D_{2p}$-symmetric solutions on $S^1$} \label{S1MorseIndex}
\noindent Identify $S^1 \cong [0,1]$. Let $p \in \N^{+}$. There exists an $\eps_p > 0$, such that for any $\eps_p > \eps > 0$, there exists a positive energy minimizer of \eqref{ACEnergy} on $[0, 1/(2p)]$ with dirichlet boundary conditions, $u_{\eps, 2p}^+$. Now define
\begin{align*}
u_{\eps, 2p}(x) &= (-1)^i u_{\eps, 2p}^+(x \mod 1/(2p)) \\
x & \in [i/2p, (i+1)/2p]
\end{align*}
i.e. we reflect and repeat $u_{\eps, 2p}^+$ at each subsequent interval of length $1/(2p)$. By symmetry, this is a solution to Allen--Cahn. We recall that all solutions to Allen--Cahn on $S^1$ with a finite number of zeros are of this form. This is done in \cite{mantoulidis2022double}, proposition 2.1, among other places
\begin{theorem}[\cite{mantoulidis2022double}, Prop 2.1]\label{OneDimPeriodicClassification}
Let $\eps > 0$. Given $u$ a solution to \eqref{ACEquation} on $S^1$ with $|u^{-1}(0)| > 0$ and finite, $u = u_{\eps, 2p}$ for some $p \geq 0$
\end{theorem}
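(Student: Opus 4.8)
The plan is a phase-plane analysis of the autonomous ODE $\eps^{2}u'' = W'(u)$, where $W'(u) = u^{3}-u$ is odd. First I would lift $u$ to a smooth $1$-periodic solution on $\mathbb{R}$ (ODE regularity) and introduce the first integral $H(x) := \tfrac{\eps^{2}}{2}u'(x)^{2} - W(u(x))$. Since $H'(x) = u'(x)\bigl(\eps^{2}u''(x) - W'(u(x))\bigr) = 0$, we get $H \equiv E$ for a constant $E$, i.e.\ $\tfrac{\eps^{2}}{2}u'^{2} = W(u) + E \ge 0$ on all of $\mathbb{R}$.

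Next I would classify $u$ by the value of $E$. If $u$ is constant it equals $0$ or $\pm 1$; but $u \equiv \pm 1$ has no zeros and $u \equiv 0$ has infinitely many, so both are excluded by the hypothesis $0 < |u^{-1}(0)| < \infty$. For $u$ nonconstant, $u'$ vanishes at some $x_{0}$ by periodicity, so $W(u(x_{0})) = -E$; since $W \ge 0$ this forces $E \le 0$. If $E = 0$ then $u(x_{0}) \in \{\pm 1\}$ and $u'(x_{0}) = 0$, so $u \equiv \pm 1$ by uniqueness of the initial value problem --- excluded. If $E \le -\tfrac14$ then $\{W \ge -E\}$ consists of two unbounded components (plus, when $E = -\tfrac14$, the isolated point $\{0\}$); the connected range of $u$ must lie in one of these, and an unbounded component makes $u$ sign-definite hence zero-free, while $\{0\}$ forces $u \equiv 0$ --- both excluded. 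Hence $-\tfrac14 < E < 0$. Writing $c := -E \in (0,\tfrac14)$, the super-level set $\{W \ge c\}$ has exactly the three connected components $(-\infty,-b]$, $[-a,a]$, $[b,\infty)$ with $0 < a < 1 < b$ (explicitly $a = \sqrt{1-2\sqrt{c}}$, $b = \sqrt{1+2\sqrt{c}}$). Since $\mathrm{range}(u)$ is a connected interval avoiding $\{W < c\}$ and must contain a zero, it lies in the middle one: $\mathrm{range}(u) \subseteq [-a,a]$, with $u' = 0$ exactly where $u = \pm a$, so $u$ oscillates monotonically between $-a$ and $a$.

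Then I would extract the reflect-and-repeat structure from reversibility and IVP uniqueness. Translate so that $u(0) = 0$; since $W(0)+E = \tfrac14 - c > 0$ we have $u'(0) \ne 0$, and after possibly replacing $u$ by $-u$ we may assume $u'(0) > 0$. Let $L > 0$ be the first time $u(L) = a$ (finite, since the orbit is a closed curve in the phase plane traversed in finite time); then $u'(L) = 0$, and reflection about $L$ gives $u(L+s) = u(L-s)$, so $u(2L) = 0$ and $u'(2L) = -u'(0)$. The function $s \mapsto -u(2L+s)$ then solves the ODE with data $(0,u'(0))$, so by uniqueness $u(2L+s) = -u(s)$ for all $s$. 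Consequently $u$ has exact minimal period $4L$ --- in $[0,4L)$ it has precisely the two transversal zeros $0$ and $2L$ --- and on $[0,2L]$ it is a positive solution of $\eps^{2}u'' = W'(u)$ with zero Dirichlet data, hence equals the Brezis--Oswald minimizer $u_{\eps,2p}^{+}$ on that interval by Theorem \ref{BO}. Finally, since $1$ is a period and $4L$ is the minimal period, $1 = 4pL$ for some $p \in \mathbb{N}$, $p \ge 1$; thus $2L = 1/(2p)$, the zeros of $u$ in $[0,1)$ are exactly $\{i/(2p)\}_{i=0}^{2p-1}$, and combining $u(x+2L) = -u(x)$ with the identification on $[0,2L]$ yields $u(x) = (-1)^{i}u_{\eps,2p}^{+}(x \bmod 1/(2p))$ on $[i/(2p),(i+1)/(2p)]$, i.e.\ $u = u_{\eps,2p}$ (read up to the rotation of $S^{1}$ and the sign $u \mapsto -u$ used above).

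The step requiring the most care is the energy classification: correctly enumerating the connected components of $\{W \ge c\}$ across the relevant ranges of $c$, and then using reversibility together with uniqueness of solutions to pin down the exact symmetric profile. I do not expect a genuine obstacle here --- this is a classical fact about $1$-dimensional Allen--Cahn solutions --- but it is worth flagging that the conclusion "$u = u_{\eps,2p}$" should be understood modulo rotation and the sign symmetry, and that the hypothesis of finitely many zeros is used only to rule out the flat solution $u \equiv 0$.
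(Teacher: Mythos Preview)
Your proof is correct and follows the same overall arc as the paper's: lift to $\mathbb{R}$, use phase-plane analysis to establish the oscillatory structure, then exploit the odd-reflection symmetry together with ODE uniqueness to identify $u$ with the reflected-and-repeated profile, and finally use $1$-periodicity to force the half-period to be $1/(2p)$. The difference is in how the phase-plane step is executed. The paper invokes the Poincar\'e--Bendixson theorem to trichotomize orbits into constant, periodic, or heteroclinic, and then rules out the non-periodic cases; you instead classify directly via the first integral $\tfrac{\eps^{2}}{2}u'^{2}-W(u)\equiv E$, pinning down $E\in(-\tfrac14,0)$ and the range $[-a,a]$ by hand. Your route is more elementary and self-contained --- Poincar\'e--Bendixson is overkill for a scalar second-order ODE with a conserved quantity --- and it makes the transversality of zeros and the location of the turning points explicit, which streamlines the subsequent reflection argument. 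You are also right to flag that the conclusion $u=u_{\eps,2p}$ is to be read modulo a rotation of $S^{1}$ and the sign flip $u\mapsto -u$; the paper leaves this implicit.
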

\noindent \Pf Lift $u$ to a solution on $\R$. Using the Poincar\'e-Bendixson theorem (\S 10.5 in \cite{hirsch2012differential}) on the dynamical system given by $(u, u_x)$, we see that $u$ is either constant, periodic, or heteroclinic. If $u$ is constant, then $p = 0$ and $|u^{-1}(0)| = 0$. If $|u^{-1}(0)| > 0$, then $u$ cannot be heteroclinic, as this is not $1$-periodic and $u$ is lifted from $S^1$ with at least $1$ zero. Thus $u$ must be periodic on $\R$. \nl \nl
Without loss of generality, $u(0) = 0$ and let $\alpha > 0$ be the first positive $0$ of $u$. Now construct
\begin{align*} 
\tilde{u}(x) &:= (-1)^i u(x \; \text{mod} \; \alpha) \\
x &\in [i \alpha, (i+1) \alpha]
\end{align*}
The two functions agree on $[0, \alpha]$, so by existence and uniqueness, they are equal everywhere. Now note that because $u$ is lifted from $S^1$, we have $u(0) = u(1) = 0$, which means that $n \alpha = 1$ for some $n$, i.e. $\alpha = 1/n$. Finally, note that $u$ changes sign at $k \alpha$, and $u_{x} \Big|_{x = 0}$ is only defined if $n$ is even. Thus $\alpha = 1/(2p)$ for some $p \in \N^+$. \qed \nl \nl
With this, our goal is to compute the Morse index of all solutions on $S^1$ with $|u^{-1}(0)|$ finite. We'll first show \cref{CircleSecondVarProp} 
\CircleSecondVarProp*
\noindent and then conclude \cref{S1MorseIndexTheorem}
\CircleMorseIndexTheorem*
\begin{Remark}
By \cref{EqualityOfIndicesTheorem} (adapted for a one dimensional manifold), theorem \ref{S1MorseIndexTheorem} shows that the Allen--Cahn index of $u_{\eps, 2p}$ equals $2p-1$ for all $\eps > 0$ sufficiently small.
\end{Remark}
\subsubsection{Reformulating second variation}
For general $p$, we reframe
\begin{align} \label{YtDefinition}
q_i &= \frac{i}{2p} + f\left(\frac{i}{2p}\right) t
\end{align}
so that we can write the second variation in a more convenient way
\begin{lemma}
Let $u_{\eps, 2p}: S^1 \to \R$ be the solution to \eqref{ACEquation} with $2p$ symmetric nodes and let $\dot{u}_{\eps, i}$ be the solution to \eqref{LinearizedSystem} on each subinterval of $\left[ \frac{i}{2p}, \frac{i+1}{2p} \right]$. Then
\begin{align*}
BE'' &= \eps c \sum_{i = 0}^{2p-1} \left[ f\left(\frac{i}{2p} \right) \dot{u}_{i,x}\left(\frac{i}{2p}\right) + f\left(\frac{i+1}{2p}\right) \dot{u}_{i,x}\left(\frac{i+1}{2p}\right) \right]
\end{align*}
\end{lemma}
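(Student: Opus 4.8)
The plan is to deduce this from the second variation formula already in hand (\cref{SecondVariationTheorem} and the computations of \S\ref{FirstVariationSection}--\S\ref{SecondVariationSection}), specialized to dimension one and reorganized by the $2p$ intervals $I_i := [x_i, x_{i+1}]$, $x_i := i/(2p)$. The first step is to exploit that $E_\eps$ depends on its argument only through $|\nabla u|^2$ and $W(u)$, both invariant under $u \mapsto -u$ on a connected component. Hence, if $I_i(t) := [q_i(t), q_{i+1}(t)]$ with $q_i(t)$ as in \eqref{YtDefinition} and $u^+_{\eps,i}(t)$ denotes the positive Brezis--Oswald minimizer of the $\eps$-Allen--Cahn energy on $I_i(t)$ (which exists and depends smoothly on $t$ for $|t|$ small by \cref{BO}, continuity of $\lambda_1$, and the smooth dependence discussed in \S\ref{SmoothDependenceDomains}), then $\BE_\eps(Y_t) = \sum_{i=0}^{2p-1} E_\eps\big(u^+_{\eps,i}(t); I_i(t)\big)$. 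Differentiating twice and commuting the finite sum with $\tfrac{d^2}{dt^2}$, it suffices to compute the second variation of $E_\eps$ on a single moving interval.

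The second step is to run the single-domain analogue of \S\ref{FirstVariationSection}--\S\ref{SecondVariationSection}. The first variation of the energy of a moving interval carrying its positive minimizer is, exactly as in \S\ref{FirstVariationSection},
\[
\frac{d}{dt}E_\eps\big(u^+_{\eps,i}(t); I_i(t)\big) = \int_{\partial I_i(t)} \Big[\tfrac{\eps}{2}(u^+_{\eps,i})_\nu^2 - \tfrac{1}{4\eps}\Big]\, s ,
\]
where $\nu$ is the inward unit normal to $I_i$ and $s$ is the inward-normal speed of the moving endpoint; since this identity holds at every $t$, we may differentiate it once more as in the proof of \cref{SecondVariationTheorem}. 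In one dimension the relevant terms simplify drastically: the boundary $\partial I_i = \{x_i, x_{i+1}\}$ is zero-dimensional, so there is no area-element variation and no tangential ($\dot\nu$) contribution, and $(u^+_{\eps,i})''(x_j) = \eps^{-2}W'(0) = 0$ at each node, so the $u_{\nu\nu}$-term also drops. (This is where the "cancellation between the two sides" of the general proof is replaced by genuine vanishing.) What survives is
\[
\frac{d^2}{dt^2}E_\eps\big(u^+_{\eps,i}(t); I_i(t)\big)\Big|_{t=0} = \eps\Big[ s_i\,(u^+_{\eps,i})_\nu(x_i)\,\dot u_{i,\nu}(x_i) + s_{i+1}\,(u^+_{\eps,i})_\nu(x_{i+1})\,\dot u_{i,\nu}(x_{i+1})\Big],
\]
with $\dot u_{\eps,i}$ the solution of \eqref{LinearizedSystem} on $I_i$ and $s_j$, $\nu$ as above.

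The third step is to insert the explicit data. By the reflection symmetry of $u^+_{\eps,2p}$ about the midpoint of its interval, $(u^+_{\eps,i})'(x_i) = c$ and $(u^+_{\eps,i})'(x_{i+1}) = -c$ with $c := (u^+_{\eps,2p})'(0) > 0$ independent of $i$, so relative to the inward normals ($+\partial_x$ at $x_i$, $-\partial_x$ at $x_{i+1}$) one has $(u^+_{\eps,i})_\nu = c$ at both endpoints. From \eqref{YtDefinition} the endpoint $x_j$ moves with velocity $f(x_j)\,\partial_x$, hence $s_i = f(x_i)$, $s_{i+1} = -f(x_{i+1})$, $\dot u_{i,\nu}(x_i) = \dot u_{i,x}(x_i)$, and $\dot u_{i,\nu}(x_{i+1}) = -\dot u_{i,x}(x_{i+1})$. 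The two sign flips at $x_{i+1}$ cancel, the interval's contribution becomes $\eps c\,[\,f(x_i)\dot u_{i,x}(x_i) + f(x_{i+1})\dot u_{i,x}(x_{i+1})\,]$, and summing over $i = 0,\dots,2p-1$ gives the stated identity.

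The main obstacle is the sign bookkeeping in the second and third steps: keeping straight the inward normals at the two ends of each $I_i$, the signed endpoint speeds induced by \eqref{YtDefinition}, and (if one instead works with $u_{\eps,2p}|_{I_i}$ rather than the positive minimizer) the alternating signs of $u^+_{\eps,i}$ versus $u_{\eps,2p}$ — it is precisely the mutual cancellation of these signs that removes any $(-1)^i$ factor and produces the clean sum. A secondary, routine point is justifying the one-dimensional simplifications of the single-domain second variation and the smooth $t$-dependence of the Brezis--Oswald minimizers, both of which are immediate from the material already developed.
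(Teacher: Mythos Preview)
Your argument is correct and lands on the same identity; the route differs only in organization. The paper specializes the global node-by-node formula \eqref{SecondVariationFormula} to the discrete set $\{i/(2p)\}$, then reindexes the double contribution at each node into a sum over the $2p$ intervals and uses $u_x(i/2p)=(-1)^i c$ to collapse the alternating signs. You instead invoke the $u\mapsto -u$ invariance of $E_\eps$ to write $\BE_\eps(Y_t)$ as a sum of single-interval energies from the outset and compute the second variation of each summand directly, observing that in one dimension the terms $f\,u_{\nu\nu}$ and $\dot\nu(u)$ --- which in the general proof of \cref{SecondVariationTheorem} only \emph{cancel} between the two sides at a critical point --- actually \emph{vanish} individually (since $u''=\eps^{-2}W'(0)=0$ at each node and $\nu$ is constant), while the affine parametrization \eqref{YtDefinition} makes the boundary speed and the $-1/(4\eps)$ term constant in $t$. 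That observation is the one small extra piece of content your version supplies; it explains why the single-domain computation closes without invoking criticality of the full $\BE_\eps$, at the cost of re-deriving the boundary calculus rather than quoting \eqref{SecondVariationFormula}. Your own caveat about sign bookkeeping is apt: the only place to slip is in matching your $\dot u_i$ (linearization of the \emph{positive} minimizer on $I_i$) to the $\dot u_i$ of \eqref{IntervalSystem}, which is phrased via $u_{\eps,2p}$; these agree up to the parity sign that your cancellation argument absorbs.
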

\noindent \Pf Using \eqref{SecondVariationFormula} and letting $\dot{u}_x^{\pm}$ denote the $x$ derivative from the $\pm$ direction, we compute:
\begin{align*}
BE''(f) &= \eps \sum_{i = 0}^{2p-1} (-1)^i f\left(\frac{i}{2p}\right) u_{x}\left(\frac{i}{2p}\right) \left[ \dot{u}^+_{x}\left( \frac{i}{2p} \right) - \dot{u}^-_{x}\left(\frac{i}{2p}\right)\right] \\
&= \eps \sum_{i = 0}^{2p-1} (-1)^i\left[ f\left(\frac{i}{2p} \right)  u_x\left(\frac{i}{2p}\right) \dot{u}_{i,x}\left(\frac{i}{2p}\right) + f\left(\frac{i+1}{2p}\right) u_x\left(\frac{i+1}{2p}\right)  \dot{u}_{i,x}\left(\frac{i+1}{2p}\right) \right]
\end{align*}
where on the interval of $\left[\frac{i}{2p}, \frac{i+1}{2p} \right]$, we have defined 
\begin{align*}
\eps^2 \dot{u}_{i,xx} &= W''(u) \dot{u} \\
\dot{u}_i\left( \frac{i}{2p} \right) &= f\left(\frac{i}{2p} \right) u_x\left( \frac{i}{2p} \right)\\ 
\dot{u}_i\left( \frac{i+1}{2p} \right) &= f\left(\frac{i+1}{2p} \right) u_x\left( \frac{i+1}{2p} \right)
\end{align*}
Letting $c = u_x(0) > 0$, and noting that 
\[
u_x\left(\frac{i}{2p} \right) = (-1)^i c
\]
our second variation formula becomes 
\begin{align*}
BE'' &= \eps c \sum_{i = 0}^{2p-1} \left[ f\left(\frac{i}{2p} \right) \dot{u}_{i,x}\left(\frac{i}{2p}\right) + f\left(\frac{i+1}{2p}\right) \dot{u}_{i,x}\left(\frac{i+1}{2p}\right) \right]
\end{align*}
and our system for $\dot{u}_i$ becomes
\begin{align} \label{IntervalSystem}
\dot{u}_i:&\left[ \frac{i}{2p}, \frac{i+1}{2p} \right] \to \R \\
\eps^2 \dot{u}_{i,xx} &= W''(u) \dot{u}_i \\ \nonumber
\dot{u}_i\left( \frac{i}{2p} \right) &= c f\left(\frac{i}{2p} \right)\\  \nonumber
\dot{u}_i\left( \frac{i+1}{2p} \right) &= -c f\left(\frac{i+1}{2p} \right) \nonumber
\end{align}
proving the lemma. \qed 
\subsubsection{Isometric reduction of Allen--Cahn solutions on $S^1$}
We now observe that we can shift the boundary conditions of each individual subinterval $[i/2p, (i+1)/2p]$ and get the same energy, and hence variations of energy. If we let $Y_t = \{q_i(t)\}$ as in \eqref{YtDefinition}, then we can write
\[
\BE(Y_t) = \sum_{i = 0}^{2p-1} \text{E}_{\eps}([q_i(t), q_{i+1}(t)])
\]
where $q_{2p}(t) = q_0(t)$. Here, $\text{E}([a,b])$ denotes the energy of the minimizer of \eqref{ACEnergy} on $[a,b]$ at scale $\eps$. Note that for $a(t) \leq b(t)$, we have 
\[
\text{E}_{\eps}([a(t), b(t)]) = \text{E}_{\eps}([0, b(t) - a(t)]) = \text{E}_{\eps}([a(t) - b(t), 0])
\]
This can be thought of as a change of reference frame to one of the endpoints. See picture \ref{fig:rightendpointframe}.
\begin{figure}[h!]
\centering
\includegraphics[scale=0.3]{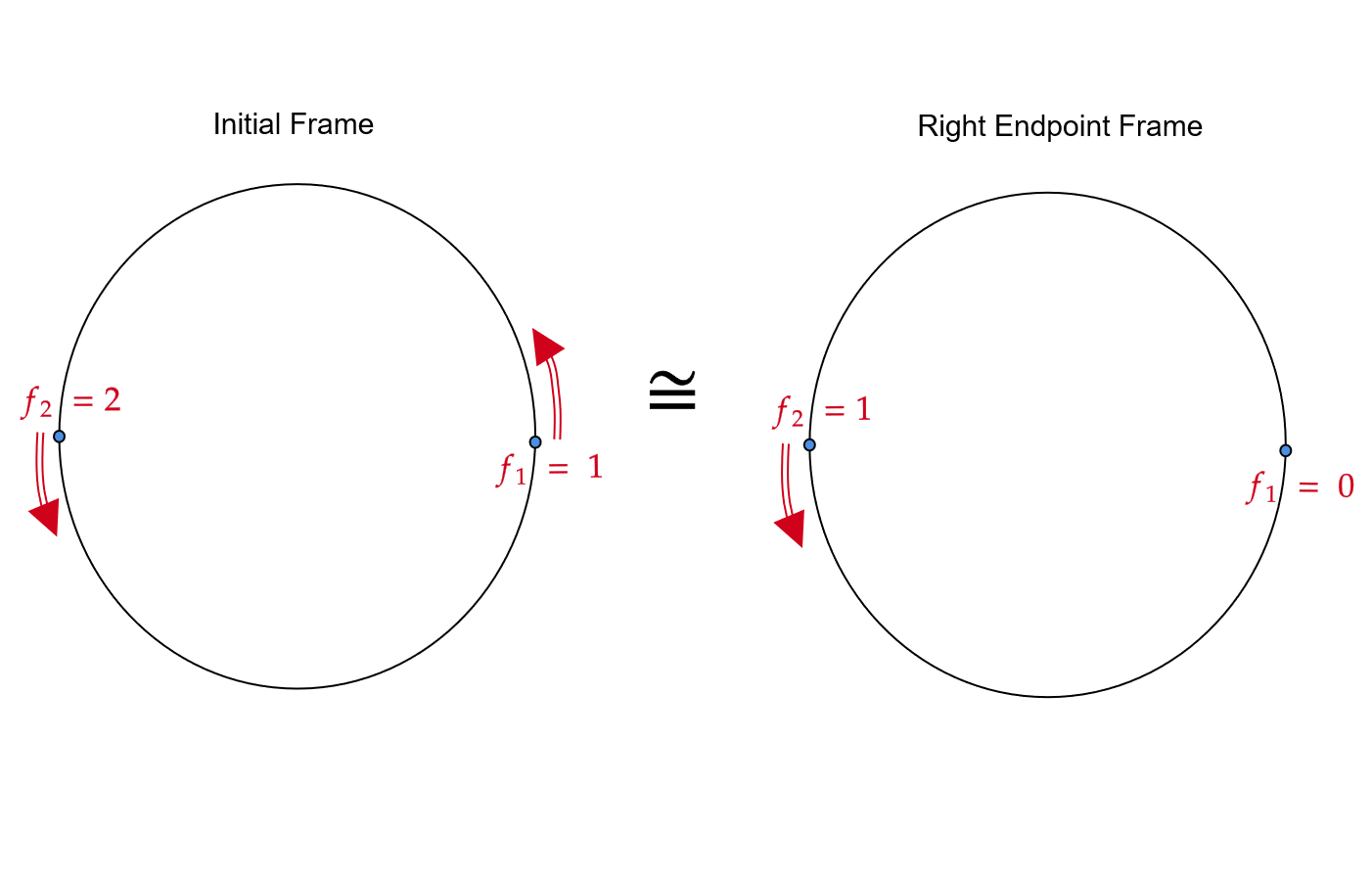}
\caption{}
\label{fig:rightendpointframe}
\end{figure}
We can also take the ``center of momentum frame" and get 
\[
\text{E}_{\eps}([a(t), b(t)]) = \text{E}_{\eps}\left(\left[\frac{a(t) - b(t)}{2}, \frac{b(t) - a(t)}{2}\right]\right)
\]
See picture \ref{fig:comframe}. 
\begin{figure}[h!]
\centering
\includegraphics[scale=0.3]{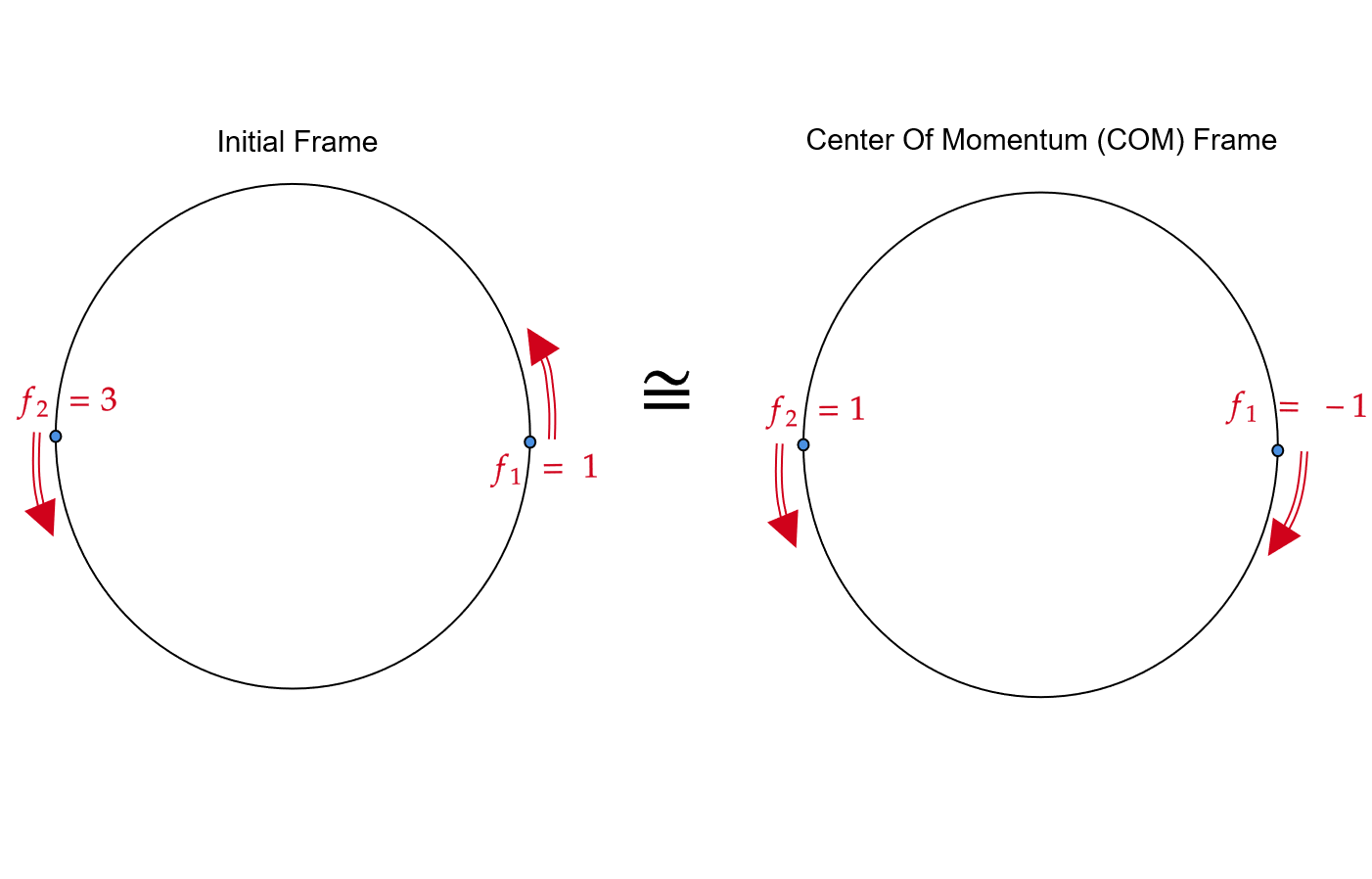}
\caption{}
\label{fig:comframe}
\end{figure}
If we further define
\[
\tilde{f}(0) = \frac{\dot{q}_0 - \dot{q}_1}{2}, \qquad \tilde{f}(1/2p) = \frac{\dot{q}_1 - \dot{q}_0}{2} = - \tilde{f}(0)
\]
Then it is clear that 
\begin{align} \nonumber 
\text{E}_{\eps}([q_0(t), q_1(t)]) &= \text{E}_{\eps}\left(\left[\frac{q_0(t) - q_1(t)}{2}, \frac{q_1(t) - q_0(t)}{2}\right]\right)\\ \nonumber
\frac{d^2}{dt^2} \text{E}_{\eps}([q_0(t), q_1(t)])\Big|_{t = 0} &= \frac{d^2}{dt^2} \text{E}_{\eps}\left(\left[\frac{q_0(t) - q_1(t)}{2}, \frac{q_1(t) - q_0(t)}{2}\right]\right)\Big|_{t = 0} \\ \label{SecondVarEnergyReduction}
&= \eps c \left[\tilde{f}(0) \dot{\tilde{u}}_x(0) + \tilde{f}(1/2p) \dot{\tilde{u}}_x(1/2p) \right]
\end{align}
where instead of the system \eqref{IntervalSystem}, we have
\begin{align} \label{COMIntervalSystem}
\dot{\tilde{u}}:&\left[0, \frac{1}{2p} \right] \to \R \\ \nonumber
\eps^2 \dot{u}_{xx} &= W''(u) \dot{\tilde{u}} \\ \nonumber
\dot{\tilde{u}}\left( 0 \right) &= c \tilde{f}\left(0 \right)\\  \nonumber
\dot{\tilde{u}}_i\left( \frac{1}{2p} \right) &= -c \tilde{f}\left(\frac{1}{2p} \right)  \\ \nonumber
&= c \tilde{f}(0)
\end{align}
In the appendix (\S \ref{NeumannCircleSection}), we show that for the system \eqref{COMIntervalSystem}, we can compute
\begin{equation} \label{DotUNeumannEq}
\dot{\tilde{u}}_x(0) = c \tilde{f}(0) v, \qquad v = v(\eps) < 0
\end{equation}
With this, we can now prove proposition \cref{CircleSecondVarProp} 
\CircleSecondVarProp*
\noindent \Pf Breaking up the energy into all of its intervals and using the center-of-momentum frame, we have 
\begin{align*}
\BE''(t) \Big|_{t = 0} &= \sum_{i = 0}^{2p-1} \frac{d^2}{dt^2}\text{E}_{\eps}\left(\left[q_i(t), q_{i+1}(t)\right] \right) \\
&= \sum_{i = 0}^{2p-1} \frac{d^2}{dt^2}\text{E}_{\eps}\left(\left[\frac{q_{i}(t) - q_{i+1}(t)}{2}, \frac{q_{i+1}(t) - q_{i}(t)}{2}\right] \right)\Big|_{t = 0} \\
& = \eps c \sum_{i = 0}^{2p-1} \frac{\dot{q}_i - \dot{q}_{i+1}}{2}\left[ \dot{\tilde{u}}_{i,x} \left( \frac{i}{2p} \right) - \dot{\tilde{u}}_{i,x}\left( \frac{i+1}{2p}\right) \right]
\end{align*}
from \eqref{SecondVarEnergyReduction}. By equation \eqref{DotUNeumannEq}, and the symmetry of the system \ref{COMIntervalSystem}, we have that 
\[
\dot{\tilde{u}}_{i,x}\left( \frac{i}{2p} \right) = - \dot{\tilde{u}}_{i,x} \left( \frac{i+1}{2p} \right) = c v \left( \frac{\dot{q}_i - \dot{q}_{i+1}}{2} \right)
\]
so that 
\begin{align*}
\BE''(t) \Big|_{t = 0} &= \eps c^2 v(\eps) \sum_{i = 0}^{2p-1} 2\left(\frac{\dot{q}_i - \dot{q}_{i+1}}{2} \right)^2  \\
&= \eps c^2 v(\eps) \sum_{i = 0}^{2p-1} 2\left(\frac{f(i/(2p)) - f((i+1)/2p)}{2} \right)^2
\end{align*}
This finishes the proof of \cref{CircleSecondVarProp}. \qed\nl \nl
The formula for second variation shows: 
\begin{align*}
BE'' &\geq  0 \iff |f(i/(2p)) -  f((i+1)/2p)| = 0 \qquad \forall i = 0, \dots, 2p - 1 
\end{align*}
which geometrically means that our variation either fixes the $D_{2p}$ nodal sets, or rotates all of them at the same rate. This gives the following \textbf{rigidity} result
\begin{corollary}
Any variation of the $u_{2p,\eps}$ solution produces a non-positive second variation of $\BE$, i.e.
\[
\frac{d^2}{dt^2} BE(t) \Big|_{t = 0}  \leq 0
\]
With equality if and only if the variation rotates or fixes all nodes.
\end{corollary}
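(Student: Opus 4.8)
The plan is to read the corollary straight off Proposition~\ref{CircleSecondVarProp}. First I would recall that $u_{\eps,2p}$, being built by iterated reflection of a single positive Dirichlet minimizer, is a genuine smooth solution of~\eqref{ACEquation} on $S^1$; in particular $\partial_x u_{\eps,2p}$ matches from both sides at every node, so $Y_{2p}$ is a critical point of $\BE_\eps$ and $\BE_\eps'\big|_{Y_{2p}}=0$. Because the first variation vanishes, for any one-parameter family $Y_t$ of $2p$-point configurations with $Y_0=Y_{2p}$ the number $\frac{d^2}{dt^2}\BE_\eps(Y_t)\big|_{t=0}$ depends only on the first-order data, i.e.\ on the function $f:\{0,\dots,2p-1\}\to\R$ recording the initial node velocities (the ``acceleration'' contribution is $\BE_\eps'$ applied to $\ddot Y_0$, which is $0$). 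Hence it suffices to treat the affine families $a_i(t)=\tfrac{i}{2p}+f(\tfrac{i}{2p})t$, and these already realize every infinitesimal deformation of $Y_{2p}$.

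Second, I would substitute into Proposition~\ref{CircleSecondVarProp}:
\[
\frac{d^2}{dt^2}\BE_\eps(Y_t)\Big|_{t=0}=BE''(f)\big|_{Y_{2p}}=\eps\,c^2\,v(\eps)\sum_{i=0}^{2p-1}\Big[f\big(\tfrac{i}{2p}\big)-f\big(\tfrac{i+1}{2p}\big)\Big]^2,
\]
where $c=\partial_x u_{\eps,2p}(0)>0$ and, by the same proposition, $v(\eps)<0$ for every $\eps>0$. Since $\eps>0$, $c^2>0$, $v(\eps)<0$, and each summand is nonnegative, the right-hand side is $\le 0$, which is the asserted inequality.

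For the equality statement, observe that a sum of nonnegative terms vanishes exactly when every term does, i.e.\ when $f(\tfrac{i}{2p})=f(\tfrac{i+1}{2p})$ for all $i=0,\dots,2p-1$ (indices cyclic mod $2p$); this says precisely that $f$ is constant, say $f\equiv a$. If $a=0$ the family fixes all nodes; if $a\neq 0$ it moves every node at the common speed $a$, i.e.\ it is to first order the rigid rotation of $S^1$. Conversely any such family has constant $f$ and hence $BE''=0$. I do not anticipate a genuine obstacle here: the only two points that need care are the reduction to affine families, which is automatic at a critical point, and the signs $c>0$ and $v(\eps)<0$, the latter already being part of the statement of Proposition~\ref{CircleSecondVarProp} and therefore free to quote.
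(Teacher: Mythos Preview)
Your proposal is correct and follows essentially the same approach as the paper: both read the inequality and the equality case directly off the formula in Proposition~\ref{CircleSecondVarProp}, using $v(\eps)<0$ and the nonnegativity of the sum of squares. Your extra remark that at a critical point the second variation depends only on the first-order data $f$ (so affine families suffice) is a helpful clarification the paper leaves implicit.
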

\noindent As a result, we also prove \cref{S1MorseIndexTheorem} 
\CircleMorseIndexTheorem*
\noindent \Pf Without loss of generality, the space of variations can be chosen to fix $q = 0$ and hence is spanned by $\{f_i\}$ with $f_i$ moving the point $\frac{i}{2p}$ for $i = 1, \dots, 2p-1$. Clearly these are linearly independent and this space is dimension $2p-1$. Moreover, any element of their span is of the form: 
\[
V = \sum_{i = 1}^{2p-1} a_i f_i
\]
and gives a negative variation by equation \eqref{S1SecondVarRevamped}. If one of the $a_i$ are non-trivial, then this is a non-trivial variation among all of the points $\{ \frac{i}{2p} \}_{i = 1}^{2p-1}$ that fixes $q = 0$, and hence is not a rotation of all points. \qed
\subsection{Classification of stable $\BE$ critical points in noncompact $M^3$}
In this section we prove a result analogous to the following theorems of Fischer-Colbrie-Schoen (\cite{fischer1980structure}, Theorem 3)
\begin{theorem}[Fischer-Colbrie-Schoen]
Let $M^3$ be a complete oriented $3$-manifold of non-negative scalar curvature. Let $Y^2$ be an oriented complete stable minimal surface in $M$. Let $S$ denote the scalar curvature of $M$ at any point. Then
\begin{enumerate}
\item If $Y$ compact, then $Y$ is conformally equivalent to $S^2$ or $Y$ is a totally geodesic flat torus $T^2$. If $S > 0$ on $M$, then $Y$ is conformally equivalent to $S^2$. 

\item If $Y$ is not compact, then $Y$ is conformally equivalent to the complex plane $\C$, or the cylinder $A$. If $Y$ is a cylinder, then $Y$ is flat and totally geodesic. If the scalar curvature of $M$ is everywhere positive, then $Y$ cannot be a cylinder with finite total curvature  \label{SecondCondition}
\end{enumerate}
If $\Ric_M \geq 0$, then the assumption of finite total curvature in \ref{SecondCondition} can be removed
\end{theorem}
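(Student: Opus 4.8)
The plan is to reproduce the original argument of Fischer--Colbrie and Schoen, whose only inputs are the stability inequality and the Gauss equation. Stability of the minimal surface $Y^2 \subset M^3$ means that for all $\phi \in C_c^\infty(Y)$,
\[
\int_Y |\nabla \phi|^2 \;\ge\; \int_Y \big(|A_Y|^2 + \Ric_g(\nu,\nu)\big)\phi^2 \;=\; \int_Y \dot H_Y\, \phi^2 ,
\]
while the Gauss equation for a minimal surface in a $3$--manifold gives $2K_Y = S - 2\Ric_g(\nu,\nu) - |A_Y|^2$, where $K_Y$ is the intrinsic Gauss curvature of $Y$. Eliminating $\Ric_g(\nu,\nu)$ and writing $V := \tfrac12 S + \tfrac12 |A_Y|^2 \ge 0$ (here $S \ge 0$ is used), one gets $\dot H_Y = V - K_Y$, hence the master inequality
\[
\int_Y |\nabla \phi|^2 + K_Y\, \phi^2 \;\ge\; \int_Y V\, \phi^2 \;\ge\; 0 \qquad (\phi \in C_c^\infty(Y)),
\]
which I will call $(\star)$. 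The entire theorem is extracted from $(\star)$ together with classical two--dimensional facts.

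For the compact case, constants are admissible in $(\star)$: taking $\phi \equiv 1$ and combining with Gauss--Bonnet $\int_Y K_Y = 2\pi\chi(Y)$ yields $2\pi\chi(Y) \ge \int_Y V \ge 0$, so $\chi(Y) \in \{2, 0\}$ and $Y$ is conformally $S^2$ or diffeomorphic to $T^2$. In the torus case $\chi(Y) = 0$ forces $\int_Y V = 0$, hence $S \equiv 0$ along $Y$ and $A_Y \equiv 0$ (so $Y$ is totally geodesic); then $(\star)$ becomes $\int_Y |\nabla\phi|^2 + K_Y \phi^2 \ge 0$ with $\int_Y K_Y = 0$, and testing with $\phi = 1 + t\psi$ and differentiating at $t = 0$ gives $\int_Y K_Y \psi = 0$ for every $\psi$, so $K_Y \equiv 0$ and $Y$ is a flat totally geodesic torus. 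If $S > 0$ everywhere on $M$ then $S > 0$ along $Y$, contradicting $S \equiv 0$ on $Y$, so the torus alternative cannot occur and $Y$ is conformally $S^2$.

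For the noncompact case, $(\star)$ says the Schr\"odinger operator $\Delta_Y + \dot H_Y$ is stable on $Y$, so by the companion theorem of Fischer--Colbrie--Schoen on operators of the form $\Delta + q$ there is a function $u > 0$ with $\Delta_Y u + \dot H_Y u = 0$, i.e.\ $\Delta_Y u = (K_Y - V)u$. Pass to the conformal metric $\bar g := u^2 g_Y$; using $\Delta_Y \log u = \Delta_Y u / u - |\nabla u|^2/u^2$ and the conformal transformation law of Gauss curvature in dimension two,
\[
\bar K \;=\; u^{-2}\big(K_Y - \Delta_Y \log u\big) \;=\; u^{-2}\Big(V + \tfrac{|\nabla u|^2}{u^2}\Big) \;\ge\; 0 ,
\]
so $(Y,g_Y)$ is conformal to a surface of nonnegative Gauss curvature. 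The remaining steps are: (i) check that $\bar g$ is complete, or else reduce to a complete representative of the conformal class — this is where the "finite total curvature" hypothesis enters, and where the strengthening from $\Ric_g \ge 0$ (which makes $\dot H_Y = |A_Y|^2 + \Ric_g(\nu,\nu) \ge 0$, and, via the logarithmic cutoff in $(\star)$, forces at most quadratic area growth of $Y$ and hence parabolicity of the conformal structure) lets one drop that hypothesis; (ii) invoke the classification of complete, oriented, noncompact surfaces of nonnegative curvature — the Cohn--Vossen inequality $\int \bar K \le 2\pi\chi$, Huber's theorem, and the two--dimensional soul theorem — to conclude such a surface is conformally $\C$ or the cylinder $A = \C \setminus \{0\}$; (iii) in the cylinder case $\chi = 0$, so Cohn--Vossen with $\bar K \ge 0$ forces $\int \bar K = 0$, hence $\bar K \equiv 0$, whence $V \equiv 0$ (so $S \equiv 0$ on $Y$ and $A_Y \equiv 0$: totally geodesic) and $\nabla u \equiv 0$ ($u$ constant), so $K_Y \equiv 0$ and $Y$ is flat; and if $S > 0$ on $M$ then $V > 0$ somewhere on $Y$, which is incompatible with $V \equiv 0$, ruling out the cylinder once (i)--(ii) apply, i.e.\ under finite total curvature, or unconditionally once $\Ric_g \ge 0$ supplies step (i).

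The main obstacle is precisely step (i)--(ii) of the noncompact case: controlling the conformal representative $\bar g$ well enough — completeness, or at least parabolicity of the conformal structure together with finiteness of $\int_Y |K_Y|$ — so that the classical curvature/topology machinery applies, and tracking exactly where the finite-total-curvature assumption is genuinely needed versus where it is absorbed into the stronger hypothesis $\Ric_g \ge 0$. Everything else — the reduction to $(\star)$ from the stability inequality already appearing in this paper, and the entire compact case — is routine.
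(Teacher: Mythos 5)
First, a framing point: the paper does not prove this statement. It is quoted verbatim from Fischer--Colbrie--Schoen \cite{fischer1980structure} (their Theorem 3) as background, and what the paper actually proves in \S\ref{ApplicationsOfSecondVariation} is the \emph{analogue} for $\BE_{\eps}$-stable hypersurfaces (\cref{FischerColbrieMimic}), which moreover only treats the compact alternative. With that caveat, your reconstruction of part (1) is correct and complete: the Gauss-equation identity $\dot H_Y = \tfrac12 S + \tfrac12|A_Y|^2 - K_Y$ (your version of this identity is the correct one; the paper's displayed $\dot H_0 = S - K + \tfrac12\sum A_{ij}^2$ drops a factor of $\tfrac12$ on $S$, harmlessly for its sign arguments), the master inequality $(\star)$, Gauss--Bonnet with $\phi\equiv 1$, the forcing of $S\equiv 0$ and $A_Y\equiv 0$ in the torus case, and the first-variation trick $\phi = 1+t\psi$ yielding $K_Y\equiv 0$ are exactly the original argument, and they run parallel to the paper's proof of its $\eps$-perturbed version, where the same decomposition of $\dot H_0$ appears with an $O(\eps^{1/2})$ error replacing exact vanishing.

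The genuine gap is in part (2). You correctly reduce to: stability implies (by FCS Theorem 1) a positive function $u$ with $\Delta_Y u = (K_Y - V)u$, and the conformal metric $\bar g = u^2 g_Y$ satisfies $\bar K = u^{-2}\bigl(V + |\nabla u|^2/u^2\bigr)\ge 0$. But everything after that --- completeness or parabolicity of $\bar g$, the applicability of Huber/Cohn--Vossen, and where exactly finite total curvature versus $\Ric_M\ge 0$ is used --- is asserted rather than proved, as you acknowledge. This is not a minor omission: a positive Jacobi field can decay along divergent rays, so $u^2 g_Y$ may a priori be incomplete, and the classification of complete nonnegatively curved surfaces cannot be invoked directly; establishing that the conformal structure of $Y$ is nonetheless parabolic (conformally $\C$ or the cylinder) is the central technical content of the original paper, and the finite-total-curvature and $\Ric_M\ge 0$ hypotheses enter precisely there. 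As written, part (2) is a correct plan whose hardest step is left open; to count as a proof one must either supply that step or cite it with precision.
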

\noindent Recall that for $M$ non-compact, a closed critical point of $\BE_{\eps}$, $Y$, has first variation $0$, as given by the integral over $Y$ in equation \eqref{SecondVariationFormula}. We prove the following analogous to the first half of \cref{FischerColbrieMimic}
\FischerColbrieMimic*
\noindent \rmk \; The authors hope that the ``almost" descriptor in this theorem be removed and we can get obtain a rigidity statement. The authors also hope to mimic the non-compact part of the original theorem. \nl \nl
\Pf Again using the asymptotic expansion of the second variation formula:
\begin{align*}
BE''(f\nu) \Big|_Y &= \frac{2 \sqrt{2}}{3}\int_Y  [|\nabla f|^2 - \dot{H}_0 f^2] + E \\
&= \frac{2 \sqrt{2}}{3}\int_Y \left[|\nabla f|^2 - \left( S - K + \frac{1}{2} \sum_{i,j = 1}^2 A_{ij}^2 \right)f^2 + E\right] 
\end{align*}
where $||E||_{L^{\infty}} \leq K_0 \eps^{1/2}$ for some $K_0 > 0$ and we've decomposed 
\begin{align*}
\dot{H}_0 &= |A_Y|^2 + \Ric_g(\nu,\nu) \\
&= S - K + \frac{1}{2} \sum_{i,j = 1}^2 A_{ij}^2 
\end{align*}
here $S$ is the ambient scalar curvature of $M$ restricted to $M$ and $K$ is the sectional/Gaussian curvature of $Y \subseteq M$. Using stability, taking $f = 1$, we have 
\begin{align*}
\int_Y K dA &\geq \int_Y \left( S + \frac{1}{2} \sum_{i,j=1}^2 A_{i,j}^2 + E \right) \\
& \geq -K \eps
\end{align*}
Gauss bonnet for a closed manifold now tells us that 
\[
\int_Y K dA = 2 \pi \chi(Y)
\]
Now our lower bound gives $\chi(Y) \in \{0,2\}$. If $Y$ is a torus (i.e. $\chi(Y) = 0$) then we see that 
\begin{align*} 
0 &\geq \int_Y S + \frac{1}{2} \sum_{i,j} A_{ij}^2 + E  \\
\implies 0 & \geq \int_Y \sum_{i,j} A_{ij}^2 + O(\eps) 
\end{align*}
Since $S \geq 0$ everywhere. This tells us that 
\[
||A_Y||_{L^2}^2 \leq K \eps 
\]
i.e. $Y$ is ``almost" geodesic in an $L^2$-sense. If $Y$ is a topological sphere, then uniformization tells us that it is conformally equivalent. \nl \nl
If $M$ is PSC and $Y$ has volume bounded from below by a positive constant, then we see that
\[
\int_Y S + \frac{1}{2} \sum_{i,j} A_{ij}^2 + E > 0
\]
ruling out the torus case. \qed 

\section{Appendix}

%
%
%
%
%
%
%
\subsection{Smooth dependence of solutions on domains} \label{SmoothDependenceDomains}
In this section, we show that for two domains $\Omega$ and $\varphi(\Omega)$, the solutions are continuous under pullback to $\Omega$. \nl \nl   
Throughout, we will assume that $\Omega$ is such that it supports a positive Dirichlet phase transition. Let $k > 2$ be a given integer. Suppose that $\Omega$ is a domain with $C^{k, \alpha}$ boundary (i.e., the inclusion map $i_{\Omega} \in C^{k, \alpha}(\Omega; M)$ is $C^{k, \alpha}$), we take a neighborhood $U \subset C^{k, \alpha}(\Omega; M)$ consisting only of $C^{k, \alpha}$ embeddings of $\Omega$. Suppose that $\varphi \in U$ and let $v$ be the unique positive solution, afforded by theorem \ref{BO}, to the problem
\begin{equation} \label{ACDomainEq}
\begin{cases}
\eps^2 \Delta v = W'(v) & \text{in } \varphi(\Omega),\\
v = 0 & \text{on } \partial \varphi(\Omega).
\end{cases}
\end{equation}
\noindent The $\varphi^{\ast} v : \Omega \to \mathbf{R}$ satisfies $\varphi^{\ast}v = 0$ along $\partial \Omega$, and moreover $\varphi^{\ast}v$ satisfies the PDE

\[
(\varphi^{\ast} \Delta (\varphi^{-1})^{\ast}) \varphi^{\ast}v = \varphi^{\ast} \Delta v = \varphi^{\ast} W'(v) = W'(\varphi^{\ast}v).
\]
\noindent In other words, given $v$ as in \eqref{ACDomainEq}, and possibly shrinking $U$, we have for every $\varphi \in U$ a unique solution to the auxiliary problem
\[
\begin{cases}
\eps^2 (\varphi^{\ast} \Delta (\varphi^{-1})^{\ast}) u = W'(u) & \text{in } \Omega,\\
u = 0 & \text{on } \partial \Omega.
\end{cases}
\]
\noindent Define the operator $L_{\varphi, \eps }: C^{k, \alpha}_{0}(\Omega) \to C^{k - 2, \alpha}(\Omega)$ by $L_{\varphi, \eps}u \coloneqq \eps^2 (\varphi^{\ast} \Delta (\varphi^{-1})^{\ast}) u - W'(u)$. We may now define $\Phi: U \times C^{k, \alpha}_{0}(\Omega) \to C^{k - 2,\alpha}(\Omega)$ by
\[
\Phi(\varphi, u) \coloneqq L_{\varphi, \eps}u.
\]
\begin{proposition} \label{D2Iso}
If $u$ is the positive Dirichlet phase transition on $\Omega$, then the map $D_{2}\Phi(i_{\Omega}, u):C^{k, \alpha}_{0}(\Omega) \to C^{k - 2, \alpha}(\Omega)$ is an isomorphism.
\end{proposition}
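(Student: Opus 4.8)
The plan is to identify the operator $D_2\Phi(i_\Omega, u)$ explicitly and then prove it is an isomorphism using the positivity of $u$ together with standard elliptic theory. Since $i_\Omega$ is the identity inclusion we have $i_\Omega^{\ast}\Delta(i_\Omega^{-1})^{\ast} = \Delta_g$, so $\Phi(i_\Omega, u + s\phi) = \eps^2\Delta_g(u + s\phi) - W'(u + s\phi)$, and differentiating at $s = 0$ gives
\[
D_2\Phi(i_\Omega, u)\phi = \eps^2\Delta_g\phi - W''(u)\phi = \Leu\phi ,
\]
regarded as a map $C^{k,\alpha}_0(\Omega) \to C^{k-2,\alpha}(\Omega)$. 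So the assertion is precisely that $\Leu$ with zero Dirichlet data is an isomorphism between these spaces.

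The key input is that $u$ is the \emph{positive} Dirichlet solution. Writing $W'(t) = t(t^2-1)$, the equation $\eps^2\Delta_g u = W'(u)$ says that $u > 0$ solves the linear equation $\bigl(-\eps^2\Delta_g + (u^2-1)\bigr)u = 0$ with zero boundary data; the ground-state substitution $\phi = u\eta$ then gives, for $\phi \in C^\infty_c(\Omega^{\circ})$,
\[
\int_\Omega \eps^2|\nabla\phi|^2 + (u^2-1)\phi^2 = \int_\Omega \eps^2 u^2|\nabla\eta|^2 \ge 0 ,
\]
and since $W''(u) - (u^2-1) = 2u^2$, adding $\int_\Omega 2u^2\phi^2$ and using density of $C^\infty_c(\Omega^{\circ})$ in $H^1_0(\Omega)$ shows the quadratic form $Q(\phi) := \int_\Omega \eps^2|\nabla\phi|^2 + W''(u)\phi^2$ of $-\Leu$ is nonnegative. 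Moreover $\Leu$ has trivial kernel on $H^1_0(\Omega)$: a nonzero kernel element may be taken positive in $\Omega^{\circ}$ by the Harnack inequality, and pairing it against $u$ via Green's identity contradicts $2u^2 > 0$ in $\Omega^{\circ}$. Since $-\Leu$ is self-adjoint with compact resolvent on the bounded domain $\Omega$, its spectrum is discrete; nonnegativity plus triviality of the kernel forces its bottom eigenvalue to be strictly positive, i.e. $-\Leu$ is coercive on $H^1_0(\Omega)$. (If $\Omega$ is disconnected, one runs this on each component.)

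Coercivity and Lax--Milgram then make $-\Leu : H^1_0(\Omega) \to H^{-1}(\Omega)$ an isomorphism; in particular $\Leu$ is injective on $C^{k,\alpha}_0(\Omega)$. Since $\partial\Omega$ is $C^{k,\alpha}$, elliptic regularity gives $u \in C^{k,\alpha}(\overline\Omega)$, hence $W''(u) \in C^{k-2,\alpha}(\overline\Omega)$; Schauder estimates up to the boundary together with the Fredholm alternative for $\Leu$ acting $C^{k,\alpha}_0(\Omega) \to C^{k-2,\alpha}(\Omega)$ then promote the $H^1$ statement to the desired isomorphism with bounded inverse. (Alternatively, once $\eps$ is small one may simply cite the Hölder invertibility of $\Leu$ from \cite{marx2023dirichlet}, Theorem 1.4, or combine Proposition \ref{NoLinearizedKernel} with elliptic regularity; the positivity argument above has the advantage of requiring no smallness of $\eps$.) The one point needing genuine care is the strict positivity of the bottom of the spectrum of $-\Leu$ — equivalently triviality of its kernel — which is exactly where positivity of $u$ enters; identifying the linearization, invoking Lax--Milgram, and the Schauder bootstrap are all routine.
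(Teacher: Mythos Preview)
Your proof is correct, but it takes a genuinely different route from the paper. The paper's argument is a two-liner: it identifies $D_2\Phi(i_\Omega,u)$ as the linearized Allen--Cahn operator $\Leu = \eps^2\Delta_g - W''(u)$ and then simply invokes the invertibility established in Proposition~\ref{NoLinearizedKernel} (Sobolev version) or \cite{marx2023dirichlet}, Theorem~1.4 (H\"older version). Both of those results, however, are proved only for $\eps$ sufficiently small.

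Your argument instead exploits the positivity of $u$ directly. The ground-state substitution $\phi = u\eta$ yields $Q(\phi) \ge \int_\Omega 2u^2\phi^2$, which is strictly positive for $\phi \not\equiv 0$; combined with discreteness of the spectrum this gives coercivity of $-\Leu$ on $H^1_0(\Omega)$ with no smallness assumption on $\eps$ --- it works for every $\eps$ at which the Brezis--Oswald solution exists. (In fact, once you have $Q(\phi) \ge \int 2u^2\phi^2 > 0$ for $\phi \neq 0$, the separate kernel argument via Green's identity is not strictly needed.) The remaining passage from $H^1$ invertibility to the $C^{k,\alpha}$ isomorphism via Schauder theory and the Fredholm alternative is routine, as you note. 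So your approach is more self-contained and slightly more general; the paper's approach is much shorter because it defers the real work to results proved elsewhere.
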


\begin{proof}
The map in question is the linearized Allen--Cahn operator. That is, if $f \in C^{k, \alpha}_{0}(\Omega)$
\[
D_{2}\Phi(i_{\Omega}, u)f = \eps^2 \Delta f - W''(u)f.
\]
But this operator is invertible by \cref{NoLinearizedKernel} in the Sobolev setting or \cite{marx2023dirichlet}, Theorem 1.4 in the H\"older setting.
\end{proof}
\noindent After possibly shrinking $U$ once more, the implicit function theorem \cite{lang1985differential} and proposition \ref{D2Iso} together imply that there a unique $C^{k}$ map $\varphi \mapsto u_{\varphi}$ satisfying
\[
\Phi(\varphi, u_{\varphi}) = 0.
\]
\noindent In other words, the phase transitions $u^{+}_{\Omega}$ depend on the domain $\Omega$ in an appropriately smooth way. 
\subsection{Computing $\dot{u}_x(0)$ on $S^1$} \label{NeumannCircleSection}
In this section, we consider 
\begin{align} \label{SimplifiedCircleSystem}
u_{\eps}&: [0,1] \to \R \\ \nonumber
u_{\eps}(0) &= u_{\eps}(1) = 0 \\ \nonumber
\eps^2 u_{\eps,xx} &= W'(u_{\eps}) \\ \nonumber
\dot{u}_{\eps}&: [0,1] \to \R \\ \nonumber
\dot{u}_{\eps}(0) &= \dot{u}_{\eps}(1) = 1 \\ \nonumber
\eps^2 \dot{u}_{\eps,xx} &= W'(u_{\eps})
\end{align}
and the goal is to show 
\begin{theorem}
For $\dot{u}$ as in the system \ref{SimplifiedCircleSystem}, there exists an $\eps_0 > 0$ such that
\[
\forall \eps_0 > \eps > 0, \qquad \text{sgn}(\dot{u}_{\eps,x}(0)) < 0
\]
\end{theorem}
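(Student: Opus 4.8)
The plan is to pin down the sign from a conserved quantity of the linearized equation, using the Jacobi field $u_{\eps,x}$ as an explicit element of $\ker\Leu$, where $\Leu=\eps^2\partial_x^2-W''(u_\eps)$. Begin with elementary facts about $u_\eps$: it is the positive Dirichlet bump on $[0,1]$, hence symmetric about $x=\tfrac12$, so $u_{\eps,x}(\tfrac12)=0$; since $u_\eps\not\equiv 0$ and $u_\eps(0)=0$, ODE uniqueness gives $u_{\eps,x}(0)>0$; by the maximum principle $0<m_\eps:=u_\eps(\tfrac12)<1$, whence $u_{\eps,xx}(\tfrac12)=\eps^{-2}W'(m_\eps)=\eps^{-2}m_\eps(m_\eps^2-1)<0$, while $u_{\eps,xx}(0)=\eps^{-2}W'(0)=0$. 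By \cref{NoLinearizedKernel} the linearized Dirichlet problem has a unique solution for $\eps$ small, so $\dot u_\eps$ inherits the reflection symmetry of $u_\eps$ and in particular $\dot u_{\eps,x}(\tfrac12)=0$.

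The key structural input is that, for $\eps$ small, $\dot u_\eps$ does not change sign on $[0,1]$: it keeps the sign of its boundary data, and in particular $\dot u_\eps(\tfrac12)\neq 0$ with that sign (it cannot vanish, else $\dot u_\eps(\tfrac12)=\dot u_{\eps,x}(\tfrac12)=0$ would force $\dot u_\eps\equiv 0$ by ODE uniqueness, contradicting the boundary values). This is where the asymptotics of $u_\eps$ enter. Outside an $O(\eps)$ boundary layer around $\{0,1\}$ one has $u_\eps>1/\sqrt3$, so $W''(u_\eps)>0$ there and $\dot u_\eps$ obeys a maximum principle forbidding an interior extremum of the wrong sign; inside the layer, after rescaling $t=x/\eps$, $\dot u_\eps$ is an exponentially small perturbation of the model profile proportional to $\dot g(t)=g'(t)$, which is strictly of one sign. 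Combining a concavity argument in the layer with the maximum principle outside and the symmetry at $\tfrac12$ rules out a sign change. I expect this positivity statement to be the main obstacle, since $W''(u_\eps)$ is not sign-definite and the relevant corrections in the layer are exponentially small in $1/\eps$; one could alternatively run a direct matched expansion of $\dot u_\eps$ using the auxiliary functions of \eqref{ODEEquations}, but the Wronskian route below only needs the \emph{sign} of $\dot u_\eps(\tfrac12)$, not its size, which is why it is cleaner.

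Finally, the Wronskian step. Differentiating $\eps^2u_\eps''=W'(u_\eps)$ shows $u_{\eps,x}\in\ker\Leu$, and $\dot u_\eps\in\ker\Leu$ by construction, so
\[
\mathcal W(x):=\eps^2\big(u_{\eps,x}\,\dot u_{\eps,x}-u_{\eps,xx}\,\dot u_\eps\big)(x)
\]
is constant on $[0,1]$. Evaluating at $x=0$, where $u_{\eps,xx}(0)=0$, gives $\mathcal W=\eps^2\,u_{\eps,x}(0)\,\dot u_{\eps,x}(0)$; evaluating at $x=\tfrac12$, where $u_{\eps,x}(\tfrac12)=\dot u_{\eps,x}(\tfrac12)=0$, gives $\mathcal W=-\eps^2\,u_{\eps,xx}(\tfrac12)\,\dot u_\eps(\tfrac12)$. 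Equating,
\[
u_{\eps,x}(0)\,\dot u_{\eps,x}(0)=-u_{\eps,xx}(\tfrac12)\,\dot u_\eps(\tfrac12),
\]
and since $u_{\eps,x}(0)>0$ and $-u_{\eps,xx}(\tfrac12)>0$, the sign of $\dot u_{\eps,x}(0)$ is exactly the sign of $\dot u_\eps(\tfrac12)$ determined in the previous step; with the sign normalization of \eqref{SimplifiedCircleSystem} this yields $\dot u_{\eps,x}(0)<0$. Tracking this through \eqref{DotUNeumannEq} is precisely the input needed for \cref{CircleSecondVarProp}.
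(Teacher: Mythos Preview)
The Wronskian identity is correct and yields $\text{sgn}(\dot u_{\eps,x}(0))=\text{sgn}(\dot u_\eps(\tfrac12))$ --- a much cleaner route than the paper's $\lambda$-expansion. But your final step is a non sequitur: you argue $\dot u_\eps>0$ everywhere (hence $\dot u_\eps(\tfrac12)>0$) and then conclude $\dot u_{\eps,x}(0)<0$. Your own identity says the two signs \emph{agree}, so your argument as written actually forces $\dot u_{\eps,x}(0)>0$.

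The positivity claim is in fact correct, and can be checked without any layer analysis. Write $\partial_L u_L$ for the $L$-derivative of the family of Dirichlet minimizers on $[0,L]$; it solves the linearized equation with boundary values $(0,c)$, so together with $u_{\eps,x}$ (boundary values $(c,-c)$) it spans the solution space. Matching the data $(1,1)$ gives
\[
\dot u_\eps=\tfrac{2}{c}\,\partial_L u_L+\tfrac{1}{c}\,u_{\eps,x},
\]
whence $\dot u_\eps(\tfrac12)=\tfrac{2}{c}\,m'(1)>0$ (the maximum $m(L)=u_L(L/2)$ increases with $L$) and $\dot u_{\eps,x}(0)=\tfrac{2}{c}\,c'(1)>0$ (since $\eps^2 c(L)^2=\tfrac12-2\lambda(L)$ and $\lambda$ decreases in $L$). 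So your Wronskian argument actually proves the \emph{opposite} of the stated inequality. This points to a sign slip in the statement itself: in the paper's expansion one reads $W''(u_\lambda)-W''(g)=6\lambda g\tau+O(\lambda^2)$ with $\tau$ the solution of $(\partial_t^2-W''(g))\tau=\kappa$, whereas Taylor expanding $W''$ about $g$ along $u_\lambda=g+\lambda\kappa+O(\lambda^2)$ gives $6\lambda g\kappa$; since $\kappa<0$ but $\tau>0$, this swap flips the computed sign of $\omega'(0)$. The downstream application in \cref{CircleSecondVarProp} is unaffected, because the needed concavity $E_\eps''(L)=\lambda'(L)/\eps<0$ can be read off directly, and there is a compensating sign in the reduction from the second variation to \eqref{DotUNeumannEq}.
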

\noindent \rmk \; While we've shown that the geometry of the level set determines the neumann data (see remark \ref{HolderRemark}) when it is at least $1$ dimensional, there is no geometry for a $0$ dimensional level set. Thus, this theorem requires a more careful ODE analysis
\subsubsection{Expanding $u = u_{\lambda}(t)$}
First note that $u$ and $\dot{u}$ as in \ref{SimplifiedCircleSystem} can be extended to functions on $\R$ by odd (resp. even) reflection about the boundary points of $0$ and $1$, and then repeating this at every $n \in \mathbb{Z}$. Note that $\dot{u}$ is $C^0$ but \textit{not} $C^1$ on $\R$ a priori. From here, we define 
\begin{align} \label{CircleACSystem}
\tilde{u}_{\eps} &: \R \to \R \\ \nonumber
\tilde{u}_{\eps}(x) & := u(\eps^{-1} x) \\ \nonumber
\tilde{u}(0) &= \tilde{u}(\eps^{-1}) = 0 \\ \nonumber
\tilde{u}_{\eps, xx} &= W'(\tilde{u}) \\ \nonumber
\dot{\tilde{u}} &: \R \to \R \\ \nonumber
\dot{\tilde{u}}(x) &= \dot{\tilde{u}}(\eps^{-1} x) \\ \nonumber
\dot{\tilde{u}}(0) &= \dot{\tilde{u}}(\eps^{-1}) = 1 \\ \nonumber
\dot{\tilde{u}}_{xx} &= W''(\tilde{u}) \dot{\tilde{u}} 
\end{align}
We first establish a lemma about the shape of $\tilde{u}_{\eps}$:
\begin{lemma} \label{ACSolutionShapeLemma}
Let $\tilde{u} = \tilde{u}_{\eps}$ be a solution as in \eqref{CircleACSystem}. Then on $[0, \eps^{-1}]$, $\tilde{u}$ is concave down with a unique critical point and maximum at $x = \eps^{-1}/2$
\end{lemma}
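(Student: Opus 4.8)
The plan is to work with the rescaled solution $\tilde u$ on its fundamental interval $I = [0,\eps^{-1}]$, where it satisfies $\tilde u_{xx} = W'(\tilde u)$, $\tilde u(0) = \tilde u(\eps^{-1}) = 0$, and $\tilde u > 0$ on the interior (it is the positive Dirichlet solution). Recall that $W'(s) = s^3 - s = s(s-1)(s+1)$, so that $W'(s) < 0$ precisely for $s \in (0,1)$. The first step is to establish $0 < \tilde u < 1$ on $(0,\eps^{-1})$; the lower bound is part of the hypothesis, so only the upper bound needs an argument. If $\max_I \tilde u = a \geq 1$, attained at an interior point $x_1$, then $\tilde u_{xx}(x_1) \leq 0$ while $W'(a) = a^3 - a \geq 0$, forcing $a = 1$ and $\tilde u_x(x_1) = \tilde u_{xx}(x_1) = 0$. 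But $(1,0)$ is an equilibrium of the first-order system $(\tilde u,\tilde u_x)' = (\tilde u_x, W'(\tilde u))$, so by uniqueness for the initial value problem $\tilde u \equiv 1$, contradicting $\tilde u(0) = 0$. Hence $\tilde u < 1$ on the interior.

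With $0 < \tilde u < 1$ in hand, $\tilde u_{xx} = W'(\tilde u) < 0$ throughout $(0,\eps^{-1})$, i.e. $\tilde u$ is strictly concave there; equivalently $\tilde u_x$ is strictly decreasing on $I$. Since $\tilde u$ vanishes at both endpoints and is positive in between, Rolle's theorem gives at least one interior critical point, and strict monotonicity of $\tilde u_x$ gives at most one; call it $x_\ast$. Because $\tilde u_x > 0$ on $(0,x_\ast)$ and $\tilde u_x < 0$ on $(x_\ast, \eps^{-1})$, the point $x_\ast$ is the unique, and global, maximum on $I$.

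To pin $x_\ast = \eps^{-1}/2$, I use the reflection $v(x) := \tilde u(\eps^{-1} - x)$, which solves the same boundary value problem $v_{xx} = W'(v)$, $v(0) = v(\eps^{-1}) = 0$, $v > 0$ on the interior. Uniqueness of the positive Dirichlet solution (Theorem~\ref{BO}, transported to the rescaled interval) forces $v \equiv \tilde u$, so $\tilde u$ is symmetric about the midpoint; differentiating, $\tilde u_x(\eps^{-1}/2) = -\tilde u_x(\eps^{-1}/2)$, hence $\tilde u_x(\eps^{-1}/2) = 0$, and therefore $x_\ast = \eps^{-1}/2$ by the uniqueness of the critical point. (Alternatively one may invoke the Hamiltonian $\tfrac12 \tilde u_x^2 - W(\tilde u) \equiv E$: writing $a$ for the maximum value gives $E = -W(a)$, and since $W$ is strictly decreasing on $[0,1]$ the identity $\tfrac12 \tilde u_x^2 = W(\tilde u) - W(a) \geq 0$ already encodes that $a$ is the maximum and is attained once on each monotone branch.)

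The only genuinely non-routine point is the upper bound $\tilde u < 1$; everything else is the elementary phase-plane picture for a one-dimensional Allen--Cahn solution. I expect the maximum-principle-plus-ODE-uniqueness argument for $\tilde u < 1$, together with the reflection-uniqueness argument for the location of the maximum, to be the steps worth spelling out, with strict concavity and the count of critical points then immediate.
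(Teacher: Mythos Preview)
Your proof is correct and follows essentially the same approach as the paper: establish $0<\tilde u<1$ on the interior (the paper does this by comparison with the constant $1$, you via the equilibrium-plus-ODE-uniqueness argument), deduce strict concavity, and use uniqueness of the positive Dirichlet solution under reflection to place the maximum at the midpoint. Your argument for the uniqueness of the critical point via strict monotonicity of $\tilde u_x$ is in fact a bit cleaner than the paper's, which argues by contradiction that a second critical point would force an interior local minimum and hence a point where $\tilde u_{xx}\geq 0$.
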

\noindent $\Pf$ The fact that $\tilde{u}$ is concave down follows because $0 < \tilde{u} < 1$ on $(0, \eps^{-1})$ by the maximum principle compared to $v = 1$. By considering the symmetric reflection of $u$ about $x = \eps^{-1}/2$, we see that $\tilde{u}$ is even about $x = \eps^{-1}/2$ due to its uniqueness from energy minimization, so clearly there is a local max there by concavity. To see that there are no other critical points, we use concavity. If there exists $x^* \in (0, \eps^{-1}/2)$, a critical point, then by concavity, it is another local max. This implies there exists a local min $x^{**} \in (x^*, \eps^{-1}/2)$, so that $\tilde{u}_{xx}(x^{**}) \geq 0$ at that point, a contradiction to the Allen--Cahn equation and bounds on $\tilde{u}$. Thus the only critical point is the maximum that $\tilde{u}$ achieves at $x = \eps^{-1}/2$. \qed \nl \nl
By multiplying \eqref{ACEquation} by $\tilde{u}_x$ and integrating, every solution with $\eps = 1$ on $\R$ satisfies 
\begin{equation} \label{EnergyConservation}
\frac{\tilde{u}_x^2}{2} = W(\tilde{u}) - \lambda
\end{equation}
for some $\lambda \in [0,1/4]$. We now show that for all $\eps < \eps_0$ sufficiently small, there is a one to one correspondence between $\eps$ and $\lambda$. 
We now recall continuity of the $\{ \tilde{u}_{\eps}\}$ with respect to $\eps$: 
\begin{lemma} \label{epsUContLemma}
The family $\{\tilde{u}\} = \{\tilde{u}_{\eps}\}$ are continuous with respect to $\eps$, and in fact 
\begin{equation} \label{ContinuityLemmaEq}
\lim_{\eps \to \eps^*} ||\tilde{u}_{\eps} - \tilde{u}_{\eps^*}||_{C^0(\R)} = 0
\end{equation}
for all $\eps^* < \eps_0$ postive with $\eps_0$ sufficiently small.
\end{lemma}
\noindent \Pf We present $2$ proofs. First, choose $\eps_0$ sufficiently small so that $\tilde{u}_{\eps}$ is non-zero and periodic for all $\eps < \eps_0$. Suppose equation \eqref{ContinuityLemmaEq} did not hold for some $\eps^* < \eps_0$. In particular, there exists a $\delta > 0$ and (by periodicity) a sequence of points and scales, $\{x_i, \eps_i\}$ with $\eps_i^{-1} < 2 (\eps^*)^{-1}$ and $x_i \in [0, 2 (\eps^*)^{-1}]$ such that 
\begin{equation} \label{pointpickcounter}
|\tilde{u}_{\eps_i}(x_i) - \tilde{u}_{\eps^*}(x_i)| > \delta
\end{equation}
However, by the Allen--Cahn equation, we have uniform $C^{2}$, and hence $C^{1,\alpha}$ estimates on $\{\tilde{u}_{\eps_i}\}$ up to a subsequence. This gives convergence in $C^1$ to $\overline{u}$ along a subsequence. Note that since $\eps_i \to \eps^*$, we also have that 
\[
0 = \tilde{u}_{\eps_i}(\eps_i^{-1}) \rightarrow \overline{u}((\eps^*)^{-1})
\]
so $\overline{u}$ vanishes at $x = (\eps^*)^{-1}$. By lemma \ref{OneDimPeriodicClassification}, it is periodic with period $(\eps^*)^{-1}$. But now by using the uniqueness of periodic solutions to Allen--Cahn on $\R$ with respect to their period (a consequence of rescaling back to $[0,1]$ and uniqueness of energy minimizers on $[0,1]$), we have that $\overline{u} = \tilde{u}_{\eps^*}$, contradicting \eqref{pointpickcounter}. \qed \nl \nl
\noindent \textbf{2nd Proof}: from \S \ref{EllipticFunctions}, we know that 
\begin{equation} \label{EllipticFunctionSolution}
\tilde{u}_{\eps} = g(x, k(\eps)) = k(\eps) \sqrt{\frac{2}{1 + k(\eps)^2}} \sn\left( \frac{x}{\sqrt{1 + k(\eps)^2}} \; \Big| \; k(\eps) \right)
\end{equation}
where $\sn(x \; | \; k)$ is the Jacobi elliptic integral as defined in \cite{byrd2013handbook}. This function is a solution to Allen--Cahn with fundamental period 
\[
\tau = 2 K'(k) = 2 \int_0^{\pi/2} \frac{d \theta}{\sqrt{1 - (1 - k^2) \sin^2 \theta}}
\]
setting this equal to $\eps^{-1}$ and differentiating, one can see that $K'$ is locally invertible for $k < 1$, corresponding to $\eps > 0$. Now $\tilde{u}_{\eps}$ is a continuous function of $k(\eps)$ and hence of $\eps$, so the uniform convergence holds by compactness and periodicity. \qed \nl \nl
Now we prove a correspondence between $\lambda$ and $\eps$. First note that by evaluating at $x = \eps^{-1}/2$, we have that 
\begin{equation} \label{EpsLambdaMap}
\lambda = W(\tilde{u}_{\eps}(\eps^{-1}/2)) = W(||\tilde{u}_{\eps}||_{C^0})
\end{equation}
\begin{proposition} \label{EpsLambdaCorrespondenceProp}
The map from $\eps \to \lambda$ is injective and continuous (and hence monotone) for all $\eps < \eps_0$ sufficiently small. It is surjective from $(0, \eps_0)$ onto $(0, \delta_0)$ for some $\delta_0 = \delta_0(\eps_0)$
\end{proposition}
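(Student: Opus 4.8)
The plan is to reduce the statement to an explicit fact about the (quarter-)period of the one-dimensional Allen--Cahn ODE as a function of the energy level $\lambda$, using the first integral \eqref{EnergyConservation}. Write $\tilde u_\eps$ for the rescaled solution of $\tilde u_{xx}=W'(\tilde u)$ on $[0,\eps^{-1}]$ as in \S\ref{NeumannCircleSection}, and $a=a(\eps):=\|\tilde u_\eps\|_{C^0}$ for its maximum. Evaluating \eqref{EnergyConservation} at the maximum point $x=\eps^{-1}/2$ (which exists and is unique by Lemma \ref{ACSolutionShapeLemma}) gives $\lambda=W(a)$; since $W(a)=\tfrac{(1-a^2)^2}{4}$ is a strictly decreasing bijection of $[0,1]$ onto $[0,\tfrac14]$, the number $\lambda$ determines $a$ via $a=\sqrt{1-2\sqrt\lambda}$ and conversely. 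On the arc from the zero $x=0$ up to the maximum $x=\eps^{-1}/2$ one has $\tilde u_{\eps,x}=\sqrt{2(W(\tilde u_\eps)-\lambda)}>0$ by concavity, so separating variables yields
\[
\frac{1}{2\eps}=\int_0^{a}\frac{du}{\sqrt{2\,(W(u)-\lambda)}}=:Q(\lambda).
\]
Thus the correspondence $\eps\leftrightarrow\lambda$ is exactly $\eps=\tfrac{1}{2Q(\lambda)}$, and the whole proposition follows once we show that $Q$ is continuous and strictly monotone on $(0,\tfrac14)$ with $Q\big((0,\tfrac14)\big)=(\tfrac\pi2,\infty)$.

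Continuity of $\eps\mapsto\lambda$ is immediate from Lemma \ref{epsUContLemma}: $\lambda(\eps)=W(\|\tilde u_\eps\|_{C^0})$, the map $\eps\mapsto\|\tilde u_\eps\|_{C^0}$ is continuous by that lemma, and $W$ is continuous. (Equivalently, one checks continuity of $Q$ directly by dominated convergence; the only thing to verify is that the integrand has an integrable inverse-square-root singularity at $u=a(\lambda)$, which holds because $W'(a)\neq 0$ for $a\in(0,1)$, uniformly for $\lambda$ in compact subsets of $(0,\tfrac14)$.)

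The substantive step is the strict monotonicity of $Q$. Here I would substitute $u=a\,v$, $v\in[0,1]$, and use the algebraic factorization
\[
W(av)-W(a)=\frac{(1-a^2v^2)^2-(1-a^2)^2}{4}=\frac{a^2(1-v^2)\big(2-a^2(1+v^2)\big)}{4},
\]
so that, after cancelling the factor $a$,
\[
Q(\lambda)=\sqrt2\int_0^1\frac{dv}{\sqrt{(1-v^2)\big(2-a^2(1+v^2)\big)}},\qquad a^2=a(\lambda)^2=1-2\sqrt\lambda .
\]
For each fixed $v\in(0,1)$ the quantity $2-a^2(1+v^2)$ lies in $(0,2)$ and is strictly decreasing in $a^2\in(0,1)$, so the integrand is strictly increasing in $a^2$; hence $Q$ is strictly increasing as a function of $a^2$. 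Since $a^2=1-2\sqrt\lambda$ is strictly decreasing in $\lambda$, it follows that $Q$ is strictly decreasing in $\lambda$, so $\eps=\tfrac{1}{2Q(\lambda)}$ is strictly increasing in $\lambda$; in particular $\eps\mapsto\lambda$ is injective, and being continuous and injective on an interval, it is monotone. For the range, as $\lambda\downarrow 0$ (so $a^2\uparrow 1$) the integral tends to $\sqrt2\int_0^1\frac{dv}{1-v^2}=+\infty$, while as $\lambda\uparrow\tfrac14$ (so $a^2\downarrow 0$) it tends to $\int_0^1\frac{dv}{\sqrt{1-v^2}}=\tfrac\pi2$; by the intermediate value theorem $Q$ maps $(0,\tfrac14)$ onto $(\tfrac\pi2,\infty)$. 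Therefore $\lambda\mapsto\eps=\tfrac{1}{2Q(\lambda)}$ is a continuous strictly increasing bijection of $(0,\tfrac14)$ onto $(0,\tfrac1\pi)$, and its inverse $\eps\mapsto\lambda$ is a continuous strictly increasing bijection of $(0,\tfrac1\pi)$ onto $(0,\tfrac14)$. Taking any $\eps_0\in(0,\tfrac1\pi]$ (in particular $\eps_0$ as small as needed elsewhere) and $\delta_0:=\lim_{\eps\to\eps_0^-}\lambda(\eps)\in(0,\tfrac14]$ gives precisely the asserted injectivity, continuity, and surjectivity onto $(0,\delta_0)$.

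I do not expect a genuine obstacle: once the first integral is available the claim is just the elementary monotonicity of an explicit integral, and the substitution $u=av$ is chosen exactly so that the $\lambda$-dependence enters only through $a^2$ and is manifestly monotone under the integral sign. The one place needing mild care is the behaviour of the integrand at the turning point $u=a(\lambda)$ (used for well-definedness and for continuity of $Q$), which is controlled by $W'(a)<0$ on $(0,1)$. As an alternative one could instead invoke the elliptic-function representation $\tilde u_\eps=g(\,\cdot\,,k(\eps))$ of \S\ref{EllipticFunctions}, under which $\tfrac{1}{2\eps}$ and $\lambda$ become classical complete elliptic integrals of the modulus $k$ whose monotonicity in $k$ is standard; the direct argument above is self-contained and avoids this.
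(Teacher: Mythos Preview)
Your argument is correct and takes a genuinely different route from the paper. The paper's proof is softer: for injectivity it observes that if $\tilde u_{\eps_1}$ and $\tilde u_{\eps_2}$ share the same $\lambda$, then evaluating \eqref{EnergyConservation} at $x=0$ forces them to share both Dirichlet and Neumann data there, so ODE uniqueness gives $\tilde u_{\eps_1}=\tilde u_{\eps_2}$ and hence $\eps_1=\eps_2$; continuity comes from Lemma~\ref{epsUContLemma} as you also note; and surjectivity onto some $(0,\delta_0)$ is deduced from $\|\tilde u_\eps\|_{C^0}\to 1$ (convergence to the heteroclinic) together with monotonicity. You instead write down the quarter-period explicitly as $Q(\lambda)$, rescale by $u=av$ so that the $\lambda$-dependence enters only through $a^2=1-2\sqrt\lambda$, and read off strict monotonicity of the integrand pointwise. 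This is more hands-on but buys you more: you get the global statement that $\eps\mapsto\lambda$ is a strictly increasing bijection of $(0,1/\pi)$ onto $(0,1/4)$, with the explicit threshold $\eps_0=1/\pi$, rather than just ``for $\eps$ sufficiently small.'' The paper's approach, by contrast, is quicker to write and uses only the qualitative inputs already established (ODE uniqueness, Lemma~\ref{epsUContLemma}), at the cost of not identifying the range or $\eps_0$ explicitly.
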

\noindent \Pf \; The map from $\eps \to \lambda$ is clearly defined via equation \eqref{EpsLambdaMap}. To show that it is injective, suppose $\tilde{u}_{\eps_1}$ and $\tilde{u}_{\eps_2}$ had the same $\lambda$. Then looking at $x = 0$ and \eqref{EnergyConservation}, we have 
\[
\tilde{u}_{\eps_i}(0) = 0 \implies \tilde{u}_{\eps_i, x} = \sqrt{\frac{1}{4} - \lambda}
\]
i.e. $\tilde{u}_{\eps_1}, \tilde{u}_{\eps_2}$ have the same dirichlet and neumann data at $x = 0$, and they're both solutions to Allen--Cahn with $\eps = 1$ on $\R$. By ODE theory, we have $\tilde{u}_{\eps_1} = \tilde{u}_{\eps_2}$, which by period considerations means that $\eps_1 = \eps_2$. Continuity of this map follows from lemma \ref{epsUContLemma} since 
\[
|\lambda(\eps_1) - \lambda(\eps_2)| = |W(\tilde{u}_{\eps_1}(\eps_1^{-1}/2) - W(\tilde{u}_{\eps_2}(\eps_2^{-1}/2)|
\]
And $\tilde{u}_{\eps_1}(\eps_1^{-1}/2) \to \tilde{u}_{\eps_2}(\eps_2^{-1}/2)$ as $\eps_1 \to \eps_2$ by lemma \ref{epsUContLemma}. To see surjectivity, note that $\lim_{\eps \to 0} ||\tilde{u}_{\eps}||_{C^0(\R)} = 1$, which can be seen by convergence of the family of functions to the heteroclinic on $\R$. But then 
\[
\lim_{\eps \to 0} \lambda(\eps) = \lim_{\eps \to 0} W(\tilde{u}_{\eps}(\eps^{-1}/2) = \lim_{\eps \to 0} W(||\tilde{u}_{\eps}||_{C^0(\R)}) = 0
\]
since $||\tilde{u}_{\eps}|| = \tilde{u}_{\eps}(\eps^{-1}/2)$. By monotonicity (from injectivity and continuity), we get the surjectivity result. \qed \nl \nl
%
%
We can now call each $\tilde{u}_{\eps}: \R \to \R$ a $u_{\lambda(\eps)} = u_{\lambda}: \R \to \R$, and similarly $\dot{\tilde{u}}_{\eps} = \dot{\tilde{u}}_{\lambda}$
\subsubsection{Expansion in $\lambda$}
In this section, we remove the $\tilde{\cdot}$ notation for convenience. From equation \eqref{EnergyConservation}, we have
\[
\int_0^{u_{\lambda}(t)} \frac{dx}{\sqrt{W(x) - \lambda}} = t
\]
we can differentiate \textbf{with respect to $\lambda$} and get 
\[
(\partial_{\lambda} u_{\lambda})(t) \frac{1}{\sqrt{W(u_{\lambda}) - \lambda}} = - \frac{1}{2}\int_0^{u_{\lambda}(t)} \frac{dx}{[W(x) - \lambda]^{3/2}}
\]
evaluating at $\lambda = 0$, we have that 
\begin{align*}
(\partial_{\lambda} u_{\lambda}|_{\lambda = 0})(t) &= - 4(1 - g^2) \int_0^g \frac{dx}{(1 - x^2)^3} \\
&= - \frac{1 - g^2}{4} \left[\frac{2g(5 - 3g^2)}{(g^2 - 1)^2} + 3 \log\left( \frac{1 + g}{1 - g} \right) \right] \\
&=: \kappa(t)
\end{align*}
where $g = g(t)$ is the heteroclinic and we've used that 
\[
\int \frac{dx }{(1  -x^2)^3} = \frac{1}{16} \left(\frac{10x - 6x^3}{(1 - x^2)^2}  +3 \log\left(\frac{1 + x}{1 - x}\right)\right)
\]
Note that $\kappa(t) < 0$ for all $t > 0$. Let $L = \partial_t^2 - W''(g)$. We now want to solve
\[
L(\tau(t)) = \kappa(t)
\]
For 
\[
u_{\lambda}(t) = g(t) + \lambda \kappa(t) + \phi
\]
We compute 
\begin{align*}
u_{\lambda,tt} &= \ddot{g} + \lambda \ddot{\kappa} + \phi_{tt} \\
W'(u_{\lambda}) &= W'(g) + W''(g)[\lambda \kappa + \phi] + O(\lambda^2, \phi^2, \lambda \phi) \\
u_{\lambda, tt} - W'(u_{\lambda}) &= \phi_{tt} - W''(g) \phi + O(\lambda^2, \phi^2, \lambda \phi) \\
L(\phi) &= O(\lambda^2, \phi^2, \lambda \phi)
\end{align*}
%
%
\noindent We now show:
\begin{lemma} \label{1DEstimate}
For $\delta = \delta(\lambda)$ sufficiently small, we have 
\[
||\phi||_{C^{2,\alpha}([0, \delta))} \leq K \lambda^2
\]
for $K$ independent of $\lambda$ (and hence $\eps$)
\end{lemma}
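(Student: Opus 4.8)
The plan is to deduce the estimate from smooth dependence of solutions of ODEs on a parameter, with the parameter being $\lambda$. After the rescaling made above, $u_{\lambda}$ is the solution of $u_{tt}=W'(u)$ on $\R$ with initial data $u_{\lambda}(0)=0$ and, by the energy identity \eqref{EnergyConservation} together with Lemma \ref{ACSolutionShapeLemma} (which forces $u_{\lambda,t}(0)>0$),
\[
u_{\lambda,t}(0)^{2}=2\bigl(W(0)-\lambda\bigr).
\]
Thus $\lambda\mapsto u_{\lambda,t}(0)$ is a real-analytic function of $\lambda$ on $[0,\lambda_{0}]$ for any $\lambda_{0}<1/4$, and $u_{\lambda}(0)=0$ is constant in $\lambda$. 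By the classical theorem on $C^{\infty}$ dependence of ODE solutions on initial data, the map $(\lambda,t)\mapsto u_{\lambda}(t)$ is $C^{\infty}$ on $[0,\lambda_{0}]\times[0,\delta]$ for every fixed $\delta>0$; differentiating the equation $u_{\lambda,tt}=W'(u_{\lambda})$ repeatedly in $t$ shows $\partial_{t}^{k}u_{\lambda}(t)$ is likewise $C^{\infty}$ in $(\lambda,t)$ there. By construction $u_{0}=g$ (note $g(0)=0$, $\dot g(0)=\sqrt{2W(0)}=\sigma$), and $\partial_{\lambda}u_{\lambda}\big|_{\lambda=0}=\kappa$ by the very definition of $\kappa$; differentiating the equation in $\lambda$ gives $L\kappa=\ddot\kappa-W''(g)\kappa=0$, though we will not need this.

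Next I would apply Taylor's theorem in $\lambda$ with Lagrange remainder, using joint smoothness to get uniformity in $t\in[0,\delta]$. Since $\partial_{t}^{k}g=\partial_{t}^{k}u_{0}$ and $\partial_{t}^{k}\kappa=\partial_{\lambda}\partial_{t}^{k}u_{\lambda}\big|_{\lambda=0}$ (the $C^{\infty}$ derivatives $\partial_{t}$ and $\partial_{\lambda}$ commute), for each $t\in[0,\delta]$ and each $k\in\{0,1,2,3\}$ there is $\xi=\xi(t,k)\in(0,\lambda)$ with
\[
\partial_{t}^{k}\phi(t)=\partial_{t}^{k}u_{\lambda}(t)-\partial_{t}^{k}g(t)-\lambda\,\partial_{t}^{k}\kappa(t)=\tfrac{\lambda^{2}}{2}\,\partial_{\lambda}^{2}\bigl(\partial_{t}^{k}u_{\lambda}(t)\bigr)\Big|_{\lambda=\xi}.
\]
The function $\partial_{\lambda}^{2}\partial_{t}^{k}u_{\lambda}$ is continuous on the compact box $[0,\lambda_{0}]\times[0,\delta]$, hence bounded there by some $C_{k}=C_{k}(\delta)$. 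Therefore $\|\partial_{t}^{k}\phi\|_{C^{0}([0,\delta))}\le\tfrac12 C_{k}\lambda^{2}$ for $k=0,1,2,3$, and in particular
\[
[\partial_{t}^{2}\phi]_{\alpha;[0,\delta]}\le\delta^{1-\alpha}\,\|\partial_{t}^{3}\phi\|_{C^{0}([0,\delta))}\le\tfrac12 C_{3}\,\delta^{1-\alpha}\lambda^{2}.
\]
Summing these estimates gives $\|\phi\|_{C^{2,\alpha}([0,\delta))}\le K\lambda^{2}$ with $K=K(\delta)$ independent of $\lambda$ (hence of $\eps$); a fixed small $\delta>0$ suffices, which is consistent with the statement.

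The argument is soft, so there is no genuine obstacle; the only points requiring care are (i) recording the correct initial datum $u_{\lambda,t}(0)^{2}=2(W(0)-\lambda)$ and its analyticity in $\lambda$ away from $\lambda=1/4$, and (ii) justifying the interchange of $\partial_{t}$ and $\partial_{\lambda}$ and the uniformity of the Taylor remainder over $t\in[0,\delta]$ — both immediate once joint $C^{\infty}$ regularity of $(\lambda,t)\mapsto u_{\lambda}(t)$ on the compact box is in hand. If one preferred to avoid invoking smooth dependence, an equivalent route is a Duhamel/contraction argument: from $\phi(0)=0$ and $\phi_{t}(0)=u_{\lambda,t}(0)-\dot g(0)-\lambda\dot\kappa(0)=O(\lambda^{2})$ (the $O(\lambda)$ term cancels precisely because $\dot\kappa(0)=\partial_{\lambda}u_{\lambda,t}(0)\big|_{0}$ is the first Taylor coefficient of the analytic function $\lambda\mapsto u_{\lambda,t}(0)$) together with the identity $L\phi=3g(\lambda\kappa+\phi)^{2}+(\lambda\kappa+\phi)^{3}$, one represents $\phi$ via the Green's function of $L=\partial_{t}^{2}-W''(g)$ on $[0,\delta]$ and runs a fixed point in a small ball of $C^{0}([0,\delta))$; the quadratic lower order of the right-hand side, combined with $\lambda\kappa+\phi=O(\lambda)$ on $[0,\delta]$, yields the $\lambda^{2}$ gain, and an ODE-regularity bootstrap upgrades $C^{0}$ to $C^{2,\alpha}$. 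Either way the mechanism is the same: subtracting $\lambda\kappa$ removes the first-order Taylor coefficient, so the remainder $\phi$ is genuinely second order in $\lambda$.
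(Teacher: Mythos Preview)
Your argument is correct and takes a genuinely different route from the paper. The paper proceeds by a continuity bootstrap: it \emph{defines} $\delta=\delta(\lambda)$ to be an interval on which $\|\phi\|_{C^0}\le\lambda^2$ (possible since $\phi(0)=0$), then feeds this into the exact identity $\phi_{tt}=W''(g)\phi+3g(\lambda\kappa+\phi)^2+(\lambda\kappa+\phi)^3$ to bound $\|\phi_{tt}\|_{C^0([0,\delta])}\le C\lambda^2$, and finishes by interpolation. Your approach instead records that the initial data $u_\lambda(0)=0$, $u_{\lambda,t}(0)=\sqrt{2(W(0)-\lambda)}$ are analytic in $\lambda$, invokes smooth dependence of ODE solutions on parameters, and reads off $\phi$ as the second-order Taylor remainder in $\lambda$ via the Lagrange form on the compact box $[0,\lambda_0]\times[0,\delta]$. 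This yields the bound with $\delta$ \emph{fixed} rather than $\lambda$-dependent, which is slightly stronger than the statement; it also produces the $C^{2,\alpha}$ (indeed $C^3$) control directly, whereas the paper's proof as written stops at $C^2$. The paper's route is entirely explicit and avoids a black-box dependence theorem, but at the cost of a somewhat informal interpolation step; yours is soft but robust, and the Duhamel alternative you sketch at the end is essentially a cleaned-up version of the paper's argument.
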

\noindent \Pf Note that $\phi(0) = 0$ by nature of the Dirichlet condition on $u_{\lambda}(t)$ and definition of $\kappa(t)$. Let $\delta = \delta(\lambda)$ be such that 
\[
||\phi||_{C^0([0, \delta])} \leq \lambda^2
\]
Then we have that 
\begin{align*}
\phi_{tt} &= W''(g) \phi + (3g) [\lambda^2 \kappa^2 + 2 \lambda \kappa \phi + \phi^2] \\
& + [\lambda^3 \kappa^3 + 3 \lambda^2 \kappa^2 \phi + 3 \lambda \kappa \phi^2 + \phi^3] \\
||\phi_{tt}||_{C^0} & \leq 2 \lambda^2 + 3[\lambda^2 K_0^2 + 2 \lambda^3 K_0 + \lambda^4] + [\lambda^3 K_0^3 + 3 K_0^2 \lambda^4 + 3 K_0 \lambda^5 + \lambda^6]
\end{align*}
where
\[
K_0 = ||\kappa(t)||_{C^0(\R^+)}
\]
Then for $\lambda$ sufficiently small, we see that
\[
||\phi_{tt}||_{C^0([0, \delta])} \leq 3(1 + K_0)^2 \lambda^2
\]
and so by interpolation
\[
||\phi||_{C^2([0, \delta])} \leq 3(1 + K_0)^2 \lambda^2
\]
\qed \nl
%
%
\noindent Similarly, we now want to show there's a next order expansion for $\dot{u}$ on $[0, \delta(\lambda)]$ at least. We write 
\begin{align*}
W''(u) &= 3 (g(t) + \lambda \tau(t) + \phi)^2 - 1 \\
&= 3g^2 + 6\lambda g(t) \tau(t) - 1 + O(\lambda^2) \\
&= W''(g) + 6 \lambda g(t) \tau(t) + O(\lambda^2) \\
f(t) &:= 6 g(t) \tau(t)
\end{align*}
%
where the $O(\lambda^2)$ error terms hold in $C^2([0, \delta))$. Take $\omega(t)$ the solution to 
\begin{align*}
\omega&: [0, \infty) \to \R \\
\omega(0) &= \lim_{t \to \infty} \omega(t) = 0 \\
L(\omega(t)) &= f(t) \dot{g}(t)
\end{align*}
and expand
\[
\dot{u} = \dot{g}(t) + \lambda \omega(t) + \varphi
\]
Using our prior 
\begin{align*}
[\partial_t^2 - W''(u)] \dot{u} &= [\partial_t^2 - W''(g) - \lambda f(t) + O(\lambda^2)] [ \dot{g} + \lambda \omega + \varphi] \\
&= L(\lambda \omega) +  L(\varphi) - \lambda f \dot{g} + O(\lambda \varphi, \lambda^2) \\
&= 0 \\
\implies L(\varphi) &= O(\lambda \varphi, \lambda^2)
\end{align*}
By the same argument as in \ref{1DEstimate}, we have 
\[
||\varphi||_{C^{2}([0, \delta^*))} \leq K \lambda^2
\]
for some $\delta^* = \delta^*(\lambda)$ and $K$ independent of $\lambda$. Furthermore, it is clear that 
\begin{align*}
\dot{u}_t(0) &= \ddot{g}(0) + \lambda \omega_t(0) + \varphi_t(0) \\
&= \lambda \omega_t(0) + O(\lambda^2)
\end{align*}
so 
\[
\text{sign}(\dot{u}_t(0)) = \text{sign}(\omega_t(0))
\]
and it suffices to compute $\omega_t(0)$.
\subsubsection{Solving for $\omega_t(0)$}
Recall that one can solve for $\omega(t) = v(t) \dot{g}(t)$ as in (\cite{marx2023dirichlet}, \S 7.6 or \cite{mantoulidis2022variational}), so that 
\begin{align*}
w'(0) &= v'(0) \dot{g}(0) \\
v'(0) &= \frac{a_0}{\dot{g}(0)^2} \\
a_0 &= - \int_0^{\infty} (f \dot{g}) \dot{g}
\end{align*}
We check the sign of $f$. Write
\begin{align*}
f(t) &= 6 g(t) \tau(t) \\
\tau(t) & \; \st \; L(\tau) = \kappa(t) \\
\kappa(t) &= - \frac{1 - g^2}{4} \left[ \frac{2g(5 - 3g^2)}{(1 - g^2)^2} + 3 \log\left( \frac{1 + g}{1 - g} \right) \right]
\end{align*}
So $\kappa < 0$ for all $t > 0$ and vanishes at $t = 0$. Recall that 
\begin{align*}
\tau(t) &= r(t) \dot{g}(t) \\
r(t) & = \int_0^t \dot{g}(s)^{-2} \left[ a_0 + \int_0^s \kappa(u) \dot{g}(u) du \right] ds \\
a_0 &= - \int_0^{\infty} \kappa(u) \dot{g}(u) du \\
& >0 \\
\implies r(t) & > 0 \qquad \forall t > 0 \\
\implies \tau(t) & > 0 \qquad \forall t > 0 \\
\implies f(t) & > 0 \qquad \forall t > 0 \\
\implies w'(0) & < 0
\end{align*}
i.e. 
\begin{align*}
\dot{u} &= \dot{g}(t) + \lambda \omega(t) + \varphi(t) \\
\dot{u}'(0) &= \lambda w'(0) + O(\lambda^2) \\
& < 0
\end{align*}
for all $\lambda$, and hence all $\eps$, sufficiently small. This is our desired result.
\subsection{Elliptic Functions} \label{EllipticFunctions}
This section provides a brief introduction to elliptic functions, which describe the solutions to Allen-Cahn on $\R$. Let 
\[
u(y_1, k) = \int_0^{y_1} \frac{dt}{\sqrt{(1 - t^2)(1 - k^2 t^2)}} = \int_0^{\sin^{-1}(y_1)} \frac{d \theta}{\sqrt{1 - k^2 \sin^2 \theta}}  = F(\sin^{-1}(y_1), k)
\]
be the incomplete elliptic integral of the first kind (see \cite{byrd2013handbook}), and let
\[
y_1 = \sn(u,k)
\]
be the $\sin$ amplitude function. From hereon, we'll notate the right as $\sn(x,k)$ and think of $\{\sn(\cdot, k)\}$ as a family of functions on $\R$. \nl \nl
We also recall the definitions and relations
\begin{align*}
\cn(x,k) &= \cos(\sin^{-1}(y_1)) =\sqrt{1 - \sn(x,k)^2} \\
\dn(x,k) &= \sqrt{1 - k^2 \sn(x,k)^2} \\
\sn_x &= \cn \cdot \dn \\
\cn_x &= - \sn \cdot \dn \\
\dn_x &= - k^2 \sn \cdot \cn
\end{align*}
One can compute
\[
\sn_{xx}(x,k) = -(1 + k^2) \sn + 2k^2 \sn^3
\]
and define
\begin{equation}\label{ACFamilyEquation}
g(x,k): = k \sqrt{\frac{2}{1 + k^2}} \sn\left(\frac{x}{\sqrt{1 + k^2}} \; \Big| \; k \right)
\end{equation}
Each of the $g(\cdot, k)$ are solutions to \eqref{ACEquation} and from \cite{byrd2013handbook} we see that $\sn(u, k) = \sn(u)$ has period equal to $4 K(k) = 4 F(\pi/2, k) $ which tends to $\infty$ as $k \to 1$. We know that any $\tilde{u}_{\eps}: \R \to \R$, a solution to \eqref{ACEquation} with period $\tau = \eps^{-1}$, is of the form $\tilde{u}_{\eps}(x) = g(x,k)$ for $k$ so that
\[
\eps^{-1} = 4K(k) \sqrt{1 + k^2}
\]
From the integral expression for $K$, it is clear that as $k \to 1$, $\eps^{-1}(k)$ is monotone and increasing to infinity and moreover differentiable so we can write 
\[
k = F(\eps)
\]
for some $\eps$. We are interested in the regime of when $k = 1$ since
\begin{align*}
u(y_1, 1) &= \int_0^{y_1} \frac{dt}{1 - t^2} = \tanh^{-1}(y_1) \\
y_1 &= \tanh(u) = \sn(u, 1)
\end{align*}
and so 
\[
g(x,k=1) = \sn(x/\sqrt{2}, k= 1) = \tanh(x/\sqrt{2}) = g(x)
\]
the heteroclinic. 
\subsection{Reducing $\BE''(f, f)$} \label{ReducingBE}
In this section, we first prove
\begin{lemma}
For $\phi: M \to \R$ as in section \S \ref{ComputeBound}, we have
\begin{align*}
-\int_Y f \phi_t &= \sigma^{-1} \int_Y f \int_0^{-2\omega \eps \ln(\eps)} - \Delta_t(h) \dot{\bg}_{\eps}^2 + \eps^{-1} H_t h \ddot{\bg}_{\eps} \dot{\bg}_{\eps} + \eps^{-2}[W''(u) - W''(\bg_{\eps})] h \dot{\bg}_{\eps}^2 \\
& \qquad \qquad \qquad - [\Delta_t(\phi) - H_t  \phi_t + \eps^{-2}(W''(u) - W''(\bg_{\eps}) + R) \phi] \dot{\bg}_{\eps} \Big) \sqrt{\det g(s,t)} dt ds
\end{align*}
where 
\[
\int_Y \int_0^{-2 \omega \eps \ln(\eps)} f R \phi \dot{\bg}_{\eps} \sqrt{\det g(s,t)} dt s \leq K \eps^{7/2} ||f||_{H^1(Y)}^2
\]
\end{lemma}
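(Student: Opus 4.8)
The plan is to obtain the claimed identity as a refinement of the double integration by parts that produces \eqref{NormalIntegration}: integrate by parts in the normal variable, rewrite every occurrence of $\phi_{tt}$ through the linearized Allen--Cahn equation, and then collect everything that is not one of the four displayed terms into the single remainder $R$.

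\textbf{Step 1 (integration by parts in the normal direction).} Working in Fermi coordinates $(s,t)$ around $Y$, I would integrate $\int_0^{-2\omega\eps\ln\eps}\eps^2\phi_{tt}\,\dot{\bg}_\eps\,\sqrt{\det g(s,t)}\,dt$ by parts twice in $t$. The boundary term at $t=-2\omega\eps\ln\eps$ vanishes since $\dot{\bg}_\eps$ and all its $t$-derivatives vanish there (the cutoff in the definition of $\bg$), while at $t=0$ only $-\eps^2\phi_t(s,0)\,\dot{\bg}_\eps(0)\,\sqrt{\det g(s,0)}$ survives because $\phi(s,0)\equiv 0$; using $\dot{\bg}_\eps(0)=\dot g(0)=\sigma$ this yields the overall prefactor $\sigma^{-1}$. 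This is precisely \eqref{NormalIntegration}, whose remainder already satisfies $\lvert\int_Y f R\rvert\le K\eps^3\|f\|_{H^1(Y)}^2$ by \eqref{ErrorIntegralBound}.

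\textbf{Step 2 (substitute the equation).} Next I would replace $\eps^2\phi_{tt}$ using the Fermi decomposition $\eps^2\Delta_g=\eps^2\partial_t^2-\eps^2 H_t\partial_t+\eps^2\Delta_t$ together with $\Leu(\phi)=\eps^2\Delta_g\phi-W''(u)\phi$, i.e.
\[
(\eps^2\partial_t^2-W''(\bg_\eps))\phi=\Leu(\phi)-\bigl[\eps^2\Delta_t\phi-\eps^2 H_t\phi_t\bigr]+\bigl[W''(u)-W''(\bg_\eps)+E\bigr]\phi .
\]
Since $\dot u=h\dot{\bg}_\eps+\phi$ solves \eqref{LinearizedSystem}, we have $\Leu(\phi)=-\Leu(h\dot{\bg}_\eps)$; expanding $\Leu(h\dot{\bg}_\eps)=\eps^2\Delta_g(h\dot{\bg}_\eps)-W''(u)h\dot{\bg}_\eps$ in Fermi coordinates and using that $\dot g$ solves the linearized heteroclinic equation $\ddot{}-W''(g)=0$ (so $\eps^2(\dot{\bg}_\eps)_{tt}=W''(\bg_\eps)\dot{\bg}_\eps$ up to the cutoff error of \eqref{ApproxHeteroclinic}) gives
\[
\Leu(\phi)=-\eps^2\Delta_t(h)\dot{\bg}_\eps+\eps H_t h\ddot{\bg}_\eps+\bigl[W''(u)-W''(\bg_\eps)+E\bigr]h\dot{\bg}_\eps .
\]
Plugging both identities into \eqref{NormalIntegration}, multiplying the integrand by $\dot{\bg}_\eps$, and dividing through by $\eps^2$, one reads off the four groups of terms $-\Delta_t(h)\dot{\bg}_\eps^2$, $\eps^{-1}H_t h\ddot{\bg}_\eps\dot{\bg}_\eps$, $\eps^{-2}[W''(u)-W''(\bg_\eps)]h\dot{\bg}_\eps^2$, and $-[\Delta_t(\phi)-H_t\phi_t+\eps^{-2}(W''(u)-W''(\bg_\eps))\phi]\dot{\bg}_\eps$ appearing in the lemma, with the cutoff errors $E$, the $\eps^{-2}R$ from Step 1, and the higher-order pieces of the Fermi expansion of $H_t$ and of $W''(u)-W''(\bg_\eps)$ all gathered into the one symbol $R$.

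\textbf{Step 3 (estimate the remainder) and the main obstacle.} Finally I would bound $\int_Y\int_0^{-2\omega\eps\ln\eps} f\,R\,\phi\,\dot{\bg}_\eps\sqrt{\det g}$ using: lemma \ref{phiBoundByfLemma}, which gives $\|\phi\|_{H^1_\eps(M)}\le K\eps\|f\|_{H^1(Y)}$ and hence $\|\phi\|_{L^2(M)}\le K\eps^{1/2}\|f\|_{H^1(Y)}$; the critical-point expansions \eqref{MCCritExpansion} and \eqref{SecondDerivDiffExpansion}, which make $H_Y$ and $W''(u)-W''(\bg_\eps)$ of order $\eps^2$; the exponential smallness $\|E\|_{H^k}\le C(k)\eps^\omega$ with $\omega>5$; and the elementary fact that integrating $\dot{\bg}_\eps^2$ (or $\ddot{\bg}_\eps\dot{\bg}_\eps$) in $t$ over $[0,-2\omega\eps\ln\eps]$ costs a factor $\eps$. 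Cauchy--Schwarz in $t$ followed by the $L^2$--$L^2$ pairing over $Y$ then gives the bound $K\eps^{7/2}\|f\|_{H^1(Y)}^2$. The computational heart is Step 2 — keeping the Jacobian $\sqrt{\det g(s,t)}$, the sign of $H_t\partial_t$, and the cutoff errors consistent through the double integration by parts — but the subtlest point is in Step 3: one must use that \emph{both} $H_Y$ and $W''(u)-W''(\bg_\eps)$ are $O(\eps^2)$ at a critical point, and that $\|\phi\|_{L^2}$ carries an extra $\eps^{1/2}$ relative to $\|f\|_{H^1(Y)}$, so that these gains together with the $\eps$ from the normal integral land exactly at $\eps^{7/2}$ rather than at a weaker power.
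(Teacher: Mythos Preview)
Your approach is essentially the paper's own: double integration by parts in $t$ against $\dot{\bg}_\eps\sqrt{\det g}$, substitution of $\Leu(\phi)=-\Leu(h\dot{\bg}_\eps)$, and a Cauchy--Schwarz estimate using lemma~\ref{phiBoundByfLemma}. Two small points where your account drifts from the paper are worth flagging. First, the remainder $R$ in the lemma is \emph{not} a catch-all for higher-order pieces of the Fermi expansion of $H_t$ or of $W''(u)-W''(\bg_\eps)$; those quantities are kept intact in the displayed formula. Rather, $R$ records exactly the terms produced when the two $t$-derivatives in the integration by parts fall on the Jacobian factor, namely
\[
R=-\bigl[E+2\eps\,\ddot{\bg}_\eps\,\partial_t\sqrt{\det g(s,t)}+\eps^2\,\dot{\bg}_\eps\,\partial_t^2\sqrt{\det g(s,t)}\bigr],
\]
and the pointwise bound $|R|\le K\eps^2$ comes from $\partial_t\sqrt{\det g}=H_Y+O(t)$ together with $H_Y=O(\eps)$ at a critical point. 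Second, in the final $\eps$-count the paper uses $\|\phi\|_{L^2(M)}\le K\eps\|f\|_{H^1(Y)}$ (directly from lemma~\ref{phiBoundByfLemma}), not $\eps^{1/2}$; combined with $\|f\dot{\bg}_\eps R\|_{L^2(M)}\le K\eps^{5/2}\|f\|_{L^2(Y)}$ this is what delivers the stated $\eps^{7/2}$.
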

\noindent \Pf We first note that by integration by parts
\begin{equation} \label{IBPEq1}
-\phi_t \sqrt{\det g(s,0)} = \sigma^{-1} \int_0^{-2\omega \eps \ln(\eps)} (\partial_t^2 \phi) \dot{\bg}_{\eps} \sqrt{\det g(s,t)} - \sigma^{-1} \int_0^{-2\omega \eps \ln(\eps)} \phi \partial_t^2(\dot{\bg}_{\eps} \sqrt{\det g(s,t)}) dt
\end{equation}
for $\sigma = \sqrt{2}^{-1}$ as in \eqref{constants} having noted that 
\[
\phi(s,0) = 0 = \partial_t^k(\bg_{\eps})(-2 \omega \eps \ln(\eps)) \qquad \forall k \geq 1
\]
so that all of the remaining boundary terms vanish in \eqref{IBPEq1}. This tells us that 
\begin{align*} 
-\int_Y f \phi_t \sqrt{\det g(s,0)} &= \sigma^{-1} \int_Y f\int_0^{-2\omega \eps \ln(\eps)} \left[ (\partial_t^2 \phi) \dot{\bg}_{\eps} \sqrt{\det g(s,t)} - \phi \partial_t^2(\dot{\bg}_{\eps} \sqrt{\det g(s,t)})\right] dt \\
&= \sigma^{-1} \eps^{-2} \int_Y f \int_0^{-2\omega \eps \ln(\eps)} [\eps^2\partial_t^2 - W''(\bg_{\eps}) + E](\phi) \dot{\bg}_{\eps} \sqrt{\det g(s,t)} - \phi [2\eps\ddot{\bg}_{\eps}  \partial_t\sqrt{\det g(s,t)} \\
& \qquad \qquad - \eps^2 \dot{\bg}_{\eps}(t) \partial_t^2 \sqrt{\det g(s,t)}] - E \phi \sqrt{\det g(s,t)} dt ds \\
&= \sigma^{-1} \eps^{-2} \int_Y f\int_0^{-2 \omega \eps \ln(\eps)} \Leu(\phi) \dot{\bg}_{\eps} \sqrt{\det g(s,t)} \\
& \qquad \qquad - \phi \left(E +  2 \eps \ddot{\bg}_{\eps} \partial_t \sqrt{\det g(s,t)} + \eps^2 \dot{\bg}_{\eps}(t) \partial_t^2 \sqrt{\det g(s,t)}\right) \\
&\qquad \qquad  + [- \eps^2 \Delta_t(\phi) + \eps^2 H_t \phi_t + [W''(u) - W''(\bg_{\eps}) ]\phi ]\dot{\bg}_{\eps} \sqrt{\det g(s,t)} dt ds \\
&= \sigma^{-1} \eps^{-2} \int_Y f \int_0^{-2\omega \eps \ln(\eps)} -\Leu(h \dot{\bg}_{\eps}) \dot{\bg}_{\eps} \sqrt{\det g(s,t)} \\
&\qquad \qquad  + [- \eps^2 \Delta_t(\phi) + H_t \phi_t + [W''(u) - W''(\bg_{\eps}) + R] \phi]\dot{\bg}_{\eps} \sqrt{\det g(s,t)} dt ds \\
&= \sigma^{-1} \int_Y f \int_0^{-2\omega \eps \ln(\eps)} - \Delta_t(h) \dot{\bg}_{\eps}^2 + \eps^{-1} H_t h \ddot{\bg}_{\eps} \dot{\bg}_{\eps} + \eps^{-2}[W''(u) - W''(\bg_{\eps})] h \dot{\bg}_{\eps}^2 \\
& \qquad \qquad \qquad - [\Delta_t(\phi) - \eps^{-2} H_t  \phi_t + \eps^{-2}(W''(u) - W''(\bg_{\eps}) + R) \phi] \dot{\bg}_{\eps} \Big) \sqrt{\det g(s,t)} dt ds
\end{align*}
for $E$ as in \eqref{ApproxHeteroclinic} and having used $\Leu(\phi) = - \Leu(h \dot{\bg}_{\eps})$. Here 
\[
R = -[E + 2 \eps \ddot{\bg}_{\eps} (\partial_t \sqrt{\det g(s,t)}) + \eps^2 \dot{\bg}_{\eps} (\partial_t^2 \sqrt{\det g(s,t)})]
\]
In particular, note that 
\begin{equation} \label{RC0Bound}
|R| \leq K \eps^2 
\end{equation}
holds pointwise. This follows because $\partial_t \sqrt{\det g(s,t)} = H_Y + O(t)$ so that $|\ddot{\bg}_{\eps} \partial_t \sqrt{\det g(s,t)}| = O(\eps)$ because $H_Y = O(\eps)$ by nature of being a critical point for $\BE_{\eps}$. Now using  \ref{phiBoundByfLemma}, we get
\begin{align*}
\Big|\int_Y f \int_0^{-2 \omega \eps \ln(\eps)} R \phi \dot{\bg}_{\eps} \sqrt{\det g(s,t)} dt ds \Big| &\leq ||f \dot{\bg}_{\eps} R||_{L^2(M)} ||\phi||_{L^2(M)} \\
& \leq K \eps^{5/2}  ||f||_{L^2(Y)} ||\phi||_{L^2(M)} \\
& K \eps^{7/2} ||f||_{H^1(Y)}^2
\end{align*}
by equation \eqref{BestPhiH1Bound} and hence verifying equation \eqref{ErrorIntegralBound}. This ends the proof of the lemma. \qed \nl \nl
\noindent We now verify equation \eqref{halfIntegralSecondVar}:
\[
\int_Y f u_{\nu} \dot{u}_{\nu} = \int_Y f u_{\nu} \phi_t = \frac{\sqrt{2}}{3} \int_Y  (|\nabla f|^2 - \dot{H}_Y f^2 ) dA_Y + E 
\]
where $|E| \leq K \eps^{1/2} ||f||_{H^1(Y)}$. We analyze the components in our expression for $\int f \phi_t$:
\[
\int f \phi_t = I_1 + I_2 + I_3 + I_4 + I_5 + I_6
\]
In the below we often employ proposition \ref{NoLinearizedKernel}. We'll first bound the lower order error terms:
\begin{align*}
I_4&= - \sigma^{-1} \int_0^{-2\omega \eps \ln(\eps)} \dot{\bg}_{\eps} \int_{Y} \Delta_t(\phi) f dA_{Y_t} \\
&= \sigma^{-1} \int_0^{-2\omega \eps \ln(\eps)} \dot{\bg}_{\eps} \int_{Y_t} g(\nabla^t \phi, \nabla^t f)  \\
& =  \sigma^{-1}\int_M g(\nabla^t \phi, - \dot{\bg}_{\eps} \nabla^t f) \\
|I_4| & \leq K \eps^{1/2} ||f||_{H^1(Y)}^2
\end{align*}
%
having used \eqref{BestPhiH1Bound} in the last line. We can also bound
\begin{align*}
I_5&= -\sigma^{-1} \int_{Y} f \int_0^{-2\omega \eps \ln(\eps)} H_t \phi_t \dot{\bg}_{\eps} \sqrt{\det g(s,t)} dA_{Y_t} dt \\
&= - \sigma^{-1} \int_M \phi_t f H_t \dot{\bg}_{\eps} dA \\
|I_5| & \leq K \eps^{3/2} ||f||_{L^2} ||\nabla \phi||_{L^2} \\
&\leq K \eps^{3/2} ||f||_{H^1(Y)}^2
\end{align*}
We also have
\begin{align*}
I_6 &= \sigma^{-1} \int_Y f \int_0^{-2\omega \eps \ln(\eps)} \phi [W''(u) - W''(\bg_{\eps}) + R] \dot{\bg}_{\eps}  \sqrt{\det g(s,t)} dt ds	\\
&= \sigma^{-1} \int_M \phi [W''(u) - W''(\bg_{\eps}) + R] f \dot{\bg}_{\eps} dA \\
|I_6| & \leq K ||\phi||_{L^2(M)} \Big| \Big| [W''(u) - W''(\bg_{\eps}) + R] f \dot{\bg}_{\eps} \Big| \Big|_{L^2(M)} \\
& \leq K \eps^{7/2} ||f||_{H^1(Y)}^2
\end{align*}
from equation \eqref{BestPhiH1Bound} and our $C^0$ bound, equation \eqref{RC0Bound}, on $R$. \nl \nl
For the remaining terms, we get something on the order of $O(1)$:
\begin{align*}
I_1 &= \sigma^{-1} \int_{Y} \int_0^{-2\omega \eps \ln(\eps)} f \Delta_t(h) \dot{\bg}_{\eps}^2 \sqrt{\det g(s,t)} ds dt \\
&= -\sigma^{-1}\int_{Y}  \int_0^{-2\omega \eps \ln(\eps)} g(\nabla^t f, \nabla^t h) \dot{\bg}_{\eps}^2 \sqrt{\det g(s,t)} ds dt \\
&= \eps^{-1} (1 + O(\eps)) \int_{Y}  \int_0^{-2\omega \eps \ln(\eps)} |\nabla^t f|^2 \dot{\bg}_{\eps}^2 \sqrt{\det g(s,t)} ds dt \\
&= \sigma_0(1 + O(\eps)) \int_Y  |\nabla f|^2
\end{align*}
having used the expansion for $h$ as in \eqref{hExpansion} and also noted that $|\nabla^t f|^2 = |\nabla^Y f|^2 + E$ where $|E| \leq t |\nabla^Y f|^2$. We further used $\sigma_0, \; \sigma$ as in \eqref{constants}. We also have 
\begin{align*}
I_2 &= -\sigma^{-1} \eps^{-1}\int_Y \int_0^{-2\omega \eps \ln(\eps)} H_t f h \ddot{\bg}_{\eps} \dot{\bg}_{\eps} \sqrt{\det g(s,t)} dt ds\\
&= \eps^{-2}(1 + O(\eps)) \Big[\int_Y f^2 \eps \dot{H}_Y \left(\int_0^{-2\omega \eps \ln(\eps)} (t/\eps) \dot{\bg}_{\eps} \ddot{\bg}_{\eps}  \sqrt{\det g(s,t)} dt ds + O(\eps^2) \right) \Big]\\
&= [ \sigma_1 + O(\eps)] \int_Y f^2 \dot{H}_Y \sqrt{\det g(s,0)} ds \\
\sigma_1 &:= \int_0^{\infty} t \dot{g}(t) \ddot{g}(t) dt
\end{align*}
having used $H_Y = O(\eps^2)$ as in \ref{MCCritExpansion}. We  also have 
\begin{align*}
I_3 &= \eps^{-3} (1 + O(\eps))  \int_Y f^2 \int_{0}^{-2\omega \eps \ln(\eps)} [W''(u) - W''(\bg_{\eps})] \dot{\bg}_{\eps}^2 \sqrt{\det g(s,t)} dt ds \\
&= 6 \eps^{-1} \int_Y f^2 \int_0^{-2\omega \eps \ln(\eps)} [\dot{H}_Y \btau_{\eps} \bg_{\eps} + O(\eps)]\dot{\bg}_{\eps}^2 \sqrt{\det g(s,t)} dt ds \\
&= \sigma_2\int_Y f^2 [\dot{H}_Y  + O(\eps)] \sqrt{\det g(s,0)} ds \\
\sigma_2 &:= 6 \int_0^{\infty}  \tau g \dot{g}^2 dt 
\end{align*}
using the expansion in equation \eqref{SecondDerivDiffExpansion}. In sum
\begin{equation} \label{AlmostJacobiEq}
\int_Y f \phi_t dA = \int_Y \left(\sigma_0 (1 + O(\eps)) |\nabla f|^2 + [(\sigma_1 + \sigma_2)\dot{H}_Y + O(\eps)] f^2 \right) dA_Y
\end{equation}
We now compute the constants
\subsubsection{Computing the constants}
Recall that
\[
\sigma_0 = \int_0^{\infty} \dot{g}^2 dt = \frac{\sqrt{2}}{3}
\]
\noindent We compute $\sigma_1$ 
\begin{align*}
\sigma_1 &= \int_0^{\infty} t \dot{g} \ddot{g} = - \frac{1}{3 \sqrt{2}} 
\end{align*}
\noindent For $\sigma_2$, we now compute
\[
\int_0^{\infty} \tau g \dot{g}^2
\]
To do this, we note that we can solve
\begin{align*}
\alpha &: [0, \infty) \to \R	 \\
\alpha(0) &= \lim_{t \to \infty} \mu(t) = 0 \\
[\partial_t^2 - W''(g)] \alpha &= g \dot{g}^2
\end{align*}
In particular,
\[
\alpha = - \frac{1}{3\sqrt{2}} g(t) \dot{g}(t)
\]
Moreover, $L = \partial_t^2 - W''(g)$ is self-adjoint on the space of smooth functions on $\R^+$ with exponential decay and dirichlet condition at $0$ and $\infty$. This means that 
\begin{align*}
\int_0^{\infty} \tau g \dot{g}^2 &= \int \tau L(\alpha) \\
&= -\frac{1}{3\sqrt{2}} \int L(\tau) g \dot{g} \\
&= - \frac{1}{3 \sqrt{2}} \int t g \dot{g}^2 \\
&= -\frac{1}{18 \sqrt{2}}
\end{align*}
so that 
\begin{align*}
\sigma_2 &= 6 \int_0^{\infty} \tau g \dot{g}^2 =  - \frac{1}{3\sqrt{2}}
\end{align*}
and so 
\[
\sigma_1 + \sigma_2 = -\frac{\sqrt{2}}{3} = -\sigma_0
\]
and so we conclude that \eqref{AlmostJacobiEq} becomes
\begin{align} \label{PartialSecondVarEq}
\int_Y f \phi_t dA &= \sigma_0 \int_Y \left((1 + O(\eps)) |\nabla f|^2 + \left([-1 + O(\eps)]\dot{H}_Y + O(\eps) \right) f^2 \right) dA_Y \\ \nonumber
&= \sigma_0 [D^2A(f) \Big|_Y + R] \\ \nonumber
|R| &\leq K \eps^{1/2} ||f||_{H^1}^2
\end{align}
Note that this expression is quadratic in $f$ and that $\dot{H}_Y$ does not depend on the choice of normal. We see that we'll get the same contribution from $-\int_Y f \phi_t^-$, meaning that the second variation of $\BE_{\eps}$ will be twice of \eqref{PartialSecondVarEq}. \nl \nl
Thus if $Y$ is a critical point, the second variation of $\BE$ corresponds to the second variation of area plus an $O(\eps)$ error term. 
\subsection{Absolute Minimizers of $\BE_{\eps}$} \label{NonExistenceSection}
\subsubsection{$M^n$ compact for $n \geq 2$}
\noindent In this section, we show that absolute minimizers of $\BE$ do not exist on closed riemannian manifolds with $n \geq 2$:
\begin{restatable}{thmm}{NoAbsoluteMinimizergeqTwo}\label{NoAbsoluteMinimizergeq2}
For $M^n$ a compact riemannian manifold with $n \geq 2$, and any $\eps > 0$ 
\[
\inf_{\substack{Y \; \text{separating} \\ \text{hypersurface}}} \BE_{\eps}(Y) = 0
\]
in particular, there  is no $Y^{n-1} \subseteq M^n$ separating hypersurface achieving the infinum.
\end{restatable}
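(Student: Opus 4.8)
The plan is to produce an explicit sequence of separating hypersurfaces whose balanced energy goes to $0$, and then to observe by a rigidity argument that the value $0$ cannot actually be attained. Fix a point $p\in M$ and, for small $r>0$, take $Y_r=\partial B_r(p)$, the geodesic sphere, so that $M=\Omega_r^+\sqcup_{Y_r}\Omega_r^-$ with $\Omega_r^+=B_r(p)$ and $\Omega_r^-=M\setminus\overline{B_r(p)}$. On the small side, since the non-negative minimizer $u^+_{\Omega_r^+,\eps}$ competes against the admissible test function $0$, we get $E_\eps(u^+_{\Omega_r^+,\eps};\Omega_r^+)\le E_\eps(0;\Omega_r^+)=\frac{W(0)}{\eps}|B_r(p)|=\frac{|B_r(p)|}{4\eps}\to 0$. (In fact a Poincaré inequality forces $u^+_{\Omega_r^+,\eps}\equiv 0$ once $\eps^2\lambda_1(B_r(p))\ge 1$, which holds for $r$ small, but this stronger fact is not needed.)

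For the large side I would build an explicit non-positive competitor concentrated near $Y_r$, exploiting that a shrinking sphere has vanishing capacity. Work in normal coordinates at $p$, in which the metric and volume form agree with the Euclidean ones up to a factor $1+O(r^2)$ on $B_{\sqrt r}(p)$, and set $\rho=\sqrt r$. Let $v_r$ be the radial function equal to $0$ on $\partial B_r$, equal to $-1$ outside $B_\rho$, and on the annulus $B_\rho\setminus B_r$ equal to $-1$ times the normalized harmonic interpolant: $v_r(x)=-\frac{\log(|x|/r)}{\log(\rho/r)}$ if $n=2$, and $v_r(x)=-\frac{r^{2-n}-|x|^{2-n}}{r^{2-n}-\rho^{2-n}}$ if $n\ge 3$. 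A direct computation in polar coordinates gives $\int_{B_\rho\setminus B_r}|\nabla v_r|^2=\frac{4\pi}{\log(1/r)}(1+o(1))$ when $n=2$ and $\le c_n\,r^{n-2}$ when $n\ge 3$, for a dimensional constant $c_n$; in both cases this tends to $0$. Moreover $0\le W(v_r)\le\frac14$ and $v_r$ is constant off $B_\rho$, so $\frac1\eps\int_{\Omega_r^-}W(v_r)\le\frac{1}{4\eps}|B_\rho(p)|=O(r^{n/2})\to 0$. Hence $E_\eps(u^-_{\Omega_r^-,\eps};\Omega_r^-)\le E_\eps(v_r;\Omega_r^-)=\frac{\eps}{2}\int|\nabla v_r|^2+\frac1\eps\int W(v_r)\to 0$. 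Combining the two sides, $\BE_\eps(Y_r)=E_\eps(u^+_{\Omega_r^+,\eps};\Omega_r^+)+E_\eps(u^-_{\Omega_r^-,\eps};\Omega_r^-)\to 0$, while $\BE_\eps\ge 0$ always; therefore $\inf_Y\BE_\eps(Y)=0$.

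For non-attainment, suppose $\BE_\eps(Y)=0$ for some separating $Y$. Then $E_\eps(u^+_{\Omega^+,\eps};\Omega^+)=E_\eps(u^-_{\Omega^-,\eps};\Omega^-)=0$, which forces $\nabla u^\pm\equiv 0$ and $W(u^\pm)\equiv 0$ a.e., so $u^\pm$ is constant on each connected component of $\Omega^\pm$ with value in $\{-1,+1\}$. But $Y\ne\emptyset$, and every connected component of $\Omega^\pm$ has non-empty topological boundary contained in $Y$ — a component with empty boundary would be a clopen subset of the connected manifold $M$, hence all of $M$, contradicting that it is properly contained in $\Omega^\pm$. Since $u^\pm$ has zero trace on $Y$, a constant component value of $\pm1$ is impossible. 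Thus $0$ is not achieved by any separating hypersurface.

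The main obstacle — and the only place the hypothesis $n\ge 2$ is used — is the construction of $v_r$: a transition layer of fixed width around $Y_r$ has gradient energy bounded below, so one must instead use that the capacity of a geodesic sphere of radius $r$ tends to $0$ as $r\to0$ (logarithmically when $n=2$, polynomially when $n\ge3$), which is precisely why the analogous statement fails for $n=1$ where, as noted in the paper, $\inf_Y\BE_\eps(Y)$ is bounded below. The remaining ingredients — bounding $E_\eps(u^+)$ by testing against $0$, the polar-coordinate energy computation, and the rigidity argument — are routine.
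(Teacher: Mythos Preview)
Your proof is correct and follows essentially the same strategy as the paper's: both take $Y_r=\partial B_r(p)$ and construct a radial test function on the exterior whose Dirichlet energy vanishes by a capacity argument, while the interior energy is bounded by comparison with $0$. The only differences are cosmetic --- you use the exact capacitary (harmonic) interpolant on an annulus of outer radius $\sqrt{r}$, whereas the paper uses a logarithmic interpolant with a second parameter $k$ --- and you additionally supply the rigidity argument for non-attainment, which the paper omits.
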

%
%
\noindent To prove this, we construct a $W^{1,2}$ function vanishing on a small sphere about a point, with energy bounded above by the $n-1$-dimensional hausdorff area of the sphere. In the small $\eps$ limit, this converges to $0$ for each $r_0 > 0$ (see \ref{fig:absmincounter}).\nl \nl
\begin{figure}[h!]
\centering
\includegraphics[scale=0.2]{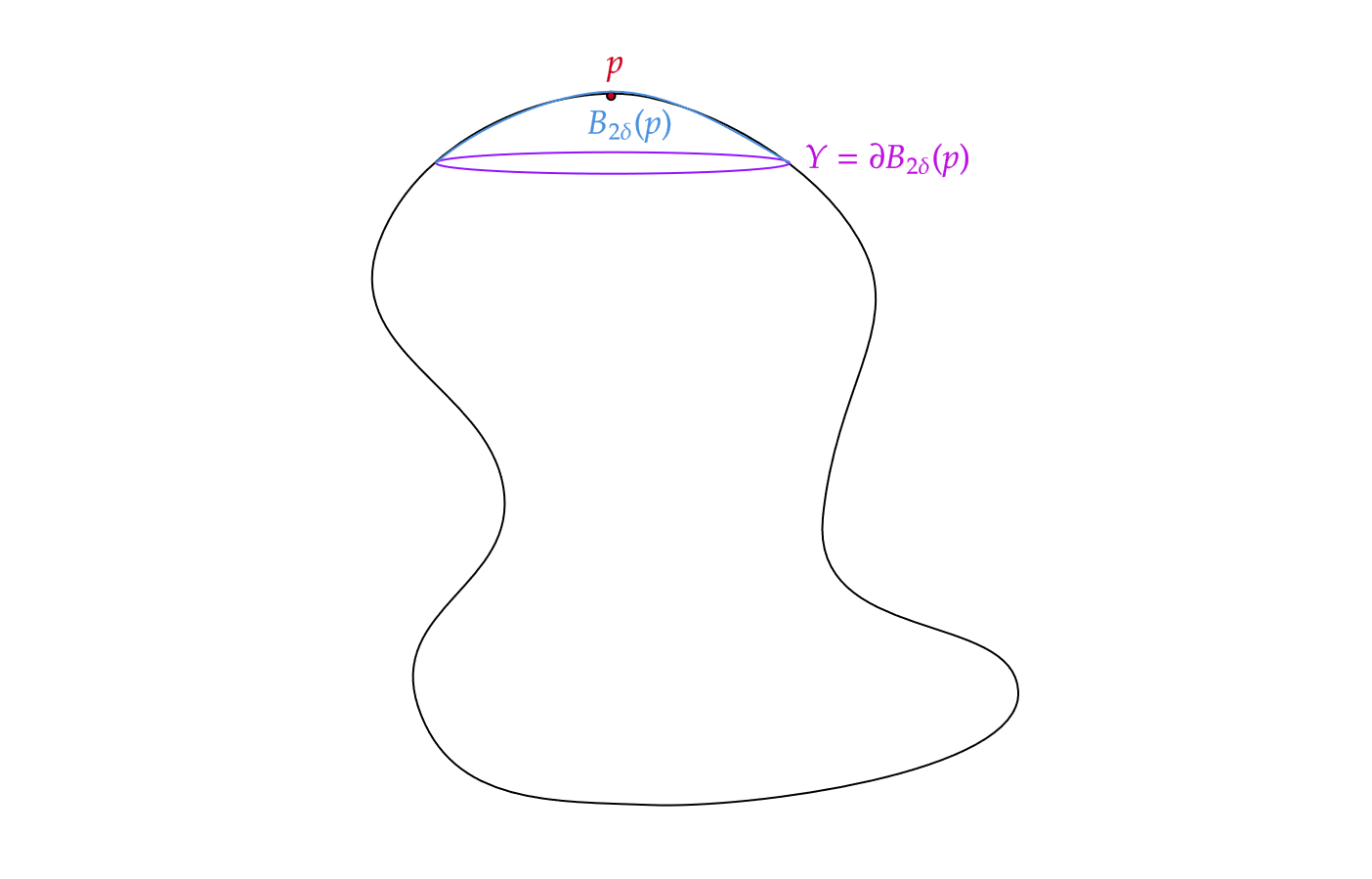}
\caption{}
\label{fig:absmincounter}
\end{figure}
\noindent \Pf Let $\eps$ fixed, $p \in M^n$, and $B_{\delta}(p)$, $B_{2\delta}(p)$ for $\delta > 0$ small. Consider 
\[
\psi: B_{2 \delta}(p) \to B_{2\delta}(0) \subseteq \R^n
\]
a diffeomorphism for $\delta$ sufficienty small such that the pull back metric on $B_{2\delta}(p)$ is approximately the pull back metric on $\R^n$. Now define 
\[
f(r) = \begin{cases}
	0 & 0 \leq r \leq \delta \\
	\frac{\ln(r/\delta)}{\ln(k)} & \delta \leq r \leq k \delta \\
	1 & r \geq k \delta
	\end{cases}
\]
Then we have that 
\[
f'(r) = \begin{cases}
	0 & 0 \leq r \leq \delta \\
	\frac{1}{r \ln(k)} & \delta \leq r \leq k \delta \\
	0 & r \geq k \delta
	\end{cases}
\]
And so 
\begin{align*}
E_{\eps}(f) &= \int_{M} \frac{\eps}{2} |\nabla f|^2 + \frac{1}{\eps} W(f) dA \\
&= \int_{B_{2\delta}(p)} \frac{\eps}{2} |\nabla f|^2 + \frac{1}{\eps} W(f) dA \\
&\leq \int_{B_{2\delta}(p)} \frac{\eps}{2} |f'(r)|^2 dA + 2\text{Vol}(B_{2\delta}(p)) \cdot \frac{1}{4 \eps} \\
&= \eps\pi \int_{\delta}^{k\delta} \frac{1}{r^2 \ln(k)^2} r dr + \frac{C \delta^n}{\eps} \\
&\leq \frac{\eps \tilde{C}}{\ln(k)^2} \int_{\delta}^{k \delta} \frac{1}{r} dr + \frac{C \delta^n}{\eps} \\
&= \begin{cases}
\frac{\eps \tilde{C}}{\ln(k)} + \frac{C \delta}{\eps} & n = 2 \\
\frac{\eps \tilde{C} \delta^{n-2}}{\ln(k)^2} + \frac{C \delta^n}{\eps} & n \geq 3
\end{cases}
\end{align*}
here, $C$ and $\tilde{C}$ can be taken to be $2$ times the volume of the balls of radius $1$ in $\R^n$ and surface area of the sphere of radius $1$ in $\R^n$ respectively.  \nl \nl
If $n \geq 3$, then we can simply send $\delta \to 0$ to prove the theorem. If $n = 2$, set $\delta = (k \ln(k))^{-1}$ (so that $\delta k$ is bounded in the definition of $f$) and send $k \to \infty$ so that
\[
\inf_Y \BE_{\eps}(Y) \leq \frac{\eps \tilde{C}}{\ln(k)} + \frac{C}{\eps k \ln(k)} \to 0
\]
which tells us that \textbf{on $M$ compact, no absolute minimizer of $\BE$ exists}. \nl \nl
For manifolds which are $2$ dimensional or larger, this leads us to look into minimizers which are only local critical points, in particular critical points which are Morse index $1$ or larger.
\subsubsection{Absolute minimizers on $S^1$}
%
Identify $S^1 \cong [0,1]$ with $0 \sim 1$ identified. For any $\eps > 0$, let $u_{0,\eps}$ denote the absolute minimizer of \eqref{ACEnergy} on $[0,1]$ vanishing at $0$ and $1$. In this section, we'll prove the following analogous theorem:
\begin{restatable}{thmm}{NoAbsoluteMinimizerCircle}\label{NoAbsoluteMinimizerS1}
For any $\eps > 0$ 
\[
\inf_{\substack{p \neq 0 \in S^1}} \BE_{\eps}(\{0, p\}) = E_{\eps}(u_{0,\eps}) > 0
\]
but there is no $2$-node solution which achieves the infinum.
\end{restatable}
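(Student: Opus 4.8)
The plan is to reduce everything to a superadditivity property of the one‑dimensional Dirichlet energy. For $\ell>0$ let $e_{\eps}(\ell)$ denote the minimum of $E_{\eps}$ over $W^{1,2}$ functions on an interval of length $\ell$ that vanish at both endpoints; by translation invariance this is well defined, the minimum is attained, and by Theorem~\ref{BO} it equals $\tfrac{\ell}{4\eps}$ (minimizer $\equiv 0$) when $\ell\le \pi\eps$, and is realized by a positive single‑hump solution when $\ell>\pi\eps$ (if the minimizer changes sign, its absolute value is again a minimizer hence a smooth solution, forcing a corner at a zero unless the function vanishes identically). Since a $2$‑node configuration $\{0,p\}$ cuts $S^1\cong[0,1]$ into arcs of lengths $p$ and $1-p$, and $E_{\eps}$ is insensitive to the sign of $u$, we have $\BE_{\eps}(\{0,p\}) = e_{\eps}(p)+e_{\eps}(1-p)$, while $E_{\eps}(u_{0,\eps}) = e_{\eps}(1)$.

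First I would prove superadditivity: $e_{\eps}(a)+e_{\eps}(b)\ge e_{\eps}(a+b)$ whenever $a+b\le 1$. Glue the absolute values of the minimizers on $[0,a]$ and $[0,b]$ to obtain a nonnegative $w\in W^{1,2}([0,a+b])$ vanishing at $0,a,a+b$; it is admissible (with an extra interior constraint) for the length‑$(a+b)$ problem, so $e_{\eps}(a)+e_{\eps}(b)=E_{\eps}(w)\ge e_{\eps}(a+b)$. With $a+b=1$ this gives $\BE_{\eps}(\{0,p\})\ge e_{\eps}(1)=E_{\eps}(u_{0,\eps})$ for all $p$. For the matching bound I would test the length‑$(1-p)$ problem with the rescaled minimizer $x\mapsto u_{0,\eps}\!\big(\tfrac{x}{1-p}\big)$ and the length‑$p$ problem with $0$; a change of variables gives $e_{\eps}(p)+e_{\eps}(1-p)\le \tfrac{p}{4\eps}+\int_0^1\!\big(\tfrac{\eps}{2(1-p)}|u_{0,\eps}'|^2+\tfrac{1-p}{\eps}W(u_{0,\eps})\big)\,dy \to E_{\eps}(u_{0,\eps})$ as $p\to 0^+$. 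Hence $\inf_{p}\BE_{\eps}(\{0,p\}) = E_{\eps}(u_{0,\eps})$, and this is $>0$ because $u_{0,\eps}$ vanishes somewhere, so $W(u_{0,\eps})>0$ near that point and $\int\tfrac1\eps W(u_{0,\eps})>0$.

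It remains to show the infimum is not attained. Suppose $e_{\eps}(p)+e_{\eps}(1-p)=e_{\eps}(1)$ for some $p\in(0,1)$. Then the glued competitor $w$ (nonnegative, vanishing at $0,p,1$) would itself minimize $E_{\eps}$ among functions on $S^1$ vanishing at $\{0\}$, hence by elliptic regularity solve \eqref{ACEquation} on $(0,1)$ and in particular be $C^1$ at the interior node $p$. Since $w\ge 0$ and $w(p)=0$, its one‑sided derivatives at $p$ are $\le 0$ from the left and $\ge 0$ from the right, equal only when the corresponding arc carries the trivial minimizer. If some arc has length $>\pi\eps$, its minimizer is a nontrivial positive solution whose endpoint derivative is nonzero ($u(p)=u'(p)=0$ forces $u\equiv 0$ by ODE uniqueness), contradicting $w\in C^1$. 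If instead both arcs have length $\le\pi\eps$ then $w\equiv 0$ with $E_{\eps}(w)=\tfrac1{4\eps}$, which can equal $e_{\eps}(1)$ only if $\eps\ge\tfrac1\pi$ (so the length‑$1$ minimizer is also trivial). Thus for $\eps<\tfrac1\pi$ no $2$‑node configuration realizes the infimum; in particular $u_{\eps,2}$, when it exists, vanishes at $\{0,\tfrac12\}$ and satisfies $\BE_{\eps}(\{0,\tfrac12\})=2e_{\eps}(\tfrac12)>E_{\eps}(u_{0,\eps})$. For $\eps\ge\tfrac1\pi$ every arc is shorter than $\pi\eps$ so $\BE_{\eps}(\{0,p\})\equiv\tfrac1{4\eps}=E_{\eps}(u_{0,\eps})$, but no nontrivial $2$‑node Allen--Cahn solution exists at all there (Theorem~\ref{OneDimPeriodicClassification}), so the statement about $2$‑node solutions holds vacuously.

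The only genuine work is the strictness in the last paragraph — ruling out equality in the superadditivity inequality. The regularity‑versus‑corner dichotomy handles it, but one must carefully separate the degenerate case in which an arc is too short to support a nontrivial solution. The reduction, the gluing estimate, and the rescaling competitor are all routine.
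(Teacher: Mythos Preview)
Your proof is correct and covers the same ground as the paper's --- superadditivity by gluing, a rescaled competitor for the upper bound, and uniqueness of the Dirichlet minimizer on $[0,1]$ for non-attainment --- but it is more self-contained. The paper routes the upper bound through Lemma~\ref{MinEnergyContinuousLemma} (continuity of the minimal energy in $\eps$), rescaling the minimizer on $[0,1-\delta]$ back to $[0,1]$ at a perturbed $\eps$; you sidestep that lemma entirely by testing the length-$(1-p)$ problem directly with $u_{0,\eps}(x/(1-p))$, which is cleaner. For strictness, the paper simply cites uniqueness of the Brezis--Oswald solution and runs a minimizing-sequence contradiction, whereas your corner-versus-regularity dichotomy makes the underlying mechanism explicit (the glued competitor would have to be a smooth solution, forcing $w'(p)=0$ and hence $w\equiv 0$ by ODE uniqueness). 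Your handling of the degenerate range $\eps \ge 1/\pi$ is also more careful than the paper's: there the infimum is in fact attained by every $p$, but no nontrivial two-node solution exists, so the claim holds vacuously.
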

%
%
\noindent To prove this, we first prove the following lemma
\begin{lemma} \label{MinEnergyContinuousLemma}
Let $N$ be a closed manifold with boundary $\partial Y = N$. Let $\{u_{\eps}\}$ be the minimizers of \eqref{ACEnergy} on $N$ restricted to functions in $W^{1,2}_0(N)$. Then there exists an $\eps_0 > 0$ such that for all $0 < \eps < \eps_0$, the function
\[
g(\eps) = E_{\eps}(u_{\eps}) = \inf_{v \in W^{1,2}_0(N)} E_{\eps}(v)
\]
is continuous in $\eps$. In fact, it is lipschitz away from $\eps = 0$.
\end{lemma}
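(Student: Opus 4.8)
The plan is to use the elementary fact that for a \emph{fixed} competitor $v \in W^{1,2}_0(N)$ the map $\eps \mapsto E_\eps(v) = \tfrac{\eps}{2}\int_N |\nabla v|^2 + \tfrac{1}{\eps}\int_N W(v)$ is of the form $a\eps + b\eps^{-1}$ with $a=\tfrac12\int_N|\nabla v|^2\ge 0$, $b=\int_N W(v)\ge 0$, hence smooth on $(0,\infty)$; then $g$ is an infimum of such functions, whose oscillation we bound by a two-sided comparison. First I would fix $\eps_0 = \lambda_1(N)^{-1/2}$ (this aligns with the Brezis--Oswald framework of \S\ref{BEEnergies} and with Theorem \ref{BO}; in fact the argument works on all of $(0,\infty)$, since $g(\eps)$ is always finite, being $\le \min\{E_\eps(0),E_\eps(v_0)\}<\infty$). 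Existence of an actual minimizer $u_\eps$ follows from the direct method (coercivity of $E_\eps$ on $W^{1,2}_0(N)$ via the Dirichlet Poincar\'e inequality, together with weak lower semicontinuity), but this is not strictly needed: everything below goes through verbatim with $\eta$-near-minimizers $v_\eps$ satisfying $E_\eps(v_\eps)\le g(\eps)+\eta$, since only the identity $E_\eps(u_\eps)=g(\eps)$ and the one-sided inequality $g(\eps')\le E_{\eps'}(u_\eps)$ are used.

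The key quantitative input is a uniform energy bound on compact subintervals of $(0,\eps_0)$. Fix $\eps_\ast\in(0,\eps_0)$ and $\rho>0$ with $I:=[\eps_\ast-\rho,\eps_\ast+\rho]\subset(0,\eps_0)$, and fix one smooth $v_0\in W^{1,2}_0(N)$ with $v_0\not\equiv 0$. Then for every $\eps\in I$,
\[
g(\eps) \;\le\; E_\eps(v_0) \;\le\; \tfrac{\eps_0}{2}\!\int_N|\nabla v_0|^2 + \tfrac{1}{\eps_\ast-\rho}\!\int_N W(v_0) \;=:\; C,
\]
where $C$ depends only on $N$, $v_0$, and $I$. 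Since $E_\eps(u_\eps)\ge \tfrac{\eps}{2}\int_N|\nabla u_\eps|^2$ and $E_\eps(u_\eps)\ge \tfrac{1}{\eps}\int_N W(u_\eps)$, this yields the uniform bounds $\int_N|\nabla u_\eps|^2 \le \tfrac{2C}{\eps_\ast-\rho}$ and $\int_N W(u_\eps) \le (\eps_\ast+\rho)\,C$ for all $\eps\in I$ (and the same for near-minimizers, with $C$ replaced by $C+\eta$).

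Then comes the comparison step. For $\eps,\eps'\in I$, testing the infimum defining $g(\eps')$ against $u_\eps$,
\[
g(\eps') - g(\eps) \;\le\; E_{\eps'}(u_\eps) - E_\eps(u_\eps) \;=\; (\eps'-\eps)\,\tfrac12\!\int_N|\nabla u_\eps|^2 \;+\; \Big(\tfrac{1}{\eps'}-\tfrac{1}{\eps}\Big)\!\int_N W(u_\eps).
\]
Using the bounds of the previous paragraph together with $\big|\tfrac{1}{\eps'}-\tfrac{1}{\eps}\big| = \tfrac{|\eps-\eps'|}{\eps\eps'} \le \tfrac{|\eps-\eps'|}{(\eps_\ast-\rho)^2}$, we obtain $g(\eps')-g(\eps)\le L\,|\eps'-\eps|$ with $L = \tfrac{C}{\eps_\ast-\rho} + \tfrac{(\eps_\ast+\rho)C}{(\eps_\ast-\rho)^2}$ depending only on $I$, $N$, $v_0$. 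Swapping the roles of $\eps$ and $\eps'$ gives the reverse inequality, hence $|g(\eps')-g(\eps)|\le L|\eps'-\eps|$ on $I$. Since $\eps_\ast\in(0,\eps_0)$ was arbitrary, $g$ is locally Lipschitz on $(0,\eps_0)$ — in particular continuous there — and Lipschitz on every $[\delta,\eps_0-\delta]$, which is the "Lipschitz away from $\eps=0$" assertion.

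I do not anticipate a serious obstacle; the only points requiring a little care are (i) confirming that $g(\eps)$ is finite and well-defined (immediate) and, if one insists on honest minimizers, (ii) the existence/regularity of $u_\eps$, which I would either cite from the Brezis--Oswald theory in \S\ref{BEEnergies} or bypass entirely by running the estimate with $\eta$-near-minimizers and letting $\eta\to 0$ at the end — note the uniform bounds in paragraph two degrade only by the harmless additive constant $\eta$.
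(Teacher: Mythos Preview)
Your proof is correct and follows essentially the same route as the paper's: both use the minimizer $u_\eps$ as a competitor at parameter $\eps'$ to get $g(\eps')-g(\eps)\le E_{\eps'}(u_\eps)-E_\eps(u_\eps)$, bound this difference by $K|\eps-\eps'|$ using a uniform energy bound on compact subintervals of $(0,\eps_0)$, and then swap the roles of $\eps,\eps'$. The only cosmetic differences are that the paper takes the zero function as its competitor for the uniform bound (giving $g(\eps)\le \mathrm{Vol}(N)/(4\eps)$) and packages the remainder as $\big(\tfrac{|\gamma-\delta|}{\gamma}+\tfrac{|\gamma-\delta|}{\delta}\big)E_\gamma(u_\gamma)$ rather than splitting the gradient and potential terms separately as you do.
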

\noindent \Pf Let $\gamma, \delta < \eps_0$ arbitrary and positive. Let 
\[
e_{\eps}(p) := \frac{\eps}{2} |\nabla u_{\eps}(p)|^2 + \frac{1}{\eps} W(u_{\eps}(p))
\]
Then
\begin{align*}
g(\gamma) - g(\delta) & =  \int_N [e_{\gamma} - e_{\delta}]
\end{align*}
We rewrite
\begin{align*}
e_{\gamma} &= \left[\frac{\delta}{2} |\nabla u_{\gamma}(p)|^2 + \frac{1}{\delta} W(u_{\gamma}(p)) \right] + \left[\frac{\gamma - \delta}{2} |\nabla u_{\gamma}(p)|^2 + (\gamma^{-1} - \delta^{-1}) W(u_{\gamma}(p))\right] \\
E_{\gamma}(u_{\gamma}) &= E_{\delta}(u_{\gamma}) + \int_N \frac{\gamma - \delta}{2} |\nabla u_{\gamma}(p)|^2 + (\gamma^{-1} - \delta^{-1}) W(u_{\gamma}(p)) \\
& \geq E_{\delta}(u_{\gamma}) - \left(\frac{|\gamma - \delta|}{\gamma} + \frac{|\delta - \gamma|}{\delta} \right) E_{\gamma}(u_{\gamma}) \\
& \geq E_{\delta}(u_{\gamma}) - K |\gamma - \delta| \\
& \geq E_{\delta}(u_{\delta}) - K|\gamma - \delta|
\end{align*}
Since $u_{\delta}$ is the minimizer of $E_{\delta}$ on $N$. Here $K$ depends on $E_{\gamma}(u_{\gamma})$ which we know is bounded as long as $\gamma > c_0 > 0$ -- take $0$ as a competitor to get a bound of $\text{Vol}(N)/4\gamma < \text{Vol}(N)/4c_0$ as a competing energy. This shows that 
\[
E_{\gamma}(u_{\gamma}) - E_{\delta}(u_{\delta}) \geq - K|\gamma - \delta|
\]
but by the same argument with $\gamma, \; \delta$ switched, we get 
\[
\Big| E_{\gamma}(u_{\gamma}) - E_{\delta}(u_{\delta}) \Big| \leq K |\gamma - \delta|
\]
This gives lipschitzness away from $0$, and hence continuity. \qed \nl \nl
We now prove theorem \ref{NoAbsoluteMinimizerS1}: \nl 
\noindent \Pf The argument is as follows: Take $\{p_i\}$ such that 
\[
\lim_{i \to \infty} \BE(\{0, p_i\}) = \inf_{\substack{p \neq 0 \in S^1}} \BE_{\eps}(\{0, p\})
\]
Then, potentially up to a subsequence, we have $\{p_i\}$ converges to some $p \in [0,1]$. If $p \neq 0, 1$, then $E_{\eps}(u) > E_{\eps}(u_{0,\eps})$ as $u_{0,\eps}$ is the unique absolute minimizer with dirichlet conditions on $\{0,1\}$. However, let $u_{\delta, \eps}$ be the function
\[
u_{\delta, \eps }(x) = \begin{cases}
u_{[0,1-\delta], \eps }(x) & x \in [0, 1 - \delta] \\
0 & x \in [1 - \delta, 1]
\end{cases}
\]
where $u_{[1-\delta], \eps }$ is the minimizer of \eqref{ACEnergy} on $[0,1-\delta]$ with dirichlet conditions at the endpoints. Then we have 
\[
\lim_{\delta \to 0} E_{\eps}(u_{\delta, \eps}, [0,1]) = \lim_{\delta \to 0} E_{\eps}(u_{[0,1-\delta], \eps}, [0,1-\delta]) + \frac{\delta}{4\eps}
\]
where we know that 
\[
\int_{1-\delta}^1 e_{\eps}(u_{\delta, \eps}) = \int_{1-\delta}^1 \left(0 + \frac{1}{4\eps} \right) = \frac{\delta}{4\eps} 
\]
Moreover for $\delta$ sufficiently small, we have that 
\[
\tilde{u}_{\eps, \delta}(x) := u_{[0,1-\delta],\eps}((1 - \delta) x)
\]
is a well defined solution to \eqref{ACEquation} with $\tilde{\eps} = \frac{\eps}{1 - \delta}$, and hence the unique solution to \eqref{ACEquation} corresponding to $\tilde{\eps}$. In particular, it is the minimizer of $E_{\tilde{\eps}}$ on $[0,1]$, and so by lemma \ref{MinEnergyContinuousLemma}, we have that 
\[
\lim_{\delta \to 0} E_{\eps}(u_{[0,1-\delta], \eps}, [0,1-\delta]) = \lim_{\delta \to 0} E_{\eps/(1-\delta)}(\tilde{u}_{\eps,\delta}) = E_{\eps}(u_{0, \eps})
\]
%
In particular, this tells us that for $\delta$ sufficiently small, we have
\[
E_{\eps}(u) > E_{\eps}(u_{\delta,\eps}) > \BE_{\eps}(\{0, 1 - \delta\})
\]
by our construction and the definition of $\BE_{\eps}$ on hypersurfaces. Thus, $u$ cannot be an absolute minimizer when $|Y| = 2$ since we've found a competitor. This is a contradiction, and tells us that $p = 0, 1$, so our minimizing solution must be $u_{0,\eps}$, which is only a $1$-node function on $S^1$.

\printbibliography

\end{document}